\numberwithin{equation}{section}
\theoremstyle{plain}
\newtheorem{lem}[equation]{Lemma}
\newtheorem{prop}[equation]{Proposition}
\newtheorem{thm}[equation]{Theorem}
\newtheorem{cor}[equation]{Corollary}
\theoremstyle{definition}
\newtheorem{definition}[equation]{Definition}
\newtheorem{conj}[equation]{Conjecture}
\newtheorem{cons}[equation]{Construction}
\newtheorem{remark}[equation]{Remark}
\newtheorem{claim}{Claim}
\newtheorem*{que*}{Question}
\newtheorem*{remark*}{Remark}
\newtheorem*{definition*}{Definition}
\newcommand{\type}{\operatorname{Type}}
\newcommand{\cq}{\mathcal{Q}}
\newcommand{\od}{\widehat{\Sigma}}
\newcommand{\Si}{\Sigma}
\newcommand{\p}{\Pi}
\newcommand{\bD}{\mathbb D}
\newcommand{\bC}{\mathfrak C}
\newcommand{\ca}{\mathcal {A}} 
\newcommand{\prj}{\operatorname{Proj}} 
\newcommand{\s}{\operatorname{Sal}} 
\newcommand{\vertex}{\operatorname{Vert}} 
\newcommand{\lk}{\operatorname{lk}} 
\newcommand{\st}{\operatorname{st}}
\newcommand{\cp}{\mathcal {P}} 
\newcommand{\ce}{\mathcal {E}}
\newcommand{\whC}{\widehat C}
\newcommand{\act}{\curvearrowright}
\newcommand{\eps}{\epsilon} 
\newcommand{\minus}{{-1}} 
\newcommand{\Ga}{\Gamma} 
\newcommand{\g}{\textbf{G}} 
\newcommand{\eq}{\stackrel{*}{=}}
\newcommand{\G}[0]{\mathbf{G}}
\newcommand{\ant}{\operatorname{Ant}}
\newcommand{\supp}{\operatorname{Supp}}
\newcommand{\Sii}{\Sigma^1}
\newcommand{\w}{\operatorname{wd}}
\newcommand{\sfs}{\mathsf s}
\newcommand{\sft}{\mathsf t}
\newcommand{\cw}{\mathcal{W}}
\newcommand{\Up}{\Upsilon}
\newcommand{\CAT}{\operatorname{CAT}}
\begin{document}

\title{Labeled four cycles and the $K(\pi,1)$-conjecture for Artin groups}
\author{Jingyin Huang}
\maketitle

\begin{abstract}
We show that for a large class of Artin groups with Dynkin diagrams being a tree, the $K(\pi,1)$-conjecture holds. 
We also establish the $K(\pi,1)$-conjecture for another class of Artin groups whose Dynkin diagrams contain a cycle, which applies to some Artin groups whose Dynkin diagrams are of hyperbolic type. This is based on a new approach to the $K(\pi,1)$-conjecture for Artin groups.
\end{abstract}

\section{Introduction}
\label{sec:intro}

In this article, we establish a close connection between a simple property in metric graph theory about 4-cycles and the long-standing $K(\pi,1)$-conjecture for hyperplane complements associated to infinite reflection groups, via elements of non-positively curvature geometry. We also propose a new approach for studying the $K(\pi,1)$-conjecture.
As a consequence, we find a large number of new cases of Artin groups which satisfy the $K(\pi,1)$-conjecture.

\subsection{Background and summary}
\label{subsec:background}
Let $W_S$ be a Coxeter group with generating set $S$. A \emph{reflection} of $W_S$ is a conjugate an element in $S$. Let $R$ be the collection of all reflections in $W_S$.
 Recall that $W_S$ admits a \emph{canonical representation} $\rho:W\to GL(V)$, such that each element in $R$ acts as linear reflection on $V$, and the action $W_S\act V$ stabilizes the interior $I$ of a convex cone in $V$, called the \emph{Tits cone}. The $W_S$-action on $I$ is properly discontinuous.
For each reflection $r\in W_S$, let $H_r$ be the set of fix points of $\rho(r)$ on $I$. The collection of all such $H_r$ forms an arrangement of hyperplanes in $I$.
We consider the manifold
$$
M(W_S)= (I\times I)\setminus(\cup_{r\in R} (H_r\times H_r)). 
$$
The \emph{$K(\pi,1)$-conjecture for reflection arrangement complements}, due to Arnold, Brieskorn, Pham and Thom, predicts that the space $M(W_S)$ is aspherical for any Coxeter group $W_S$. 

There is an induced action of $W_S\act M(W_S)$ which is free and properly discontinuous. The fundamental group of the quotient manifold is defined to be the \emph{Artin group}, or the \emph{Artin-Tits group} $A_S$ associated with $W_S$. We will also say the $K(\pi,1)$-conjecture holds for the Artin group $A_S$, if $M(W_S)$ is aspherical.

As an example, if $W_S$ is the symmetry group on $n$-elements, then the canonical presentation permutes coordinates in $\mathbb R^n$. The Tits cone is all of $\mathbb R^n$. Then $$M(W_S)=\mathbb C^n\setminus \cup_{i\neq j}\{(z_1,\cdots,z_n)\in \mathbb C^n\mid z_i=z_j\}$$ is the configuration space of ordered $n$-points in $\mathbb C^n$. The associated Artin group is a braid group. 

The study of $K(\pi,1)$-conjecture connects to the study of Artin groups. These are also poorly understood, see \cite{godelle2012basic}.  
Artin groups have simple presentations \cite{lek}.
Let $\Gamma$ be a finite simplicial graph with each edge labeled by an integer $\ge 2$. The \emph{Artin group with presentation graph $\Gamma$}, denoted $A_\Gamma$, is a group whose generators are in one to one correspondence with vertices of $\Gamma$, and there is a relation of the form $aba\cdots=bab\cdots$ with both sides being alternating words of length $m$
whenever two vertices $a$ and $b$ are connected by an edge labeled by $m$. 

To date, $K(\pi,1)$-conjecture is widely open and the list of known cases is short. 
The $K(\pi,1)$-conjecture was proved for all spherical type Artin groups (i.e. the associated Coxeter group is finite) by Deligne \cite{deligne}, after being proved in some subclasses in \cite{fox1962braid,brieskorn2006groupes}. The $K(\pi,1)$-conjecture was proved for all affine type Artin groups (i.e. the associated Coxeter group is affine) recently by Paolini and Salvetti \cite{paolini2021proof}, based on work of McCammond and Sulway \cite{mccammond2017artin}, before some subclasses were treated in \cite{okonek1979k,CharneyDavis,charney2003cal,callegaro2010k}. Very recently Haettel \cite{haettel2021lattices} recovered the $K(\pi,1)$-conjecture of some affine type Artin groups by an alternative geometric method, which partly inspired this article.

Charney and Davis proved the $K(\pi,1)$-conjecture for all FC type Artin groups \cite{CharneyDavis}. This leads to a general principle that the study of $K(\pi,1)$-conjecture reduces to $K(\pi,1)$-conjecture for Artin groups whose presentation graphs are complete \cite{ellis2010k,godelle2012k}. Charney and Davis also proved the $K(\pi,1)$-conjecture for all 2-dimensional Artin groups \cite{CharneyDavis}. There are also results on classes which have some ``lower-dimensional'' feature but not exactly 2-dimensional, see \cite{charney2004deligne,juhasz2018relatively,goldman2022k,juhasz2023class}.

The $K(\pi,1)$-conjecture for reflection arrangement complements fits into the more general theme of understanding the topology of complex hyperplane arrangement complements, and is closely related to the study of the topology of discriminant complements of singularities.  See e.g. \cite{hendriks1985hyperplane,terao1986modular,falk1995k,paris1995topology,bessis2015finite,Allcock} for some notable results on the $K(\pi,1)$ problem for other types of complements. 

\smallskip

Given an Artin group with presentation graph $\Gamma$, its \emph{Dynkin diagram} $\Lambda$ is obtained from $\Gamma$ by removing all open edges of $\Gamma$ labeled by $2$, and add extra edges labeled by $\infty$ between vertices of $\Gamma$ which are not adjacent. 
A Dynkin diagram is spherical or affine, if the associated Coxeter group is spherical or affine. See Figure~\ref{fig:sa} for a complete list of spherical Dynkin diagrams (left two columns) made of four infinite families and six exceptional cases, and affine Dynkin diagrams (right two columns) made of four infinite families and six exceptional cases.
It is a convention that we suppress the label of all edges that are labeled by $3$ in a Dynkin diagram. 
We write $A_\Lambda$ for Artin group with Dynkin diagram $\Lambda$. We will refer to vertices in the presentation graphs and Dynkin diagrams as \emph{nodes}.

\begin{figure}[H]
	\centering
	\includegraphics[scale=0.86]{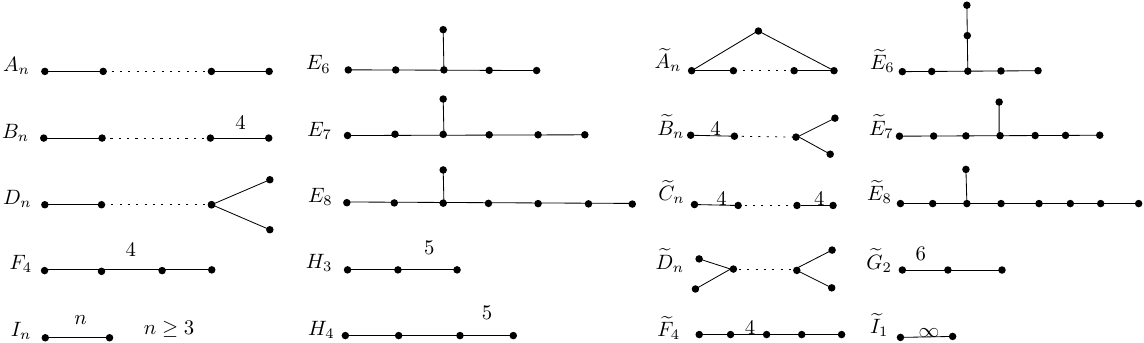}
	\caption{Spherical and affine Dynkin diagrams}
	\label{fig:sa}
\end{figure}

\textbf{By the principle discussed above, from now on we will only consider Artin groups whose Dynkin diagram does not have any edges labeled by $\infty$ - equivalently the presentation graph is complete.}
We vastly extend the known results on diagrams in Figure~\ref{fig:sa} in two different directions. The first is on classes of Artin groups with tree Dynkin diagrams (in light of that most diagrams in Figure~\ref{fig:1} are trees). This is discussed in Section~\ref{subsec:main results1}. The second is on some Artin groups associated with hyperbolic reflection groups, and a family of variations, which are discussed in Section~\ref{subsec:main results2}.

These results are obtained via a new approach to the $K(\pi,1)$-conjecture, explained in Section~\ref{subsec:proof1} and a connection to a simple condition in metric graph theory, explained in Section~\ref{subsec:proof2}. Some of the methods will be further developed in an upcoming article.

\subsection{Artin groups with tree Dynkin diagrams}
\label{subsec:main results1}
The $K(\pi,1)$-conjecture for Artin groups with Dynkin diagrams being trees is widely open. Existing methods for treating $K(\pi,1)$-conjecture for diagrams in Figure~\ref{fig:sa} are highly sensitive to the shape or label of the diagram, e.g. the $K(\pi,1)$-conjecture already becomes unknown if one takes the $B_n$-type diagram, and change the label of the last edge from $4$ to any number $>4$.

Let $\mathcal C_{I_n,A_3}$ (resp. $\mathcal C_{I_n}$) denotes the collection of Dynkin diagrams such that the only allowed connected spherical subdiagrams are either of type $I_n$ or $A_3$ (resp. $I_n$). The class $\mathcal C_{I_n}$ is more commonly studied, and it is often referred as \emph{locally reducible} diagrams.

The only known cases of Artin groups with tree Dynkin diagrams satisfying $K(\pi,1)$-conjecture are either spherical, or affine, or those belong to $\mathcal C_{I_n,A_3}$ \cite{charney2004deligne}. 
Our first theorem treats the $K(\pi,1)$-conjecture for much more general Dynkin trees.

\begin{thm}(=Theorem~\ref{thm:combine1})
	\label{thm:main1}
	Suppose $\Lambda$ is a tree Dynkin diagram. Suppose there exists a collection $E$ of open edges with labels $\ge 6$ such that each component of $\Lambda\setminus E$ is either spherical, or locally reducible. Then $A_\Lambda$ satisfies the $K(\pi,1)$-conjecture.
\end{thm}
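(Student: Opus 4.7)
\medskip

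\textbf{Proof plan.} The plan is to combine two ingredients: (i) the known $K(\pi,1)$ results for the pieces of $\Lambda\setminus E$, and (ii) the new geometric framework this paper develops, which enables gluing of these pieces along the high-label edges in $E$ when $\Lambda$ is a tree.

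First, I would analyze the combinatorial structure. Let $\Lambda_1,\ldots,\Lambda_k$ be the connected components of $\Lambda\setminus E$; by hypothesis each is spherical or locally reducible. In the spherical case, $A_{\Lambda_i}$ satisfies the $K(\pi,1)$-conjecture by Deligne~\cite{deligne}. In the locally reducible case, the only connected spherical subdiagrams of $\Lambda_i$ have rank $\le 2$, so the Salvetti complex of $A_{\Lambda_i}$ is two-dimensional, and $K(\pi,1)$ holds by Charney--Davis~\cite{CharneyDavis}. Because $\Lambda$ is a tree, the cut along $E$ presents $A_\Lambda$ as an iterated amalgam of the $A_{\Lambda_i}$ over dihedral Artin subgroups $A_{I_m}$ with $m\ge 6$, one for each edge of $E$; geometrically, this should realize $\s(A_\Lambda)$ as a tree of Salvetti complexes $\s(A_{\Lambda_i})$ glued along $\s(A_{I_m})$.

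Second, I would invoke the new approach to the $K(\pi,1)$-conjecture outlined in Section~\ref{subsec:proof1}. This presumably assigns to $A_\Lambda$ a metric cell complex with an $A_\Lambda$-action whose contractibility is equivalent to the $K(\pi,1)$-conjecture, together with a combinatorial non-positive curvature criterion phrased in terms of 4-cycles in an auxiliary metric graph (Section~\ref{subsec:proof2}). The numerical hypothesis $m\ge 6$ on edges of $E$ is what makes the 4-cycles passing through the corresponding gluing locus satisfy this condition, in the spirit of the classical principle that dihedral contributions with large $m$ behave as if locally non-positively curved. The tree hypothesis on $\Lambda$ rules out global combinatorial obstructions: with no cycles in $\Lambda$, no 4-cycle of the metric graph can wind around the diagram and spoil the local analysis. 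I would carry out the verification inductively along the tree, adding one piece at a time across an edge of $E$ and checking the 4-cycle condition at the new gluing locus, using only that the two adjacent pieces already satisfy the criterion and that the joining edge has label $\ge 6$.

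The main obstacle is the gluing step. Two things must be checked: (a) a parabolic-type statement that the dihedral subgroup $A_{I_m}$ for each edge of $E$ embeds correctly into the adjacent pieces, so that the amalgam structure is reflected geometrically in the metric complex; and (b) the local verification of the 4-cycle condition at the gluing locus, where the constraint $m\ge 6$ is essential (labels $<6$ would produce links that fail the non-positive curvature condition). Part (b) is where the technical machinery introduced in this paper carries the weight, and it is the step I expect to require the most care.
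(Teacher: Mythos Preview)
Your proposal contains a fundamental error in its first structural claim. You assert that cutting $\Lambda$ along $E$ presents $A_\Lambda$ as an iterated amalgam of the $A_{\Lambda_i}$ over dihedral Artin subgroups $A_{I_m}$. This is false, and the paper says so explicitly just after Figure~\ref{fig:tree}: ``We are using Dynkin diagram rather than presentation graph, so the group is not an iterated amalgamation along parabolic subgroups starting from spherical Artin groups.'' The point is that the presentation graph here is \emph{complete} (since there are no $\infty$-edges), so every pair of generators from different components $\Lambda_i,\Lambda_j$ still commutes in $A_\Lambda$. If $e=\overline{ab}\in E$ with $a\in\Lambda_1$ and $b\in\Lambda_2$, then $a$ commutes with every generator of $\Lambda_2$ except $b$; there is no subgroup over which to amalgamate that captures all of these cross-relations. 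Consequently your plan of realizing $\s(A_\Lambda)$ as a tree of Salvetti complexes glued along $\s(A_{I_m})$, and your ``gluing step'' (a), cannot work as stated.

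The paper's actual route is quite different and does not proceed via any decomposition of the group or its Salvetti complex. Instead it works entirely inside the Artin complex $\Delta_\Lambda$ and the relative Artin complexes $\Delta_{\Lambda,\Lambda'}$. The argument (Proposition~\ref{prop:tree contractible}) shows that if every component of $\Lambda\setminus E$ has an Artin complex satisfying the \emph{labeled 4-wheel condition} (Definition~\ref{def:labeled 4-wheel intro}), then $\Delta_\Lambda$ is contractible; one then invokes Theorem~\ref{thm:combine} inductively. The labeled 4-wheel condition is established separately for spherical pieces (Corollary~\ref{cor:wheel}, via parabolic-closure arguments in spherical Artin groups) and for locally reducible pieces (Corollary~\ref{cor:locally reducible}). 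The role of the hypothesis $m\ge 6$ is not a gluing angle condition on Salvetti complexes, but rather that the edge $e$ becomes \emph{$6$-solid}: the bipartite graph $\Delta_{\Lambda,e}$ has girth $\ge 12$, which is exactly what is needed to build an auxiliary $\CAT(0)$ metric on certain 2-dimensional relative Artin complexes $\Delta_{\Lambda,\Lambda_0}$ (Lemmas~\ref{lem:enlarge1}--\ref{lem:enlarge3}) and thereby propagate the bowtie-free / labeled 4-wheel property across the edges of $E$. Your intuition that ``labels $\ge 6$ give non-positive curvature at the interface'' is morally right, but the object being metrized is a relative Artin complex, not the Salvetti complex, and there is no amalgam in sight.
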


Note that we do not requires edges in $\Lambda\setminus E$ has label $<6$. If an edge of $\Lambda\setminus E$ lies in a spherical component, then the largest possible label is $5$, see Figure~\ref{fig:1} (this is what motivates the number $6$ in the theorem), however, if the edge is in a local irreducible component, it could possibly have label $\ge 6$. The theorem also holds true if we replace ``locally reducible'' by ``belongs to $\mathcal C_{I_n,A_3}$'', as each diagram in $\mathcal C_{I_n,A_3}$ which is a tree satisfies the assumption of Theorem~\ref{thm:main1}.

%can be cut along open edges of label $\ge 6$ into $\mathcal C_{I_n}$ diagrams and $A_3$ diagrams. The number $6$ is related to the fact that the largest possible edge label in any spherical Dynkin diagram is $5$.

Examples of Dynkin trees satisfying Theorem~\ref{thm:main1} can be obtained assembling spherical or locally reducible diagrams together by connecting them by edges of label $\ge 6$. See Figure~\ref{fig:tree} for a simple example. We are using Dynkin diagram rather than presentation graph, so the group is not an iterated amalgamation along parabolic subgroups starting from spherical Artin groups.
\begin{figure}[H]
	\centering
	\includegraphics[scale=1.2]{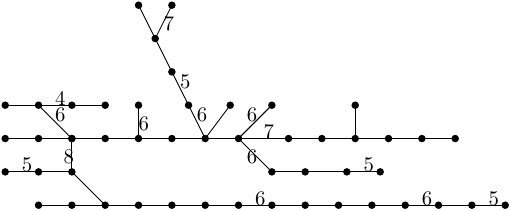}
	\caption{A Dynkin diagram satisfying Theorem~\ref{thm:main1}.}
	\label{fig:tree}
\end{figure}

%The format of Theorem~\ref{thm:main1} is similar to combination type theorems for amalgamated products, though the Artin groups in Theorem~\ref{thm:main1} are not amalgamations of spherical Artin groups and locally reducible Artin groups ($\Lambda$ is \emph{not} the presentation graph). They are rather obtained by taking a direct sum of spherical Artin groups and locally reducible Artin groups, then changing some of the commutation relations to generalized braid relations of length $\ge 6$. 

%The condition on $\ge 6$ leads to a fairly muted form of non-positively-curvature or hyperbolicity.

We can also combine affine type Artin groups in the above theorem as follows, modulo two group theoretic statements on certain affine type Artin groups, one on parabolic closure of elements, and another on commutation of a pair of very specific elements. Note that the study of parabolic closure of elements in Artin groups is very active in recent years \cite{cumplido2019parabolic,cumplido2020parabolic,morris2021parabolic,blufstein2022parabolic,godelle2022parabolic,gonzalez2022parabolic}.
\begin{thm}(=Theorem~\ref{thm:combine2})
	\label{thm:main2}
	Suppose $\Lambda$ is a tree Dynkin diagram. Suppose there exists a collection $E$ of open edges with label $\ge 6$ such that each component of $\Lambda\setminus E$ is either spherical, or affine, or locally reducible. Suppose  the two group theoretic assumptions of Corollary~\ref{cor:algebraic} hold for affine Artin groups of type $\widetilde B$, $\widetilde D$, $\widetilde E$ and $\widetilde F$.
	Then $A_\Lambda$ satisfies the $K(\pi,1)$-conjecture.
\end{thm}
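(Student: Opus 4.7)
The plan is to follow the same assembly strategy used for Theorem~\ref{thm:main1}, but to treat affine components of $\Lambda\setminus E$ as new atomic pieces alongside the spherical and locally reducible ones. For each component $\Lambda_i$ of $\Lambda\setminus E$ one already has an aspherical model for $A_{\Lambda_i}$: the Deligne complex for spherical pieces, the Paolini--Salvetti model for affine pieces, and the construction from the proof of Theorem~\ref{thm:main1} for locally reducible pieces. The task is to glue these local models into a single non-positively curved complex on which $A_\Lambda$ acts properly, using the edges of $E$ as the gluing interface.

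Concretely, one would construct a piecewise Euclidean complex $X_\Lambda$ by taking the local complexes $X_{\Lambda_i}$ and identifying them along hinge cells corresponding to the edges $e\in E$. Asphericity of $X_\Lambda$ reduces to (i) asphericity of each $X_{\Lambda_i}$, already in hand, and (ii) a local link condition at each hinge. Because every edge in $E$ has label $m\ge 6$, the hinge locally models a dihedral group $I_m$ with $m\ge 6$, providing enough angular room to verify the link condition via the 4-cycle criterion of Section~\ref{subsec:proof2}, exactly as in the proof of Theorem~\ref{thm:main1}. The part of the argument that lives entirely inside a spherical or locally reducible component is then a verbatim repeat of Theorem~\ref{thm:main1}.

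The two group-theoretic assumptions of Corollary~\ref{cor:algebraic} enter precisely where an affine piece meets a hinge. Fix an edge $e=\{a,b\}\in E$ with $a$ in an affine component $\Lambda_i$ of type $\widetilde B$, $\widetilde D$, $\widetilde E$, or $\widetilde F$. To apply the local link analysis I must identify the stabilizer of a particular cell of $X_{\Lambda_i}$ with a standard parabolic subgroup of $A_{\Lambda_i}$; in the spherical case this identification is automatic, while in the affine case it is exactly what the parabolic closure hypothesis supplies. The commutation hypothesis then guarantees that the two specific elements arising on the two sides of $e$ actually commute in $A_{\Lambda_i}$, which is what causes the hinge link to decompose as a spherical join and therefore be $\CAT(1)$ — the final ingredient needed to run the 4-cycle condition at the hinge.

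The main obstacle is this local verification at hinges involving affine pieces. Affine parabolics are infinite, so unlike in the spherical case one cannot brute-force through cell stabilizers; the two hypotheses are precisely the minimum structural information on $A_{\widetilde B}, A_{\widetilde D}, A_{\widetilde E}, A_{\widetilde F}$ that lets one reduce the affine-hinge link check to the same combinatorial 4-cycle condition that worked in Theorem~\ref{thm:main1}. Once this reduction is established, the rest of the argument — global asphericity of $X_\Lambda$ and hence the $K(\pi,1)$-conjecture for $A_\Lambda$ — proceeds by the same non-positively curved geometry argument as for Theorem~\ref{thm:main1}, with affine pieces simply added to the existing list of allowed atoms.
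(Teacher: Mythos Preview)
Your architecture is not the paper's, and more importantly it rests on a misreading of the two hypotheses of Corollary~\ref{cor:algebraic}, which is where the real gap lies.

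The paper does not glue local aspherical models (Deligne complex, Paolini--Salvetti complex, etc.) along hinge cells; those models live in incompatible categories and there is no ``hinge cell'' to identify. Instead the engine is Proposition~\ref{prop:tree contractible}: to get $K(\pi,1)$ for $A_\Lambda$ it suffices that (a) each component $\Lambda_i$ of $\Lambda\setminus E$ already satisfies $K(\pi,1)$, and (b) the Artin complex $\Delta_{\Lambda_i}$ of each component satisfies the labeled 4-wheel condition. Condition (a) for affine pieces is Paolini--Salvetti. Condition (b) is where the two hypotheses enter, and they are used \emph{internally} to the affine piece, not at any interface: via Corollary~\ref{cor:algebraic} (together with Proposition~\ref{prop:4-cycle} and Lemma~\ref{lem:connect}) they force $\Delta_{\Lambda_i}$ to be bowtie free, hence to satisfy the labeled 4-wheel condition, for each affine tree component of type $\widetilde B,\widetilde D,\widetilde E,\widetilde F$. (Type $\widetilde C_n$ is handled separately by Proposition~\ref{prop:widetilde C}, and $\widetilde A_n$ cannot occur in a tree.)

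You have also inverted the content of the hypotheses. Assumption~(1) of Corollary~\ref{cor:algebraic} is that arbitrary intersections of parabolics are parabolic; this is used so that parabolic closures of elements exist, allowing one to pass from a commuting pair $u_1,u_2$ to the elements $c_{\mathsf P_1},c_{\mathsf P_2}$. Assumption~(2) is not the assertion that two specific elements commute; it is the conditional statement that \emph{whenever} $c_{\mathsf P_1}$ and $c_{\mathsf P_2}$ commute, they can be simultaneously conjugated by some $g\in A_{i_1}\cap A_{i_2}$ into a common $A_{i_3}$ with $i_1\le i_3\le i_2$. This is exactly the input to Proposition~\ref{prop:group bowtie}, which produces the bowtie-free property of $\Delta_{\Lambda_i,\Lambda'}$ for each admissible linear $\Lambda'\subset\Lambda_i$. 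There is no ``hinge link decomposing as a spherical join'' step anywhere in the argument; the label-$\ge 6$ edges enter later, inside the proof of Proposition~\ref{prop:tree contractible}, through the $6$-solidity and $\CAT(0)$ arguments of Section~\ref{subsec:preserve}.
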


Theorem~\ref{thm:main1} and Theorem~\ref{thm:main2} are special cases of Proposition~\ref{prop:tree contractible intro} allowing components of $\Lambda\setminus E$ to be any diagrams satisfying $K(\pi,1)$-conjecture and an additional assumption.

\subsection{Some hyperbolic type cases and their relatives}
\label{subsec:main results2}
Recall that a Dynkin diagram is \emph{Lann\'er} if the associated Coxeter group is a geometric reflection group generated by reflections across the
codimension 1 faces of a compact $n$-simplex in either $\mathbb S^n$, $\mathbb E^n$ or $\mathbb H^n$. Such diagrams are classified in \cite{lanner1950complexes}.
A Dynkin diagram is \emph{quasi-Lann\'er} if the associated Coxeter group is generated by reflections across the
codimension 1 faces of a non-compact finite volume $n$-simplex in $\mathbb H^n$. Such diagrams are classified in \cite{chein1969recherche}.
As the cases of $\mathbb S^n$ and $\mathbb E^n$ correspond to diagrams in Figure~\ref{fig:sa}, the next natural step is to understand the case of $\mathbb H^n$, where $K(\pi,1)$-conjecture is widely open for $n\ge 3$. 
Unlike the spherical or affine case, there are only finitely many hyperbolic Lann\'er or quasi-Lann\'er groups acting on $\mathbb H^n$ for $n\ge 3$. However, we are able to treat some of these groups which fit into more general infinite families containing members of arbitrarily high dimension. 

A key geometric feature of all quasi-Lann\'er diagrams, which motivates some of our infinite families, is that $\Lambda$ has a \emph{core} (coresponding to the cusps), which is an affine subdiagram $\Lambda'$, such that for any node $s\in \Lambda'$, each component of $\Lambda\setminus\{s\}$ is either spherical or affine. Axiomatizing this leads to a much larger class of Artin groups. Our method is based on the type of cores. In this article we treat the case when the core is a cycle - though we allow a slightly more general situation that the core might not be affine. 

\begin{thm}
	\label{thm:single cycle}
	Suppose $\Lambda$ is a connected Dynkin diagram with an induced cyclic subgraph $C$. Suppose that for each node $s\in C$, the component $\Lambda_s$ of $\Lambda\setminus\{s\}$ that contains remaining nodes of $C$ is either spherical or a locally reducible tree. Then $A_\Lambda$ satisfies the $K(\pi,1)$-conjecture.
	
More generally, assume, in addition, that the two group theoretic assumptions of Corollary~\ref{cor:algebraic} holds for affine Artin groups of type $\widetilde B$, $\widetilde D$, $\widetilde E$ and $\widetilde F$. If $\Lambda_s$ is a tree which is either spherical or affine or locally reducible for any $s\in C$, then $A_\Lambda$ satisfies the $K(\pi,1)$-conjecture.
\end{thm}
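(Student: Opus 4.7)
The plan is to adapt the new geometric approach sketched in Sections~\ref{subsec:proof1} and~\ref{subsec:proof2}, coupling a construction of a Salvetti-type contractible model with a metric-graph-theoretic condition on 4-cycles. The induced cycle $C$ is used to orchestrate a global assembly of local contractible models coming from the components $\Lambda_s$.

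First, I would exploit the hypothesis to obtain local $K(\pi,1)$ models. For each vertex $s\in C$, the component $\Lambda_s$ is a tree satisfying the hypotheses of Theorem~\ref{thm:main1} in the first case, and of Theorem~\ref{thm:main2} in the second, more general case. In particular, by (the proofs of) those theorems, which invoke Proposition~\ref{prop:tree contractible intro}, each $A_{\Lambda_s}$ admits an explicit contractible model for the universal cover of its Salvetti complex, built by gluing together contractible pieces coming from the spherical, locally reducible, and (in the second case) affine subdiagrams. These local models are the building blocks from which $\widetilde X_\Lambda$ will be assembled.

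Second, I would construct a global complex $X_\Lambda$ with $\pi_1(X_\Lambda)=A_\Lambda$ by combining the local models indexed by the vertices of $C$. The combinatorics of the cycle $C=s_1s_2\cdots s_ns_1$ dictates the assembly: consecutive vertices $s_i,s_{i+1}$ of $C$ share the common sub-Artin group $A_{\Lambda\setminus\{s_i,s_{i+1}\}}$, and the models for $A_{\Lambda_{s_i}}$ and $A_{\Lambda_{s_{i+1}}}$ are glued along the subcomplexes corresponding to this shared parabolic. Unlike the tree case of Theorems~\ref{thm:main1} and~\ref{thm:main2}, the cyclic nature of $C$ forbids a straightforward iterated amalgamation, so one instead builds a Salvetti-type complex that simultaneously encodes the global cycle relations and restricts on each $\Lambda_s$-region to the already-constructed local model.

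Third, I would verify contractibility of $\widetilde X_\Lambda$ via the metric-graph-theoretic 4-cycle condition of Section~\ref{subsec:proof2}. Each 4-cycle in the 1-skeleton of $\widetilde X_\Lambda$ that is supported inside some $\Lambda_s$-region already satisfies the required condition, by construction of the local models. The main obstacle is handling those 4-cycles that straddle the loop $C$ and hence do not lie in any single $\Lambda_s$-region: one must argue, using the fact that removing any vertex $s\in C$ turns the cycle into a path contained in one of the tree diagrams $\Lambda_s$, that such straddling 4-cycles are controlled by the interaction of two consecutive local models at a cycle vertex. In the more general affine case, this is precisely where the two group-theoretic hypotheses of Corollary~\ref{cor:algebraic} enter: the parabolic closure assumption pins down which subcomplex a given loop is confined to, while the commutation assumption ensures compatibility of adjacent local models at a vertex of $C$. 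Once the 4-cycle condition is verified, the criterion of Section~\ref{subsec:proof2} delivers contractibility of $\widetilde X_\Lambda$ and hence the $K(\pi,1)$ property for $A_\Lambda$.
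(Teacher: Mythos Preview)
Your proposal misidentifies both the object to which the 4-cycle condition is applied and the mechanism that delivers contractibility. The paper does not construct a new Salvetti-type model and check a 4-cycle condition on its universal cover. It works entirely with the \emph{Artin complex} $\Delta_\Lambda$ (the coset complex of Definition~\ref{def:Artin complex}) and its relative version $\Delta_{\Lambda,C}$. The argument is: by Theorem~\ref{thm:combine} it suffices to show $\Delta_\Lambda$ is contractible and that each $A_{\hat s}$ satisfies $K(\pi,1)$; the latter reduces, via the framework of Corollary~\ref{cor:contractible}, to $K(\pi,1)$ for each $A_{\Lambda_s}$, which holds since $\Lambda_s$ is spherical, locally reducible, or affine. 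For contractibility one retracts $\Delta_\Lambda$ onto the core $\Delta_{\Lambda,C}$ (Lemma~\ref{lem:dr}, Proposition~\ref{prop:contractible}) and applies Haettel's criterion, Theorem~\ref{thm:contractible}: for each vertex $x$ of type $\hat s$ in $\Delta_{\Lambda,C}$ the link $\lk(x,\Delta_{\Lambda,C})\cong\Delta_{\Lambda_s,C_s}$ must be a bowtie-free poset. This is exactly where the labeled 4-wheel condition enters --- Corollary~\ref{cor:wheel}, Corollary~\ref{cor:locally reducible}, and (in the affine case) Corollary~\ref{cor:algebraic} give the labeled 4-wheel condition on $\Delta_{\Lambda_s}$, and Lemma~\ref{lem:connect} converts this to bowtie-freeness of $\Delta_{\Lambda_s,C_s}$. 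See Lemma~\ref{lem:cyclic contractible} and Proposition~\ref{prop:single cycle}.

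Your plan has two concrete gaps. First, the ``gluing of local Salvetti models around the cycle'' is not a construction: an amalgamation along a cycle of subgroups does not in general recover $A_\Lambda$, and the sentence ``one instead builds a Salvetti-type complex that simultaneously encodes the global cycle relations'' names the difficulty rather than resolving it. Second, the labeled 4-wheel condition of Section~\ref{subsec:proof2} is a condition on induced 4-cycles in the (relative) Artin complex --- a coset complex with $A_\Lambda$-action --- not on the 1-skeleton of a universal cover of a Salvetti complex; your third step therefore checks the wrong condition on the wrong space. Likewise, the hypotheses of Corollary~\ref{cor:algebraic} are not used to ``ensure compatibility of adjacent local models at a vertex of $C$'': they are the algebraic input (existence of parabolic closure, control of a specific commuting pair) that feeds into Proposition~\ref{prop:group bowtie} to yield bowtie-freeness of $\Delta_{\Lambda_s,C_s}$ when $\Lambda_s$ is affine.
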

This theorem is a special case of more general (and more technical) results in the text, see Theorem~\ref{thm:folded},  Theorem~\ref{thm:folded1} and Theorem~\ref{thm:folded2}, and they cover all Lann\'er or quasi-Lann\'er diagrams in Figure~\ref{fig:known}, which contains examples up to dimension $7$.

To demonstrate Theorem~\ref{thm:single cycle} and its companions in the text, we list a collection of Dynkin diagrams in Figure~\ref{fig:9} below, which contains 11 infinite families of Artin groups, and 20 exceptional examples.  
These diagrams are produced by taking each of the quasi-Lann\'er diagrams or Lann\'er diagrams we can treat from Theorem~\ref{thm:folded2}, and modifying them in one or several ways, so to produce higher dimensional members with similar geometric features (which might not be (quasi)-Lann\'er).
For example, the two ``horseshoe crab'' families are motivated from the quasi-Lann\'er diagram $\Lambda_0$ made of two triangles glued along an edge. The $K(\pi,1)$-conjecture for $A_{\Lambda_0}$ is a consequence of a highly-nontrivial computation of Charney \cite{charney2004deligne}, though the $K(\pi,1)$-conjecture for other members in the horseshoe crab families were unknown. Figure~\ref{fig:9} does not exhaust all the possibilities where our method applies, as we mainly want to list out those families which are closest to (quasi)-Lann\'er diagrams.

\begin{cor}(=Corollary~\ref{cor:extended lanner})
	\label{thm:main4}
	All Artin groups whose Dynkin diagrams belongs to Figure~\ref{fig:9} satisfy the $K(\pi,1)$-conjecture.
\end{cor}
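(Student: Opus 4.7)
The plan is to reduce Corollary~\ref{thm:main4} to a finite case analysis that verifies the hypotheses of Theorem~\ref{thm:single cycle}, together with its more technical refinements Theorem~\ref{thm:folded}, Theorem~\ref{thm:folded1} and Theorem~\ref{thm:folded2}, for each of the $11$ infinite families and $20$ exceptional diagrams listed in Figure~\ref{fig:9}. Since the figure is explicitly built from (quasi-)Lann\'er progenitors by modifications that are designed to preserve the cycle-slicing structure, this verification is essentially combinatorial bookkeeping.

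First I would identify, for each Dynkin diagram $\Lambda$ appearing in Figure~\ref{fig:9}, a distinguished induced cyclic subgraph $C \subset \Lambda$ to serve as the "core." The guiding principle is that Figure~\ref{fig:9} is obtained from (quasi-)Lann\'er diagrams by attaching trees at peripheral vertices and/or relaxing certain edge labels, in a way that the cycle of the progenitor persists as an induced subgraph; this persisting cycle is the desired $C$. For example, in the two horseshoe crab families $C$ is the $4$-cycle formed by the two triangles of $\Lambda_0$ glued along their shared edge, while for families built from a triangle-plus-tail progenitor $C$ is that triangle, and so on down the list.

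Next, for every vertex $s \in C$ I would pin down the component $\Lambda_s$ of $\Lambda \setminus \{s\}$ containing the remaining vertices of $C$, and check that $\Lambda_s$ is either spherical, or affine, or a locally reducible tree. Because the only modifications used to assemble Figure~\ref{fig:9} are (i) attaching subtrees at vertices under controlled label constraints, and (ii) replacing some labels by numbers $\ge 6$, in every case $\Lambda_s$ is a tree obtained by gluing subtrees of classical spherical or affine diagrams of types $A, B, D, E, F, \widetilde A, \widetilde B, \widetilde D, \widetilde E, \widetilde F$ along edges that are either small-label (keeping them spherical/affine) or of label $\ge 6$ (then the locally reducible case applies). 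Infinite families admit a uniform verification since the parameter only inserts extra $A_n$-type segments; the $20$ exceptional diagrams are checked by direct inspection.

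The main obstacle is not conceptual but organizational: one must correctly sort each entry of Figure~\ref{fig:9} according to which of Theorem~\ref{thm:single cycle}, Theorem~\ref{thm:folded}, Theorem~\ref{thm:folded1} or Theorem~\ref{thm:folded2} applies, and in particular decide when the slices $\Lambda_s$ are all spherical or locally reducible trees (so the unconditional first half of Theorem~\ref{thm:single cycle} suffices), versus when some $\Lambda_s$ is affine (so the second half applies, conditional on the two group-theoretic assumptions of Corollary~\ref{cor:algebraic} for affine Artin groups of type $\widetilde B$, $\widetilde D$, $\widetilde E$, $\widetilde F$). Once this sorting is complete, the corollary follows by applying the appropriate theorem diagram by diagram.
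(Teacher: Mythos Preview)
Your plan covers most of Figure~\ref{fig:9} correctly, but it has a genuine gap for three families: the two horseshoe crab families and the family in the top right corner. These do \emph{not} fall under the hypotheses of Theorem~\ref{thm:single cycle}, Theorem~\ref{thm:folded}, Theorem~\ref{thm:folded1}, or Theorem~\ref{thm:folded2}, and the paper treats them by a separate ad hoc $\CAT(0)$ argument.

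Concretely, your description of the horseshoe crab core as ``the $4$-cycle formed by the two triangles glued along their shared edge'' is incorrect: two triangles sharing an edge do not produce an induced $4$-cycle (the shared edge is a chord). The actual horseshoe crab diagram contains an $n$-cycle $\Lambda_1$ and a $3$-cycle $\Lambda_2$ sharing an edge $\overline{s_1s_2}$. If you take $C=\Lambda_2$ as the core and remove the apex $s_3$, the component $\Lambda_{s_3}$ containing the rest of $C$ is the $n$-cycle $\Lambda_1$, which is neither a spherical tree, nor a locally reducible tree, nor an $f$-folded cycle (there is no special folding in play). Hence none of the cited theorems applies. The paper instead shows directly that $\Delta_{\Lambda,\Lambda_2}$ is $\CAT(0)$ with flat equilateral triangles: for the link at a type-$\hat s_3$ vertex one uses Lemma~\ref{lem:4-cycle} applied to the cycle $\Lambda_1$ to get $3$-solidity of $\overline{s_1s_2}$, and for type-$\hat s_1$, $\hat s_2$ vertices one uses that $\Lambda\setminus\{s_i\}$ is spherical of type $A_n$, $B_n$ or $D_n$ together with Theorem~\ref{thm:bowtie free}. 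The top-right-corner family is handled analogously, with an additional appeal to Lemma~\ref{lem:enlarge1} when the edge label $n\ge 6$.

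A secondary point: Corollary~\ref{thm:main4} is stated and proved \emph{unconditionally} in the paper; none of the diagrams in Figure~\ref{fig:9} requires the affine-type hypotheses of Corollary~\ref{cor:algebraic}. Your sorting into conditional and unconditional cases is therefore unnecessary (and would weaken the statement if carried through).
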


Note that 9 of the 11 infinite families in Figure~\ref{fig:9}  contain members of arbitrarily high dimension, and 19 of the 20 exceptional examples contains spherical subdiagrams of type $F_4,H_3,H_4,E_6$, $E_7,E_8$. The $K(\pi,1)$-conjecture was previously open for all of them, except finitely many examples of dimension $\le 3$ in some of the infinite classes.

We also obtain the following corollary regarding torsion and center of certain Artin groups, using \cite{jankiewicz2023k}, which belongs to some of the fundamental unsolved problems of Artin groups.

\begin{cor}
	Let $\Lambda$ be a connected Dynkin diagram satisfying the assumptions in one of the previous theorems. Then $A_\Lambda$ is torsion free and it has trivial center as long as $\Lambda$ is not spherical.
\end{cor}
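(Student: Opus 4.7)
The plan is to treat this corollary as an almost immediate consequence of the $K(\pi,1)$-conjecture for $A_\Lambda$, which is supplied under our hypotheses by Theorem~\ref{thm:main1}, Theorem~\ref{thm:main2}, or Theorem~\ref{thm:single cycle}. Granting the $K(\pi,1)$-conjecture, both conclusions are packaged as consequences of standard facts together with the results of \cite{jankiewicz2023k}, so the task reduces to invoking the correct statements in the correct order.

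For torsion-freeness I would argue as follows. The $K(\pi,1)$-conjecture for $A_\Lambda$ is equivalent to asserting that the Salvetti complex $\s(A_\Lambda)$, obtained as the quotient of $M(W_\Lambda)$ by the free, properly discontinuous $W_\Lambda$-action, is a model for the classifying space $K(A_\Lambda,1)$. Since $\s(A_\Lambda)$ is a finite-dimensional CW complex, $A_\Lambda$ has finite cohomological dimension over $\mathbb Z$. A standard cohomological argument then rules out torsion: any nontrivial element of finite order would generate a finite cyclic subgroup of infinite cohomological dimension, contradicting the finiteness of the cohomological dimension of the ambient group.

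For the center, assuming $\Lambda$ is connected and non-spherical, I would invoke \cite{jankiewicz2023k}, whose main result says precisely that an Artin group whose Dynkin diagram is connected and non-spherical and which satisfies the $K(\pi,1)$-conjecture has trivial center. Our hypotheses supply exactly these two inputs (recalling the running convention stated in the introduction that the presentation graph is complete, so connectedness is to be read with respect to the Dynkin diagram and not the presentation graph), so the conclusion follows verbatim.

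The only place where a genuine issue could arise is the bookkeeping step of matching conventions with \cite{jankiewicz2023k} — in particular, making sure that their hypothesis on the Dynkin diagram and on the $K(\pi,1)$-property coincide with ours. I do not anticipate a real obstacle here; the substantive content of the corollary is already contained in Theorems~\ref{thm:main1}, \ref{thm:main2}, \ref{thm:single cycle} and in \cite{jankiewicz2023k}, and the proof is purely a combination of these.
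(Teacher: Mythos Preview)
Your proposal is correct and matches the paper's approach: the corollary is stated immediately after the sentence ``using \cite{jankiewicz2023k}'' and is not given a separate proof in the text, so the intended argument is exactly the one you outline --- torsion-freeness from the existence of a finite-dimensional $K(A_\Lambda,1)$ (the Salvetti complex), and triviality of the center from \cite{jankiewicz2023k} once the $K(\pi,1)$-conjecture is established by the preceding theorems.
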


\begin{figure}[H]
	\centering
	\includegraphics[scale=0.8]{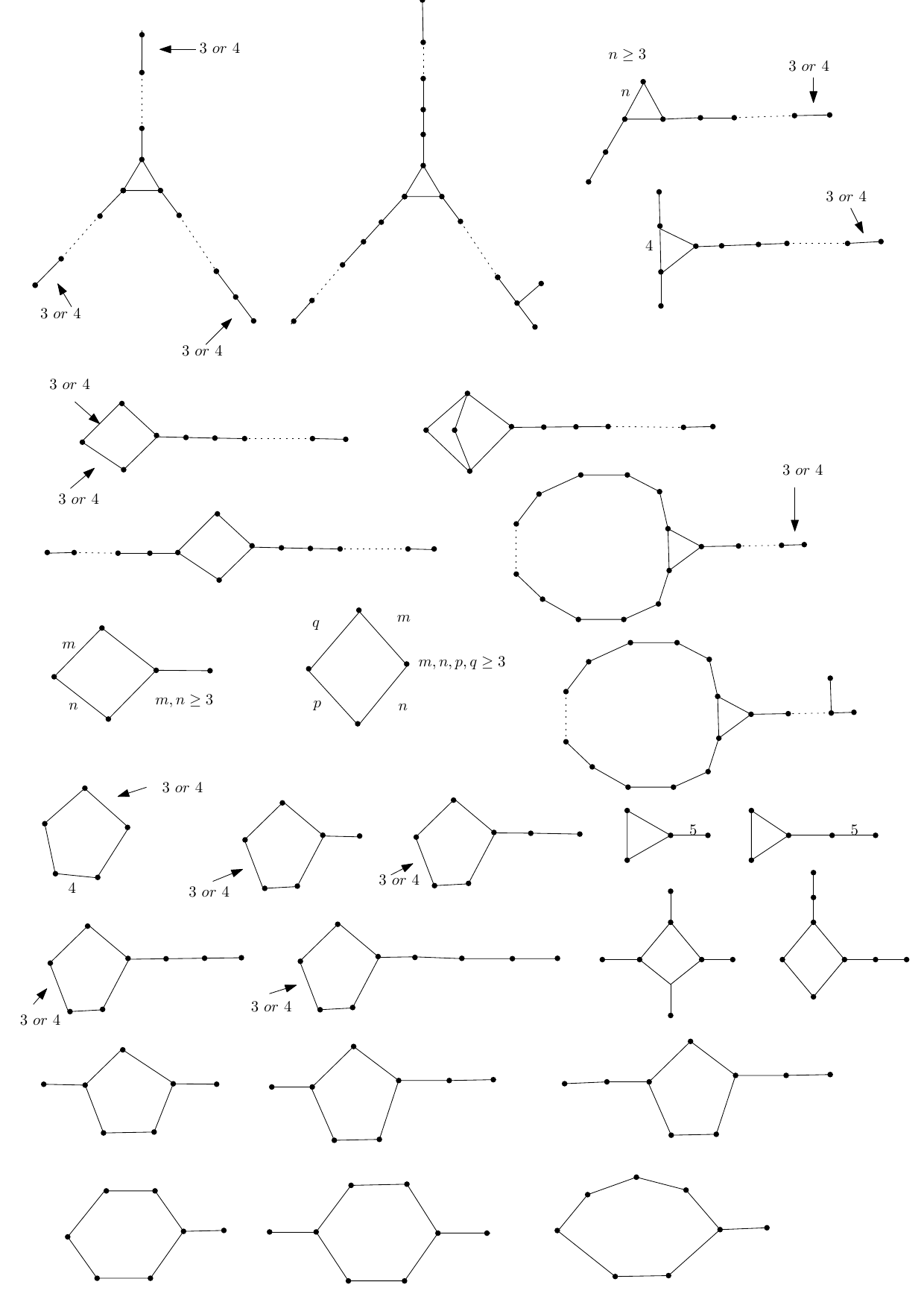}
	\caption{Some diagrams.}
	\label{fig:9}
\end{figure}

Finally, we summarize the collection of \emph{hyperbolic} Lann\'er or quasi-Lann\'er diagrams with known $K(\pi,1)$-conjecture as in Figure~\ref{fig:known}. The $K(\pi,1)$-conjecture for diagrams in the first three columns were known before from \cite{CharneyDavis} and \cite{charney2004deligne}. 
The remaining diagrams are new and they are consequences of Corollary~\ref{thm:main4}. 

%Besides the diagrams in Figure~\ref{fig:known}, there are four remaining hyperbolic Lann\'er or quasi-Lann\'er diagrams with unknown $K(\pi,1)$-conjecture that contains a cycle, see Figure~\ref{f:lanner}. Corollary~\ref{cor:lanner} gives a potential approach to these remaining cases. 

\begin{figure}[H]
	\centering
	\includegraphics[scale=0.8]{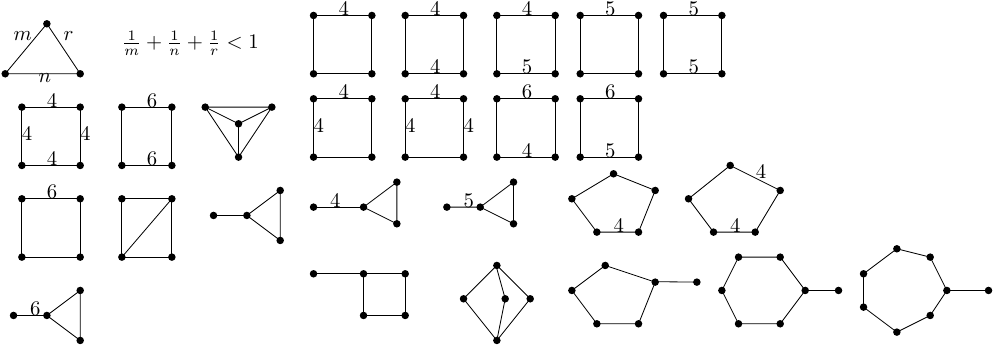}
	\caption{(quasi)-Lann\'er Diagrams with known $K(\pi,1)$-conjecture.}
	\label{fig:known}
\end{figure}

\paragraph{Remark on the hyperbolic cyclic case}
An Artin group is of \emph{hyperbolic cyclic} type if its Dynkin diagram is a circle, and the associated reflection group acts \emph{cocompactly} on $\mathbb H^n$. Their Dynkin diagrams are either $(m,n,r)$-triangles (whose $K(\pi,1)$-conjecture were proved in \cite{CharneyDavis}) or belong to the $6$ examples in Figure~\ref{fig:known} with edge labeling $(3,4,3,3),(3,4,3,4),(3,4,3,5)$, $(3,5,3,3)$, $(3,5,3,5),(3,3,3,3,4)$.
Theorem~\ref{thm:main4} implies that all hyperbolic cyclic type Artin groups satisfy the $K(\pi,1)$-conjecture. 
In another joint article with Thomas Haettel \cite{garside}, 
we provided an alternative proof to the $K(\pi,1)$-conjecture of hyperbolic cyclic type Artin groups. However, the objectives of \cite{garside} and this article are different: the article \cite{garside} gives a list of highly non-trivial geometric, algorithmic and topological properties for hyperbolic cyclic type Artin groups which are not  consequences of this article. On the other hand, this article treats many cases of $K(\pi,1)$-conjecture which are not consequences of \cite{garside}. This article also proves several group theoretic properties for hyperbolic cyclic type Artin groups, including the existence of parabolic closure, see Corollary~\ref{cor:group1}, which are not covered by \cite{garside}.
%
%We also obtain several non-trivial group theoretic properties of hyperbolic cyclic Artin groups, all of them are new for these six extra examples.
%
%\begin{cor}(=Corollary~\ref{cor:group1})
%	\label{cor:group}
%Suppose $A_S$ is an Artin group of hyperbolic cyclic type. Then
%\begin{enumerate}
%	\item the intersection of any parabolic subgroups of $A$ is a parabolic subgroup;
%	\item $A_S$ satisfies Properties $(\ast),(\ast\ast)$ and $(\ast\ast\ast)$ from \cite{godelle2007artin}, in particular, for any $X\subset S$, the commensurator of $A_X$ in $A_S$ coincides with the normalizer of $A_X$ in $A_S$, which is equal to $A_X$ times its quasi-center $QZ_{A_S}(X)$ (recall that $QZ_{A_S}(X)=\{g\in A_S\mid g\cdot X=X\}$);
%	\item for any group $G$ of symmetries of the Artin systems $A_S$, the fix point subgroup $A^G_S$ is isomorphic to an Artin group.
%\end{enumerate}
%\end{cor}

\subsection{Structure of the paper}
In Section~\ref{sec:proof} we discuss the proofs of some of the main theorems, via a new approach to the $K(\pi,1)$-conjecture.
In Sections~\ref{sec:pre1} and \ref{sec:pre2} we collect some preliminaries and in Section~\ref{sec:Davis} we prove some properties of the Davis complex and oriented Davis complex for later use. In Section~\ref{sec:relative} we introduce the notion of relative Artin complexes, and consider the two of the key properties on relative Artin complexes, namely the labeled 4-cycle condition and the bowtie free condition. In Section~\ref{subsec:homotopy}, we discuss homotopy types of Artin complexes.
In Section~\ref{sec:4-cycle} we prove Proposition~\ref{prop:labeled 4 wheel intro}. In Section~\ref{sec:tree} we prove Proposition~\ref{prop:labeled 4 wheel intro} and deduce Theorem~\ref{thm:main1} and Theorem~\ref{thm:main2}. In Section~\ref{sec:cycle} we introduce the notion of folded Artin complexes and prove Theorem~\ref{thm:single cycle} and Corollary~\ref{thm:main4}. A reader already with some experience with Artin groups and Coxeter groups might wish to start with Section~\ref{sec:relative}, and refer to previous sections when necessary.

\subsection{Acknowledgment}
We thank Mike Davis warmly for several long conversations on Coxeter groups acting on hyperbolic spaces and their Artin groups.
We thank Thomas Haettel warmly for several long discussions on finding an alternative algebraic proof of the bowtie free properties for the spherical Deligne complex of 4 strand braid group, without referring to curves in surfaces. We also thank Thomas Haettle for providing interesting comments on relative Artin complexes.
We thank Piotr Przytycki warmly for several inspiring  conversations on contractibility of Coxeter complexes and Artin complexes, and Helly graphs. We also thank Srivatsav Kunnawalkam Elayavalli, Camille Horbez, Damian Osajda for related discussions and comments. 

The author thanks the referee for a careful reading and very helpful comments.

The author thanks the Centre de Recherches Mathématiques at Montreal for hospitality. The author is partially supported by a Sloan fellowship.

\setcounter{tocdepth}{1}
\tableofcontents

\section{Discussion of proofs}
\label{sec:proof}
\subsection{An approach to the $K(\pi,1)$-conjecture based on relative Artin complexes}
\label{subsec:proof1}
\begin{definition}
	\label{def:Artin complex}
	The \emph{Artin complex} associated with an Artin group, introduced in \cite{CharneyDavis} and further studied in \cite{godelle2012k,cumplido2020parabolic}, is defined as follows. Let $\Gamma$ be a presentation graph with its collection of nodes $S$. For each $s\in S$, let $A_{\hat s}$ be the standard parabolic subgroup generated by $S\setminus\{s\}$. The \emph{Artin complex}, denoted by $\Delta_\Gamma$ (or $\Delta_S$), is the simplicial complex whose vertex set corresponds to left cosets of $\{A_{\hat s}\}_{s\in S}$. A collection of vertices span a simplex if the associated cosets have nonempty common intersection. By \cite[Proposition 4.5]{godelle2012k},  $\Delta_\Gamma$ is a flag complex. When $A_\Gamma$ is spherical, then $\Delta_\Gamma$ is also called the \emph{spherical Deligne complex}.
	
A vertex of $\Delta_S$ has \emph{type $\hat s=S\setminus \{s\}$}, if it corresponds to a coset of form $g A_{S\setminus \{s\}}$. The \emph{type} of each face of $\Delta_S$ is defined to be the subset of $S$ which is the intersection of the types of the vertices of the face. In particular, the type of each top-dimensional simplex is the empty set.
\end{definition}

%Artin complexes for Artin groups are analogue of a more classical object, called \emph{Coxeter complexes} for Coxeter groups. See Section~\ref{subsec:Coxeter}.  

When $A_S$ is spherical, the Artin complex $\Delta_S$ is a union of spheres called \emph{apartments}, such that each apartment is tiled by the fundamental domains of the action of the associated finite Coxeter group $W_S$ on the sphere. Hence $\Delta_S$ has a natural piecewise spherical metric, and the question of proving the $K(\pi,1)$-conjecture can be reduced to proving that, for each spherical Artin group $A_S$, $\Delta_S$ with such metric is $\CAT(1)$ \cite{CharneyDavis}. The latter question is rather challenging and widely open. Even the case when $A_S$ being the 4-strand braid group is already highly involved \cite{charney2004deligne}.

Instead, we explore the possibility that new cases of $K(\pi,1)$-conjecture can be proved by showing $\Delta_S$ satisfies certain combinatorial condition (i.e. labeled four cycle condition in Definition~\ref{def:labeled 4-cycle intro}) for each spherical $A_S$. The condition we consider will be
 reminiscence of the $\CAT(1)$ property, though they are easier to verify than the $\CAT(1)$ property. We will develop the framework of a new approach to $K(\pi,1)$-conjecture by exploiting combinatorial conditions on $\Delta_S$ for spherical $A_S$. The starting point is the following. 

%some combinatorial conditions do not fit immediately into the framework of \cite{CharneyDavis} to imply contractibility, they do fit into the framework as in Section~\ref{subsec:proof1}.

%Charney and Davis \cite{CharneyDavis} showed that the $K(\pi,1)$-conjecture reduces to the conjecture that for each spherical Artin group $A_S$, the 

\begin{thm}\cite[Theorem 3.1]{godelle2012k}
	\label{thm:combine}
	Suppose $S$ is not spherical. If the associated Artin complex $\Delta_\Gamma$ is contractible and each group in $\{A_{\hat s}\}_{s\in S}$ satisfies the $K(\pi,1)$-conjecture, then $A_S$ satisfies the $K(\pi,1)$-conjecture.
\end{thm}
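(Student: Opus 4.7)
The plan is to realize $A_S$ as the fundamental group of the Salvetti complex $\s(A_S)$ and to show, under the hypotheses, that its universal cover $\widetilde{\s(A_S)}$ is contractible, which is equivalent to the $K(\pi,1)$-conjecture. A preliminary step is to upgrade the hypothesis on the maximal standard parabolics $A_{\hat s}$ to a statement about all proper standard parabolics: if $A_{\hat s}$ satisfies $K(\pi,1)$ for every $s \in S$, then $A_T$ satisfies $K(\pi,1)$ for every $T \subsetneq S$. This is a standard fact, following from the retraction of $\s(A_{\hat s})$ onto the Salvetti subcomplex $\s(A_T)$ whenever $T \subset \hat s$ (and, correspondingly, the retraction of universal covers). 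So from now on every proper parabolic subcomplex is aspherical.

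Next I would construct a natural cover of $\widetilde{\s(A_S)}$ indexed by vertices of $\Delta_\Gamma$. Each vertex of $\Delta_\Gamma$ of type $\hat s$ is a left coset $gA_{\hat s}$, and to it I associate the translate $g\cdot \widetilde{\s(A_{\hat s})} \subset \widetilde{\s(A_S)}$. The essential structural claim is that, given a finite collection of vertices of $\Delta_\Gamma$, the intersection of the corresponding subcomplexes is nonempty iff the vertices span a simplex $\sigma$ of $\Delta_\Gamma$, and in that case this intersection is a translate of $\widetilde{\s(A_T)}$, where $T$ is the type of $\sigma$; in particular it is contractible by the preliminary step. Thus the nerve of this cover is canonically identified with $\Delta_\Gamma$. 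Then one invokes a nerve-type theorem (Leray's nerve lemma for closed covers of CW complexes, equivalently the spectral sequence of the cover, or the homotopy colimit formalism): when every piece and every nonempty intersection is contractible, the inclusion of the nerve into the total space is a homotopy equivalence. Since $\Delta_\Gamma$ is contractible by hypothesis, so is $\widetilde{\s(A_S)}$, yielding $K(\pi,1)$ for $A_S$.

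The main obstacle is the second step, namely verifying that finite intersections of these translates are again translates of parabolic Salvetti universal covers, with the combinatorics matching the simplicial structure of $\Delta_\Gamma$. This is where the definition of $\Delta_\Gamma$ via cosets of $\{A_{\hat s}\}$ must be reconciled with the cube-complex structure of $\s(A_S)$ and its parabolic subcomplexes; in particular one needs the coset intersection property $\bigcap_i gA_{\hat{s_i}} = g\,A_T$ for $T = \bigcap_i \hat{s_i}$, whose faithful lift to the universal cover requires control of the inclusions $\widetilde{\s(A_T)} \hookrightarrow \widetilde{\s(A_S)}$. A clean way to package everything is via developable simple complexes of groups: the action $A_S \act \Delta_\Gamma$ is the development of the simple complex of groups on the standard simplex with local groups $A_T$, and the classifying space of this complex of groups is $\s(A_S)$; the contractibility of the development is then precisely the input that makes the spectral-sequence argument collapse to the desired conclusion.
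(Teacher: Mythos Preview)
Your approach is correct and is essentially the proof of the Godelle--Paris theorem that the paper simply cites. The paper's ``proof'' just observes that \cite[Corollary 2.4]{godelle2012k} upgrades the $K(\pi,1)$ hypothesis from maximal to all proper standard parabolics (your preliminary step), and then invokes \cite[Theorem 3.1]{godelle2012k} directly, noting that the complex $\Phi(\Gamma,\mathcal S)$ appearing there is the barycentric subdivision of $\Delta_\Gamma$. Your nerve/complex-of-groups argument is exactly the content of that cited theorem.

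One point you leave implicit deserves to be said: the union of the translates $g\cdot\widetilde{\s(A_{\hat s})}$ must actually cover $\widetilde{\s(A_S)}$. This is precisely where the hypothesis that $S$ is not spherical enters: cells of $\s(A_S)$ are indexed by spherical subsets $T\subset S$, and since $S$ itself is not spherical every such $T$ is proper, hence contained in some $\hat s$. Without this the nerve argument would only compute the homotopy type of the union of the subcomplexes, not of the whole universal cover.
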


\begin{proof}
	Let $\mathcal S$ be the collection of all proper standard parabolic subgroup of $A_\Gamma$. Then the assumption of the theorem and \cite[Corollary 2.4]{godelle2012k} imply that $\mathcal S$ is complete and $K(\pi,1)$ in the sense of explained before \cite[Theorem 3.1]{godelle2012k}. In this case the complex $\Phi(\Gamma,\mathcal S)$ in \cite[Theorem 3.1]{godelle2012k} is isomorphic to the barycentric subdivision of $\Delta_\Gamma$. Thus we are done. 
\end{proof}

If a vertex $v\in \Delta_\Gamma$ of type $\hat s$ satisfies that $\lk(v,\Delta_\Gamma)$ is contractible, then the link of each vertex of type $\hat s$ is contractible. So $\Delta_\Gamma$ can deformation retracts onto its subcomplex spanned by vertices whose type is not $\hat s$ (cf. Lemma~\ref{lem:dr}). The first step in our strategy is to keep preforming such kind of deformation retraction to reduce the dimension of the complex, until one reaches a ``core'' where such deformation retraction is not possible, which motivates the following:

\begin{definition}
	\label{def:rel}
	Let $A_S$ be an Artin group with presentation graph $\Gamma$ and Dynkin diagram $\Lambda$. Let $S'\subset S$. The \emph{$(S,S')$-relative Artin complex $\Delta_{S,S'}$} is defined to be the induced subcomplex of the Artin complex $\Delta_S$ of $A_S$ spanned by vertices of type $\hat s$ with $s\in S'$. In other words, vertices of $\Delta_{S,S'}$ correspond to left cosets of $\{A_{\hat s}\}_{s\in S'}$, and a collection of vertices span a simplex if the associated cosets have nonempty common intersection.
	
	Let $\Gamma'$ and $\Lambda'$ be the induced subgraphs of $\Gamma$ and $\Lambda$ spanned by $S'$. Then we will also refer a $(S,S')$-relative Artin complex as $(\Gamma,\Gamma')$-relative Artin complex or $(\Lambda,\Lambda')$-relative Artin complex, and denote it by $\Delta_{\Gamma,\Gamma'}$ or $\Delta_{\Lambda,\Lambda'}$.
\end{definition}

As we will see later (Lemma~\ref{lem:link}), if a vertex $x\in \Delta_{S,S'}$ is of type $\hat s$ for a node $s\in S'$, then $\lk(x,\Delta_{S,S'})$ is a copy of $\Delta_{S\setminus\{s\},S'\setminus\{s\}}$. So the discussion before Definition~\ref{def:rel} implies that if $\Delta_{S\setminus\{s\},S'\setminus\{s\}}$ is contractible, then $\Delta_{S,S'}$ deformation retracts onto $\Delta_{S,S'\setminus\{s\}}$. In other words, we know $\Delta_{S,S'}$ is contractible, if $\Delta_{S\setminus\{s\},S'\setminus\{s\}}$ and $\Delta_{S,S'\setminus\{s\}}$ are contractible. This implies the contractibility of $\Delta_{S,S'}$ reduces to contractibility of relative Artin complexes whose $S$ or $S'$ has smaller cardinality. Thus, in order to show $\Delta_S=\Delta_{S,S}$ is contractible, we could repeat this procedure to show $\Delta_{S,S}$ is homotopic to $\Delta_{S,S'}$ for $S'$ being as small as possible. At some point, we might reach a $\Delta_{S,S'}$ such that no vertex links are contractible, this is called a \emph{core} of $\Delta_S$. 

Recall that
an Artin group $A_S$ is \emph{almost spherical} if its presentation graph is complete and not spherical, but for each $s\in S$, $A_{\hat s}$ is spherical. A subset $T\subset S$ is \emph{almost spherical} if $A_T$ is almost spherical. Almost spherical Artin groups are classified: they are either affine, or one of the hyperbolic Lann\'er types. If $A_S$ is almost spherical, then the link of each vertex in $\Delta_S$ is a spherical Deligne complex, which is not contractible \cite{deligne}. More generally, we expect the links of vertices in $\Delta_{S,S'}$ to be not contractible whenever $S'$ is almost spherical. These relative Artin complexes serve as our ``cores''. 
We conjecture that the cores are contractible.

\begin{conj}
	\label{conj:contractible}
	Suppose $S'\subset S$ and $S'$ is almost spherical (but not spherical). Suppose the Dynkin diagrams for $S'$ and $S$ are connected and do not have edges labeled by $\infty$.
	Then $\Delta_{S,S'}$ is contractible.
\end{conj}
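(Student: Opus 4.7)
The plan is to proceed by induction on $|S \setminus S'|$. The base case $S = S'$ asserts that $\Delta_{S'}$ is contractible whenever $A_{S'}$ is almost spherical. By Theorem~\ref{thm:combine} applied to $A_{S'}$, this is essentially equivalent to the $K(\pi,1)$-conjecture for almost spherical $A_{S'}$, since spherical Artin groups are $K(\pi,1)$ by Deligne. For affine type this is known by Paolini--Salvetti; the hyperbolic Lann\'er case requires a direct combinatorial argument, which the later sections of the paper develop through conditions such as the labeled 4-wheel and bowtie-free conditions on spherical Deligne complexes.

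For the inductive step with $|S \setminus S'| > 0$, the central structural observation is that the link of any vertex $v = gA_{\hat s} \in \Delta_{S,S'}$ (with $s \in S'$) is isomorphic to the relative Artin complex $\Delta_{\hat s, S' \setminus \{s\}}$ associated to the parabolic subgroup $A_{\hat s}$. Using the coset-intersection formulas $A_{\hat s} \cap A_{\hat t} = A_{S \setminus \{s,t\}}$ (known in the relevant settings by work on parabolic subgroups of Artin groups), vertices adjacent to $v$ of type $\hat t$, for $t \in S'$ with $t \ne s$, correspond to cosets of $A_{S \setminus \{s,t\}}$ in $A_{\hat s}$. Since $S' \setminus \{s\}$ is spherical by the almost-spherical hypothesis on $S'$, this link is a relative Artin complex in the ``spherical-inner'' regime, whose analysis reduces to combinatorial properties of the spherical Deligne complex $\Delta_{S'}$.

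If all such links can be shown contractible, one can deformation retract $\Delta_{S,S'}$ onto the subcomplex spanned by vertices whose type avoids a distinguished $\hat s_0$ (cf.\ Lemma~\ref{lem:dr}); iterating this reduction eventually brings $\Delta_{S,S'}$ down to a copy of $\Delta_{S',S'}$, which is contractible by the base case. An alternative tactic is a Leray-nerve argument: one covers $\Delta_{S,S'}$ by the $A_S$-translates of the subcomplex $\Delta_{S',S'}$ (a well-defined embedding thanks to $A_{S'} \cap A_{S \setminus \{s\}} = A_{S' \setminus \{s\}}$), each translate being contractible by the base case, and the nerve, governed by the double coset structure $A_{S'} \backslash A_S / A_{S'}$, should ultimately reduce to a relative Coxeter complex, which is known to be contractible by Davis.

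The main obstacle is two-fold. First, the base case is open outside the affine setting: proving contractibility of $\Delta_{S'}$ for almost spherical $A_{S'}$ is precisely the $K(\pi,1)$-conjecture in this regime. Second, even granting the base case, the inductive step requires contractibility of relative Artin complexes $\Delta_{\hat s, S' \setminus \{s\}}$ with spherical inner set, which appears to demand the combinatorial conditions (labeled 4-wheel, bowtie-free, folded Artin complexes) developed later in the paper. For this reason, rather than attacking the conjecture head-on, the paper verifies these conditions in specific geometric settings (tree Dynkin diagrams and Dynkin diagrams with cyclic core) and thereby obtains its main theorems as special cases of the conjecture.
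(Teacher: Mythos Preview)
This statement is a \emph{conjecture}; the paper does not prove it and offers no proof to compare against. You correctly recognize this at the end of your write-up. However, the inductive scheme you sketch before that is not merely incomplete---it is misdirected, and this is worth flagging.

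The deformation retraction of Lemma~\ref{lem:dr} removes a vertex type and \emph{shrinks} the inner set: from $\Delta_{S,S''}$ one passes to $\Delta_{S,S''\setminus\{s\}}$. Starting from $\Delta_{S,S'}$ and removing types $\hat s_0$ with $s_0\in S'$ therefore moves you toward $\Delta_{S,\emptyset}$, not toward $\Delta_{S',S'}$; there is no way to ``bring $\Delta_{S,S'}$ down to a copy of $\Delta_{S',S'}$'' by this mechanism. Relatedly, the links you compute, $\Delta_{\hat s,\,S'\setminus\{s\}}$ for $s\in S'$, have \emph{spherical} inner set $S'\setminus\{s\}$, so they lie outside the conjecture's hypotheses, and moreover $|\hat s|-|S'\setminus\{s\}|=|S|-|S'|$, so your induction parameter does not decrease.

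The paper's actual use of the conjecture runs in the opposite direction (see the proof of Proposition~\ref{prop:contractible} and Corollary~\ref{cor:contractible1}): one takes the conjecture as \emph{input}, starts from the full Artin complex $\Delta_S=\Delta_{S,S}$, and retracts down to $\Delta_{S,S'}$ by successively removing vertex types $\hat s$ with $s\in S\setminus S'$. The relevant links here are $\Delta_{\Lambda_s,\Lambda''}$ where $\Lambda_s$ is a component of $\Lambda\setminus\{s\}$ still containing the almost spherical core, so one stays within the conjecture's regime with strictly fewer ambient vertices. Thus the conjecture is the base of the argument, not its conclusion.
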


In fact, we conjecture that $\Delta_{S,S'}$ is contractible whenever $S'$ is not spherical.

Using the iterated deformation retraction argument as above, we show in Corollary~\ref{cor:contractible1} that 
if Conjecture~\ref{conj:contractible} is true, then the $K(\pi,1)$ holds true for any Artin groups. This strategy can also be adjusted if one only aims at showing that a subclass of Artin groups satisfy the $K(\pi,1)$-conjecture, see Proposition~\ref{prop:contractible} and Corollary~\ref{cor:contractible}.

\subsection{Contractibility of core}
\label{subsec:core}
The main challenge is to show $\Delta_{S,S'}$ is contractible. 
We believe $\Delta_{S,S'}$ in Conjecture~\ref{conj:contractible} can be metrized as non-positively curvature metric space in an appropriate sense, which would imply contractibility. There is a natural metric on $\Delta_{S,S'}$ defined as follows. The Coxeter group $W_{S'}$ associated with $A_{S'}$ has a natural action on either $\mathbb E^n$ or $\mathbb H^n$. 
We metrize $\Delta_{S,S'}$ such that each maximal simplex has the shape of the fundamental domain of this action of $W_{S'}$ and conjecture the resulting space is either $\CAT(0)$ or $\CAT(-1)$. However, the feasibility of this plan is not clear as verifying $\CAT(0)$ in high dimension situation is notoriously challenging. Instead we explore alternative ways of metrizing $\Delta_{S,S'}$, and alternative notions of non-positive curvature.

We say that an Artin group $A_\Gamma$ \emph{dominates} another Artin group $A_{\Gamma'}$ if there exists an isomorphism $f:\Gamma\to\Gamma'$ such that the label of $e$ is $\ge$ the label of $f(e)$ for each edge $e$ of $\Gamma$. An almost spherical Artin group is \emph{tight}, if it cannot dominate other almost spherical Artin groups. The list of tight almost spherical Artin groups are much smaller - they are either of affine type, or they belong to two extra examples, associated with reflection groups acting on $\mathbb H^3$ and $\mathbb H^4$. Now for the complex $\Delta_{S,S'}$ as above with $A_{S'}$ almost spherical, we find a tight almost spherical Artin group $A_{T'}$ which is dominated by $A_{S'}$. Then we metrize $\Delta_{S,S'}$ such that each maximal simplex has the ``shape'' of the fundamental domain $F$ of the action of $W_{T'}$ rather than $W_{S'}$. 

There is a natural ``shape'' of $F$ -- it is a convex simplex in $\mathbb E^n$, $\mathbb H^3$ or $\mathbb H^4$, hence inherits an metric from the ambient metric space. However, instead of using this canonical shape, there are other types of metric or combinatorial structure on $F$, which allows us to arranging different notions of non-positive curvature condition on $\Delta_{S,S'}$, more precisely metric with convex geodesic bicombing \cite{descombes2015convex} and weakly modular graphs \cite{chepoi1989classification,bandelt1996helly}. Some of these notions have the advantage of being easier to verify than $\CAT(0)$, especially in high dimensions. To show $\Delta_{S,S'}$ has some form of non-positive curvature property, one usually has to check certain conditions on the link of vertices. This is the heart of the matter. 

We now discuss a main obstacle of checking such link conditions through an example. Let $\Lambda'$ be a triangle subgraph in a bigger Dynkin diagram $\Lambda$. Then $\Delta_{\Lambda,\Lambda'}$ is a 2-dimensional complex made of triangles, and the above discussion leads us to metrize each triangle as flat equilateral triangles (despite that the label of edges of $\Lambda'$ could be $>3$) and prove $\Delta_{\Lambda,\Lambda'}$ is $\CAT(0)$\footnote{For contractibility of 2-dimensional core, we still use $\CAT(0)$ metric, as the difficulty of checking $\CAT(0)$ lies in dimension $\ge 3$.}, which amounts to prove that girth of the link of each vertex is $\ge 6$. 
 This deceptively sounds like a problem concerning 2-dimensional complexes, but it is not of a 2-dimensional nature. Showing such link condition is equivalent to proving a property regarding commutation of a pair of elements of certain form in the Artin group $A_\Lambda$ (Lemma~\ref{lem:triming}), which could be of arbitrarily high dimension. However, we do not even know how to solve the word problem of $A_\Lambda$.
\medskip

\noindent
\underline{An induction scheme for checking link conditions:}
To resolve this issue, note that the link of a vertex in the relative Artin complex $\Delta_{\Lambda,\Lambda'}$ is again a relative Artin complex of form $\Delta_{\Lambda_1,\Lambda'_1}$, where $\Lambda_1\subsetneq \Lambda$. If $\Lambda_1$ is spherical, then we are reduced to checking a particular property of spherical Artin groups. If $\Lambda_1$ is not spherical, then we are forced to check a property of a non-spherical Artin group $A_{\Lambda_1}$, which a priori we know nothing about.
The plan is to find a core subgraph $\Lambda''_1$ such that $\Delta_{\Lambda_1,\Lambda''_1}$ is non-positively curved, and use the non-positive curvature geometry of $\Delta_{\Lambda_1,\Lambda''_1}$ to study $\Delta_{\Lambda_1,\Lambda'_1}$, finally justifying the desired link condition. Note that a priori $\Lambda''_1$ and $\Lambda'_1$ might not be related, however, $A_\Gamma$ acts on both $\Delta_{\Lambda_1,\Lambda''_1}$ and $\Delta_{\Lambda_1,\Lambda'_1}$. In principle, checking a link condition on $\Delta_{\Lambda_1,\Lambda'_1}$ can be converted to a group theoretical property of $A_{\Lambda_1}$. This property can be understood by the action of $A_{\Lambda_1}$ on the non-positively curved space $\Delta_{\Lambda,\Lambda''_1}$, using geometry.
However, to justify $\Delta_{\Lambda_1,\Lambda''_1}$ is non-positively curved, we need to justify an appropriate link condition again, and we will repeat the same procedure as before. Note that $\Lambda_1$ has fewer vertices compared to $\Lambda$, and the complexity of the problem goes down after each iteration. This allows us to use an induction process to eventually reduce to a combinatorial condition on the Artin complexes $\Delta_S$ for $S$ spherical. 

Reversing the procedure, if we are able to identifying combinatorial conditions on $\Delta_S$ (with $S$ spherical) that are compatible with both the induction procedure and the link conditions of the form of non-positive curvature on certain types of core we chose to work with, then we will be able to prove new cases of $K(\pi,1)$-conjecture for Artin groups with certain types of cores. In this article, we show a simple combinatorial property about 4-cycles studied in metric graph theory, discussed in the next subsection, fits well into this procedure. More generally, we speculate that proving the full $K(\pi,1)$-conjecture might be reduced identifying and proving a strong enough combinatorial condition on $\Delta_S$ (with $S$ spherical) that is compatible with the induction and the non-positive geometry of all kinds of cores.

If the Dynkin diagram has extra symmetry, then there is a symmetrized version of relative Artin complexes, which is called folded Artin complexes, discussed in Section~\ref{sec:cycle}, and the above strategy can be symmetrized as well so to allow simplification.

%In this article, we show a very simple combinatorial property about 4-cycles studied in metric graph theory is compatible with the procedure described above. Moreover, we prove this property holds true for any $\Delta_S$ with $S$ spherical.

\subsection{Connection to metric graph theory}
\label{subsec:proof2}
Given a simplicial graph $\Gamma$, an \emph{induced} 4-cycle in $\Gamma$ is a $4$-cycle such that opposite vertices in the $4$-cycle are not adjacent in $\Gamma$. We recall that the following condition studied in \cite{chalopin2020weakly}, where it was called $(C_4,W_4)$-condition.

\begin{definition}
	\label{def:4wheel}
A simplicial graph $X$ satisfies the \emph{4-wheel} condition if for any induced 4-cycle in $X$, there exists a vertex $x\in X$ which is adjacent to each of the vertices in this 4-cycle. 
\end{definition}

This definition is connected to the $\CAT(1)$ property in the following way. Recall that a metric space is $\CAT(1)$ if and only if any loop of length $<2\pi$ admits a length non-increasing homotopy to the trivial loop \cite{bowditch1995notes}. If $X$ is the 1-skeleton of an Artin complex $\Delta_S$ such that $A_S$ is spherical and $\Delta_S$ is endowed with its canonical piecewise spherical metric \cite{CharneyDavis}, then $X$ satisfying $4$-wheel condition implies that any 4-cycle in $\Delta_S$ admits a length non-increasing homotopy. Indeed, the 4-wheel condition implies that any embedded 4-cycle in $\Delta_S$ can be filled by a combinatorial disk in the 2-skeleton, such that this disk either is made of two triangles (when the 4-cycle has two opposite vertices being adjacent in $\Delta_S$), or is made of four triangles with a central vertex in the interior of the disk. This combinatorial disk gives the desired length non-increasing homotopy. Thus Definition~\ref{def:4wheel} can be viewed a combinatorial version of $\CAT(1)$, where we only place conditions on loops in the 1-skeleton, and instead of requiring  filling disks for these loops to trace out a length decreasing homotopy, we only need  combinatorial filling disks in the 2-skeleton with a particular combinatorial feature.

In the metric setting of piecewise spherical complexes, we assign the simplices with certain spherical shapes and the edge lengths are usually different. However, this information is lost if we consider the graph which is the $1$-skeleton of $\Delta_S$. To maintain some information on the ``shape'' of simplices in a combinatorial fashion, we make use of the vertex labeling and consider the following variation of Definition~\ref{def:4wheel}.

\begin{definition}
	\label{def:labeled 4-cycle intro}
Let $\Lambda$ be a Dynkin diagram which is a tree, with its collection of nodes $S$. Let $X$ be the 1-skeleton of $\Delta_S$, such that (as explained above) each vertex is labeled by its type, $\hat s$, with  $s$ being a node of $\Lambda$.
We say $\Delta_S$ satisfies the \emph{labeled 4-cycle condition} if for any induced 4-cycle in $X$ with consecutive vertices being $\{x_i\}_{i=1}^4$, the following conditions are satisfied:
\begin{enumerate}
	\item there exists a vertex $x\in X$ adjacent to each of $x_i$;
	\item if $x_i$ is of type $\hat s_i$ for node $s_i\in \Lambda$ for $1\le i\le 4$, then the vertex $x\in X$ in the previous item can be chosen so it satisfies, in addition, that $x$ is of type $\hat s$ for node $s\in \Lambda'$ such that the node $s$ is contained the smallest subtree of $\Lambda'$ containing all the nodes $\{s_i\}_{i=1}^4$.
\end{enumerate}	
\end{definition}
We refer to Figure~\ref{fig:l4cycle} after Definition~\ref{def:labeled 4-cycle} for a pictorial illustration of this definition.

In the text we discuss two ways to connect Definition~\ref{def:labeled 4-cycle intro} to the $K(\pi,1)$-conjecture.  The first method, leading to Theorem~\ref{thm:main1} and Theorem~\ref{thm:main2}, relying on the induction procedure in Section~\ref{subsec:core}, which gives the following result.

%The induction procedure replies on an interesting connection between labeled 4-cycle condition and the intersection pattern of certain convex subcomplexes in non-positively curved spaces

\begin{prop}(=Proposition~\ref{prop:tree contractible})
	\label{prop:tree contractible intro}
	Suppose $\Lambda$ is a tree Dynkin diagram. Suppose there exists a nonempty collection $E$ of open edges with label $\ge 6$ such that for each component $\Lambda'$ of $\Lambda\setminus E$ the Artin complex $\Delta_{\Lambda'}$ satisfies the labeled 4-cycle condition. If each component of $\Lambda\setminus E$ satisfies the $K(\pi,1)$-conjecture, then $A_\Lambda$ satisfies the $K(\pi,1)$-conjecture.
\end{prop}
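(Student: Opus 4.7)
My plan is to proceed by strong induction on $|V(\Lambda)|$, applying Theorem~\ref{thm:combine} at each step. Since $\Lambda$ contains an $E$-edge of label $\ge 6$, $\Lambda$ is necessarily non-spherical, so Theorem~\ref{thm:combine} reduces the proposition to two tasks: (a) each proper standard parabolic $A_{\hat s}$ satisfies the $K(\pi,1)$-conjecture, and (b) $\Delta_\Lambda$ is contractible.

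For (a), observe that $\Lambda\setminus\{s\}$ is a disjoint union of subtrees $\Lambda^{(1)},\ldots,\Lambda^{(k)}$, so $A_{\hat s}\cong\prod_i A_{\Lambda^{(i)}}$. Each $\Lambda^{(i)}$ with inherited edge set $E^{(i)}=E\cap E(\Lambda^{(i)})$ either lies inside a single component of $\Lambda\setminus E$ (so $K(\pi,1)$ is a direct hypothesis) or satisfies the hypotheses of the proposition with strictly fewer vertices, provided one establishes as preliminary lemmas that both $K(\pi,1)$ and the labeled 4-wheel condition descend to parabolic subdiagrams of the components. In either case the induction hypothesis applies, and since $K(\pi,1)$ is closed under direct products we conclude $A_{\hat s}$ satisfies $K(\pi,1)$.

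The main task is (b), for which I would strengthen the induction to prove simultaneously that $\Delta_\Lambda$ is contractible whenever $\Lambda$ is non-spherical and satisfies the stated hypotheses. The idea is the iterated deformation retraction strategy of Section~\ref{subsec:core}: the link in $\Delta_\Lambda$ of any vertex of type $\hat s$ is canonically isomorphic to $\Delta_{\hat s}$, which for a non-leaf $s$ is the simplicial join of the Artin complexes of the subtrees $\Lambda^{(i)}\subset\Lambda\setminus\{s\}$. Whenever at least one such $\Lambda^{(i)}$ is non-spherical (and still meets the hypotheses of the proposition) the strengthened induction makes $\Delta_{\Lambda^{(i)}}$ contractible, hence the whole join is contractible, and Lemma~\ref{lem:dr} permits deformation retracting $\Delta_\Lambda$ onto the subcomplex spanned by the remaining vertex types. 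Iterating in a judicious order reduces $\Delta_\Lambda$ to the relative Artin complex $\Delta_{\Lambda,\{s_1,s_2\}}$, where $e=\{s_1,s_2\}$ is a fixed edge of $E$ of label $m\ge 6$; since only two types survive, this core is a bipartite graph.

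The crux of the argument is then to show that this bipartite core is a tree, hence contractible. My plan is a shortest-cycle argument: any closed bipartite cycle in $\Delta_{\Lambda,\{s_1,s_2\}}$ would, when lifted via appropriate coset representatives into the spherical Artin complexes $\Delta_{\Lambda'}$ of the components $\Lambda'$ of $\Lambda\setminus E$ adjacent to $e$, give rise to induced 4-cycles inside those $\Delta_{\Lambda'}$. The labeled 4-wheel condition applied inside each $\Delta_{\Lambda'}$ then forces the existence of a filling vertex whose label must lie in the minimal subtree of $\Lambda'$ spanning the labels of the 4-cycle, and the bound $m\ge 6$ on the label of $e$ supplies the numerical obstruction that makes this forced label configuration incompatible with the purported cycle, yielding the contradiction. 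The main obstacle I anticipate is exactly this final combinatorial step: setting up the correspondence between cycles in the one-dimensional core and induced 4-cycles in the component Artin complexes requires careful bookkeeping of coset representatives across $E$-edges, and exploiting the $\ge 6$ rigidity to close the argument is the technical heart of the proof. This is also precisely the point at which the hypothesis $m\ge 6$ is indispensable, since smaller labels would admit short configurations that survive the 4-wheel reduction.
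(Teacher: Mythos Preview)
Your overall inductive framework via Theorem~\ref{thm:combine} is correct, and part~(a) is fine. The gap is in part~(b): the one-dimensional core $\Delta_{\Lambda,\{s_1,s_2\}}$ is \emph{not} a tree, so the shortest-cycle argument you sketch cannot succeed. Indeed, by Lemma~\ref{lem:embedding} the spherical Deligne complex $\Delta_e$ of the dihedral Artin group $A_e$ (with $e=\overline{s_1s_2}$ labeled $m\ge 6$) embeds into $\Delta_{\Lambda,\{s_1,s_2\}}$, and $\Delta_e$ contains the Coxeter $2m$-gon as an apartment. So the core has cycles of length exactly $2m$; the best one can hope for is girth $\ge 2m$ (this is what the paper calls \emph{$m$-solidity}), not acyclicity. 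For the same reason the deformation retraction itself breaks down before reaching two types: once you are at three types $\{s_1,s_2,s_3\}$ and try to delete $s_3$, the link of a type-$\hat s_3$ vertex is a relative complex $\Delta_{\Lambda',\{s_1,s_2\}}$ with $\Lambda'$ containing $e$, which again has $2m$-cycles and is not contractible. Note also that Lemma~\ref{lem:relative sc} needs at least three types for simple-connectedness, another indication that a two-type core is too small.

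The paper's fix is to stop the retraction one step earlier, at a \emph{two-dimensional} core $\Delta_{\Lambda,e_1\cup e_2}$ with $e_1\in E$ and $e_2$ an adjacent edge (three vertex types). This is formalized by taking $\mathcal C_1$ in Proposition~\ref{prop:contractible} to be the class of two-edge paths with one edge labeled $\ge 6$. Contractibility of this core (Proposition~\ref{prop:tree}) is proved by metrizing each triangle as a flat $(\pi/2,\pi/3,\pi/6)$-triangle and verifying the $\CAT(0)$ link condition: at the vertex opposite $e_1$ the angle is $\pi/6$, so one needs the link (a copy of $\Delta_{\Lambda',e_1}$) to have girth $\ge 12$, i.e.\ $e_1$ to be $6$-solid. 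This $6$-solidity is obtained inductively from the labeled 4-wheel condition on the components via Lemmas~\ref{lem:enlarge1}--\ref{lem:enlarge3}. So the bound $m\ge 6$ enters not as an obstruction to cycles in a bipartite graph, but as exactly the girth needed for the $\CAT(0)$ angle $\pi/6$.
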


The proof of Proposition~\ref{prop:tree contractible intro} uses the induction scheme in Section~\ref{subsec:core}. We refer to Section~\ref{subsec:examples} for an informal discussion on how the induction works on two concrete examples.

Now we explain how Proposition~\ref{prop:tree contractible intro} can be used to prove Theorem~\ref{thm:main1}. Let $\Lambda$ be as Theorem~\ref{thm:main1}. Since $\Lambda$ is a tree, $\Lambda\setminus E$ decomposes into a union of components each of which is a tree. As each such component $\Lambda'$ is either spherical or locally reducible, the $K(\pi,1)$-conjecture of the associated Artin group is already known (\cite{deligne}, \cite{CharneyDavis}). By Proposition~\ref{prop:tree contractible intro}, to prove the $K(\pi,1)$-conjecture for $A_\Lambda$, it remains to show that whenever $\Lambda'$ is a tree Dynkin diagram such that the corresponding Artin group is spherical or locally reducible, then $\Delta_{\Lambda'}$ satisfies the labeled 4-cycle condition. This is exactly the content of the next result.

\begin{prop}(=Corollary~\ref{cor:wheel} and Corollary~\ref{cor:locally reducible})
	\label{prop:labeled 4 wheel intro}
Suppose $\Lambda$ is a tree which is either spherical or locally reducible. Then $\Delta_S$ satisfies the labeled 4-cycle condition.
\end{prop}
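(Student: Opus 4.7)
The plan is to handle the two cases separately: $\Lambda$ a spherical tree (giving Corollary~\ref{cor:wheel}) and $\Lambda$ a locally reducible tree (giving Corollary~\ref{cor:locally reducible}). In both cases, I fix an arbitrary induced $4$-cycle $x_1, x_2, x_3, x_4$ in the $1$-skeleton of $\Delta_S$ with vertex types $\hat s_1, \hat s_2, \hat s_3, \hat s_4$, and aim to construct a fifth vertex $x$ adjacent to all four whose type $\hat s$ satisfies that $s$ lies in the smallest subtree $T \subseteq \Lambda$ containing $\{s_1, s_2, s_3, s_4\}$. After normalizing by the $A_S$-action on vertices of a fixed type, I may take $x_1 = A_{\hat s_1}$; the consecutive adjacencies then translate to the algebraic conditions $g_i^{-1} g_{i+1} \in A_{\hat s_i} A_{\hat s_{i+1}}$ on the representative elements, while the diagonal non-adjacencies exclude $g_i^{-1} g_{i+2} \in A_{\hat s_i} A_{\hat s_{i+2}}$. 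This sets up the problem as one of constructing a common factor in four double cosets simultaneously, with the extra constraint on where its defining generator lies in the tree $\Lambda$.

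For the locally reducible case, the hypothesis that no rank $\ge 3$ subdiagram of $\Lambda$ is spherical forces intersections of three or more maximal standard parabolics to behave like those in a $2$-dimensional complex: the local picture of $\Delta_S$ around the $4$-cycle lies essentially in a relative Artin complex $\Delta_{S,T \cap S}$ of explicit $2$-dimensional type. Since $T$ is a tree containing the four vertices $s_1,\ldots,s_4$, either $T$ contains a vertex of degree $\ge 3$ which naturally serves as the candidate $s$, or $T$ is a path and the candidate $s$ is a suitable interior vertex. Verifying adjacency of $g A_{\hat s}$ with each $g_i A_{\hat s_i}$ then reduces to a commutation statement between rank-$2$ standard parabolics $A_{\{s, s_i\}}$, which in turn follows from the locally reducible hypothesis because no larger spherical triple is present to obstruct it.

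For the spherical case, the plan is to exploit the Garside structure on the spherical Artin group $A_S$. Each $g_i$ admits a positive Garside normal form, and the adjacency conditions $g_i^{-1} g_{i+1} \in A_{\hat s_i} A_{\hat s_{i+1}}$ let one peel common left factors. Iterating this around the $4$-cycle and exploiting the tree shape of $\Lambda$ (which forbids "shortcut" relations in the Dynkin diagram) should yield a common prefix $g$ and a choice of $s$ in the minimal subtree $T$ such that $g A_{\hat s}$ meets each $g_i A_{\hat s_i}$. I would proceed by induction on $|T|$, with base case $|T| \le 2$ where $A_T$ is dihedral and $\Delta_S$ restricted to the two relevant types is explicitly a bipartite graph in which the condition is trivial, and with the inductive step using that the link of a vertex of type $\hat s$ in $\Delta_S$ is itself a relative Artin complex of smaller rank where the same condition can be invoked.

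The main obstacle is the \emph{labeled} refinement in the spherical case: the unlabeled $4$-wheel condition could often be verified by a naive centering argument, but requiring $s \in T$ sharply restricts the admissible types of the fifth vertex and forces a careful tracking of which generators appear in the Garside factors. I expect the non-simply-laced and exceptional types (notably $B_n$, $F_4$, $H_3$, $H_4$, and $E_6, E_7, E_8$) to demand the most care, since there the subtree structure interacts non-trivially with the edge labels; the branching types $D_n, E_n$ require in addition an argument that the central branch vertex is always reachable from the four given types. Packaging all of this through the inductive reduction to the dihedral base case, while maintaining the labeling constraint at each step, is the essential technical point.
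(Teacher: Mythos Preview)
Your plan has genuine gaps in both cases, and the paper's actual arguments are substantially different from what you sketch.

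\textbf{Spherical case.} Your inductive scheme (peel Garside prefixes, reduce via links, base case dihedral) does not supply the key mechanism. The paper does not argue by induction on $|T|$. Instead, it first reduces the labeled 4-wheel condition to a bowtie-free condition on $\Delta_{\Lambda,\Lambda'}$ for linear $\Lambda'\subset\Lambda$ (Proposition~\ref{prop:4-cycle} and Lemma~\ref{lem:connect}), and then reduces bowtie-freeness to a purely algebraic statement (Corollary~\ref{cor:algebraic}): for any commuting pair $u_1\in A_{i_1}$, $u_2\in A_{i_2}$ there exist $g\in A_{i_1}\cap A_{i_2}$ and $i_1\le i_3\le i_2$ with $gu_jg^{-1}\in A_{i_3}$. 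The crucial ingredient is the theory of \emph{parabolic closure} from \cite{cumplido2019parabolic}: one replaces $u_j$ by the central element $c_{\mathsf P_j}$ of its parabolic closure $\mathsf P_j$, and then proves a delicate conjugator-control result (Proposition~\ref{prop:key}) showing that these central elements can be simultaneously conjugated into some $A_{\hat s}$ with $s\in\Lambda'$, using a conjugator in $A_1\cap A_n$. The proof of Proposition~\ref{prop:key} is the technical heart: it uses $pn$-normal forms, decompositions into elementary $B$-segments (Corollary~\ref{cor:decomposition}), and a geometric obstruction lemma about left-irreducible paths escaping a convex hull (Lemma~\ref{lem:triming}). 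None of this is visible in your ``peel common left factors'' outline, and your acknowledged obstacle---tracking the label constraint through exceptional types---is exactly what the parabolic-closure machinery resolves uniformly.

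\textbf{Locally reducible case.} Your assertion that adjacency of $gA_{\hat s}$ to each $g_iA_{\hat s_i}$ ``reduces to a commutation statement between rank-$2$ standard parabolics $A_{\{s,s_i\}}$'' is not correct; no such reduction is available, and the locally reducible hypothesis does not directly force the required common coset intersection. The paper's argument (Corollary~\ref{cor:locally reducible}) is geometric: it first shows every edge $e$ of label $m$ is $m$-solid (Lemma~\ref{lem:locally reducible}), which endows suitable 2-dimensional relative Artin complexes $\Delta_{\Lambda,\Lambda''}$ with a $\CAT(0)$ metric. It then proves, by induction, that certain subcomplexes of the form $\lk(y_0,\cdot)\cap X$ are \emph{convex} in this $\CAT(0)$ space, and feeds this convexity into Corollary~\ref{cor:connected intersection} to obtain bowtie-freeness. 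The $\CAT(0)$ structure and the convexity of link-intersections are the essential points your plan omits.
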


In the special case $\Lambda$ is a spherical Dynkin diagram of type $A_n$ or $B_n$, we can reformulate the labeled 4-cycle condition in several different ways, then the proposition can be deduced from \cite[Lemma 4.1]{charney2004deligne}, \cite{haettel2021lattices} and an unpublished work of Crisp and McCammond as explained in \cite{haettel2021lattices}. The previous proofs on type $A_n$ or $B_n$ are already highly non-trivial, and rely crucially on the combinatorics of curves on surfaces. Our method is different -- it relies on a connection between the labeled 4-cycle condition and the study of parabolic closure in \cite{cumplido2019parabolic}. We also give a more general algebraic criterion for labeled 4-cycle condition in Corollary~\ref{cor:algebraic}.

The second method, leading to Theorem~\ref{thm:single cycle} and Corollary~\ref{thm:main4}, 
comes from on the compatibility of the labeled 4-cycle condition and some link conditions leading to metrics with convex geodesic bicombing and/or weakly modular condition on cores that dominate an $\widetilde A_n$-type diagram.  This relies on work of Haettel \cite{haettel2022injective}, and Haettel and the author \cite{haettel2022lattices}.

\section{Preliminaries I}
\label{sec:pre1}
\subsection{Some terminology on complexes}

Let $X$ be a simplicial complex. We use $X^{(k)}$ to denote the $k$-skeleton of $X$.
A subcomplex $Y$ of $X$ is \emph{induced} if for any simplex of $X$ with its vertex set inside $Y$ is entirely contained in $Y$.  Given a vertex $x\in X$, the \emph{link} of $x$ in $X$, denoted by $\lk(x,X)$, is the induced subcomplex spanned by all vertices of $X$ that are adjacent to $x$. The link can also be thought as an $\eps$-sphere around $x$, with the induced simplicial structure from the ambient space. The \emph{star} of $x$ in $X$, denoted by $\st(x,X)$, is the union of simplices in $X$ containing $x$. The \emph{open star} of $x$ in $X$, denoted by $\st^o(x,X)$, is the union of the interior of all simplices in $X$ which contain $x$.

We will be working with piecewise Euclidean simplicial complexes, namely each simplex has the metric of a convex simplex in some Euclidean space, and the simplices are glued together isometrically along their faces. In this case, for any $x\in X$ (not necessarily a vertex), we can define $\lk(x,X)$ as a metric space obtained by gluing together the unit spheres in the tangent spaces of $x$ in $\Delta$, with $\Delta$ ranging over all simplices that contains $x$. For example, when $x$ is a vertex and $X$ is 2-dimensional, $\lk(x,X)$ is a metric graph, whose edges corresponding to 2-simplices containing $x$, and the length of the edges are angles of these 2-simplices at $x$. We refer to \cite[Chapter I.7]{BridsonHaefliger1999} for more background on piecewise Euclidean simplicial complexes.

We refer to \cite{BridsonHaefliger1999} for background on $\CAT(0)$ spaces. Let $X$ be a $\CAT(0)$ piecewise Euclidean simplicial complex with $x\in X$. Note that any $y\neq x$ gives rise to a point in $\lk(x,X)$, by considering the geodesic from $x$ to $y$. This gives a map $\log_x:X\setminus\{x\}\to \lk(x,X)$. For any $y_1,y_2\neq x$, we define the angle $$\angle_x(y_1,y_2)=\min\{d(\log_x(y_1),\log_x(y_2)),\pi\}.$$

\subsection{Artin groups and Coxeter groups}
\label{subsec:Artin}
Let $\Gamma$ be a finite simplicial graph with each edge labeled by an integer $\ge 2$. The \emph{Artin group with presentation graph $\Gamma$}, denoted $A_\Gamma$, is a group whose generators are in one to one correspondence with vertices of $\Gamma$, and there is a relation of the form $aba\cdots=bab\cdots$ with both sides being alternating words of length $m$
whenever two vertices $a$ and $b$ are connected by an edge labeled by $m$.  The \emph{Coxeter group with presentation graph $\Gamma$}, denoted $W_\Gamma$, has the same generating sets and the same relators as the Artin group, with extra relations $v^2=1$ for each vertex $v\in\Gamma$. There is a homomorphism $A_\Gamma\to W_\Gamma$, whose kernel is called the \emph{pure Artin group}, and is denoted by $PA_\Gamma$. Recall that we will refer to vertices of $\Gamma$ as \emph{nodes}.

There is a set theoretic section $\mathfrak{s}:W_\Gamma\to A_\Gamma$ to the quotient map $A_\Gamma\to W_\Gamma$ defined as follows. Take $g\in W_\Gamma$ and we write $g$ as a word $w$ in the free monoid $S^*$ generated by $S$. We require $w$ is \emph{reduced}, i.e. it is a minimal length word in $S^*$ representing $g$. Then $\mathfrak{s}(g)$ is defined to be $w$, viewed as an element in $A_\Gamma$. This map is well-defined, since by Tits' solution to the word problem for Coxeter groups, two different reduced words in $S^*$ representing the same element of $W_\Gamma$ differ by a finite sequence of relators in $A_\Gamma$ (see \cite{matsumoto1964generateurs}).

Let $S$ be the set of nodes of $\Gamma$. We will also write $A_\Gamma$ as $A_S$. Recall that for any $S'\subset S$ generates a subgroup of $A_\Gamma$ which is also an Artin group, whose presentation graph is the induced subgraph $\Gamma_{S'}$ of $\Gamma$ spanned by $S'$ \cite{lek}. This subgroup is called a \emph{standard parabolic subgroup} of type $S'$. A \emph{parabolic subgroup} of $A_\Gamma$ of type $S'$ is a conjugate of a standard parabolic subgroup of type $S'$. A parabolic subgroup of $A_S$ is \emph{reducible} if its type $S'$ admits a disjoint non-trivial decomposition $S'_1\sqcup S'_2$ such that each element in $S'_1$ commutes with every element in $S'_2$. If such decomposition does not exist, then the parabolic subgroup is \emph{irreducible}.

\begin{lem}
	\label{lem:flag}
	(\cite{godelle2012k})
Suppose $\{g_iA_{S_i}\}_{i=1}^n$ be a collection of left cosets of standard parabolic subgroups in $A_S$ (with $S_i\subset S$). If these left cosets have pairwise non-empty intersection, then they have nonempty common intersection.
\end{lem}

\begin{proof}
The case $n=3$ is \cite[Lemma 4.7]{godelle2012k}. The arbitrary $n$ case follows from the $n=3$ case, and the same induction argument in the proof of \cite[Proposition 4.5]{godelle2012k}
\end{proof}	

\begin{thm}
	\label{thm:parabolic}
	(\cite{blufstein2023parabolic})
Let $S_1,S_2\subset S$ and $g\in A_S$ such that $gA_{S_1}g^{-1}\subset A_{S_2}$. Then there exists $h\in A_{S_2}$ and $S_1'\subset S_2$ such that $gA_{S_1}g^{-1}=hA_{S'_1}h^{-1}$.
\end{thm}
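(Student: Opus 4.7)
The plan is to reformulate the claim by absorbing part of $g$ into $h$. Set $k := h^{-1}g$; then the desired equality $gA_{S_1}g^{-1} = hA_{S_1'}h^{-1}$ is equivalent to $kA_{S_1}k^{-1} = A_{S_1'}$. So the problem reduces to producing a \emph{ribbon-type} element $k$, that is, an element whose conjugation action sends the standard parabolic $A_{S_1}$ onto another standard parabolic $A_{S_1'} \subset A_{S_2}$, while $k$ represents the same left $A_{S_2}$-coset as $g$.

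First I would choose $h \in A_{S_2}$ so that $k = h^{-1}g$ is a distinguished representative of the double coset $A_{S_2}\, g\, A_{S_1}$, selected to minimize the Coxeter-length of its image in $W_S$ among all products $h^{-1} g g_1$ with $h \in A_{S_2}$ and $g_1 \in A_{S_1}$ (so $k = \mathfrak{s}(w) \cdot z$ for some central-like piece $z$ in the spherical parabolic containing it, in the Garside-style setup). Next, using the hypothesis $gA_{S_1}g^{-1} \subset A_{S_2}$, which translates to $kA_{S_1}k^{-1} \subset A_{S_2}$, I would argue that minimality forces $kA_{S_1}k^{-1}$ itself to be a standard parabolic of $A_S$: otherwise one could find a generator of $kA_{S_1}k^{-1}$ whose reduced expression exhibits a proper subword lying in $A_{S_2}$, which could be pushed across $k$ to produce a shorter representative of the double coset and contradict minimality. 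Once $kA_{S_1}k^{-1} = A_{S_1'}$ is standard parabolic in $A_S$, the containment in $A_{S_2}$ plus van der Lek's intersection theorem $A_{S_1'} \cap A_{S_2} = A_{S_1' \cap S_2}$ force $S_1' \subset S_2$.

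The main obstacle is the second step, namely showing that minimal double-coset representatives normalize standard parabolic structure. For Coxeter groups this is classical via Matsumoto's exchange condition and Deodhar's theory of minimal double-coset representatives, but lifting it to $A_S$ is subtle because there is no general solution to the word problem and no universal Garside structure. I would handle this by induction on $|S \setminus S_2|$, reducing to the codimension-one case $S_2 = S \setminus \{s\}$ and carefully tracking how a single generator $s$ can be peeled off a chosen normal form of $g$ while preserving the conjugation hypothesis; alternatively, one can invoke the (deep) fact that intersections of parabolic subgroups of $A_S$ are parabolic, take the parabolic closure of $gA_{S_1}g^{-1}$ inside $A_{S_2}$ (which is automatically of the form $hA_{S_1'}h^{-1}$ with $h \in A_{S_2}$), and then use a rank comparison between $A_{S_1}$ and this closure to force equality. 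Either route requires the full machinery of parabolic closure that has been developed in the recent sequence of works on parabolic subgroups of Artin groups.
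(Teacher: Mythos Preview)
The paper does not prove this theorem; it merely states it with the citation \cite{blufstein2023parabolic} and uses it as a black box. There is no proof in the paper to compare your proposal against.

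That said, a few comments on your sketch. Your reduction to finding a ribbon-type element $k$ in the coset $A_{S_2}g$ is the right shape, and the double-coset minimality idea is indeed how such results are proved in the Coxeter setting. But your alternative route has a genuine problem: you propose to invoke ``the (deep) fact that intersections of parabolic subgroups of $A_S$ are parabolic'' and then take a parabolic closure inside $A_{S_2}$. This is not known for general Artin groups; indeed, elsewhere in this very paper (Corollary~\ref{cor:algebraic}) the intersection-of-parabolics property is listed as an \emph{assumption}, not a theorem. So that route either begs the question or restricts you to special classes (spherical, FC-type, certain 2-dimensional cases) where the statement was already known before \cite{blufstein2023parabolic}. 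The actual content of the cited result is precisely that the conclusion holds \emph{without} assuming anything about parabolic closures or intersections, and the proof there goes through a careful analysis of Van der Lek-type retractions and does not reduce to the parabolic-closure machinery you suggest.
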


Now we assume $A_S$ is spherical.
For each standard parabolic subgroup $A_{S'}$ of $A_S$, let $\delta_{S'}$ be its Garside element, and let $c_{S'}$ be the smallest positive power of $\delta_{S'}$ that is contained in the center of $A_{S'}$ (actually either $c_{S'}=\delta_{S'}$, or $c_{S'}=\delta^2_{S'}$). If $ \mathsf{P}=gA_{S'}g^{-1}$ is a parabolic subgroup of $A_S$, we define $c_\mathsf{P}=gc_{S'}g^{-1}$. 
The following is a consequence of \cite[Lemma 33]{cumplido2019minimal} and \cite[Proposition 2.2]{godelle2003normalisateur}.
\begin{lem}
	\label{lem:contain}
	Suppose $A_S$ is spherical and suppose $\mathsf{P}$ is a parabolic subgroup of $A_S$. Then
	\begin{enumerate}
		\item for $X,Y\subset S$, $g^{-1}A_Xg=A_Y$ if and only if $g^{-1}c_Xg=c_Y$, in particular, if $gA_{S'}g^{-1}=g_1A_{S'}g^{-1}_1$, then $gc_{S'}g^{-1}=g_1c_{S'}g^{-1}_1$, hence $c_\mathsf{P}$ is well-defined;
		\item $gc_\mathsf{P}g^{-1}\in A_{S'}$ for some $S'\subset S$ if and only if $g\mathsf{P}g^{-1}\subset A_{S'}$.
	\end{enumerate}		
\end{lem}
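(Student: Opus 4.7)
The plan is to extract two key facts about the central elements $c_X$ from the cited references, after which both parts reduce to short formal arguments. From \cite[Proposition 2.2]{godelle2003normalisateur} I obtain (a): $c_X$ is the minimal positive central power of the Garside element $\Delta_X$, and consequently any conjugation-induced isomorphism $A_X \to A_Y$ of spherical Artin parabolics (inside $A_S$) sends $c_X$ to $c_Y$. From \cite[Lemma 33]{cumplido2019minimal} I obtain (b): the parabolic closure of $c_X$ inside $A_X$ is $A_X$ itself. Combining (b) with Theorem~\ref{thm:parabolic} also yields the stronger statement that the parabolic closure of $c_X$ inside the ambient group $A_S$ is still $A_X$: this closure is contained in $A_X$ by minimality (since $A_X$ is already a parabolic subgroup of $A_S$ containing $c_X$), it is therefore a parabolic subgroup of $A_X$ by Theorem~\ref{thm:parabolic}, and hence equals $A_X$ by (b). Conjugating, the parabolic closure in $A_S$ of $c_{\mathsf{P}}$ is $\mathsf{P}$ for every parabolic $\mathsf{P}\subset A_S$. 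In the reducible case $X=\bigsqcup_i X_i$, the Garside element splits as $\Delta_X=\prod_i\Delta_{X_i}$ with commuting factors and the same conclusions descend factor-by-factor.

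For part (1), the forward direction is immediate from (a): $g^{-1}A_Xg=A_Y$ produces an isomorphism $A_X\to A_Y$ via conjugation by $g$, which must send $c_X$ to $c_Y$, i.e.\ $g^{-1}c_Xg=c_Y$. For the reverse direction, take the parabolic closure in $A_S$ of both sides of $g^{-1}c_Xg=c_Y$: the left-hand side yields $g^{-1}A_Xg$ (because parabolic closures are preserved by conjugation and the closure of $c_X$ is $A_X$) and the right-hand side yields $A_Y$, giving $g^{-1}A_Xg=A_Y$. The ``in particular'' clause follows by applying the forward direction with $X=Y=S'$ and with $g^{-1}g_1$ in place of $g$: the hypothesis $gA_{S'}g^{-1}=g_1A_{S'}g_1^{-1}$ gives $(g^{-1}g_1)A_{S'}(g^{-1}g_1)^{-1}=A_{S'}$, hence $(g^{-1}g_1)c_{S'}(g^{-1}g_1)^{-1}=c_{S'}$, equivalently $gc_{S'}g^{-1}=g_1c_{S'}g_1^{-1}$, so $c_{\mathsf{P}}$ is well-defined.

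For part (2), the ``if'' direction is immediate since $c_{\mathsf{P}}\in\mathsf{P}$. For the ``only if'' direction, assume $gc_{\mathsf{P}}g^{-1}\in A_{S'}$. By the first paragraph, the parabolic closure in $A_S$ of $gc_{\mathsf{P}}g^{-1}$ equals $g\mathsf{P}g^{-1}$. But $A_{S'}$ is itself a parabolic subgroup of $A_S$ containing $gc_{\mathsf{P}}g^{-1}$, so by minimality of parabolic closure we obtain $g\mathsf{P}g^{-1}\subset A_{S'}$, as required.

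The main obstacle is distilling (a) and (b) cleanly from the two references, together with upgrading (b) from the parabolic closure inside $A_X$ to the parabolic closure inside the ambient group $A_S$ via Theorem~\ref{thm:parabolic}; the reducible case of $\mathsf{P}$ also requires a brief check that the product structure $c_X=\prod_i c_{X_i}^{k_i}$ still has $A_X$ as its parabolic closure. Once these canonical properties of $c_X$ are in hand, the rest of the argument is purely formal manipulation with conjugates and minimality of parabolic closures.
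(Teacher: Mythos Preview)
Your proposal is correct and follows essentially the route the paper indicates: the paper gives no explicit argument beyond citing \cite[Lemma 33]{cumplido2019minimal} and \cite[Proposition 2.2]{godelle2003normalisateur}, and you have unpacked those citations into the two key facts (a) and (b) and combined them via parabolic closure, exactly as intended. Your additional use of Theorem~\ref{thm:parabolic} to upgrade the parabolic closure of $c_X$ from $A_X$ to the ambient $A_S$ is natural and implicit in the paper's setup.

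One minor remark: in your final paragraph you write $c_X=\prod_i c_{X_i}^{k_i}$, but by the paper's definition $c_X$ is the least power of $\Delta_X=\prod_i\Delta_{X_i}$ lying in the center, so $c_X=\Delta_X^{\operatorname{lcm}(k_i)}$ rather than $\prod_i c_{X_i}$; this does not affect your argument, since all that matters is that $c_X$ lies in no proper standard parabolic of $A_X$, which still holds factor-by-factor.
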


\subsection{Davis complexes}
\label{subsec:complex}
By a cell, we always mean a closed cell unless otherwise specified.

\begin{definition}[Davis complex]
	Given a Coxeter group $W_\Gamma$, let $\mathcal{P}$ be the poset of left cosets of spherical standard parabolic subgroups in $W_\Gamma$ (with respect to inclusion) and let $b\Si_\Gamma$ be the geometric realization of this poset (i.e.\ $b\Si_\Gamma$ is a simplicial complex whose simplices correspond to chains in $\mathcal{P}$). Now we modify the cell structure on $b\Si_\Gamma$ to define a new complex $\Sigma_\Gamma$, called the \emph{Davis complex}. The cells in $\Sigma_\Gamma$ are induced subcomplexes of $b\Si_\Gamma$ spanned by a given vertex $v$ and all other vertices which are $\le v$ (note that vertices of $b\Si_\Gamma$ correspond to elements in $\mathcal{P}$, hence inherit the partial order).
\end{definition}

Suppose $W_\Gamma$ is finite with $n$ generators. Then there is a canonical faithful orthogonal action of $W_\Gamma$ on the Euclidean space $\mathbb E^n$. Take a point in $\mathbb E^n$ with trivial stabilizer, then the convex hull of the orbit of this point under the $W_\Gamma$ action (with its natural cell structure) is isomorphic to $\Sigma_\Gamma$. In such case, we call $\Sigma_\Gamma$ a \emph{Coxeter cell}. In general Davis complex is a union of Coxeter cells.

The 1-skeleton $\Sigma^1_\Gamma$ of $\Sigma_\Gamma$ is the unoriented Cayley graph of $W_\Gamma$ (i.e.\ we start with the usual Cayley graph and identify the double edges arising from $s^2_i$ as single edges), and $\Sigma_\Gamma$ can be constructed from the unoriented Cayley graph by filling in Coxeter cells. Each edge of $\Sigma_\Gamma$ is labeled by a generator of $W_\Gamma$. We endow $\Sigma^1_\Gamma$ with the path metric with edge length $1$. 

A \emph{reflection} of $W_\Gamma$ is a conjugate of one of its generators. There is a natural action of $W_\Gamma$ on $b\Si_\Ga$ by simplicial automorphisms. The fix point set $H$ of a reflection $r$ is a subcomplex of $b\Si_\Ga$ (also viewed as a subset of $\Si_\Ga$), which is called a \emph{wall}. Then $\Si_\Ga\setminus H$ has exactly two connected components, exchanged by the action of $r$. Two vertices of $\Sigma_\Ga$ are \emph{separated} by a wall $H$ if they are in different connected components of $\Sigma_\Gamma\setminus H$. The distance between any two vertices with respect to the path metric on $\Sigma^1_\Gamma$ is the number of walls separating these two vertices. A wall is \emph{dual} to an edge if the wall and the edge have nonempty intersection. Two edges are \emph{parallel} if they are dual to the same wall.

If $\Gamma'\subset \Gamma$ is an induced subgraph, then $W_{\Gamma'}\to W_{\Gamma}$ induces an embedding $\Sigma_{\Gamma'}\to \Sigma_{\Gamma}$. The image of this embedding and its left translations are \emph{standard subcomplexes} of type $\Gamma'$. There is a correspondence between standard subcomplexes of type $\Gamma'$ in $\Sigma_{\Gamma}$ and left cosets of $W_{\Gamma'}$ in $W_{\Gamma}$.

Lemma~\ref{lem:gate}, Lemma~\ref{lem:more gate} and Lemma~\ref{lem:pair gate} below are standard, see e.g.\ \cite{bourbaki2002lie} or \cite{davis2012geometry}, also \cite{dress1987gated}. Let $d$ be the path metric on the 1-skeleton of $\Si_\Ga$, with each edge having length 1.
Lemma~\ref{lem:gate} describes nearest point projection into the vertex set of a standard subcomplex.
\begin{lem}
	\label{lem:gate}
	Let $F$ be a standard subcomplex of $\Si_\Ga$ and let $x\in \Si_\Gamma$ be a vertex. Then the following hold.
	\begin{enumerate}
		\item For two vertices $x_1,x_2\in\mathcal R$, the vertex set of any geodesic in $\Si^1_\Gamma$ joining $v_1$ and $v_2$ is inside $F$. Moreover, there exists a unique vertex $x_F\in F$ such that $d(x,x_F)\le d(x,y)$ for any vertex $y\in F$, where $d$ denotes the path metric on the 1-skeleton of $\Sigma_\Gamma$. The vertex $x_F$ is called the \emph{projection} of $x$ to $F$, and is denoted $\prj_F(x)$. 
			\item For any vertex $y\in F$, there exists a shortest edge path $\omega$ in $\Si^1_\Ga$ from $x$ to $y$ so that $\omega$ passes through $x_F$ and so that the segment of $\omega$ between $x_F$ and $y$ is contained in $F$.
		\item Let $\mathcal W(F)$ be the collection of walls in $\Si_\Ga$ dual to an edge of $F$. Then $x_F$ can be characterized as the unique vertex in $F$ such that	no element in $\mathcal W(F)$ separates $x$ from $x_F$.
		\end{enumerate}
\end{lem}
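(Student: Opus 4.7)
The plan is to reduce all three parts to standard facts about Coxeter systems and their standard parabolic subgroups, exploiting the dictionary between vertices of $\Sigma_\Gamma$ and elements of $W_\Gamma$, between edge paths in $\Sigma^{(1)}_\Gamma$ and words in $S$, and between walls and reflections. After translating by an element of $W_\Gamma$, I may assume that $F$ is the standard subcomplex of type $T$ containing the identity, so that its vertex set is exactly the parabolic subgroup $W_T$.

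Convexity of $F$ (the first sentence of (1)) follows from the classical fact that any reduced expression in $W_\Gamma$ for an element of $W_T$ uses only letters from $T$. A geodesic from $v_1$ to $v_2$ in $\Sigma^{(1)}_\Gamma$ corresponds to a reduced expression for $v_1^{-1}v_2$; when $v_1,v_2\in W_T$ this expression uses only letters from $T$, so each intermediate vertex $v_1 s_{i_1}\cdots s_{i_k}$ also lies in $W_T=F$. For existence and uniqueness of the projection I would use the decomposition $W_\Gamma=W_T\cdot W^T$, where $W^T$ denotes the set of minimal length representatives of the right cosets $W_T\backslash W_\Gamma$. Every $x\in W_\Gamma$ factors uniquely as $x=vu$ with $v\in W_T$, $u\in W^T$ and $\ell(x)=\ell(v)+\ell(u)$; moreover the defining property of $W^T$ yields $\ell(w' u)=\ell(w')+\ell(u)$ for every $w'\in W_T$. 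Consequently, for any $w\in W_T$,
\[
d(x,w)=\ell(w^{-1} v u)=\ell(w^{-1}v)+\ell(u)\ge \ell(u),
\]
with equality iff $w=v$. This identifies $x_F=v$ uniquely and proves (1). For (2), concatenating a reduced expression for $u$ (giving a geodesic $x\to x_F$) with a reduced expression for $w^{-1}v$ inside $W_T$ (giving a geodesic $x_F\to w$ contained in $F$ by the convexity step) produces an edge path of total length $d(x,w)$, hence a geodesic of the required form.

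For (3), walls of $\Sigma_\Gamma$ biject with the reflections of $W_\Gamma$, and the walls separating two vertices $y_1,y_2$ are exactly those dual to edges of any geodesic from $y_1$ to $y_2$; under this bijection $\mathcal{H}_F$ corresponds to the set of reflections lying in $W_T$. I would then observe that the reflections appearing in a reduced expression for an element $u\in W^T$ all lie outside $W_T$: for if such a reflection $r$ lay in $W_T$ then $ru\in W_T\cdot u$ would have length strictly less than $\ell(u)$, contradicting minimality of $u$ in its coset. Applied to the geodesic constructed in (2), this shows that the walls separating $x$ from $x_F$ all lie outside $\mathcal{H}_F$, while the walls separating $x_F$ from $y$ all lie in $\mathcal{H}_F$. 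For the uniqueness in (3), if $y\in F\setminus\{x_F\}$ then any geodesic from $x_F$ to $y$ inside $F$ crosses some wall $H\in\mathcal{H}_F$; since $H$ does not separate $x$ from $x_F$ by hypothesis, $H$ must separate $x$ from $y$, contradicting the assumption. Beyond these standard ingredients the argument has no real obstacle; the only technical step that requires care is the length additivity $\ell(w' u)=\ell(w')+\ell(u)$ for $w'\in W_T$, $u\in W^T$, which is classical and can be quoted from Bourbaki or Davis rather than reproved.
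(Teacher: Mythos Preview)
Your proof is correct. The paper does not actually give a proof of this lemma; it simply cites it as standard, referring to Bourbaki, Davis's book, and Dress's paper on gated sets. Your argument via the minimal coset representative decomposition $W_\Gamma = W_T \cdot W^T$ is exactly the classical route one finds in those references, so there is nothing to compare beyond noting that you have spelled out what the paper leaves to a citation. One small cosmetic point: in part (3) the walls separating $x=vu$ from $x_F=v$ are the $v$-conjugates $vr_iv^{-1}$ of the reflections $r_i$ along a reduced word for $u$, not the $r_i$ themselves; since $v\in W_T$, membership in $W_T$ is unaffected by this conjugation, so your conclusion stands, but it would be cleaner to say so explicitly.
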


The next lemma describes two more properties of nearest point projection.
\begin{lem}
	\label{lem:more gate}
The projection map from the vertex set of $\Si_\Ga$ to the vertex set of a standard subcomplex of $\Si_\Ga$ in Lemma~\ref{lem:gate} (1) satisfies the following properties.
	\begin{enumerate}
		\item 	If two standard subcomplexes $F_1\cap F_2\neq\emptyset$, then for any $x\in F_2$, $\prj_{F_1}(x)\in F_1\cap F_2$.
		\item Let $e$ be an edge with its endpoints $x$ and $y$. Then $e$ is parallel to an edge in $F$ iff $d(x,F)=d(y,F)$ iff the wall dual to $e$ has nonempty intersection with $F$ iff $\prj_F(x)\neq \prj_F(y)$. In this case, $\prj_F(x)$ and $\prj_F(y)$ are two distinct vertices in an edge. If $d(x,F)=d(y,F)$, then $\prj_F(x)=\prj_F(y)$.
	\end{enumerate}
\end{lem}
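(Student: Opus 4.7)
The plan is to prove parts (1) and (2) using two tools from Lemma~\ref{lem:gate}: the convexity of geodesics inside standard subcomplexes (part (1)) and the characterization of $\prj_F(x)$ as the unique vertex of $F$ not separated from $x$ by any wall in $\ch_F$ (part (3)). Throughout I will use two basic facts: the $\Si_\Ga^{(1)}$-distance between two vertices equals the number of walls separating them, and the wall $H_e$ dual to an edge $e$ is the unique wall separating the two endpoints of $e$.

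For (1), pick any $v\in F_1\cap F_2$. Since $x,v\in F_2$, Lemma~\ref{lem:gate}(1) applied to $F_2$ forces every $\Si_\Ga^{(1)}$-geodesic from $x$ to $v$ to have its vertex set contained in $F_2$. On the other hand, Lemma~\ref{lem:gate}(2) applied to $F_1$ with the vertex $v\in F_1$ produces some shortest edge path from $x$ to $v$ that passes through $\prj_{F_1}(x)$. Combining the two yields $\prj_{F_1}(x)\in F_1\cap F_2$.

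For (2), let $H_e$ be the wall dual to $e$; I split into cases according to whether $H_e\in\ch_F$. If $H_e\notin\ch_F$, then $F$ lies entirely on one side of $H_e$; say $x$ is on the same side as $F$, so $H_e$ separates $y$ from $F$ but not $x$ from $F$. The walls separating $y$ from $F$ are exactly those separating $x$ from $F$ together with $H_e$, giving $d(y,F)=d(x,F)+1$ and hence $d(x,F)\ne d(y,F)$. Moreover, since $H_e$ is the unique wall separating $x$ from $y$ and $H_e\notin\ch_F$, no wall of $\ch_F$ separates $x$ from $y$; combined with Lemma~\ref{lem:gate}(3) applied to $\prj_F(x)$, no wall of $\ch_F$ separates $y$ from $\prj_F(x)$, so $\prj_F(x)=\prj_F(y)$. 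Finally, $e$ cannot be parallel to an edge of $F$, for that would force $H_e$ to be dual to an edge of $F$ and hence to lie in $\ch_F$.

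If instead $H_e\in\ch_F$, then $H_e$ is dual to some edge $e'$ of $F$, so $e$ is parallel to $e'$; since $H_e$ does not separate any vertex from $F$ and is the unique wall separating $x$ from $y$, the remaining walls contribute identically to $d(x,F)$ and $d(y,F)$, giving $d(x,F)=d(y,F)$. By Lemma~\ref{lem:gate}(3), no wall of $\ch_F$ separates $x$ from $\prj_F(x)$ or $y$ from $\prj_F(y)$; and applying Lemma~\ref{lem:gate}(1) inside $F$ shows that walls separating $\prj_F(x)$ from $\prj_F(y)$ all lie in $\ch_F$. Consequently $H_e$ is the only wall that can separate $\prj_F(x)$ from $\prj_F(y)$, and it does separate them because $\prj_F(x)$ lies on the $x$-side of $H_e$ and $\prj_F(y)$ on the $y$-side. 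Hence $\prj_F(x)\ne\prj_F(y)$ and they are adjacent. Together the two cases give the four equivalences and the supplementary statements; the argument is purely organizational, so the main thing to keep straight is whether $H_e\in\ch_F$, with no substantial obstacle beyond bookkeeping.
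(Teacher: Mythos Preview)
Your proof is correct. Note that the paper does not supply its own proof of this lemma: it is listed among the standard facts (together with Lemma~\ref{lem:gate} and Lemma~\ref{lem:pair gate}) with a citation to \cite{bourbaki2002lie}, \cite{davis2012geometry}, and \cite{dress1987gated}. Your argument is exactly the kind of short derivation from Lemma~\ref{lem:gate} that those references contain, so there is nothing to compare against in the paper itself.

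One remark: the final sentence of the stated lemma, ``If $d(x,F)=d(y,F)$, then $\prj_F(x)=\prj_F(y)$,'' is evidently a typo (it contradicts the equivalence just stated). Your proof correctly establishes the intended statement, namely that $\prj_F(x)=\prj_F(y)$ when $d(x,F)\neq d(y,F)$, which is the case $H_e\notin\ch_F$ in your dichotomy.
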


Now we consider the properties of nearest point sets between two standard subcomplexes.
\begin{lem}
	\label{lem:pair gate}
	Let $E$ and $F$ be standard subcomplexes of $\Si_\Ga$. Define $$X=\{x\in \vertex E\mid d(x,\vertex F)=d(\vertex E,\vertex F)\}$$ and $$Y=\{y\in \vertex F\mid d(y,\vertex E)=d(\vertex E,\vertex F)\}.$$ Then 
	\begin{enumerate}
		\item there are standard subcomplexes $E'\subset E$, $F'\subset F$ such that $X=\vertex E'$ and $Y=\vertex F'$;
		\item $\prj_E(\vertex F)=X$  and $\prj_F(\vertex E)=Y$;
		\item $\prj_E|_{\vertex F'}$ and $\prj_F|_{\vertex E'}$ gives a bijection and its inverse between $\vertex E'$ and $\vertex F'$;
		\item if $\mathcal W(E')$ is the collection of all walls dual to an edge in $E'$, then $\mathcal W(E')=\mathcal W(F')=\mathcal W(E)\cap \mathcal W(F)$;
		\item if $\mathcal W(E)=\mathcal W(F)$, then $E=E'$ and $F=F'$.
	\end{enumerate}
	In the situation of this lemma we will write $E'=\prj_E(F)$.
\end{lem}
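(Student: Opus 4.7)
The plan is to analyze the wall structure of the pair $(E,F)$ using the gate and projection properties of Lemma~\ref{lem:gate} and Lemma~\ref{lem:more gate}, and then translate into Coxeter-group double-coset language to prove that $X$ and $Y$ are vertex sets of standard subcomplexes.

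First I would fix a minimum pair $(x_0,y_0)\in\vertex E\times\vertex F$ with $d(x_0,y_0)=m$ and denote by $W_{EF}$ the set of $m$ walls separating $x_0$ from $y_0$. The key wall-theoretic assertion is that $W_{EF}\cap(\ch_E\cup\ch_F)=\emptyset$: if some $H\in\ch_E$ separated $x_0$ from $y_0$, then by Lemma~\ref{lem:gate}(3) the vertex $x_1:=\prj_E(y_0)$ would lie on $y_0$'s side of $H$ while $x_0$ lies on the opposite side, so $x_0\ne x_1$; the geodesic decomposition of Lemma~\ref{lem:gate}(2) applied to the pair $(y_0,x_0)$ with target $E$ then yields $m=d(x_0,y_0)=d(y_0,x_1)+d(x_1,x_0)\ge m+1$, a contradiction, and symmetry handles $\ch_F$. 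The set $W_{EF}$ is consequently independent of the chosen pair: since walls in $W_{EF}$ lie outside $\ch_E\cup\ch_F$, each of them has $E$ entirely on one side and $F$ entirely on the other, forcing the same set of walls to separate any other minimum pair.

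Next I would characterize $X$ via walls. For $x\in\vertex E$, Lemma~\ref{lem:gate}(3) identifies $d(x,\vertex F)=d(x,\prj_F(x))$ with the number of walls that separate $x$ from all of $F$, and decomposing these walls gives
$$d(x,\vertex F)=|W_{EF}|+|\{H\in\ch_E\setminus\ch_F:H\cap F=\emptyset,\ x\text{ and }F\text{ lie on opposite sides of }H\}|,$$
so $X$ is exactly the set of $x\in\vertex E$ lying on the $F$-side of every such $H$. To upgrade this to (1), I would translate into Coxeter-group data: after a $W_\Gamma$-translation, write $E=W_I$ and $F=gW_J$ with $I,J\subset S$, and let $g_0$ be the unique minimum-length representative of the double coset $W_IgW_J$. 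The Kilmoyer--Solomon intersection theorem provides $W_I\cap g_0W_Jg_0^{-1}=W_K$ for some $K\subset I$ (see e.g.\ \cite{davis2012geometry}); a direct length computation then identifies $X$ with a coset $a_0W_K$ for a specific $a_0\in W_I$, and symmetrically $Y=a_0g_0W_{K'}$ with $W_{K'}:=g_0^{-1}W_Kg_0\subset W_J$. This shows $X=\vertex E'$ and $Y=\vertex F'$ for standard subcomplexes $E'\subset E$ of type $K$ and $F'\subset F$ of type $K'$, proving (1). I expect this step to be the main obstacle, as it rests on the nontrivial Kilmoyer--Solomon theorem and careful bookkeeping of double-coset representatives.

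The remaining statements follow mechanically. For (2), given $x\in X$, setting $y:=\prj_F(x)$ one has $d(x,y)=m=d(y,\vertex E)$, so uniqueness in Lemma~\ref{lem:gate}(1) forces $\prj_E(y)=x$, yielding $X\subset\prj_E(\vertex F)$; conversely, tracking $\prj_E$ along an edge path in $F$ from a fixed $y_0\in Y$ (whose projection lies in $X$) to an arbitrary $y\in\vertex F$, Lemma~\ref{lem:more gate}(2) shows each step either fixes the projection or moves it across a wall of $\ch_E\cap\ch_F=\mathcal W(E')$ while remaining inside $E'$, so $\prj_E(y)\in X$. For (3), the bijection $a_0v\leftrightarrow a_0vg_0=a_0g_0(g_0^{-1}vg_0)$ carries $X$ to $Y$ and, unwinding the projection formulas, coincides with $\prj_F|_{\vertex E'}$ and its inverse $\prj_E|_{\vertex F'}$. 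For (4), generators of $W_K$ are reflections lying in both $W_I$ and $g_0W_Jg_0^{-1}$, so $\mathcal W(E')\subset\ch_E\cap\ch_F$; any wall in $\ch_E\cap\ch_F$ corresponds to a reflection in $W_I\cap g_0W_Jg_0^{-1}=W_K$, giving the reverse inclusion, and symmetry yields $\mathcal W(F')=\mathcal W(E')$. Finally, (5) is immediate: $\mathcal W(E)=\mathcal W(F)$ forces $W_K=W_I$ and $W_{K'}=W_J$, hence $E'=E$ and $F'=F$.
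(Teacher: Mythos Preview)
The paper does not give its own proof of this lemma: it is listed alongside Lemma~\ref{lem:gate} and Lemma~\ref{lem:more gate} as a standard fact about gated subsets of Coxeter/Davis complexes, with references to Bourbaki, Davis, and Dress. Your outline is a correct route to the result and is essentially the argument one finds in those sources: the disjointness $W_{EF}\cap(\ch_E\cup\ch_F)=\emptyset$, the double-coset normalization and the Kilmoyer--Solomon identification $W_I\cap g_0W_Jg_0^{-1}=W_K$ to pin down $E'$ and $F'$, and then the mechanical deductions of (2)--(5).

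One small presentational point: in your argument for (2) you invoke the equality $\ch_E\cap\ch_F=\mathcal W(E')$, which is (4). Since your proof of (4) uses only the Kilmoyer--Solomon description from (1) and not (2), there is no circularity, but it would read more cleanly to establish (4) immediately after (1). Also, in the inductive edge-path step of (2) you should make explicit that the reflection across any wall of $\mathcal W(E')$ stabilizes $E'$ (because such a reflection lies in $aW_Ka^{-1}$), which is why the projection remains in $E'$ after each step; you assert this but do not quite say why.
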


In the situation of Lemma~\ref{lem:pair gate} (5), we will say $E$ and $F$ are \emph{parallel}. In this case, the bijection between $\vertex E$ and $\vertex F$ given by $\prj_E|_{\vertex F}$ and $\prj_F|_{\vertex E}$ are called \emph{parallel translation} between $E$ and $F$.

\begin{definition}
	\label{def:intersection poset}
Let $\cq_\Ga$ be the collection of walls in $\Si_\Ga$ and their non-empty intersections. For each element $B\in \cq_\Ga$, there is a parallel class of faces in $\Si_\Ga$ such that each face in the class intersects $B$ in exactly one point. These faces are \emph{dual} to $B$. There is a one to one correspondence between elements in $\cq_\Ga$ and parallel classes of faces of dimension $\ge 1$ in $\Si_\Ga$. Not that when $\Gamma$ is spherical, then $\ca_Q$ is the set of subspaces in the associated reflection hyperplane arrangement.
\end{definition}

\begin{definition}
	\label{def:projection1}
	Let $F$ be a standard subcomplex of $\Si_\Ga$. Lemma~\ref{lem:gate} gives a map $\Pi_F:\vertex\Si_\Ga\to\vertex F$ which extends to a retraction $\Pi_F:\Si_\Ga\to F$ as follows. Note that for each face $E$ of $\Si_\Ga$, $\pi(\vertex E)$ is the vertex set of a face $E'\subset F$. Then we extends $\pi$ to a map $\pi'$ from the vertex set of $b\Si_\Ga$ to the vertex set of $bF$, by sending the barycenter of $E$ to the barycenter of $E'$. As $\pi'$ map vertices in a simplex to vertices in a simplex, it extends linearly to a map $\Pi_F:b\Si_\Ga\cong \Si_\Ga\to bF\cong F$.
\end{definition}

\subsection{Oriented Davis complexes and Salvetti complexes}
\label{subsec:Sal}
Let $\mathcal P$ be the poset of faces of $\Si_\Ga$ (under containment), and let $V$ be the vertex set of $\Si_\Ga$. We now define the \emph{oriented Davis complex} $\widehat\Si_\Ga$ as follows.
Consider the set of pairs $(F,v)\in \cp \times V$.  Define  an equivalence relation $\sim$ on this set by $$(F,v)\sim (F,v') \iff F=F' \text{\ and\ } \prj_F(v') = \prj_F(v).$$
Denote the equivalence class of $(F,v')$ by $[F,v']$ and let $\ce(\ca)$ be  the set of equivalence classes.   Note  that each equivalence class $[F,v']$ contains a unique representative of the form $(F,v)$, with $v\in \vertex F$.  The \emph{oriented Davis complex} $\widehat\Si_\Ga$ is defined as the regular CW complex given by taking  $\Si_\Ga\times V$ (i.e., a disjoint union of copies of $\Si_\Ga$) and then identifying faces $F\times v$ and $F\times v'$ whenever $[F,v]=[F,v']$, i.e.,
\begin{equation}
\widehat\Si_\Ga=( \Si_\Ga\times V)/ \sim \ .
\end{equation}
For example, for each edge $F$ of $\Si_\Ga$ with endpoints $v_0$ and $v_1$, we get two $1$-cells $[F,v_0]$ and $[F,v_1]$ of $\widehat\Si_\Ga$ glued together along their endpoints $[v_0,v_0]$ and $[v_1,v_1]$.  So, the $0$-skeleton of $\widehat\Si_\Ga$ is equal to the $0$-skeleton of $\widehat\Si_\Ga$ while its $1$-skeleton is formed from the $1$-skeleton of $\widehat\Si_\Ga$ by doubling each edge.  
There is a natural map $\pi:\widehat\Si_\Ga\to\Si_\Ga$ defined by ignoring the second coordinate. 

The definition of oriented Davis complex traced back to work of Salvetti \cite{s87}, so it is also called Salvetti complex by many other authors. The naming ``oriented Davis complex'' comes from an article of J. McCammond \cite{mccammond2017mysterious}, clarifying the relation between Salvetti's work and Davis complex, which suits better for our latter discussion. We will reserve the term ``Salvetti complex'' for a quotient of the oriented Davis complex.

	Each edge of $\od_\Ga$ has a natural orientation, namely, if $F=\{v_0,v_1\}$ is an edge of $\Si_\Ga$, then $[F,v_0]$ is oriented so that $[v_0,v_0]$ is its initial vertex and $[v_1,v_1]$ is its terminal vertex.  An edge path in the $\od_\Ga$ is \emph{positive} if each of its edges is positively oriented. (Positive paths are related to the Deligne groupoid defined in Section~\ref{subsec:Deligne} below.)

\begin{definition}
	\label{def:label}
As the 1-skeleton of $\Si_\Ga$ can be identified with the unoriented Cayley graph of $W_\Ga$, each edge of $\Si_\Ga$ is labeled by an element in the generating set $S$.
	We pull back the edge labeling from $\Si_\Ga$ to $\od_\Ga$ via the map $\pi:\od_\Ga\to \Si_\Ga$. For a subset $E$ in $\Sigma$ or $\Gamma(\ca)$, we define $\supp(E)$ to be the collection of labels of edges in $E$. Let $u$ be an edge path in $\Sii$ or a positive path in $\od_\Ga$. Then reading off labels of edges of $u$ gives a word in the free monoid generated by $S$, which we denote by $\w(u)$. If $u$ is an arbitrary edge path in $\od_\Ga$, then when an edge travels opposite to its orientation, we read off the inverse of the associated label. Then $\w(u)$ gives a word in the free group on $S$. 
\end{definition}

For each subcomplex $Y$ of $\Si_\Ga$, we write $\widehat Y=p^{-1}(Y)$ and call $\widehat Y$ the subcomplex of $\od_\Ga$ associated with $Y$.
A \emph{standard subcomplex} of $\widehat\Si_\Ga$ is a subcomplex of $\od_\Ga$ associated with a standard subcomplex of $\Si_\Ga$. In other words, if $F\subset \Si_\Ga$ is a standard subcomplex, then $\widehat F$ is the union of faces of form $E\times v$ in $\widehat \Si_\Ga$ with $E\subset F$ and $v$ ranging over vertices in $\Si_\Ga$.

\begin{lem}
	\label{lem:compactible}
	Let $E$ be a face of $\Si_\Ga$ and let $F$ be a standard subcomplex of $\Si_\Ga$.
	If $[E,v_1]=[E,v_2]$, then $[\prj_F(E),v_1]=[\prj_F(E),v_2]$.
\end{lem}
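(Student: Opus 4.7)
The plan is to translate the equivalence relation $\sim$ into a statement purely about walls, and then exploit the inclusion of wall families under projection given by Lemma~\ref{lem:pair gate}(4). First observe that since $E$ is a face of $\Sigma_\Gamma$, it is (a translate of) $\Sigma_{\Gamma'}$ for a spherical induced subgraph $\Gamma'\subset\Gamma$, and hence qualifies as a standard subcomplex. The same is true for $\tilde E := \operatorname{proj}_F(E)$ by Lemma~\ref{lem:pair gate}(1). Thus both Lemma~\ref{lem:gate} and Lemma~\ref{lem:pair gate} are available for $E$, $F$, and $\tilde E$.

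The key reformulation is the following. By Lemma~\ref{lem:gate}(3), a vertex $p\in E$ equals $\operatorname{proj}_E(v)$ if and only if no wall in $\mathcal{H}_E$ (the walls dual to some edge of $E$) separates $v$ from $p$. A short argument using this characterization yields the equivalent formulation
\[
[E,v_1]=[E,v_2]\ \Longleftrightarrow\ \text{no wall in }\mathcal{H}_E\text{ separates }v_1\text{ from }v_2.
\]
Indeed, if $\operatorname{proj}_E(v_1)=\operatorname{proj}_E(v_2)=p$, then both $v_1,v_2$ lie in the common halfspace determined by each $H\in\mathcal{H}_E$ containing $p$. Conversely, if no wall of $\mathcal{H}_E$ separates $v_1$ from $v_2$, then taking $p=\operatorname{proj}_E(v_1)$ makes $v_2$ lie on the same side as $v_1$ (and hence as $p$) for every wall in $\mathcal{H}_E$, so by uniqueness in Lemma~\ref{lem:gate}(3) we get $\operatorname{proj}_E(v_2)=p$.

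Now apply Lemma~\ref{lem:pair gate}(4) to the pair $(E,F)$: setting $\tilde E=\operatorname{proj}_F(E)$, one has
\[
\mathcal{H}_{\tilde E}=\mathcal{W}(\tilde E)=\mathcal{W}(E)\cap\mathcal{W}(F)\ \subseteq\ \mathcal{W}(E)=\mathcal{H}_E.
\]
Combining this inclusion with the equivalence established above, the hypothesis $[E,v_1]=[E,v_2]$ gives that no wall in $\mathcal{H}_E$ separates $v_1$ from $v_2$, and a fortiori no wall in $\mathcal{H}_{\tilde E}$ separates $v_1$ from $v_2$. Applying the characterization once more, this time to $\tilde E$, yields $\operatorname{proj}_{\tilde E}(v_1)=\operatorname{proj}_{\tilde E}(v_2)$, i.e.\ $[\tilde E,v_1]=[\tilde E,v_2]$, which is the desired conclusion.

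There is no substantive obstacle in this argument; the entire proof is an exercise in unwinding definitions, with the essential content supplied by the wall-characterization of projections in Lemma~\ref{lem:gate}(3) and the equality $\mathcal{W}(\tilde E)=\mathcal{W}(E)\cap\mathcal{W}(F)$ from Lemma~\ref{lem:pair gate}(4). The only point worth flagging is checking that $\operatorname{proj}_F(E)$ is indeed a standard subcomplex (so that Lemma~\ref{lem:pair gate}(4) makes sense as stated), which follows directly from Lemma~\ref{lem:pair gate}(1) applied to the pair $(E,F)$.
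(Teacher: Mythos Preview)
Your proof is correct and follows essentially the same approach as the paper's: both reformulate $[E,v_1]=[E,v_2]$ as the statement that no wall dual to $E$ separates $v_1$ from $v_2$, and then use that the walls dual to $\prj_F(E)$ form a subset of those dual to $E$. The paper's version is simply more terse, leaving the wall-inclusion $\mathcal{W}(\tilde E)\subseteq\mathcal{W}(E)$ implicit rather than citing Lemma~\ref{lem:pair gate}(4) explicitly.
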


\begin{proof}
	Note that $[E,v_1]=[E,v_2]$ if and only if for each wall $H$ with $H\cap E\neq\emptyset$, $v_1$ and $v_2$ are on the same side of $H$. Thus for each wall $H$ dual to $\prj_F(E)$, $v_1$ and $v_2$ are on the same side of $H$. Now the lemma follows.
\end{proof}

We will be make use of the following important construction of Godelle and Pairs in \cite{godelle2012k}.
\begin{definition}
	\label{def:retraction}
	Let $F$ be a face in $\Si_\Ga$. Then there is a retraction map $\Pi_{\widehat F}:\widehat\Si_\Ga\to \widehat F$ defined as follows. Recall that $\widehat\Si_\Ga=( \Si_\Ga\times V)/ \sim$. For each $v\in V$, let $(\Si_\Ga)_v$ be the union of all faces in $\widehat\Si_\Ga$ of form $E\times v$ with $E$ ranging over faces of $\Si_\Ga$. By Definition~\ref{def:projection1}, there is a retraction $(\Pi_F)_v:(\Si_\Ga)_v\to F\times v$ for each $v\in V$. It follows from Lemma~\ref{lem:compactible} that these maps $\{(\Pi_F)_v\}_{v\in V}$ are compatible in the intersection of their domains. Thus they fit together to define a retraction $\Pi_{\widehat F}:\widehat\Si_\Ga\to \widehat F$.
\end{definition}
The following lemma is a direct consequence of the definition.
\begin{lem}
	\label{lem:retraction property}
	Take standard subcomplexes $E,F\subset \Si_\Ga$. Then $\Pi_{\widehat F}(\widehat E)=\widehat{\Pi_F(E)}$.
\end{lem}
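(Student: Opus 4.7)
The plan is to compare the two subcomplexes cell by cell. As a first step I would establish the explicit formula
\[
\Pi_{\widehat F}\bigl([G,v]\bigr) \;=\; [\Pi_F(G),\,v]
\]
for every face $G \subset \Si_\Ga$ and every vertex $v \in V$. This is a direct rewriting of Definition~\ref{def:retraction}: the cell $[G,v]$ is represented in the copy $(\Si_\Ga)_v$, on which the retraction acts by $\Pi_F$ in the first coordinate, and Lemma~\ref{lem:compactible} is exactly what is needed to see that the prescription descends unambiguously from $\Si_\Ga \times V$ to the quotient $\od_\Ga$.

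Granting this formula, the inclusion $\Pi_{\widehat F}(\wE) \subseteq \widehat{\Pi_F(E)}$ becomes the statement that $\Pi_F$ carries faces of $E$ to faces of $\Pi_F(E)$. Indeed, by Definition~\ref{def:projection1} the face $\Pi_F(G)$ has vertex set $\prj_F(\vertex G)$; since $\vertex G \subseteq \vertex E$ and Lemma~\ref{lem:pair gate}(2) identifies $\prj_F(\vertex E)$ with $\vertex \Pi_F(E)$, and standard subcomplexes are induced, we conclude $\Pi_F(G) \subseteq \Pi_F(E)$, so $[\Pi_F(G),v] \in \widehat{\Pi_F(E)}$.

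For the reverse inclusion I would invoke the parallel-translation structure in Lemma~\ref{lem:pair gate}. Setting $E' := \prj_E(F)$ and $F' := \Pi_F(E)$, parts (3)--(5) of that lemma guarantee that $E'$ and $F'$ are parallel standard subcomplexes and that $\prj_F|_{\vertex E'} : \vertex E' \to \vertex F'$ is a bijection implemented by parallel translation, and therefore preserves the simplicial structure. In particular, every face $G' \subseteq F'$ is the $\Pi_F$-image of a unique face $G \subseteq E'$, and since $E' \subseteq E$ this $G$ lies in $E$. Given an arbitrary cell $[G',v']$ of $\widehat{\Pi_F(E)}$, the pair $(G,v')$ then represents a cell $[G,v'] \in \wE$ with $\Pi_{\widehat F}([G,v']) = [G',v']$ by the formula above.

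The only genuinely nontrivial input is the parallel-translation statement of Lemma~\ref{lem:pair gate}, which supplies the inverse of $\Pi_F$ on faces of $\Pi_F(E)$; once this is in hand, the argument amounts to tracking faces through the definitions of $\wE$, $\widehat{\Pi_F(E)}$, and $\Pi_{\widehat F}$, so I do not expect any serious obstacle beyond correctly matching the two representatives of each equivalence class.
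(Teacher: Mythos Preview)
Your proof is correct. The paper gives no proof at all, declaring the lemma a ``direct consequence of the definition,'' so you are supplying details the paper omits. The formula $\Pi_{\widehat F}([G,v]) = [\Pi_F(G),v]$ and the forward inclusion are exactly right.

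For the reverse inclusion, your appeal to the parallel-translation structure of Lemma~\ref{lem:pair gate} works but is heavier than necessary. A more direct route, and presumably what the paper has in mind: by construction $\widehat Y = (Y \times V)/{\sim}$ for any subcomplex $Y$, and by Definition~\ref{def:retraction} the map $\Pi_{\widehat F}$ acts slice-by-slice as $\Pi_F$ on the first factor, so
\[
\Pi_{\widehat F}\bigl((E \times V)/{\sim}\bigr) \;=\; (\Pi_F(E) \times V)/{\sim} \;=\; \widehat{\Pi_F(E)}
\]
as sets, with no need to produce cellular preimages. Even staying cell-by-cell one can avoid Lemma~\ref{lem:pair gate}: if a face $G'$ lies in $\Pi_F(E)$, then $G' \subset \Pi_F(G)$ for some face $G \subset E$; choosing for each vertex of $G'$ a $\prj_F$-preimage in $\vertex G$ yields a subface $G'' \subset G \subset E$ with $\Pi_F(G'') = G'$. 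Your argument does buy the extra information that $\Pi_F(E)$ is in fact the standard subcomplex $F'$ of Lemma~\ref{lem:pair gate}, but that is not needed for the statement at hand.
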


The action of $W_\Gamma$ on $\Si_\Ga$ gives a free action of $W_\Ga$ on $\widehat\Si_\Ga$, whose quotient complex is denoted by $\s_\Ga$. The fundamental group of $\s_\Ga$ is $A_\Ga$ and its 2-skeleton is the presentation complex of $A_\Ga$, see e.g. \cite{paris2012k}. The fundamental group of $\od_\Ga$ is the \emph{pure Artin subgroup} of $A_\Ga$, as $\od_\Ga$ is a regular cover of $\s_\Ga$ corresponding to the kernel of $A_\Ga\to W_\Ga$.

\subsection{Coxeter complexes and Artin complexes}
\label{subsec:Coxeter}
Let $\Gamma$ be a presentation graph.
The definition of a Coxeter complex $\bC_\Gamma$ of a Coxeter group $W_\Gamma$ is almost identical to Artin complex (cf. Section~\ref{def:Artin complex}), except one replaces $A_{\hat s}$ by $W_{\hat s}$, which is the standard parabolic subgroup of $W_\Gamma$ generated by $S\setminus \{s\}$. We define \emph{types} of vertices and faces of $\bC_\Gamma$ in a similar way as in Definition~\ref{def:Artin complex}.

\begin{definition}
	\label{def:barpi}
	Let $\mathfrak{s}:W_\Gamma\to A_\Gamma$ be the section of $A_\Gamma\to W_\Gamma$ described before. Then $\mathfrak{s}$ maps a coset $gW_{\hat s}$ inside $g A_{\hat s}$. This induces a simplicial embedding $\mathfrak{s}':\bC_\Gamma\to \Delta_\Gamma$. The image of $\mathfrak{s}'$ and their translations under the group action $A_\Gamma\act \Delta_\Gamma$ are called \emph{apartments} of $\Delta_\Gamma$. The quotient complex of $\Delta_\Gamma$ under the action of the pure Artin group is isomorphic to $\bC_\Gamma$. This quotient map $\bar\pi:\Delta_\Gamma\to \bC_\Gamma$ restricts to an isomorphism on each apartment of $\Delta_\Gamma$. 
\end{definition}

\begin{definition}
	\label{def:associated subcomplex}
For each vertex $x\in \bC_\Gamma$, let $gW_{S\setminus\{s\}}$ be the associated left coset in $W_\Gamma$. Recall that vertices of the Davis complex $\Si_\Gamma$ can be identified with $W_\Gamma$. The \emph{standard subcomplex of $\Si_\Gamma$ associated with the vertex $x\in \bC_\Gamma$} is defined to be the standard subcomplex of $\Si_\Gamma$ spanned by vertices in $gW_{S\setminus\{s\}}$.
\end{definition}

When $W_\Gamma$ is a finite group, $\bC_\Gamma$ is homeomorphic to a sphere. More precisely, consider the canonical representation $\rho: W_\Gamma\to GL(n,\mathbb R)$ and let $\ca$ be the collection of all reflection walls in $\mathbb R^n$. Then elements in $\ca$ cuts the unit sphere of $\mathbb R^n$ into a simplicial complex, which is isomorphic to $\bC_\Gamma$. From this, we know that $\bC_\Gamma$ and $\Si_\Ga$ are dual complexes of each other.

%(DO WE NEED THIS?)Now we can also describe the Artin complex $\Delta_\Gamma$ in terms of $\od_\Ga$ as follows. Let $X$ be the universal cover of $\od_\Ga$. A \emph{lift} of a standard subcomplex in $\od_\Ga$ is a connected component of the inverse image of this subcomplex under the map $X\to\od_\Ga$. Vertices of $\Delta_\Gamma$ are in 1-1 correspondence with lifts standard subcomplexes of $\od_\Ga$ of type $\hat s$ for some $s\in S$. A collection of vertices span a simplex if their associated lifts have non-trivial common intersection. 

\subsection{Singular disk diagrams and combinatorial Gauss-Bonnet}
\label{ss:disk}
We quickly review disk diagrams and the Gauss-Bonnet formula. We will only need the Gauss-Bonnet formula for piecewise flat simplicial complexes, though it is more natural to set it up in the more general context of combinatorial CW complexes. We refer to \cite[Definition 2.1]{mccammond2002fans} for the definition of combinatorial CW complexes and combinatorial maps between them. We recall from \cite[Definition~2.6]{mccammond2002fans} that a \emph{(singular) disk diagram} $D$ is a finite contractible 2-dimensional combinatorial CW complex with a fixed embedding in the plane $\mathbb R^2$. A \emph{boundary cycle} of $D$ is a combinatorial map from a polygon $P$ to $D$ whose image is an edge-path in the graph $D^1$ (i.e. the 1-skeleton of $D$) corresponding to going around $D$ once in the clockwise direction along the boundary of the unbounded complementary region $\mathbb R^2\setminus D$ (see also \cite[p.~150]{LSbook}).

Let $P \to X$ be a closed null-homotopic edge path in a 2-dimensional combinatorial CW complex $X$. A \emph{singular disk diagram in $X$ for $P$}
is a singular disk diagram $D$ together with a map $D\to X$ such that the closed path
$P\to X$ factors as $P\to D\to X$ where $P\to D$ is a boundary cycle of $D$.
It is a theorem of Van Kampen that every null-homotopic closed edge path $P\to X$ is the boundary cycle of a singular disk diagram $D\to X$; moreover, we can assume this singular disk diagram is \emph{reduced}, i.e. $D-D^{(0)}\to X$ is an immersion, see \cite[V.2.1]{LSbook} or \cite[Lemma~2.17]{mccammond2002fans}. We caution the reader that $D$ is usually not homeomorphic to a 2-dimensional disk (thus the name ``singular'' disk diagram), e.g.\ it is not if $P\to X$ is not an embedding. 
Also even if $P\to X$ is an embedding, there might not exist a singular disk diagram for $P$ such that $D\to X$ is an embedding. If $X$ has a piecewise flat metric, then we equip the singular disk diagram $D$ with the natural piecewise flat metric induced by $D\to X$.

We will use the following version of the Gauss-Bonnet formula for a singular disk diagram $D$ which is a special case of the Gauss-Bonnet formula in \cite[Section 2]{ballmann1996nonpositively}. We assume that $D$ has a piecewise flat structure. For a vertex $v\in D^{(0)}$, let $\chi(v)$ be the Euler characteristic of $\lk(v,D)$. Recall that the length of an edge of $\lk(v,D)$ is the interior angle at $v$ of the 2-cell of $D$ corresponding to this edge. Let $\alpha(v)$ be the sum of the lengths of all edges in $\lk(v,D)$.
Define $\kappa(v)=(2-\chi(v))\pi-\alpha(v)$. Then
\begin{equation}
	\label{eq:GB}
	\sum_{v\in D^{(0)}}\kappa(v)=2\pi.
\end{equation}

Now suppose $X$ is a piecewise hyperbolic complex (e.g. each 2-cell of $X$ has the metric of a convex polygon in the hyperbolic plane). 
Given a $2$-cell $C\in D^{(2)}$, we denote by $\mathrm{Area}(C)$ the area of $C$. Then 
\begin{equation}
	\label{eq:GB1}
	\sum_{v\in D^{(0)}}\kappa(v)-\sum_{C\in D^{(2)}}\mathrm{Area}(C)=2\pi.
\end{equation}

\section{Preliminaries II}
\label{sec:pre2}
\subsection{Posets and lattices}
Let $P$ be a poset, i.e. a partially ordered set.
Let $S\subset P$. An \emph{upper bound} (resp. lower bound) for $S$ is an element $x\in P$ such that $s\le x$ (resp. $s\ge x$) for any $s\in S$. The \emph{join} of $S$ is an upper bound $x$ of $S$ such that $x\le y$ for any other upper bound $y$ of $S$. The \emph{meet} of $S$ is a lower bound of $x$ of $S$ such that $x\ge y$ for any other lower bound $y$ of $S$. We will write $x\vee y$ for the join of two elements $x$ and $y$, and $x\wedge y$ for the meet of two elements (if the join or the meet exists). A poset is \emph{bounded} if it has a maximal element and a minimal element. We say $P$ is \emph{lattice} if $P$ is a poset and any two elements in $P$ have a join and have a meet. 

A chain in $P$ is any totally ordered
subset, subsets of chains are subchains and a maximal chain is one that is not a proper subchain of any other chain. A poset has rank $n$ if it is bounded, every chain is
a subchain of a maximal chain and all maximal chains have length $n$. For $a,b\in P$ with $a\le b$, the \emph{interval}  between $a$ and $b$, denoted by $[a,b]$, is the collection of all elements $x$ of $P$ such that $a\le x$ and $x\le b$. The poset $P$ is \emph{graded} if every interval in $P$ has a rank. For $a,b\in P$, we say $b$ \emph{covers} $a$, if $b>a$ and $[a,b]=\{a,b\}$.

Recall that a \emph{bowtie} in a poset $P$ is a subset $\{x_1,x_2,y_1,y_2\}$ such that $y_i$ covers $x_j$ for $1\le i,j\le 2$. The name comes from that if we draw $y_1,y_2$ above $x_1,x_2$ in the Hasse diagram, then we obtain a bowtie shaped configuration.
\begin{definition}
	\label{def:bowtie free0}
	Let $P$ be a poset. We say $P$ is \emph{bowtie free} if any subset $\{x_1,x_2,y_1,y_2\}\subset P$ made of mutually distinct elements with $x_i<y_j$ for $i,j\in\{1,2\}$, there exists $z\in P$ such that $x_i\le z\le y_j$ for any $i,j\in\{1,2\}$.
\end{definition}

The interest of the bowtie free condition lies in the following observation.

\begin{lem}(\cite[Proposition 1.5]{brady2010braids})
	\label{lem:posets}
	If $P$ is a bowtie free graded poset, then any pair of elements in $P$ with a lower bound has a join, and any pair of elements in $P$ with a upper bound has a meet.
	
	Let $P$ be a bounded graded poset. Then $P$ is lattice if it is bowtie free. 
\end{lem}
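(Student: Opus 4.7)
The two clauses of the first assertion are order-dual, so the plan is to prove only the join statement: every pair $x, y \in P$ with a common upper bound has a join (the meet clause then follows by applying this to the opposite poset $P^{\mathrm{op}}$, which is again bowtie-free and graded). The key idea is that a common upper bound of minimal rank is forced to be \emph{the} join, with the bowtie-free hypothesis supplying the needed ``uniqueness in minimal rank''.

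After discarding the trivial cases in which $x = y$ or $x$ and $y$ are comparable (where the larger element already serves as the join), I may assume $x$ and $y$ are distinct and incomparable. Let $U$ denote the set of their common upper bounds; under this reduction every $u \in U$ satisfies $x < u$ and $y < u$ strictly, and $u \notin \{x, y\}$. Since $P$ is graded, the interval $[x, u]$ has a well-defined rank for each $u \in U$, so I choose $u^* \in U$ with $\mathrm{rk}([x, u^*])$ minimal, and claim that this $u^*$ is the join.

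Suppose, for contradiction, that there exists $u' \in U$ with $u^* \not\le u'$. The alternative $u' < u^*$ is excluded, as it would place $u'$ strictly below $u^*$ inside $[x, u^*]$ and violate the minimality of $\mathrm{rk}([x, u^*])$. Hence $u^*$ and $u'$ are incomparable and in particular distinct, so the four elements $\{x, y, u^*, u'\}$ are pairwise distinct and satisfy $x, y < u^*$ and $x, y < u'$. Applying the bowtie-free hypothesis produces $z \in P$ with $x, y \le z \le u^*, u'$. Then $z \in U$ and $z$ lies in $[x, u^*]$, so $\mathrm{rk}([x, z]) \le \mathrm{rk}([x, u^*])$; the choice of $u^*$ forces equality, and additivity of rank in the graded interval $[x, u^*]$ then forces $z = u^*$. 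Thus $u^* \le u'$, contradicting the choice of $u'$, and $u^*$ is indeed the join.

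For the second statement, if $P$ is bounded then the global minimum and maximum serve as common lower and upper bounds for every pair, so the first part immediately supplies both joins and meets and $P$ is a lattice. The main delicacy of the whole argument is ensuring that the four elements fed into the bowtie-free hypothesis are pairwise distinct and satisfy the strict inequalities required by the definition; once the reduction to the distinct, incomparable case secures this, the remainder is a clean minimum-rank contradiction.
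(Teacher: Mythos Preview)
Your argument is correct. The paper does not supply its own proof of this lemma; it is stated with a citation to Brady--McCammond, and your minimum-rank contradiction is the standard argument. One cosmetic note: as printed, the words ``join'' and ``meet'' appear to be transposed in the lemma statement (a pair with a common \emph{upper} bound should have a \emph{join}, and dually a pair with a common \emph{lower} bound a \emph{meet}); you have proved the intended version, which is also how the lemma is actually invoked later in the paper.
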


\subsection{Deligne groupoids}
\label{subsec:Deligne}
Let $\od_\Ga$ and $\Si_\Ga$ be the oriented Davis complex and the Davis complex introduced before, with the map $\pi:\od_\Ga\to \Si_\Ga$. Given an edge $e$ of $\od_\Ga$, write $\bar e=\pi(e)$. We will say an edge $e$ of $\od_\Ga$ is dual to a wall $H$ of $\Si_\Ga$ if $\bar e$ is dual to $H$.

Since the edges of $\od_\Ga$ are directed, each edge $a$ has a \emph{source}, denoted $\sfs(a)$, and a \emph{target}, denoted $\sft(a)$. Introduce a formal inverse of $a$, denoted $a^{-1}$. It can be thought as traveling  the same edge but in the opposite direction. Thus, $\sft(a^{-1})=\sfs(a)$ and $\sfs(a^{-1})=\sft(a)$. 

A \emph{path} of $\od_\Ga$ is an expression $g=a^{\eps_1}_1a^{\eps_2}_2\cdots a^{\eps_n}_n$ where $a_i$ being an edge of $\od_\Ga$, $\eps_i\in\{\pm 1\}$ and $\sft(a^{\eps_i}_i)=\sfs(a^{\eps_{i+1}}_{i+1})$. Define $\sfs(g)=\sfs(a^{\eps_1}_1)$ and $\sft(g)=\sft(a^{\eps_n}_n)$. The \emph{length} of $g$ is $n$. A vertex is a path of length $0$. The path $g$ is \emph{positive} if $\eps_i=1$ for each $1\le i\le n$. The path $g$ is \emph{minimal} if any path from $\sfs(g)$ to $\sft(g)$ has length $\ge n$. 

Let $\sim$ be the smallest equivalence relation on the set of paths such that
\begin{enumerate}
	\item $ff^\minus \sim \sfs(f)$ for any path $f$;
	\item if $f\sim g$, then $f^{-1}\sim g^{-1}$;
	\item if $f\sim g$, and $h_1$ is a path with $\sft(h_1)=\sfs(f)=\sfs(g)$, and $h_2$ is a path with $\sfs(h_2)=\sft(f)=\sft(g)$, then $h_1fh_2\sim h_1gh_2$;
	\item if $f$ and $g$ are both minimal positive paths with $\sfs(f)=\sfs(g)$ and $\sft(f)=\sft(g)$, then $f\sim g$.
\end{enumerate} 

Let $[f]$ be the collection of all paths equivalent to $f$. Define another equivalence relation on the set of positive paths, called \emph{positive equivalence} and denoted $\sim_+$: it is the smallest equivalence relation generated by conditions (3) and (4) above. If $f$ is positive, let $[f]_+$ be the set of all positive paths that are positively equivalent to $f$. Note that the elements of $[f]_+$ all have the same length; so, $[f]_+$ has a well-defined length.

Define $\g(\Ga)$ (resp. $\g^+(\Ga)$) to be the collection of all equivalence (resp. positive equivalence) classes of paths (resp. positive paths). Then $\g(\Ga)$ (resp. $\g^+(\Ga)$) has the structure of a groupoid (resp. a category), whose objects are vertices of $\od_\Ga$, whose morphisms are equivalence classes of paths (resp. positive paths) with compositions given by concatenation of paths.  Then $\g(\Ga)$ is called the \emph{Deligne groupoid}.  Put $\g=\g(\Ga)$ and $\g^+=\g^+(\Ga)$. For objects $x,y\in\g^+$, let $\g^+_{x\to}$ denote the collection of morphisms whose source  is $x$. Define $\g^+_{\to y}$ and $\g^+_{x\to y}$ similarly. These notions  can be defined for $\g$ (that is, without using the superscript ${}^+$) in the same way. Note that the collection of morphisms of $\g$ with source and target both equal to $x$ is a group, called the \emph{isotropy group} at $x$  denoted by $\g_x$.

For two morphisms (i.e., for two positive equivalence classes of positive paths) $f$ and $g$ in $\g^+$, 
define the \emph{prefix order} in such a way that $[f]\preccurlyeq [g]$ if there is a morphism $h$ such that $[g]=[fh]$. Similarly, define the \emph{suffix order} so that $[g]\succcurlyeq [f]$ if there is a morphism $[h]$ such that $[g]=[hf]$. Then $(\g^+_{x\to},\preccurlyeq)$ and $(\g^+_{\to y},\succcurlyeq)$ are posets. 

\begin{thm}\label{thm:deligne} 
	The following statements are true.
	\begin{enumerate}
		\item For $[g],[h_1],[h_2]\in\g^+$, if, in addition, $[g][h_1]=[g][h_2]$, then $[h_1]=[h_2]$; if, in addition, $[h_1][g]=[h_2][g]$, then $[h_1]=[h_2]$.
		\item The natural map $\g^+\to\g$ is injective.  In other words, two positive path are positively equivalent if and only if they are equivalent.  
		\item For any vertex $x\in\od_\Ga$, any two elements in  $(\g^+_{x\to},\preccurlyeq)$ have a meet, and any two elements in $(\g^+_{x\to},\preccurlyeq)$ with a common upper bound have a join. A similar statements hold for  $(\g^+_{\to x},\succcurlyeq)$.
		\item Suppose $A_\Gamma$ is spherical. Then for any vertex $x\in\od_\Ga$, the posets $(\g^+_{x\to},\preccurlyeq)$ and $(\g^+_{\to x},\succcurlyeq)$ are lattices.
		\item Suppose $A_\Gamma$ is spherical. Then for any vertex $x\in\od_\Ga$, the posets $(\g_{x\to},\preccurlyeq)$ and $(\g_{\to x},\succcurlyeq)$ are lattices.
	\end{enumerate}
\end{thm}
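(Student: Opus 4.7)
The plan is to translate each statement into a classical fact about the Artin monoid $A^+_\Gamma$ or the Artin group $A_\Gamma$ and then invoke results of Brieskorn--Saito, Paris, and Dehornoy--Paris. The key setup is a dictionary: for each fixed source vertex $x \in \od_\Ga$, I would identify positive equivalence classes of positive paths based at $x$ with elements of $A^+_\Gamma$ via the label-reading map of Definition~\ref{def:label}. A positive path $f$ with $\sfs(f)=x$ reads off a positive word $\w(f) \in S^*$ representing some $\varphi(f) \in A^+_\Gamma$, and $\sft(f)$ is determined by $\bar x$ and the image of $\varphi(f)$ in $W_\Gamma$. Condition~(d) in the definition of $\sim_+$, combined with Matsumoto's theorem, shows $\varphi$ descends to a well-defined map on $\sim_+$-classes; conversely every $\alpha \in A^+_\Gamma$ is realized by some positive path starting at $x$. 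Concatenation corresponds to multiplication in $A^+_\Gamma$, so the prefix order on $\g^+_{x\to}$ corresponds to left divisibility in $A^+_\Gamma$, and analogously the suffix order on $\g^+_{\to y}$ corresponds to right divisibility.

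With this dictionary, part~(1) is left/right cancellation in $A^+_\Gamma$ (Brieskorn--Saito); part~(2) is injectivity of $A^+_\Gamma \hookrightarrow A_\Gamma$ (Paris); and part~(3) is the Gaussian monoid property of $A^+_\Gamma$ (Brieskorn--Saito, via the complement construction on positive words), asserting that every pair of positive elements has a meet under divisibility, and that pairs with a common upper bound admit a join.

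For parts~(4) and~(5) I would invoke Garside theory. When $A_\Gamma$ is spherical, $A^+_\Gamma$ admits a Garside element $\Delta_S$, and any pair $\alpha,\beta \in A^+_\Gamma$ has a common upper bound of the form $\Delta_S^n$; combined with~(3) this upgrades $(\g^+_{x\to},\preccurlyeq)$ to a lattice, yielding~(4). For~(5), the isotropy group $\g_x$ is isomorphic to $A_\Gamma$, and every morphism in $\g_{x\to}$ can be written as $[f]^{-1}[g]$ with $f,g$ positive (clear denominators by left multiplication by a suitable power of $\Delta_S$); the lattice structure on $A^+_\Gamma$ then extends to one on $A_\Gamma$ by bringing any two such quotients to a common denominator, following Dehornoy--Paris.

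The main obstacle I expect is the careful verification of the dictionary itself, namely that two positive paths are positively equivalent in $\g^+$ if and only if their label readouts give equal elements of $A^+_\Gamma$. One direction is immediate from the braid relations defining $A^+_\Gamma$; the other requires combining Matsumoto's theorem with an induction on path length to convert any equality in $A^+_\Gamma$ into a chain of braid moves realizable inside $\od_\Ga$ by relations~(c) and~(d). Once this identification is set up rigorously, all five parts reduce directly to the classical results quoted above.
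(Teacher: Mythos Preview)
Your proposal is correct and matches the paper's own treatment: the paper does not give an independent proof of this theorem but simply attributes (1) and (3) to Brieskorn--Saito, (2) to Paris, and (4)--(5) to Brieskorn--Saito and Deligne. Your dictionary between $\g^+_{x\to}$ and $A^+_\Gamma$ via label reading is exactly the standard one (the paper states it in the sentence following the theorem), so your reduction to the classical monoid/group results is precisely what is intended.
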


Assertions (1) and (3) are proved by \cite{brieskorn1972artin}, Assertion (2) is proved in \cite{paris2002artin}, Assertions (4) and (5) are proved by \cite{brieskorn1972artin} and \cite{deligne}.

Given vertex $x\in \od_\Ga$, there is a 1-1 correspondence between elements in $\g_{x\to}$ and elements in $A_\Gamma$ by sending $[f]$ to $\w(f)$ (cf. Definition~\ref{def:label}), as $[f]=[g]$ if and only if $\w(f)=\w(g)$ in $A_\Gamma$. Similarly, there is a 1-1 correspondence between elements in $\g^+_{x\to}$ and elements in the positive monoid of $A_\Gamma$.

For two morphisms $f$ and $g$ with $\sfs(f)=\sfs(g)$ (resp. $\sft(f)=\sft(g)$), we write $[f]\vee_p [g]$ and $[f]\wedge_p [g]$ (resp. $[f]\vee_s [g]$ and $[f]\wedge_s [g]$) for the join and meet of $[f]$ and $[g]$ with respect to the prefix order (resp. suffix order).  
For paths $f$ and $g$, write $f\eq g$  if $[f]=[g]$.

%\begin{lem}
%	\label{lem:pn form}
%	Given any path $f$ on $\od_\Ga$, there exist positive paths $a$ and $b$ such that $f\eq ab^{-1}$ and $a\wedge_s b=\sft(a)$. Moreover, if $f\eq cd^{-1}$ where $c$ and $d$ are positive paths with $c\wedge_s d=\sft(c)$, then $a\eq c$ and $b\eq d$. Thus if $f\eq a_1b^{-1}_1$ for $a_1$ and $b_1$ positive, then $a\pr a_1$ and $b\pr b_1$.
%\end{lem}
%In the above lemma $\sft(a)$  denotes the identity morphism at the vertex $\sft(a)$. The proof of Lemma~\ref{lem:pn form} is identical to that of \cite[Theorem 2.6]{charney1995geodesic}. The decomposition $f\eq ab^{-1}$ is called the \emph{$pn$-normal form} of $f$.

For a given wall $H$ and a path $f=a^{\eps_1}_1a^{\eps_2}_2\cdots a^{\eps_n}_n$,  define the \emph{signed intersection number}, denoted $i(f,H)$, to be the sum of all the $\eps_i$'s such that $a_i$ is dual to $H$. In the special case when $f$ is positive, $i(f,H)$ is the number of times the edge path $\pi(f)$ crosses $H$. Two positive minimal paths with the same end points cross the same collection of walls (and each wall is crossed exactly once).  This gives the following lemma.

\begin{lem}(\cite[Proposition 1.11]{deligne})
	\label{lem:intersection number}
	Let $f$ and $g$ be paths on $\od_\Ga$ such that $f\sim g$. Then $i(f,H)=i(g,H)$ for any wall $H$. In particular, if $f$ and $g$ are positive paths with $f\sim g$, then they cross the same collection of walls.
\end{lem}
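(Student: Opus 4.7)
The plan is to show that $i(\,\cdot\,,H)$ is a well-defined function on equivalence classes of paths, by verifying invariance under each of the four generating relations (a)--(d) that define $\sim$. From the definition one immediately gets additivity under concatenation, $i(f_1 f_2, H) = i(f_1, H) + i(f_2, H)$ whenever the concatenation is defined, and the sign relation $i(f^{-1}, H) = -i(f, H)$. Invariance under (a) is then automatic since $i(ff^{-1}, H) = i(f, H) - i(f, H) = 0$, matching the intersection number of the constant path $\sfs(f)$. Invariance under (b) follows from the sign relation, and invariance under (c) follows from additivity applied to $h_1 f h_2$ and $h_1 g h_2$.

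The only nontrivial case is (d): if $f$ and $g$ are minimal positive paths with $\sfs(f)=\sfs(g)$ and $\sft(f)=\sft(g)$, I need $i(f,H)=i(g,H)$ for every wall $H$. The plan is to push both paths down to $\Si_\Ga$ via $\pi\colon\od_\Ga\to\Si_\Ga$. Because $f$ is a minimal positive path, its length as a morphism in $\g^+$ equals the word-length (in $S$) of a reduced expression for the corresponding element of $W_\Ga$, which in turn equals $d(\sfs(f),\sft(f))$ in $\Si^{(1)}_\Ga$. Therefore $\pi(f)$ is an edge geodesic in $\Si^{(1)}_\Ga$, and the same holds for $\pi(g)$.

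Now I would invoke the standard wall/distance dictionary recalled in Section~\ref{subsec:complex}: the path-metric distance between two vertices of $\Si_\Ga$ equals the number of walls separating them, and any edge geodesic between them crosses each separating wall exactly once and crosses no other wall. Consequently $\pi(f)$ and $\pi(g)$ cross exactly the same collection of walls, each exactly once; since both $f$ and $g$ are positive, $i(f,H)$ and $i(g,H)$ both equal $1$ when $H$ separates the endpoints and $0$ otherwise. This settles (d), and together with the reductions above, it settles the first assertion.

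For the ``in particular'' clause, observe that if $f,g$ are positive paths then $i(f,H)$ and $i(g,H)$ are nonnegative integers equal to the total number of times $\pi(f)$, respectively $\pi(g)$, crosses $H$. Equality of these integers forces the two paths to traverse the same set of walls. The main obstacle is really just the key step (d), but this reduces cleanly to the observation that minimal positive paths in $\od_\Ga$ project to edge geodesics in $\Si_\Ga$; everything else is bookkeeping via the additivity of $i(\,\cdot\,,H)$.
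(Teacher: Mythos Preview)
Your argument is correct and matches the paper's approach: the paper's ``proof'' is just the sentence immediately preceding the lemma, noting that two minimal positive paths with the same endpoints cross the same collection of walls (each exactly once), which is exactly your verification of relation (d) via projecting to edge geodesics in $\Si_\Ga$; the invariance under (a)--(c) is left implicit there as routine bookkeeping, which you have spelled out.
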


\subsection{Centralizer of parabolic Garside elements}
\label{subsec:centralizer}
Throughout this subsection, we assume $A_\Gamma$ is a spherical Artin group.
We recall from \cite[Section 5]{paris1997parabolic} the following definition. Let $X,X'\subset S$. A \emph{$(X',X)$-conjugator} is an element $\alpha\in A_\Gamma$ such that $\alpha X\alpha^{-1}=X'$. If $t\in S\setminus X$, $Y=X\cup\{t\}$ and $\alpha=\delta_Y\delta^{-1}_X$, then $\delta_Y\delta^{-1}_X$ is a $(X',X)$-conjugator, where $X'=\alpha X\alpha^{-1}$. Such a $(X',X)$-conjugator is called
an \emph{elementary $(X',X)$-conjugator}.

The following is \cite[Lemma 5.6]{paris1997parabolic} and \cite[Lemma 2.2]{godelle2003normalisateur}. 
\begin{lem}
	\label{lem:conjugator1}
	Suppose $A_\Gamma$ is a spherical Artin group and let $A_+$ be its positive monoid. Let $X\subset S$. Suppose there exists $p>0$ and $g\in A_+$ such that $\delta^p_X$ is central in $A_X$ and $g\delta^p_Xg^{-1}\in A_+$. Then $g=u_1\cdots u_nu_{n+1}$ where $u_{n+1}\in A_X$ and  there exists a sequence $X_n=X,X_{n-1},\cdots,X_0$ of subsets of $S$ such that $u_i$ an elementary $(X_{i-1},X_{i})$-conjugator for all $1\le i\le n$. Moreover, $g\delta^p_Xg^{-1}=\delta^p_{X_0}$ and $\delta^p_{X_0}$ is central in $A_{X_0}$.
\end{lem}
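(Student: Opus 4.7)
The plan is to prove this by induction on the length $\ell(g)$ of $g$ in the positive monoid $A_+$, peeling off one elementary conjugator (together with an $A_X$-tail) from the right of $g$ at each step. The base case is $g\in A_X$: take $n=0$, $u_{n+1}=g$, $X_0=X$; the conclusion $g\Delta_X^p g^{-1}=\Delta_X^p$ is immediate from centrality.

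For the inductive step, assume $g\notin A_X$. The crucial sub-claim is that $g$ admits a factorization
\[
g \;=\; v\cdot(\Delta_Y\Delta_X^{-1})\cdot w
\]
with $w\in A_X$, $v\in A_+$ of length strictly less than $\ell(g)$, and $Y=X\cup\{t\}$ spherical for some $t\in S\setminus X$. Granting this, put $u_n:=\Delta_Y\Delta_X^{-1}$, $u_{n+1}:=w$, and $X_{n-1}:=\tau_Y(X)\subset Y\subset S$, where $\tau_Y$ denotes the Garside involution of $A_Y$. A short telescoping using $u_n\Delta_X^p u_n^{-1}=\Delta_{X_{n-1}}^p$ and the centrality of $\Delta_X^p$ in $A_X$ yields
\[
v\,\Delta_{X_{n-1}}^p\,v^{-1} \;=\; g\,\Delta_X^p\,g^{-1}\;\in\;A_+,
\]
with $\Delta_{X_{n-1}}^p$ central in $A_{X_{n-1}}$ (as a conjugate of a central element). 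The inductive hypothesis applied to $v$ with $X_{n-1}$ in place of $X$ provides a decomposition $v = u_1\cdots u_{n-1}\cdot u''$ with $u''\in A_{X_{n-1}}$ and sequence $X_0,\ldots,X_{n-1}$. Since $u_n^{-1}A_{X_{n-1}}u_n=A_X$, one rewrites $u''\cdot u_n = u_n\cdot(u_n^{-1}u''u_n)$ and absorbs $u_n^{-1}u''u_n\in A_X$ into $u_{n+1}$, producing the desired final decomposition $g=u_1\cdots u_n u_{n+1}$ together with the sequence $X_0,X_1,\ldots,X_{n-1},X_n=X$.

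The main obstacle is the sub-claim itself. My approach is to exploit the identity $g\cdot\Delta_X^p = h\cdot g$ in $A_+$, where $h:=g\Delta_X^p g^{-1}$; additivity of length in the Garside monoid forces $\ell(h)=p\,\ell(\Delta_X)$. I would consider a greedy normal form of $g$ read from the right and extract the longest possible suffix $w\in A_X$, setting $g'=gw^{-1}$. Since $g\notin A_X$ while $w$ is maximal, the rightmost simple factor of $g'$ must involve some $t\in S\setminus X$. Analyzing how the equality $g\cdot\Delta_X^p = h\cdot g$ interacts with the simple factors of $g'$ and $\Delta_X^p$ at their common boundary, and exploiting the LCM/GCD structure of the spherical Artin monoid $A_Y$ for candidate $Y\supseteq X$, one shows that $X\cup\{t\}$ must itself be spherical and that $\Delta_{X\cup\{t\}}$ right-divides $g'\cdot\Delta_X$. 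Extracting this divisor produces $g'=v\cdot(\Delta_Y\Delta_X^{-1})$, completing the sub-claim. The Garside-theoretic analysis at the boundary between $g'$ and $\Delta_X^p$ is where the essential difficulty lies, since it requires tracking how positivity is preserved under a non-trivial conjugation and how a single new generator is forced to fit into a spherical standard parabolic together with $X$.
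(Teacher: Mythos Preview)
The paper does not give a self-contained proof here: it cites \cite[Lemma 5.6]{paris1997parabolic} (and \cite[Lemma 2.2]{godelle2003normalisateur}) and merely explains how to drop Paris's extra hypothesis that no $s\in X$ right-divides $g$, by first stripping a maximal $A_X$-suffix from $g$ using centrality of $\Delta_X^p$. Your inductive framework---strip the $A_X$-tail, peel one elementary conjugator, recurse---is exactly Paris's argument, so in spirit you are on the same route as the paper, just one citation layer deeper.

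Two remarks. First, since $A_\Gamma$ is assumed spherical, every subset of $S$ is spherical; so your concern that ``$X\cup\{t\}$ must itself be spherical'' is vacuous here, and the sub-claim reduces purely to a right-divisibility statement: after removing the maximal $A_X$-suffix, some $\Delta_{X\cup\{t\}}\Delta_X^{-1}$ right-divides what remains. Second, your justification of that divisibility (``analyzing how $g\Delta_X^p=hg$ interacts with simple factors at the boundary'') is precisely the nontrivial content of Paris's lemma, and your sketch does not really pin it down. The clean way is: with $g$ having no right divisor in $X$, the last simple factor of $g\Delta_X^p$ equals $\Delta_X$ joined (as right-lcm of atoms) with whatever atoms of $S\setminus X$ right-divide $g$; positivity of $h$ then forces at least one such $t$, and the right-lcm $\Delta_X\vee t=\Delta_{X\cup\{t\}}$ right-divides $g\Delta_X$, giving $g=v\cdot(\Delta_{X\cup\{t\}}\Delta_X^{-1})$ with $v\in A_+$. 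If you want a self-contained write-up, this is the step to spell out; otherwise, cite Paris as the paper does.
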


\begin{proof}
	As the statement of the lemma is slightly different from the exact statement of \cite[Lemma 5.6]{paris1997parabolic}, we give an explanation. \cite[Lemma 5.6]{paris1997parabolic} requires that we cannot write $g=g's$ with $g'$ positive and $s\in X$. However, it is always possible to reduce to this case as we are assuming $\delta^p_X$ is central in $A_X$. For the last statement, as the Dynkin diagram on $X$ and the Dynkin diagram on $X_0$ are isomorphic, $\delta^p_{X_0}$ is central in $A_{X_0}$.
\end{proof}

\begin{cor}
	\label{cor:conjugator2}
	Suppose $A_\Gamma$ is a spherical Artin group and let $A_+$ be its positive monoid. Let $X\subset S$. Suppose there exists $p>0$ and $g\in A_+$ such that $\delta^p_X$ is central in $A_X$ and $g\delta^p_Xg^{-1}=\delta^p_Y$. Then $g=v_{n+1}v_{n}\cdots v_1$ where $v_{n+1}\in A_Y$ and there exists a sequence $X_n=Y,X_{n-1},\cdots,X_0=X$ of subsets of $S$ such that $v_i$ an elementary $(X_{i},X_{i-1})$-conjugator for all $1\le i\le n$. 
\end{cor}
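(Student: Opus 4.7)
The plan is to apply Lemma~\ref{lem:conjugator1} to $g$ and then repackage the resulting factorization by migrating the parabolic factor from the right of the product to the left. Lemma~\ref{lem:conjugator1} produces
\[
	g=u_1u_2\cdots u_nu_{n+1}
\]
with $u_{n+1}\in A_X$, each $u_i$ an elementary $(\tilde X_{i-1},\tilde X_i)$-conjugator for a sequence $\tilde X_n=X,\tilde X_{n-1},\dots,\tilde X_0\subset S$, and $g\Delta_X^pg^{-1}=\Delta_{\tilde X_0}^p$.

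The one step with substantive content is the identification $\tilde X_0=Y$. Combining the lemma's conclusion with the hypothesis $g\Delta_X^pg^{-1}=\Delta_Y^p$ gives $\Delta_{\tilde X_0}^p=\Delta_Y^p$. Uniqueness of positive roots in spherical Artin groups \cite{deligne} then yields $\Delta_{\tilde X_0}=\Delta_Y$; projecting to the Coxeter quotient, the longest element $w_T\in W_T$ of a spherical $W_T$ has support exactly $T$, so $w_{\tilde X_0}=w_Y$ forces $\tilde X_0=Y$. An alternative route passes to the minimal central powers $c_{\tilde X_0}$ and $c_Y$ and invokes Lemma~\ref{lem:contain}(1).

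With $\tilde X_0=Y$ in hand, the rest is bookkeeping. Set $X_i:=\tilde X_{n-i}$ for $0\le i\le n$, so $X_0=X$ and $X_n=Y$; define $v_i:=u_{n+1-i}$ for $1\le i\le n$ and
\[
	v_{n+1}:=(u_1u_2\cdots u_n)\,u_{n+1}\,(u_1u_2\cdots u_n)^{-1}.
\]
A direct computation gives $v_{n+1}v_n\cdots v_1=u_1\cdots u_nu_{n+1}=g$, and under the new indexing $u_{n+1-i}$ is precisely an elementary $(\tilde X_{n-i},\tilde X_{n+1-i})=(X_i,X_{i-1})$-conjugator. Since each elementary conjugator $u_j$ satisfies $u_jA_{\tilde X_j}u_j^{-1}=A_{\tilde X_{j-1}}$, an induction on $j=n,n-1,\dots,1$ yields $(u_1\cdots u_n)A_X(u_1\cdots u_n)^{-1}=A_{\tilde X_0}=A_Y$, whence $v_{n+1}\in A_Y$. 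The only place where non-formal input is required is the identification $\tilde X_0=Y$; everything else is just relabeling and conjugation.
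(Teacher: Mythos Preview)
Your proof is correct, but it takes a different route from the paper's. The paper passes to the opposite monoid: it considers $A_\Gamma$ with the inverse generating set $\bar S=\{s^{-1}:s\in S\}$, observes that $g^{-1}$ is positive in $(A_\Gamma,\bar S)$ and that $g^{-1}\Delta_Y^{-p}g=\Delta_X^{-p}$ is positive there as well, and applies Lemma~\ref{lem:conjugator1} in that setting to $g^{-1}$ with starting set $Y$. Inverting the resulting factorization directly yields $g=u_{n+1}u_n\cdots u_1$ with the parabolic factor already on the left.

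Your approach instead applies Lemma~\ref{lem:conjugator1} to $g$ and $X$ as given, then migrates the parabolic factor from right to left by conjugation. This is more elementary in that it avoids setting up the opposite monoid, at the cost of two extra steps: you must verify $\tilde X_0=Y$ (your argument via unique positive roots and support of the longest element is fine), and your $v_{n+1}=(u_1\cdots u_n)u_{n+1}(u_1\cdots u_n)^{-1}$ lands in $A_Y$ but is not obviously positive. The statement of the corollary only asks for $v_{n+1}\in A_Y$, so this suffices; note however that the paper's approach yields $v_{n+1}\in A_Y\cap A_+$ for free, which is the form actually used downstream in Corollary~\ref{cor:decomposition}.
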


\begin{proof}
	We consider $A_\Gamma$ with a generating set $\bar S$ whose elements are inverses of elements in $S$. Then $g^{-1}$ is a positive element in $(A_\Gamma,\bar S)$ and $g^{-1}\delta^{-p}_Yg$ is a positive element in $(A_\Gamma,\bar S)$. By Lemma~\ref{lem:conjugator1}, $g^{-1}=u^{-1}_1\cdots u^{-1}_{n+1}$ with $u_i$ positive in $(A_\Gamma,S)$ (hence $u^{-1}_i$ positive in $(A_\Gamma,\bar S)$). Thus $g=u_{n+1}\cdots u_1$ and $u_i$ satisfies the required conditions.
\end{proof}

\subsection{A contractibility criterion for simplicial complexes}
Let $S=\{s_1,s_2,\ldots,s_n\}$. A simplicial complex $X$ is of \emph{type $S$} if all the maximal simplices of $X$ has dimension $n-1$ and $X$ admits a labeling of each vertex of $X$ by an element in $S$ such that adjacent vertices have different labels. This labeling induces a bijection between $S$ and the vertex set of each maximal simplex of $X$. 

We put a cyclic order $s_1<s_2<\cdots s_n<s_1$ on $S$. 
For each vertex $x$ in $X$ of type $s_i$, we consider a relation $<_x$ in $\lk(x,X)$ as follows. We identify vertices in $\lk(x,X)$ as vertices in $X$ which are adjacent to $x$. For each $s_i\in S$, this cyclic order induces an order on $S\setminus\{s_i\}$ by declaring $s_{i+1}<\cdots<s_n<s_1<\cdots s_{i-1}$. 
For $y,z\in \lk(x,X)$, define $y<_x z$ if $y$ is adjacent to $z$ and $\type(y)<\type(z)$ in $S\setminus \{s_i\}$.

The following contractility criterion is due to Haettel \cite{haettel2021lattices,haettel2022link}. This criterion is also closely related to another contractibility criterion due to Bestvina \cite{bestvina1999}, see Remark~\ref{rmk:connection}.

%The former can alternatively be deduced from the latter, 

\begin{thm}\cite{haettel2021lattices,haettel2022link}
	\label{thm:contractible}
	Let $X$ be a simplicial complex of type $S$, with $S$ being a cyclically ordered set as above. Suppose the following are true.
	\begin{enumerate}
		\item $X$ is simply-connected.
		\item For each vertex $x\in X$, the relation $<_x$ on the vertex set of $\lk(x,X)$ is a partial order.
		\item For each vertex $x\in X$, the set of vertex in $\lk(x,X)$ with this partial order is bowtie free.
	\end{enumerate}
Then $X$ is contractible. 

Moreover, if a group $G$ acts on $X$ by type-preserving automorphisms, then $X$ can be equipped with a metric with an $G$-equivariant consistent convex geodesic bicombing $\sigma$ such that each simplex of $X$ is $\sigma$-convex.
\end{thm}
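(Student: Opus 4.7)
My strategy is to metrize $X$ so that it becomes a piecewise $\ell_\infty$ simplicial complex and then obtain the convex bicombing from a local-to-global theorem for lattice-like links. Fix the cyclic order $s_1<\cdots<s_n<s_1$ on $S$. Model each maximal simplex of $X$ on the standard $\ell_\infty$-orthoscheme whose $n$ vertices are labelled $s_1,\ldots,s_n$ in cyclic order, and glue these models along common faces. Since every vertex carries a type and adjacent vertices have distinct types, the local models match up, so $X$ acquires a canonical piecewise-$\ell_\infty$ metric in which every simplex is tautologically convex.

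The heart of the argument is to verify the link condition. Fix a vertex $x\in X$ of type $s_i$. Condition (2) says that $(\vertex\lk(x,X),<_x)$ is a poset, graded by the cyclic position of $\type(y)$ inside $S\setminus\{s_i\}$. Condition (3) combined with Lemma~\ref{lem:posets} then guarantees that every pair of vertices with a common upper bound has a join and every pair with a common lower bound has a meet, so bounded intervals of this poset are lattices. Together with the flag-type property implicit in the definition of $<_x$ (transitivity forces every strictly ascending 3-chain to span a triangle, and the bowtie-free condition fills in higher-dimensional cells), this realises $\lk(x,X)$ as a lattice simplicial complex in the sense of Haettel. The link criterion of \cite{haettel2022link} then supplies a unique consistent convex geodesic bicombing on $\lk(x,X)$ whose simplices are convex.

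With the local picture in place, the conclusion follows from an $\ell_\infty$-analogue of the Cartan--Hadamard theorem proved in \cite{haettel2021lattices}: a simply connected piecewise $\ell_\infty$-simplicial complex all of whose vertex links admit consistent convex bicombings preserving simplices itself admits such a bicombing $\sigma$ preserving simplices. Simple connectedness is hypothesis (1). Uniqueness of $\sigma$ forces $G$-equivariance under any type-preserving simplicial action, since such an action is automatically isometric in the chosen metric. Contractibility is then immediate: $\sigma(\,\cdot\,,x_0)$ gives a deformation retraction of $X$ onto any chosen basepoint $x_0$. The hard part is the middle step, which must translate the combinatorial bowtie-free partial-order condition into the metric/lattice-complex language required by the link criterion, and match it with the $\ell_\infty$-orthoscheme metric on each simplex; once this translation is done, the remaining steps are formal consequences of Haettel's machinery.
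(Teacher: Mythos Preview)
The paper does not actually prove this theorem; it is quoted from \cite{haettel2021lattices,haettel2022link}, and the only indication of the argument appears in Remark~\ref{rmk:connection} and implicitly in the proof of Lemma~\ref{lem:4-cycle}. There the route is: form the infinite ``unrolling'' $P=X^{(0)}\times\mathbb Z$ with its weak order, so that $X_\varphi\cong X\times\mathbb R$; use Assumption~(3) and Lemma~\ref{lem:posets} to make each interval $[x,\varphi(x)]$ a lattice; then invoke \cite[Theorem~1.3]{haettel2022lattices}, which gives either contractibility via the Bestvina/Bessis Morse-theoretic argument or an injective metric on $X_\varphi$.

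Your sketch is a legitimate rendition of Haettel's machinery, but it works directly on $X$ with the $\ell_\infty$-orthoscheme metric and a link criterion, rather than passing to the unrolled complex $X_\varphi$. Both viewpoints appear in Haettel's papers and are essentially equivalent; the unrolling is what turns the \emph{cyclic} order on $S$ into a genuine linear order so that the interval/lattice language applies cleanly, and this is the step you gloss over. Concretely, your claim that ``bounded intervals of $(\lk(x,X),<_x)$ are lattices'' is exactly what the paper extracts from Lemma~\ref{lem:posets}, and your appeal to a local-to-global theorem corresponds to the injective-metric half of Remark~\ref{rmk:connection}. One point you should tighten: the assertion that $\lk(x,X)$ is (the order complex of) the poset $(\vertex\lk(x,X),<_x)$ needs the flag condition, which the paper does not state as a hypothesis here but rather derives in context; you should say explicitly why chains in $<_x$ span simplices of $X$.
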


The contractibility of $X$ will be used later in the proof of $K(\pi,1)$-conjecture. The last part can be used to prove a number of other group theoretic properties other than the $K(\pi,1)$-conjecture.
We refer to \cite{descombes2015convex} for background on convex geodesic bicombing. 

We will also need an extra combinatorial property of $X$. To prove this combinatorial property, we need the following relaxation of the partial order.

\begin{definition}
	\label{defi:weak_order}
	A \emph{weakly ordered} set $P$ is a set with a binary relation $\le$ over $P$ which is  reflexive and antisymmetric. Moreover, while transitivity may fail, we do require the following associativity law for transitivity. Define $(a,b,c)\in P^3$ to be a \emph{transitive triple} if $a\le b$, $b\le c$ and $a\le c$. We require $\le$ satisfies the following condition:
	
	\noindent
	$(\ast)$: for any quadruple $a,b,c,d\in P$ with $a\le b,b\le c$ and $c\le d$, we have $(b,c,d)$ and $(a,b,d)$ are transitive triples if and only if $(a,b,c)$ and $(a,c,d)$ are transitive triples.
\end{definition}

The notions of upper bound, join, lower bound, meet, interval and homogeneity can be defined for a weakly ordered set in the same way. 
Let $(P,\le)$ be a weakly ordered set. For $x\in P$, let $P_{\ge x}$ be the collection of all elements which are $\ge x$. Similarly we define $P_{\le x}$. Note that $P_{\ge x}$ and $P_{\le x}$ are actually posets by $(\ast)$. 
A \emph{weak chain} in a weakly ordered set $P$ is a sequence of elements $c_1<c_2<c_3<\cdots<c_n$ such that any two adjacent elements in the sequence are comparable. A \emph{chain} is a weak chain such that any two elements in the weak chain are comparable.
Note that if $P$ does not contain non-trivial weak chains which start and end at the same element, then the weak order $\le$ on $P$ actually generates a partial order $\le_t$, where $a\le_tb$ if $a$ and $b$ are the first and the last member of a weak chain in $P$ with respect to $\le$.

\begin{lem}
	\label{lem:4-cycle}
	Let $X$ be as in Theorem~\ref{thm:contractible}, with all the three assumptions there satisfied. Then for any induced 4-cycle in the 1-skeleton of $X$, there is a vertex $x\in X$ such that $x$ is adjacent to each vertex of this 4-cycle.
\end{lem}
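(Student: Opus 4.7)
Let $y_1, y_2, y_3, y_4$ denote the vertices of the induced $4$-cycle in cyclic order, with edges $y_i y_{i+1}$ (indices mod $4$) and non-edges $y_1 y_3$, $y_2 y_4$; write $t_i = \type(y_i)$. The goal is to find $x \in X$ adjacent to every $y_i$.

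The plan is to apply the bowtie-free hypothesis to the poset $(\lk(y_1,X), <_{y_1})$, in which $y_2$ and $y_4$ are incomparable because the edge $y_2 y_4$ is missing. Since every maximal simplex of $X$ has dimension $n-1$ and contains one vertex of each label, every edge of $X$ lies in a top-dimensional simplex. I would pick top simplices $\sigma_{12} \supset \{y_1, y_2\}$ and $\sigma_{14} \supset \{y_1, y_4\}$; removing $y_1$ from each yields maximal chains in $(\lk(y_1,X), <_{y_1})$ through $y_2$ and through $y_4$. From the first chain I extract the vertex $a$ of label $t_4$ and from the second the vertex $b$ of label $t_2$; the induced-$4$-cycle hypothesis forces $a \ne y_4$ and $b \ne y_2$, and the definition of $<_{y_1}$ pins down the comparability relations among $\{a, b, y_2, y_4\}$ in terms of the cyclic order of the labels $t_1, t_2, t_4$. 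After an appropriate rearrangement these four elements form a bowtie configuration in $\lk(y_1, X)$, so assumptions~(2)--(3) together with Lemma~\ref{lem:posets} produce a vertex $z \in \lk(y_1, X)$ adjacent in $X$ to both $y_2$ and $y_4$, and hence adjacent to $y_1, y_2, y_4$ simultaneously.

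A symmetric argument applied to $(\lk(y_3,X),<_{y_3})$ produces $z' \in \lk(y_3, X)$ adjacent to $y_2, y_3, y_4$. The final step is to promote these two local witnesses to a single vertex $x$ adjacent to all four $y_i$. For this I would use assumption~(1) (simple-connectedness) to fill the closed loop $z - y_2 - z' - y_4 - z$ by a reduced singular disk diagram $D \to X$, and then apply the bowtie-free condition at each interior vertex of $D$, together with the consistent convex bicombing supplied by the \emph{moreover} part of Theorem~\ref{thm:contractible} and the fact that each simplex of $X$ is convex under this bicombing, to force either $z = z'$ or the appearance of a single vertex of $X$ adjacent to all of $y_1, y_2, y_3, y_4$.

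The step I expect to be the main obstacle is precisely this globalization: the bowtie-free hypothesis is local to each link and a priori only furnishes common neighbors of three of the four $y_i$ at a time, so matching the candidates from $\lk(y_1, X)$ and $\lk(y_3, X)$ into one vertex of $X$ genuinely requires the combined strength of all three hypotheses of Theorem~\ref{thm:contractible}, rather than any single one. If the disk-diagram route proves too delicate, a backup plan is to observe that each interior vertex of $D$ has its own link satisfying the same hypotheses, so one can iteratively shrink the disk using the bowtie-free property until no interior vertex remains and the common neighbor is read off directly.
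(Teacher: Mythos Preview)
Your local step already fails. In $\lk(y_1,X)$ you produce $a$ (of type $t_4$, from $\sigma_{12}$) and $b$ (of type $t_2$, from $\sigma_{14}$), but the only comparabilities you actually have are $y_2 \lessgtr a$ and $b \lessgtr y_4$: each comes from two vertices lying in a common simplex. For $\{a,b,y_2,y_4\}$ to be a bowtie you would also need $y_2$ comparable to $y_4$ (false by hypothesis) or $b$ comparable to $a$ (no reason, they live in different maximal simplices). Equivalently, to invoke Lemma~\ref{lem:posets} you would need $y_2$ and $y_4$ to already possess a common upper or lower bound in $(\lk(y_1,X),<_{y_1})$; nothing in your setup provides one, and the bowtie-free axiom does not manufacture bounds ex nihilo. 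So you cannot conclude that a vertex $z$ adjacent to $y_1,y_2,y_4$ exists. The same objection applies to $z'$.

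Even granting $z$ and $z'$, your globalization is not an argument. The loop $z\,y_2\,z'\,y_4$ is another $4$-cycle, and ``fill it and shrink using bowtie-freeness'' is a restatement of the problem, not a reduction of it; there is no induction parameter that decreases. Invoking the bicombing from the moreover clause of Theorem~\ref{thm:contractible} is also circular in spirit: that clause is a nontrivial consequence of exactly the lattice machinery you are trying to bypass.

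The paper's proof is global from the start and does not attempt to patch link-local witnesses. One builds the weakly ordered set $P=X^{(0)}\times\mathbb Z$ with the shift $\varphi$, checks that each interval $[x,\varphi(x)]$ is a lattice (this is where assumption~(3) enters, via Lemma~\ref{lem:posets}), and then applies \cite[Theorem~1.3]{haettel2022lattices} together with the simple connectivity of $X_\varphi\cong X\times\mathbb R$ to conclude that $(P,\le_t)$ is a genuine lattice. The $1$-skeleton of $X$ is then the quotient of this lattice by the $\mathbb Z$-action, and the $4$-wheel property is read off from the lattice structure via the remark at the end of \cite[Section~2]{haettel2022lattices}. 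The point is that the common neighbour of an induced $4$-cycle comes from a join (or meet) computed in the global lattice $P$, not from anything visible in a single link.
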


\begin{proof}
We will follow the proof of \cite[Theorem 5.6]{haettel2022lattices}.
	Given $X$ as in the theorem, we can apply a construction in the work of Hirai and Haettel \cite{hirai2020uniform,haettel2022injective,haettel2021lattices} to obtain a new complex $Y$ as follows. 
	Let $P=X^{(0)}\times \mathbb Z$. 
	We put a weak order on $P$ as follows. Suppose $x_1,x_2\in X$ are vertices of type $s_{i_1}$ and $s_{i_2}$ respectively.
	Define $(x_1,n)\le (x_2,m)$ if one of the following holds:
	\begin{enumerate}
		\item $n=m$, $x_1$ and $x_2$ are adjacent and $i_1\le i_2$;
		\item $m=n+1$, $x_1$ and $x_2$ are adjacent and $i_2\le i_1$.
	\end{enumerate}
	Note that $(P,\le)$ is a weakly ordered set. If $(x,n)\le (y,m)$, then $x$ and $y$ are adjacent in $X$. Conversely, if $x$ and $y$ are adjacent, then either $(x,n)\ge (y,n)$ or $(x,n)\ge (y,n-1)$. As the 1-skeleton $X^1$ is connected, we deduce that for any $(x,n)\in P$ and any $y\in X$, there exists $m\in\mathbb Z$ such that we can find a weak chain from $(y,m)$ to $(x,n)$.
	The weakly ordered set $P$ is homogeneous, indeed, for $a\le b$ in $P$, we can define the length function $\ell(a\le b)$ to be the maximal length of chains in $P$ from $a$ to $b$. 
	
	Let $\varphi: P\to P$ be the map sending $(x,n)$ to $(x,n+1)$. Then $\varphi$ is an automorphism of weakly ordered set. Then $\varphi$ generates the weak order $\le$. Let $X_\varphi$ be the simplicial complex whose vertex set is $P$ and whose edges correspond
	to morphisms of form $a\le b$ such that $a,b\in [x,\varphi(x)]$ for $x\in P$. As $X_{\varphi}$ is homeomorphic to $X\times \mathbb R$, by Assumption 1 of Theorem~\ref{thm:contractible}, $X_{\varphi}$ is simply-connected. By Assumption 3 of Theorem~\ref{thm:contractible} and Lemma~\ref{lem:poset}, $[x,\varphi(x)]$ is a lattice for $x\in P$.
	Thus \cite[Theorem 1.3]{haettel2022lattices} implies $\le$ generates a partial order $\le_t$ on $P$. The previous paragraph implies any two elements in $(P,\le_t)$ have lower bound, thus $(P,\le_t)$ is a lattice by \cite[Theorem 1.3]{haettel2022lattices}. As $(P,\le_t)$ is homogeneous, we know any lower bounded subset in $(P,\le_t)$ has a meet and any upper bounded subset in $(P,\le_t)$ has a join. Now the automorphism $\varphi$ gives an action $\mathbb Z\act (P,\le_t)$. 
	The previous paragraph also implies for any $a,b\in (P,\le_t)$, there exists $k\in \mathbb Z$ with $$-k\cdot a\le b\le k\cdot a.$$
	Note that $X^1$ coincides with the quotient graph of $(P,\le_t)$ by $\mathbb Z$ as defined in \cite[Section 2]{haettel2022lattices}. Thus we are done by the remark at the end of \cite[Section 2]{haettel2022lattices}. 
\end{proof}

\begin{remark}
	\label{rmk:connection}
Note that in the last paragraph of the proof of \cite[Theorem 1.3]{haettel2022lattices}, it is shown that $X_\varphi$ is contractible by \cite[Corollary 7.6]{bessis2006garside}, which relies on a Morse theoretic argument of Bestvina \cite{bestvina1999}. In particular, this implies the contractibility part of Theorem~\ref{thm:contractible} immediately. For comparison, Haettel's proof of Theorem~\ref{thm:contractible} relies on putting an injective metric on $X_\varphi$. Both proofs rely crucially on the lattice structure on the interval $[x,\varphi(x)]$.
\end{remark}

\section{Some properties of (oriented) Davis complexes}
\label{sec:Davis}

In this section we collect and prove several properties of Davis complexes and oriented Davis complexes for later use. In Section~\ref{subsec:elementary segments}, we reformulate the discussion of centralizers in Section~\ref{subsec:centralizer} into a geometric property of oriented Davis complexes, see Corollary~\ref{cor:decomposition}. In Section~\ref{subsec:convex hull}, we prove two properties of convex hulls of two standard subcomplexes in a Davis complex. In Section~\ref{subsec:escaping} we prove some properties of certain types of paths in the Davis complexes.

Let $\od_\Ga$, $\g^+=\g^+(\Ga)$, $\g=\g(\Ga)$, and $\pi:\od_\Ga\to \Si_\Ga$ be as in Section~\ref{subsec:Deligne}.
Recall that edges of $\Si_\Ga$ and $\od_\Ga$ are labeled by elements in the generating set $S$ (cf. Definition~\ref{def:label}). We will write $\Si=\Si_\Ga$ and $\od=\od_\Ga$.
We endow the 1-skeleta of these two complexes with path metrics such that each edge has length one. We use $d$ to denote the path metric. By a \emph{geodesic} between two vertices of $\Si$ or $\od$, we always mean geodesic in the 1-skeleton with respect to this path metric.

We will be repeatedly using the following basic observation which is a direct consequence of \cite{matsumoto1964generateurs}. Suppose $u_1$ and $u_2$ are two geodesics in the 1-skeleton $\Sii$ with the same endpoints. Then $\supp(u_1)=\supp(u_2)$. Likewise, if $u_1$ and $u_2$ are minimal positive paths on $\od$ with $[u_1]=[u_2]$, then $\supp(u_1)=\supp(u_2)$. As a consequence, by Theorem~\ref{thm:deligne} (2), the same statement holds true if $u_1$ and $u_2$ are positive paths.

\subsection{Elementary segments}\label{subsec:face}
\label{subsec:elementary segments}
\begin{definition}
	Suppose $F$ is a standard subcomplex of $\Si_\Ga$.  An \emph{$F$-path} of $\od_\Ga$ is a path whose image under $\pi$ is contained in $F$.
\end{definition}

Let $\G(F)$ denote the Deligne groupoid  over the 1-skeleton of $\widehat F$. Then $\G(F)$ also satisfies Theorem~\ref{thm:deligne}.
For a vertex $x\in\od_\Ga$ and a standard subcomplex $F$ of $\Si_\Ga$, denote by $\Sigma_{x\to}(F)$ (resp. $\Sigma_{\to x}(F)$)  the collection of edges in $\od_\Ga$ whose image under $\pi$ is contained in $F$ and whose source (resp. target) is $x$. 

If $F$ is a face of $\Si_\Ga$, 
let $\ant(x,F)$ denote the antipodal vertex to $x$ in $F$.  For $y=\ant(x,F)$ define $\delta^{x}(F)$ (resp. $\delta_{x}(F)$) to be the morphism represented by a minimal positive path from $y$ to $x$ (resp. $x$ to $y$). The equivalence classes of such positive paths of this form are called \emph{Garside elements}. For an integer $k\ge 1$, define 
$$(\delta_x(F))^k:=\underbrace{\delta_x(F)\delta_y(F)\delta_x(F)\cdots}_{k \text{ times}}.$$
In the special case when $A_\Ga$ is spherical, to simplify notation, put 
\begin{align*}
	\delta^x=\delta^x(&\Si_\Ga), \quad \Sigma_{x\to}=\Sigma_{x\to}(\Si_\Ga),\\
	\delta_x=\delta_x(&\Si_\Ga), \quad\Sigma_{\to x}=\Sigma_{\to x}(\Si_\Ga).
\end{align*}
Note that if we read off the labels of consecutive edges in $\delta_x(F)$, then we obtain a word which is the Garside element in the spherical standard parabolic subgroup $A_{\Gamma'}$ of $A_{\Gamma}$, where $\Gamma'$ is the full subgraph spanned by $\supp(F)$.

Recall that the notion of two faces of $\Si_\Ga$ being parallel is defined before Definition~\ref{def:intersection poset}.
\begin{definition}
	\label{def:adj}
Two	Parallel faces $F$ and $F'$ of $\Si_\Ga$ are \emph{adjacent} if $F\neq F'$ and if they are contained in a face $F_0$ with $\dim(F_0)=\dim(F)+1$.
\end{definition}

\begin{definition}
	\label{def:elementary segment}
	Let $B\in\cq_\Ga$ (cf. Definition~\ref{def:intersection poset}). Let $F$ and $F'$ be two adjacent parallel faces of $\Si_\Ga$ that are dual to $B$. An \emph{elementary $B$-segment}, or an \emph{$(F,F')$-elementary $B$-segment} is a minimal positive path from a vertex $x\in F$ to $x'=p(x)\in F'$, where $p:F\to F'$ is parallel translation.
\end{definition}

\begin{lem}
	\label{lem:geodesic in carrier}
	Suppose $p:F\to F'$ be the parallel translation between two parallel faces of $\Si_\Ga$ that are dual to an element $B\in \cq_\ca$. Take a vertex $x\in F$. Then there exists an edge path $u$ of shortest length between $x$ and $p(x)$ in the 1-skeleton of $\Si_\Ga$ such that $u$ is made of elementary $B$-segments.
\end{lem}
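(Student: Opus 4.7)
The plan is to induct on $N := d(x,p(x))$, the combinatorial distance in $\Si_\Ga^{(1)}$. The base case $N=0$ is trivial. For the inductive step, I will produce an adjacent parallel face $F_1$ of $F$ together with an $(F,F_1)$-elementary $B$-segment $v$ from $x$ to $p_1(x) := \prj_{F_1}(x)$ such that $v$ is a geodesic in $\Si_\Ga^{(1)}$ and $d(p_1(x),p(x)) = N - \operatorname{length}(v)$. Applying the induction hypothesis to the parallel pair $F_1, F'$ (which has strictly smaller distance) yields a shortest edge path from $p_1(x)$ to $p(x)$ made of elementary $B$-segments, and concatenating with $v$ produces the required path.

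To construct $F_1$, I pick a geodesic $\gamma$ from $x$ to $p(x)$ in $\Si_\Ga^{(1)}$ and let $e$ be its first edge, with label $s$ and dual wall $H$. Because $F$ and $F'$ are parallel, no wall of $B = \mathcal W(F) = \mathcal W(F')$ separates them, so every wall crossed by $\gamma$ lies outside $B$; in particular $s$ is not in the type $T$ of $F$, and $H$ belongs to the set $W$ of walls separating $F$ from $F'$. The minimal face $F_0$ of $\Si_\Ga$ containing $F$ and $e$ is then the Coxeter cell of type $T\cup\{s\}$, of dimension $\dim F + 1$. Inside the finite Coxeter cell $F_0$, the parallel class of $F$ dual to $B$ contains a unique face $F_1 \ne F$, namely the one lying on the opposite side of $H$ from $F$; by Definition~\ref{def:adj}, $F_1$ is adjacent parallel to $F$, and I take $v$ to be an elementary $B$-segment from $x$ to $p_1(x)$, which runs inside $F_0$.

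The main obstacle is to show that every wall crossed by $v$ lies in $W$. The set $W_0$ of such walls can be identified with $\mathcal W(F_0)\setminus B$: every wall in $\mathcal W(F_0)\setminus B$ separates $F$ from its unique parallel $F_1$ inside $F_0$ (by the antipodal arrangement of opposite codimension-one parallel faces within a finite Coxeter cell), and no wall of $B$ is crossed by $v$ since parallel translation preserves sides of walls in $B$. Thus $\operatorname{length}(v)=|W_0|$, and it remains to establish the containment $W_0 \subseteq W$. The plan is to track how each wall of $\mathcal W(F_0)$ separates the three faces $F, F_1, F'$: the inclusion $H \in W_0 \cap W$ anchors the argument, and for any other $H' \in W_0$, the fact that $F_1$ lies on the $F'$-side of $H$ (by construction) together with Lemma~\ref{lem:pair gate} applied to the parallel pair $F_1, F'$ should force $F'$ to lie on the $F_1$-side of $H'$, yielding $H' \in W$. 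The key leverage is that $\gamma$ is a geodesic and therefore crosses no wall outside $W$, which constrains the position of $F'$ relative to each wall of $F_0$ through the parallel translation between $F_1$ and $F'$. Granted $W_0 \subseteq W$, the segment $v$ is a geodesic in $\Si_\Ga^{(1)}$ and $d(p_1(x),p(x)) = N - |W_0|$, closing the induction; I expect verifying this containment to be the main technical content of the proof.
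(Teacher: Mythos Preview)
Your inductive plan is sound and does lead to a valid proof, but it differs from the paper's route and the two steps you treat lightly are exactly where the content lies; both require the structure of $B$ that the paper exploits directly.

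First, you assert without justification that the minimal face $F_0$ of type $T\cup\{s\}$ exists. In a general Davis complex this is not automatic: $T\cup\{s\}$ need not be spherical just because $T$ is and $e$ is an edge at $x$. It is spherical here, but the reason already invokes $B$: since $H_e$ separates $bF$ from $bF'$ inside the connected set $B$, we have $H_e\cap B\neq\emptyset$, and this intersection is the fixed set of $xW_{T\cup\{s\}}x^{-1}$ in the $\CAT(0)$ complex $\Si_\Ga$, which is nonempty only when that group is finite.

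Second, your sketched argument for $W_0\subseteq W$ via Lemma~\ref{lem:pair gate} does not work as written. Knowing that $F_1$ lies on the $F'$-side of the single wall $H$ gives no control over where $F'$ sits relative to another $H'\in W_0$, and the geodesic $\gamma$ need not cross $H'$ at all. The correct argument is again through $B$: every $H'\in W_0$ separates the adjacent vertices $bF,bF_1$ of $B$, so all such $H'$ have the \emph{same} trace $H'\cap B$, namely the unique wall of $\cw^B$ dual to the edge $\overline{bF\,bF_1}$. Since $H_e\in W_0\cap W$, this common trace separates $bF$ from $bF'$ in $B$, and hence every $H'\in W_0$ separates $F$ from $F'$.

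The paper's proof bypasses both issues by working directly in $B^{(1)}$ rather than $\Si_\Ga^{(1)}$: it takes a geodesic from $bF$ to $bF'$ in the $1$-skeleton of $B$, so each edge immediately yields an adjacent parallel face and its elementary segment, and the wall count matches via the decomposition $\cw(F,F')=\sqcup_i\{H:H\cap B=H_i^B\}$. Your induction can be completed, but only after reconstructing this picture of $B$, which makes the paper's approach the more direct one.
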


\begin{proof}
	Let $d$ denotes the length metric on the 1-skeleton of $\Si_\Ga$ and let $\cw$ denotes the collection of walls in $\Si_\Ga$. Then $d(x,p(x))$ equals to the number of walls in $\Si_\Ga$ separating $x$ from $p(x)$. By the definition of $p$, this equals to the number of walls in $\cw(F,F')$, which is defined to be the collection of walls separating $F$ from $F'$. Thus it suffices find a concatenation of elementary $B$-segments from $x$ to $p(x)$ such that each element in $\cw(F,F')$ are crossed exactly once, and no walls in $\cw\setminus\cw(F,F')$ are crossed.

Let $\cw^B=\{H\cap B\mid H\cap B\neq \emptyset\ \mathrm{and}\ H\in \cw\}$. Each element in $\cw^B$ separates $B$ into exactly two connected components. We endow $B$ with the induced cell structure from $\Si_\Ga$. 
Vertices of $B$ are in 1-1 correspondence with faces of $\Si_\Ga$ that are dual to $B$, and the distance between two vertices of $B$ in the 1-skeleton is the number of walls in $\cw^B$ that separate these two vertices.
Moreover, $B\cap b\Si_\Ga$ is a subcomplex of $b\Si_\Ga$ which can be viewed as the barycentric subdivision $bB$ of $B$.

Let $bF$ be the barycenter of $F$, viewed as a vertex of $bB$. Take a geodesic edge path $u_B$ in the 1-skeleton of $bB$ from $bF$ to $bF'$. 
	Let $\cw^B(bF,bF')$ be the collection of walls in $\cw^B$ that separate $bF$ from $bF'$ in $B$. Then $$\cw^B(bF,bF')=\{H\cap B\mid H\in \cw(F,F')\}.$$

	Let $\{v_i\}_{i=1}^n$ be consecutive vertices in $u_B$ with $v_1=bF$ and $v_n=bF'$. Let $H^B_i\in \cw^B$ be the wall of $B$ separating $v_i$ from $v_{i+1}$. Then $$\cw^B(bF,bF')=\{H^B_1,\ldots, H^B_{n-1}\}.$$ 
	Suppose $v_i$ is the barycenter of $F_i$. Then $F_i$ and $F_{i+1}$ are adjacent parallel faces. Let $p_i:F\to F_i$ be the parallel translation and let $u_i$ be an elementary $B$-segment from $p_{i}(x)$ to $p_{i+1}(x)$. Let $u$ be the concatenation of $\{u_i\}_{i=1}^{n-1}$. Note that the collection of walls of $\cw$ crossed by $u_i$ equals to the collection of walls separating $F_i$ and $F_{i+1}$, which equals to $$\cw_i=\{H\in \cw\mid H\cap B=H^B_i\}.$$ Note that $$\cw(F,F')=\sqcup_{i=1}^{n-1}\cw_i.$$ Thus $u$ is the desired geodesic.
\end{proof}

\begin{lem}
	\label{lem:equal support}
	 Let $e_1=\overline{x_1y_1}$ and $e_2=\overline{x_2y_2}$ be parallel edges in $\Si_\Ga$ such that the parallel translation between them sends $x_1$ to $y_1$. Let $u_x$ be a geodesic from $x_1$ to $x_2$, and let $u_y$ be a geodesic from $y_1$ to $y_2$. Then $\supp(u_x)=\supp(u_y)$ (see Definition~\ref{def:label} for the definition of support).
\end{lem}

\begin{proof}
By Lemma~\ref{lem:geodesic in carrier}, we assume $u_x$ is made of elementary $H$-segments where $H$ is the wall dual to $e_1$. For each elementary $H$-segment $u_{x,i}$ of $u_x$, we can find another elementary $H$-segment $u_{y,i}$ on the other side of $H$ such that $u_{x,i}$ and $u_{y,i}$ are contained in a 2-dimensional face which intersects $H$. Note that $\supp(u_{x,i})=\supp(u_{y,i})$ and the concatenation of $u_{y,i}$'s is a geodesic from $y_1$ and $y_2$. Thus the lemma is proved.
\end{proof}

Take $B\in\cq_\Ga$. Let $F$ and $F'$ be two adjacent parallel faces of $\Si_\Ga$ dual to $B$ such $F$ and $F'$ are contained in a face $F_0$ with $\dim(F_0)=\dim(F)+1$.
Any elementary $B$-segment can be written as $\delta_x(F_0)(\delta^y(F'))^{-1}$ for some vertex $x\in F$ and $y=\ant(x,F_0)$ (cf. \cite[Lemma 3.8]{davis2021bordifications}). Thus for any $(F,F')$-elementary $B$-segment $h$, $\w(h)$ is an elementary $(\supp(F),\supp(F'))$-conjugator in the sense of \cite[Section 5]{paris1997parabolic}. Recall that $\w(h)$ and $\supp(F)$ are defined in Definition~\ref{def:label}. Conversely, any elementary conjugator arises as a word associated with some elementary $B$-segment for some $B\in\cq_\Ga$. We also record the following consequence of this discussion.

%or one type of positive elementary $\supp(F)$-ribbon-$\supp(F')$ in the sense of \cite[Definition 0.2]{godelle2003parabolic}. 

Let $S$ be the vertex set of $\Gamma$. A subset $T\subset S$ is \emph{irreducible}, if the sub-Dynkin diagram spanned by element in $T$ is connected. Given $T'\subset S$, an \emph{irreducible component} of $T'$ is the vertex set of a connected component of the sub-Dynkin diagram spanned by $T'$. For $T_1,T_2\subset S$, we write $T_1\perp T_2$ if $T_1\cap T_2=\emptyset$, and each element of $T_1$ commutes with every element of $T_2$.

\begin{lem}
	\label{lem:irreducible segment}
	Let $B,F,F'$ and $F_0$ be as in Definition~\ref{def:elementary segment}, and
	let $u$ be an $(F,F')$-elementary $B$-segment from $x$ to $x'$. Then
	\begin{enumerate}
		\item  $\supp(u)$ equals to the irreducible component of $\supp(F_0)$ that contains $\supp(F_0)\setminus\supp(F)$, which is also the irreducible component of $\supp(F_0)$ that contains $\supp(F_0)\setminus\supp(F')$;
		\item for any $p>1$, there exists an $(F,F')$-elementary $B$-segment $v$ such that $$[u](\delta_y(F'))^p=(\delta_x(F))^p[v]\ \textmd{and}\ \w(u)=\w(v);$$
		\item if $v$ is an $(F',F'')$-elementary $B$-segment from $x'$ to $x''$ with $\supp(v)\perp \supp(u)$, then $[uv]=[v'u']$ where $v'$ and $u'$ are elementary $B$-segments with $\w(u)=\w(u')$ and $\w(v)=\w(v')$.
	\end{enumerate} 
\end{lem}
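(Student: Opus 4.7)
The common thread in all three parts is the identity $[u] = [\Delta_x(F_0)][\Delta^y(F')]^{-1}$ recorded just before the lemma, which at the Artin group level reads $\w(u) = \Delta_{S_0}\Delta_{\supp(F')}^{-1}$ where $S_0 = \supp(F_0)$. In particular $\w(u)$ depends only on the pair $(F,F')$ and not on the source vertex, so any two $(F,F')$-elementary $B$-segments share the same Artin word. I also use that parallelism of adjacent $F,F'\subset F_0$ forces $\supp(F)=\supp(F')=:X$: the dual walls of a codimension-one face correspond to reflections in the parabolic on its support, so coincidence of the two sets of walls forces coincidence of supports. Consequently $X$ is invariant under the diagram involution $\sigma_{S_0}$ of $S_0$ induced by $w_0(S_0)$.

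For part (1), decompose $S_0=S_1\sqcup\cdots\sqcup S_k$ into irreducible components and let $S_1$ be the component containing the unique generator $s\in S_0\setminus X$. Then $X=(S_1\setminus\{s\})\sqcup S_2\sqcup\cdots\sqcup S_k$, and since Garside elements of orthogonal components commute, $\w(u)=\Delta_{S_1}\Delta_{S_1\setminus\{s\}}^{-1}$, giving $\supp(u)\subset S_1$. For the reverse inclusion, pass to the Coxeter image $v=w_0(S_1)w_0(S_1\setminus\{s\})$ in $W_{S_1}$, whose support as a Coxeter element equals $\supp(u)$ via the section $\mathfrak{s}$ and the uniqueness of minimal length positive lifts. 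The inversion set $\Phi(v)$ equals the set of positive roots of $W_{S_1}$ with nonzero coefficient of $\alpha_s$. For each $s'\in S_1$, the highest root of the standard parabolic whose type is the Dynkin-diagram geodesic from $s$ to $s'$ lies in $\Phi(v)$ and has support containing $s'$; since the Dynkin diagram of the irreducible $S_1$ is connected, this exhibits every simple reflection and yields $\supp(v)=S_1$.

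For part (2), the target equation translates at the Artin group level to $\w(u)\Delta_X^p=\Delta_X^p\w(v)$. Setting $\w(v)=\w(u)=\Delta_{S_0}\Delta_X^{-1}$, this reduces to the commutation identity $\Delta_{S_0}\Delta_X^p=\Delta_X^p\Delta_{S_0}$, which follows from the ribbon identity $\Delta_{S_0}\Delta_X\Delta_{S_0}^{-1}=\Delta_{\sigma_{S_0}(X)}=\Delta_X$ (a consequence of Lemma~\ref{lem:conjugator1}). Equality of the corresponding morphisms in $\g^+$ then follows from equality of $\w$ together with Theorem~\ref{thm:deligne}(2). The required $v$ is the $(F,F')$-elementary $B$-segment starting at $\sft((\Delta_x(F))^p)$, which equals $u$ for even $p$ and starts at $\ant(x,F)$ for odd $p$. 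For part (3), the orthogonality $\supp(u)\perp\supp(v)$ places $\w(u)$ and $\w(v)$ in commuting standard parabolic subgroups, so they commute in $A_\Ga$; geometrically the path $uv$ fits into a commutative rectangle in $\od$ built from the product structure of faces in orthogonal directions. Taking $v'$ to be the elementary $B$-segment starting at $\sfs(u)$ in the $\supp(v)$-direction and $u'$ its completion, one has $\w(v')=\w(v)$ and $\w(u')=\w(u)$ by the vertex-independence observed at the outset, and $[uv]=[v'u']$ from equality of Artin words via Theorem~\ref{thm:deligne}(2).

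The main obstacle is the reverse inclusion in part (1): while the containment $\supp(u)\subset S_1$ is immediate from the product decomposition, showing that every simple reflection of $S_1$ actually appears in $\supp(u)$ requires the root-theoretic analysis of $\Phi(v)$ and crucially uses the connectedness of the irreducible Dynkin diagram of $S_1$.
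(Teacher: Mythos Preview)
Your proof contains a genuine gap: the claim that parallelism forces $\supp(F)=\supp(F')$ is false. Take $F_0$ to be the hexagon for $W_{\{s,t\}}$ with $m_{st}=3$. The wall of the reflection $s$ is dual to exactly two edges of the hexagon, namely $\{e,s\}$ (an $s$-edge) and $\{ts,sts\}$ (a $t$-edge); these are adjacent parallel codimension-one faces of $F_0$ with $\supp(F)=\{s\}\neq\{t\}=\supp(F')$. Your justification conflates the set of dual walls with the set of simple reflections in the support: the walls dual to a face $gW_X$ are the conjugates $grg^{-1}$ of reflections $r$ of $W_X$, and two cosets of different types can give the same wall set. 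The correct relation, which follows from the elementary-conjugator description recorded just before the lemma, is $\supp(F')=\sigma_{S_0}(\supp(F))$.

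This error propagates to both (1) and (2). In (1), your formula $\w(u)=\Delta_{S_0}\Delta_{\supp(F')}^{-1}$ is correct and your inversion-set argument does show that $\supp(u)$ is the irreducible component of $S_0$ containing $S_0\setminus\supp(F')$; but you have not addressed why this coincides with the component containing $S_0\setminus\supp(F)$, which the lemma also asserts. (It does, because $\sigma_{S_0}$ permutes irreducible components and sends $S_0\setminus\supp(F')$ to $S_0\setminus\supp(F)$---but this must be argued, not assumed via $X=X'$.) In (2), the word-level identity you actually need is $\Delta_{S_0}\Delta_{\supp(F')}^{\,p}=\Delta_{\supp(F)}^{\,p}\Delta_{S_0}$, not the commutation $\Delta_{S_0}\Delta_X^p=\Delta_X^p\Delta_{S_0}$ for a single $\sigma_{S_0}$-invariant $X$; the correct identity follows from $\sigma_{S_0}(\supp(F'))=\supp(F)$ rather than from invariance.

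For comparison, the paper's proof avoids the Artin-word calculus entirely. For (1) it reduces to the irreducible case by passing to the sub-face $F_u\subset F_0$ with $\supp(F_u)$ equal to the relevant irreducible component, inside which $F\cap F_u$ and $F'\cap F_u$ remain adjacent parallel faces. For (2) it constructs $v$ geometrically as the elementary $B$-segment from $z=\ant(x,F)$ to $z'=\ant(x',F')$, observes that $u\Delta_{x'}(F')$ and $\Delta_x(F)v$ are minimal positive paths with the same endpoints (hence equivalent), and extracts $\w(u)=\w(v)$ from the two identities $u\Delta_{x'}(F')\eq\Delta_x(F_0)$ and $v\Delta_{z'}(F')\eq\Delta_z(F_0)$; then iterates. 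Your algebraic route becomes valid once the $\supp(F)=\supp(F')$ error is replaced by $\sigma_{S_0}(\supp(F'))=\supp(F)$, but the paper's geometric argument is shorter and does not require the ribbon identity or the root-system computation.
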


\begin{proof}
	For (1), note that if $\supp(F_0)$ is irreducible, then $\supp(u)=\supp(F_0)$. If $F_0$ is not irreducible, let $I_1$ be the irreducible component of $\supp(F_0)$ that contains $\supp(F_0)\setminus\supp(F)$, and let $I_2=\supp(F_0)\setminus I_1$.
	For $i=1,2$, let $F_i$ be the face of $F_0$ containing $u\cap F$ such that $\supp(F_i)=I_i$. Then $F_0\cong F_1\times F_2$. 
	Note that $F\cap F_1$ and $F'\cap F_1$ are parallel adjacent faces, and $u\subset F_1$. Now (1) follows. For (2), let $z$ (resp. $z'$) be the antipodal point of $x$ (resp. $x'$) in $F$ (resp. $F'$). Then $z'=p(z)$. Let $v$ be an elementary $B$-segment from $z$ to $z'$. By Lemma~\ref{lem:gate}, both $u\delta_{x'}(F')$ and $\delta_x(F)v$ are minimal positive paths with same endpoints. Thus, $u\delta_{x'}(F')\eq\delta_x(F)v$. Now we show $\w(u)=\w(v)$. As $x$ and $z$ are antipodal points in $F_0$, we know $u\delta_{x'}(F')\eq\delta_x(F)v\eq\delta_x(F_0)$. Similarly, $v\delta_{z'}(F')\eq\delta_z(F)u\eq\delta_{z'}(F_0)$. As $\w(\delta_{x'}(F'))=\w(\delta_{z'}(F'))$ and $\w(\delta_x(F_0))=\w(\delta_{z'}(F_0))$, we deduce that $\w(u)=\w(v)$. (2) now follows by repeatedly applying this argument. (3) is clear. 
\end{proof}

We also record the following statement, which is a direct translation of
Lemma~\ref{lem:conjugator1} and Corollary~\ref{cor:conjugator2}.

\begin{cor}
	\label{cor:decomposition}
	Suppose $A_\Gamma$ is spherical.
	Let $F$, $F'$ of $\Si_\Ga$ be two parallel faces. Take $k>0$ such that $\w((\delta_{x'}(F'))^k)$ is central in $A_{\supp(F')}$.  Let $f,f'$ be a positive paths in $\od_\Ga$ from $x\in F$ to $x'\in F'$ satisfying the both of the following assumptions:
	\begin{itemize}
		\item $f(\delta_{x'}(F'))^k\eq hf'$, for a positive path $h$;
		\item $\w(f)=\w(f').$
	\end{itemize}
	Then $f$ and $f'$ admit decompositions, let us call them of type I, satisfying all of the following conditions:
	\begin{itemize}
		\item $f\eq u_1u_2\cdots u_nu_{n+1}$ such that $u_{n+1}$ is a positive $F'$-path and each $u_i$ with $1\le i\le n$ is an elementary $B$-segment and $B\in\cq_\Ga$ is dual to $F$;
		\item $f'\eq u'_1u'_2\cdots u'_nu'_{n+1}$ such that $u'_{n+1}$ is a positive $F'$-path and each $u'_i$ with $1\le i\le n$ is an elementary $B$-segment, moreover $\w(u'_i)=\w(u_i)$ for $1\le i\le n+1$;
		\item for $2\le i\le n+1$, there is a face $F_i$ parallel to $F$ such that for $1\le i\le n+1$,  $$[u_i](\delta_{\sft(u_i)}(F_{i+1}))^k=(\delta_{\sfs(u_i)}(F_i))^k[u'_i]$$ (here $F_1=F$ and $F_{n+2}=F'$).
	\end{itemize}
	Alternatively, $f$ and $f'$ admit different decompositions, let us call them of type II, satisfying all of the following three conditions:
	\begin{itemize}
		\item $f\eq v_{n+1}v_{n}\cdots v_1$ such that $v_{n+1}$ is a positive $F'$-path and each $u_i$ with $1\le i\le n$ is an elementary $B$-segment and $B\in\cq_\Ga$ is dual to $F$;
		\item $f'\eq v'_{n+1}v'_{n}\cdots v'_1$ such that $v'_{n+1}$ is a positive $F'$-path and each $v'_i$ with $1\le i\le n$ is an elementary $B$-segment, moreover $\w(v'_i)=\w(v_i)$ for $1\le i\le n+1$;
		\item for $2\le i\le n+1$, there is a face $E_i$ parallel to $F$ such that for $1\le i\le n+1$,  $$[v_i](\delta_{\sft(v_i)}(E_{i}))^k=(\delta_{\sfs(v_i)}(E_{i+1}))^k[v'_i]$$ (here $E_1=F'$ and $E_{n+2}=F$).
	\end{itemize}
\end{cor}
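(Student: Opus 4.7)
The plan is to translate the hypothesis into an algebraic identity in $A_\Gamma$, apply Lemma~\ref{lem:conjugator1} (resp.\ Corollary~\ref{cor:conjugator2}) to produce an algebraic factorization, and lift each elementary conjugator factor to an elementary $B$-segment in $\od_\Gamma$ using the correspondence recalled just before Lemma~\ref{lem:irreducible segment}. Condition~(3) then drops out factor-by-factor from Lemma~\ref{lem:irreducible segment}(2).

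Set $g := \w(f) = \w(f')$ and $c := \w((\Delta_{x'}(F'))^k) = \Delta_{\supp(F')}^k$, central in $A_{\supp(F')}$ by hypothesis. Applying $\w$ to~(1) yields $gc = \w(h)\,g$, so $gcg^{-1} = \w(h)$, which is positive since $h$ is positive. This is precisely the hypothesis of Lemma~\ref{lem:conjugator1} with $X = \supp(F')$; the lemma produces a factorization $g = \alpha_1\alpha_2\cdots\alpha_n\alpha_{n+1}$ in which $\alpha_{n+1}\in A_{X_0}$, each $\alpha_i$ ($1 \le i \le n$) is an elementary $(X_{i-1},X_i)$-conjugator along a sequence $X_n = \supp(F'),X_{n-1},\ldots,X_0$, and $gcg^{-1} = \Delta_{X_0}^k$.

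By the correspondence recalled just before Lemma~\ref{lem:irreducible segment}, each $\alpha_i$ is $\w$ of some elementary $B_i$-segment with $B_i \in \cq_\Gamma$. I realize the factorization geometrically by induction: starting from $x_0 := x \in F_1 := F$, pick at step $i \le n$ an elementary $B_i$-segment $u_i$ with $\sfs(u_i) = x_{i-1}$ and $\w(u_i) = \alpha_i$; let $F_{i+1}$ be its target face (adjacent parallel to $F_i$, hence parallel to $F$) and $x_i := \sft(u_i) \in F_{i+1}$. Matching supports through the conjugation relations forces $\supp(F_i) = X_{i-1}$ and in particular $\supp(F_{n+1}) = \supp(F')$. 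Finally take $u_{n+1}$ to be a positive $F_{n+1}$-path out of $x_n$ representing $\alpha_{n+1} \in A_{X_0}$. The concatenation $u_1 u_2 \cdots u_{n+1}$ has source $x$ and word $g = \w(f)$, so by Theorem~\ref{thm:deligne}(2) together with uniqueness of positive representatives with given source and word, $f \eq u_1 u_2 \cdots u_{n+1}$; consequently $\sft(u_{n+1}) = x'$, we may take $F_{n+1} = F'$, and the convention $F_{n+2} = F'$ is consistent.

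For the twin decomposition of $f'$ and the verification of~(3), apply Lemma~\ref{lem:irreducible segment}(2) with $p = k$ to each $u_i$ ($1 \le i \le n$) and the parallel pair $F_i, F_{i+1}$. This produces an elementary $B_i$-segment $u'_i$ with $\w(u'_i) = \w(u_i)$ realizing precisely
\[
[u_i]\,(\Delta_{\sft(u_i)}(F_{i+1}))^k = (\Delta_{\sfs(u_i)}(F_i))^k\,[u'_i],
\]
which is exactly condition~(3). For $i = n+1$, define $[u'_{n+1}] := (\Delta_{\sfs(u_{n+1})}(F'))^{-k}\,[u_{n+1}]\,(\Delta_{\sft(u_{n+1})}(F'))^k$; centrality of $\w((\Delta_{x'}(F'))^k)$ in $A_{\supp(F')}$ guarantees $\w(u'_{n+1}) = \w(u_{n+1})$ and that $u'_{n+1}$ remains a positive $F'$-path. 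The concatenation $u'_1 u'_2 \cdots u'_{n+1}$ has source $x$ and word $g = \w(f')$, hence is positively equivalent to $f'$ by the same uniqueness. Decomposition~II is obtained by the identical recipe using Corollary~\ref{cor:conjugator2} in place of Lemma~\ref{lem:conjugator1}, yielding $g = \beta_{n+1}\beta_n\cdots\beta_1$ and lifting it inductively (backwards from $x'$, or equivalently forwards from $x$ reading the factorization from the other end) through faces $E_i$ parallel to $F$. The principal delicacy throughout is bookkeeping: carefully matching sources, targets, and faces so that the algebraic factorization assembles into a genuine concatenation in $\g^+(\Gamma)$; once this is arranged, Lemma~\ref{lem:irreducible segment}(2) applied factor-by-factor delivers condition~(3) mechanically.
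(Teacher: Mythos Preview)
Your proposal is correct and follows exactly the approach the paper intends: the paper states this corollary as ``a direct translation of Lemma~\ref{lem:conjugator1} and Corollary~\ref{cor:conjugator2}'' without further proof, and you have supplied precisely that translation, using the elementary-conjugator/elementary-$B$-segment correspondence and Lemma~\ref{lem:irreducible segment}(2) to recover condition~(3) factor by factor. The bookkeeping you flag (matching $\supp(F_i)$ with the algebraic $X_{i-1}$, and checking that the concatenation lands at $x'$ so that $F_{n+1}=F'$) is the only real content beyond the cited lemmas, and you handle it adequately.
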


\subsection{Convex hulls of two standard subcomplexes}
\label{subsec:convex hull}
 Let $E_1$ and $E_n$ be two standard subcomplexes of $\Si$. A subset $K$ of $\vertex \Si$ is \emph{convex} if for any $x,y\in K$ and any geodesic $u\subset \Si^1$ (or $u\subset \od^1$), all the vertices of $u$ are contained in $K$. In particular, $\vertex E_i$ is convex for $i=1,n$ by Lemma~\ref{lem:gate}. We define an \emph{halfspace} of $\vertex \Si$ to be the collection of all points in one side of a wall $H\in \ca$. Each halfspace is convex. Recall that we write $T_1\perp T_2$ for subsets $T_1,T_2$ of vertex set $S$ of $\Gamma$, if $T_1\cap T_2=\emptyset$ and each element in $T_1$ commutes with every element of $T_2$.

\begin{lem}
	\label{lem:hull}
	Let $K$ be the convex hull of $\vertex E_1\cup \vertex E_n$, i.e. the smallest convex subset of $\vertex\Si$ containing $\vertex E_1\cup \vertex E_n$. Take $x\in K$, let $x_i=\prj_{E_i}(x)$ and let $u_i$ be a geodesic from $x$ to $x_i$ ($i=1,n$). Then $\supp(u_1)\perp\supp(u_n)$.
\end{lem}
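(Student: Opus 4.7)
The plan is to leverage the characterization of walls dual to geodesic edges (Lemma~\ref{lem:gate}(3)) together with the convex-hull property of $K$ to force first a disjointness and then a commutation between the walls crossed by $u_1$ and those crossed by $u_n$.

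For $i=1,n$, let $\cw(u_i)$ denote the collection of walls of $\Si$ dual to some edge of $u_i$. By Lemma~\ref{lem:gate}(3), $\cw(u_i)=\{H:\vertex E_i\subset H^-,\ x\in H^+\}$ for a suitable choice of sides $H^\pm$. As an immediate consequence, $\cw(u_1)\cap\cw(u_n)=\emptyset$: a wall $H$ in the intersection would have $\vertex E_1\cup\vertex E_n\subset H^-$, so $H^-$ is a halfspace containing $\vertex E_1\cup\vertex E_n$, forcing $K\subset H^-$ and contradicting $x\in K\cap H^+$. This disjointness is the first step, and in particular any edge of $u_1$ and any edge of $u_n$ are dual to distinct walls.

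The heart of the argument is to strengthen disjointness to $\supp(u_1)\perp\supp(u_n)$. I would proceed by induction on $d(x,\vertex E_1)+d(x,\vertex E_n)$: the base case (one distance zero) is trivial since $\supp$ of an empty path is empty. For the inductive step, pick the first edge $e$ of $u_n$, from $x$ to some $x'$. Convexity of $K$ places $x'\in K$, since $x'$ lies on a geodesic between $x\in K$ and $x_n\in K$. If $e$ simultaneously decreases the distance from $x$ to $\vertex E_1$, the parameter strictly drops and induction applies directly at $x'$ with the geodesics $e^{-1}\cdot u_1$ (with its leading $e^{-1}$ cancelled) and $u_n$ minus $e$. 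Otherwise, the wall dual to $e$ lies in $\cw(u_n)$ but not in $\ch_{E_1}$; here I would invoke Lemma~\ref{lem:pair gate} to isolate the parallel gate subcomplexes $E_1'\subset E_1$, $E_n'\subset E_n$ and use the elementary-segment decomposition of Corollary~\ref{cor:decomposition} to commute $e$ past a prefix of $u_1$, replacing $u_1$ by a geodesic of the same length whose first letter has label commuting with that of $e$. In either branch the induction hypothesis yields the commutation of the remaining supports, which together with the commutation introduced during the replacement delivers $\supp(u_1)\perp\supp(u_n)$.

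The main obstacle is the commutation step itself. The delicate point is organizing the interaction between walls of $\ch_{E_i}$ (walls that cross the standard subcomplex $E_i$) and walls that strictly separate $\vertex E_1$ from $\vertex E_n$; without careful bookkeeping, a naive replacement of $u_1$ could reintroduce a wall already handled in a previous induction step. The parallel-subcomplex structure of Lemma~\ref{lem:pair gate} splits $\cw(u_i)$ into an ``inner'' part (walls dual to edges of the gate $E_i'$) and an ``outer'' part (walls strictly separating $E_1$ from $E_n$), and this decomposition, combined with the fact (from Corollary~\ref{cor:decomposition}) that elementary $B$-segments for walls separating parallel faces commute with inner moves in a controlled way, is what should make the inductive argument terminate and yield the labelled-orthogonality conclusion.
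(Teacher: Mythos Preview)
Your first paragraph is correct and matches the paper's opening observation: any wall dual to an edge between two points of $K$ must meet $E_1$ or $E_n$, and in particular $\cw(u_1)\cap\cw(u_n)=\emptyset$. The trouble starts with your case split. If $e$ is the first edge of $u_n$, its dual wall $H$ already separates $x$ from $E_n$, so $H\cap E_n=\emptyset$; by your own opening observation this forces $H\cap E_1\neq\emptyset$, i.e.\ $H\in\ch_{E_1}$ and $d(x',E_1)=d(x,E_1)$. Your ``Case~1'' (where $e$ also decreases the distance to $E_1$) is therefore vacuous, and your stated description of ``Case~2'' (``not in $\ch_{E_1}$'') repeats the same impossible condition. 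All the content lives in the situation $H\in\ch_{E_1}$, which you have not actually addressed.

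More seriously, the tool you reach for in the surviving case is wrong. Corollary~\ref{cor:decomposition} is a statement about positive paths in the oriented Davis complex of a \emph{spherical} Artin group; it has no bearing on this purely Coxeter-theoretic lemma in an arbitrary Davis complex. And the step you call ``commute $e$ past a prefix of $u_1$'' is exactly the orthogonality you are trying to establish, so invoking it without an independent argument is circular. You acknowledge this is ``the main obstacle'' but do not resolve it.

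The paper's argument avoids these problems by inducting on $d(x,E_1)$ alone and stepping along $u_1$ rather than $u_n$. Given $x'$ on $u_1$ with $d(x',E_1)=d(x,E_1)-1$, the inductive hypothesis gives $\supp(u'_1)\perp\supp(u'_n)$ for $u'_n$ a geodesic from $x'$ to $\prj_{E_n}(x')$. The dual wall $H$ of the edge $e=\overline{xx'}$ meets $E_n$ (by the opening observation), so Lemma~\ref{lem:geodesic in carrier} lets one choose $u'_n$ to be a concatenation of elementary $H$-segments. The inductive orthogonality $\supp(u'_1)\perp\supp(u'_n)$ then forces each such segment to be a single edge (otherwise a $2$-face argument produces an edge parallel to $e$ inside $E_1$, contradicting $H\cap E_1=\emptyset$). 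This yields $\supp(e)\perp\supp(u'_n)$, and a ladder of squares transports $u'_n$ across $e$ to a geodesic $w_n$ from $x$ to $\prj_{E_n}(x)$ with $\supp(w_n)=\supp(u'_n)$, finishing the induction. The key ingredient you are missing is Lemma~\ref{lem:geodesic in carrier}, not Corollary~\ref{cor:decomposition}.
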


A more natural notation to use might be $E_1$ and $E_2$, instead of $E_1$ and $E_n$. However, we hope to align the notation in later sections when this lemma is used, so we use $E_1$ and $E_n$.

\begin{proof}
	First note that for any edge connecting two vertices in $K$, the wall $H$ dual to this edge must intersect at least one of $E_1$ or $E_n$, otherwise $H$ would contain $E_1$ and $E_n$ on the same side as $E_1\cap E_n\neq\emptyset$, implying $K$ is contained on one side of $H$.

	We prove the lemma by induction on the distance $d(x,E_1)$. The case $d(x,E_1)=0$ is trivial as $\supp(u_1)=\emptyset$. Now we suppose the lemma is true if $d(x,E_1)=k$. Take $x\in K$ with $d(x,E_1)=k+1$. Let $x'$ be the vertex in $u_1$ such that $d(x',E_1)=k$. Let $u'_1$ be the subsegment of $u_1$ between $x'$ and $\prj_{E_1}(x)=\prj_{E_1}(x')$, and let $u'_n$ be a geodesic between $x'$ and $\prj_{E_n}(x')$. By induction $\supp(u'_1)\perp\supp(u'_n)$. Let $F$ be the standard subcomplex with $\supp(F)=\supp(u'_n)$ that contains $u'_n$. 
	
	Let $H$ be the wall dual to the edge $e$ between $x$ and $x'$. Then $H$ intersects at least one of $E_1$ and $E_n$, otherwise $E_1\cup E_n$ is contained on one side of $H$, hence $K$ is contained on one side of $H$ as each halfspace is convex, contradicting that $x,x'\in K$. As $e$ is an edge in the geodesic from $\prj_{E_1}(x)$ to $x$, $H\cap E_1=\emptyset$ by Lemma~\ref{lem:gate}. Thus $H\cap E_n\neq\emptyset$. It follows that $\prj_{E_n}(x)$ and $\prj_{E_n}(x')$ span an edge $e'$ which is parallel to $e$. By Lemma~\ref{lem:geodesic in carrier}, we can find a geodesic $u$ from $x'$ to $\prj_{E_n}(x')$ that is made of elementary $H$-segments. 
	
	We claim each elementary $H$-segment in $u$ is made of a single edge. Suppose $u$ contains an elementary $H$-segment $v$ which is not a single edge. Then $\supp(v)$ consists of two elements of $S$ which do not commute. Let $F_v$ the 2-dimensional face with  $\supp(F_v)=\supp(v)$ that contains $v$. Note that $v\subset F\cap F_v$. Indeed, as $F$ is convex (Lemma~\ref{lem:gate} (1)), we know $u\subset F$, hence $v\subset F$. As $F_v\cap F$ is face of $\Sigma$ and $F_v\cap F$ contains at least two consecutive edges, we must have $$F_v\subset F.$$ As $F_v$ contains an edge parallel to $e$, we know $H\cap F\neq\emptyset$. As $$\supp(F)=\supp(u'_n)\ \mathrm{and}\ \supp(u'_n)\perp \supp(u'_1),$$ we know $F$ is parallel to $F'$, where $F'$ is defined to be the standard subcomplex with $\supp(F')=\supp(F)$ containing $\prj_{E_1}(x')$. As each wall dual to an edge of $u'_n$ must have non-empty intersection with $E_1$ by a similar argument as before, and $\supp(u'_n)\perp \supp(u'_1)$, we know that $\supp(u'_n)\subset \supp(E_1)$. Hence $\supp(F')\subset\supp(E_1)$, and $F'\subset E_1$. Thus $H\cap E_1\neq\emptyset$.
	This contradicts $H\cap E_1=\emptyset$ in the previous paragraph. Thus the claim is proved.
	
	By the claim, $\supp(e)\perp\supp(u'_n)$, thus $\supp(u'_n)\perp\supp(u_1)$. Moreover, the geodesic path $u'_n$ and $e$ together span a ``ladder-like'' subcomplexes of $\Si$ made of a sequence of squares with $u'_n$ being one side of the ladder. Let $w_n$ be the other side of the ladder. Then $w_n$ is a geodesic from $x$ to a vertex in $E_n$. As $H\cap E_n\neq\emptyset$, we know $d(x,E_n)=d(x',E_n)$. Note that $w_n$ and $u'_n$ have the same length, thus Lemma~\ref{lem:gate} implies that $w_n$ goes from $x$ to $\prj_{E_n}(x)$. Thus $\supp(u'_n)=\supp(w_n)=\supp(u_n)$, which implies that $\supp(u_1)\perp\supp(u_n)$.
\end{proof}

Given a standard subcomplex $F$ of $\Sigma$ and a geodesic segment $u$ from $x$ to $y$ in $\Si^1$, we say $u$ is \emph{$F$-escaping} if $x\in F$ and $d(y,F^1)=\ell(u)$, where $\ell(u)$ means the length of $u$. It follows from Lemma~\ref{lem:gate} that a geodesic $u$ is $F$-escaping if and only if each wall dual to an edge of $u$ has empty intersection with $F$.
Similarly, we define a minimal positive path $u\subset\od$ from vertex $x$ to vertex $y$ to be $F$-escaping if $x\in \widehat F$ and $d(y,\widehat F)=\ell(u)$. 

\begin{lem}
	\label{lem:extra edge}
	Let $F$ be a standard subcomplex of $\Si$.
	Let $u$ be an $F$-escaping geodesic in $\Sii$ (or $\od^1$) ending at $y\notin F$. Let $e$ be an edge containing $y$. Then
	\begin{enumerate}
		\item if the two endpoints of $e$ have the same distance from $F$, then either $\supp(e)\perp\supp(u)$ or $\supp(e)\subset\supp(u)$;
		\item if $\supp(e)\notin \supp(u)$ and $\supp(e)\cup \supp(u)$ is irreducible, then $ue$ is an $F$-escaping geodesic.
	\end{enumerate}
\end{lem}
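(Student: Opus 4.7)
The plan is to deduce (2) from (1) and a case analysis on $d(y',F)$, while (1) reduces, via a standard diamond argument, to a Coxeter-combinatorial fact about reduced expressions. Throughout, let $x$ be the starting vertex of $u$ (so $x\in F$), let $y'$ be the other endpoint of $e$, and let $s\in S$ be its label.

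For (1), since $u$ is $F$-escaping, every wall dual to $u$ misses $F$ and $x=\prj_F(y)$. The hypothesis $d(y,F)=d(y',F)$ together with Lemma~\ref{lem:more gate}(2) forces the wall $H$ dual to $e$ to meet $F$ and yields $x':=\prj_F(y')$ adjacent to $x$ in $F$, so the edge $e'$ spanned by $x,x'$ lies in $F$ and is parallel to $e$. The walls crossed around the closed path $x\to y\to y'\to x'\to x$ sum (mod $2$) to $\emptyset$; since $xx'$ and $yy'$ are both dual to the single wall $H$, the sets of walls separating $x$ from $y$ and $x'$ from $y'$ coincide, and any geodesic $u'$ from $x'$ to $y'$ crosses exactly the walls crossed by $u$. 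Hence $\supp(u')=\supp(u)$. Viewing $x$ as the identity of $W_\Gamma$, $u$ represents some $w$ and $u'$ represents $sws$, so $\ell(sws)=\ell(w)$. If $s\in\supp(u)$, statement (1) is immediate; otherwise $s\notin\supp(w)$, so $s$ is neither a left nor a right descent of $w$ and $\ell(sw)=\ell(ws)=\ell(w)+1$, which forces $\ell(sws)=\ell(w)$. I then plan to apply the deletion condition to the length-$(\ell(w){+}2)$ word $s\cdot w_{\mathrm{red}}\cdot s$ representing $sws$: if one of the two $s$-letters were cancelled with a letter of $w_{\mathrm{red}}$, then $s$ would become a descent of $w$, contradicting $s\notin\supp(w)$; so the two $s$-letters must cancel, giving $sws=w$. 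Matsumoto connects the two reduced expressions $s\cdot w_{\mathrm{red}}$ and $w_{\mathrm{red}}\cdot s$ by braid moves, and because $s$ appears exactly once in every intermediate word, no long braid move can involve the $s$-letter; it must migrate from position $0$ to position $\ell(w)$ entirely by commutations with each neighboring letter, forcing $st=ts$ for every $t\in\supp(w)$. This yields $\supp(e)\perp\supp(u)$.

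For (2), the hypotheses give $\supp(e)=\{s\}$ with $s\notin\supp(u)$ and $\supp(u)\cup\{s\}$ irreducible; note $\supp(u)\neq\emptyset$ because $y\notin F$. Since $y,y'$ are adjacent, $d(y',F)\in\{\ell(u)-1,\ell(u),\ell(u)+1\}$, and I plan to exclude the first two. The value $\ell(u)$ is ruled out by (1): it would force $\supp(e)\perp\supp(u)$, contradicting irreducibility of $\supp(u)\cup\{s\}$. The value $\ell(u)-1$ is ruled out because Lemma~\ref{lem:more gate}(2) then gives $H\cap F=\emptyset$, and a wall count along $x\to y\to y'$ shows $d(x,y')=\ell(u)-1$ only when $H$ lies among the walls crossed by $u$, producing an edge of $u$ parallel to $e$ and hence $s\in\supp(u)$. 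Thus $d(y',F)=\ell(u)+1$, and $\ell(u)+1=d(y',F)\le d(x,y')\le\ell(ue)=\ell(u)+1$ forces $ue$ to be a geodesic from $x$ to $y'$; since $x\in F$ realizes $d(y',F)$, also $x=\prj_F(y')$, and $ue$ is $F$-escaping. The $\od^{(1)}$ case follows identically, since minimal positive paths in $\od$ project through $\pi:\od\to\Si$ to geodesics in $\Sii$ with the same wall-crossings.

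The main obstacle I anticipate is the Coxeter-combinatorial step inside (1): that $\ell(sws)=\ell(w)$ with $s\notin\supp(w)$ forces $s$ to commute with every generator of $\supp(w)$. Once this is in hand, both parts amount to routine wall-counting and projection bookkeeping via Lemma~\ref{lem:more gate}(2).
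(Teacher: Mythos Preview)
Your argument for (2) is essentially the paper's: both use the Exchange Condition to see $ue$ is geodesic and then invoke (1) (equivalently, Lemma~\ref{lem:more gate}(2)) to rule out $H\cap F\neq\emptyset$. One small slip: in your $\ell(u)-1$ case you write ``an edge of $u$ parallel to $e$ and hence $s\in\supp(u)$'', but parallel edges need not carry the same label (in type $A_2$, the two edges dual to the wall of $s_1s_2s_1$ are labelled $s_1$ and $s_2$). The conclusion $s\in\supp(u)$ follows instead from $\ell(ws)<\ell(w)$ and the Exchange Condition directly.

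The real gap is in (1). Two points. First, the same parallel-label issue bites your setup: from ``the walls crossed by $u$ and $u'$ coincide'' you cannot conclude $\supp(u')=\supp(u)$, nor that the edge $e'\subset F$ is labelled $s$ (so that $x'=s$ and $u'$ represents $sws$). Both happen to be true here---the reflection $r$ across $H$ preserves $F$ and edge-labels, sending $u$ to a geodesic from $x'$ to $y'$---but your wall-count argument does not establish them. Second, and more seriously, your Matsumoto step is circular. You assert that ``$s$ appears exactly once in every intermediate word'' in a braid-move sequence from $s\,w_{\mathrm{red}}$ to $w_{\mathrm{red}}\,s$, hence only commutations can touch $s$. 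But a length-$3$ braid move $tst\to sts$ turns one $s$ into two, so the single-$s$ property is not preserved by braid moves in general; it holds here only \emph{because} $m_{st}=2$ for all $t\in\supp(w)$, which is exactly what you are trying to prove. The underlying fact (if $w\in W_{S\setminus\{s\}}$ and $sw=ws$ then $s$ commutes with every $t\in\supp(w)$) is true, but needs a different argument---for instance via roots: $w\alpha_s=\alpha_s$, and for any right descent $t$ of $w=w't$ one checks $w'\alpha_s=\alpha_s-2\cos(\pi/m_{st})\,w'\alpha_t$ with $w'\alpha_s$ positive and $w'\alpha_t$ a positive root of $W_{S\setminus\{s\}}$, forcing $\cos(\pi/m_{st})=0$.

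For comparison, the paper avoids all of this by a direct geometric argument: it replaces $u$ by a geodesic from $y$ back to $\prj_F(y)$ built from elementary $H$-segments (Lemma~\ref{lem:geodesic in carrier}), locates the first segment whose support fails to commute with $\supp(e)$, and reads off $\supp(e)\subset\supp(u)$ from the $2$-face containing that segment. This sidesteps the Coxeter-combinatorial lemma entirely.
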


\begin{proof}
	We prove (1). By Lemma~\ref{lem:more gate}, the projection of two endpoints of $e$ onto $F$ span an edge $e'\subset F$ which is parallel to $e$. Let $H$ be the wall dual to $e$. Then Lemma~\ref{lem:geodesic in carrier} implies that there is a geodesic $u'$ from $y$ to $\prj_F(y)$ such that $u'$ is made of elementary $H$-segments. Note that $\supp(u')=\supp(u)$. If $\supp(e)$ and $\supp(u')$ do not commute, then we take the first edge $f_1$ of $u'$ (starting from $y$) such that $\supp(f_1)$ does not commute with $\supp(e)$. Let $u_1$ be the elementary $H$-segment of $u'$ that contains $f_1$, and let $y_1$ be the endpoint of $u_1$ that is closer to $y$. Let $e_1$ be the edge parallel to $e$ that contains $y_1$.
	As the subsegment of $u'$ from $y$ to $y_1$ has support commuting with $\supp(e)$, we know the support $\supp(e)=\supp(e_1)$.  Note that $e_1$ and $u_1$ span a 2-dimensional face of $\Si$. As $\supp(e_1)$ and $\supp(u_1)$ do not commute, $u_1$ contains at least two consecutive edges of the boundary of this 2-face. Thus $\supp(e_1)\subset \supp(u_1)$. Hence $\supp(e)\subset\supp(u)$.
	
	Now we prove (2).	
	As $\supp(e)\notin \supp(u)$, it follows from the ``Exchange Condition'' in Coxeter groups (see e.g. \cite[Chapter 3]{davis2012geometry}) that $ue$ is a geodesic. It remains to show the wall $H$ dual to $e$ satisfies $H\cap F=\emptyset$. If $H\cap F\neq\emptyset$, then Lemma~\ref{lem:more gate} and part (1) imply that either $\supp(e)\perp\supp(u)$ or $\supp(e)\subset\supp(u)$, which is a contradiction.
\end{proof}

\begin{lem}
	\label{lem:two cells}
	Let $F_n,F'_n\subset \Si$ be two standard subcomplexes in $\Sigma$. Suppose $F_n\neq F'_n$ and both $F_n$ and $F'_n$ have nonempty intersection with a standard subcomplex $F_1$. Let $\Pi_{F_n}$ be the map in Definition~\ref{def:retraction}. Let $\bar F_n=\Pi_{F_n}(F'_n)$ and $\bar F'_n=\Pi_{F'_n}(F_n)$ with the parallel translation $p:\bar F_n\to \bar F'_n$ between them. Then
	\begin{enumerate}
		\item  there is a vertex $x\in \bar F_n$ such that both $x$ and $p(x)$ are contained in $F_1$; moreover, for any vertex $y\in \bar F_n\cap F_1$, $p(y)\in F_1$;
		\item  define $I_n$ (resp. $I'_n$) to be the union of the irreducible components of $\supp(\bar F_n)$ (resp. $\supp(\bar F'_n)$) that are not contained in $\supp(F_1)$, then
		$I_n=I'_n$; moreover, for any pair of vertices $z_1,z_2\in \bar F_n$ and any choice of geodesics $u_i$ between $z_i$ and $p(z_i)$ with $i=1,2$, we have $\supp(u_1)=\supp(u_2)\perp I_n$;
		\item for any edge $e'\subset \bar F'_n$ with $\supp(e')\in I'_n$, $\supp(e')=\supp(\Pi_{F_n}(e'))$.
	\end{enumerate}
\end{lem}

\begin{proof}
	Let $x'\in F_n\cap F_1$ and $y'\in F'_n\cap F_1$. By Lemma~\ref{lem:more gate} (2), there exists a geodesic from $x'$ to $y'$ passing through $y=\prj_{F'_n}(x')$. By Lemma~\ref{lem:more gate} (1), $y\in F_1$. Applying Lemma~\ref{lem:more gate} (2) again, there is a geodesic from $x'$ to $y$ passing through $x=\prj_{F_n}(y)$. By Lemma~\ref{lem:pair gate}, $y\in \bar F'_n$ and $x=p^{-1}(y)\in \bar F_n$. As there is a geodesic from $x'$ to $y'$ passing through $x$ and $y$ and both $x'$ and $y'$ are in $F_1$, the convexity of $F_1$ implies that $x,y\in F_1$. This finishes the first part Assertion 1. The last part of Assertion 1 can be proved similarly.
	
	For Assertion 2, let $L'_n\subset I'_n$ be an irreducible component. Let $F_{L'_n}$ be the standard subcomplex containing $p(x)$ from the previous paragraph with $\supp(F_{L'_n})=L'_n$. As $L'_n$ is irreducible and contains an element outside $\supp(F_1)$, we can repeatedly apply Lemma~\ref{lem:extra edge} (2) to find an $F_1$-escaping geodesic $v$ starting from $p(x)$ and ending at $x''$ such that $$\supp(v)=\supp(L'_n).$$ 
	As $v\subset \bar F'_n$, and $\bar F'_n$ and $\bar F_n$ are parallel, we know any wall dual to an edge of $v$ intersects $\bar F_n$, hence $F_n$. Thus $v$ is in the convex hull of $\vertex F_n$ and $\vertex F_1$. Now Lemma~\ref{lem:hull} implies that $\supp(v)\perp\supp(u')$ where $u'$ is an $F_n$-escaping geodesic ending at $x''$. Thus $F_{L'_n}$ and $\prj_{F_n}(F_{L'_n})\subset \bar F_n$ are contained in a common standard subcomplex $F$ with $\supp(F)=\supp(v)\cup \supp(u')$. As $\supp(v)\perp\supp(u')$, $F$ is a product of $F_{L'_n}$ and another standard subcomplex whose support is $\supp(u')$. Thus $\supp(F_{L'_n})=\supp(\prj_{F_n}(F_{L'_n}))$ and the first part of Assertion 2 follows. This discussion and Lemma~\ref{lem:equal support} also imply the last part of Assertion 2. As a consequence, for any $z'\in \bar F'_n$ and any geodesic $u_{z'}$ from $z'$ to $p(z')$, $\supp(u_{z'})=\supp(u')\perp I'_n$. Assertion 3 follows by taking $z'$ to be a vertex of $e'$.
\end{proof}

\subsection{Some properties of escaping geodesics and irreducible paths}
\label{subsec:escaping}

\begin{lem}
	\label{lem:support contain}
	Let $F$ be a standard subcomplex of $\Si$.
	Let $u$ be an edge path (not necessarily geodesic) in $\Sii$ (or $\od^1$) from $x\in F$ to $y\notin F$. Let $u'$ be an $F$-escaping geodesic ending at $y$. Then $\supp(u')\subset\supp(u)$.
\end{lem}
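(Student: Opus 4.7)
\medskip

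\noindent\textbf{Proof proposal.} The plan is to argue entirely in terms of walls and labels, using only the characterization of nearest point projections in Lemma~\ref{lem:gate}. The first step is to identify $\supp(u')$ with a set of labels of walls. Since $u'$ is $F$-escaping and ends at $y$, its initial point is $\prj_F(y)$ and, by Lemma~\ref{lem:gate}(3), the walls dual to $u'$ are precisely the walls $H$ with $H \cap F = \emptyset$ that separate $y$ from $\prj_F(y)$, i.e.\ the walls that separate $y$ from $F$. Because all edges of $\Sigma$ dual to a common wall $H$ carry the same label in $S$, we may speak of the label of $H$, and $\supp(u')$ equals the set of labels of walls separating $y$ from $F$.

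Next I would show that every such wall contributes a label to $\supp(u)$ as well. Fix a wall $H$ separating $y$ from $F$. Since $x \in F$, and $F$ lies entirely on one side of $H$ while $y$ lies on the other, any edge path from $x$ to $y$ must contain at least one edge dual to $H$; in particular $u$ does. The label of such an edge coincides with the label of $H$. Therefore each label that appears in $\supp(u')$ also appears in $\supp(u)$, giving $\supp(u') \subset \supp(u)$.

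For the case of a path in $\od^{(1)}$, I would reduce to the previous case via the map $\pi \colon \od \to \Sigma$. The image $\pi(u)$ is an edge path in $\Sigma^{(1)}$ from $\pi(x) \in F$ to $\pi(y) \notin F$, and $\pi(u')$ is a minimal positive (hence geodesic) edge path in $\Sigma^{(1)}$ that is $F$-escaping and ends at $\pi(y)$. By Definition~\ref{def:label}, the labels of edges are preserved by $\pi$, so $\supp(u) = \supp(\pi(u))$ and $\supp(u') = \supp(\pi(u'))$, and the conclusion follows from the case already proved.

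I do not anticipate any substantial obstacle; the whole statement is essentially a repackaging of the fact that a wall which separates $y$ from $F$ must be crossed by any edge path from $F$ to $y$, combined with the fact that edges dual to the same wall share a common label.
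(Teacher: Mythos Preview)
Your argument contains a genuine error: the claim that ``all edges of $\Sigma$ dual to a common wall $H$ carry the same label in $S$'' is false. A wall is the fixed set of a reflection $r = wsw^{-1}$, and the edge from $w'$ to $w't$ is dual to that wall precisely when $w'tw'^{-1} = r$; there is no reason this forces $t = s$. Already in the dihedral group $I_2(3) = \langle s,t \mid s^2 = t^2 = (st)^3 = 1\rangle$, the edge from $e$ to $s$ (label $s$) and the edge from $ts$ to $tst$ (label $t$) are dual to the same wall, since $(ts)t(st) = s$. So walls do not have well-defined labels, and your step ``the label of such an edge coincides with the label of $H$'' fails. Knowing that $u$ crosses every wall that $u'$ crosses does not by itself tell you that $u$ crosses it with the same label that $u'$ does.

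The paper circumvents this by never assigning labels to walls. Instead it compares $u$ to a \emph{geodesic} $u_1$ from $x$ to $y$: by Tits' solution to the word problem (any word can be brought to a reduced word using braid moves and deletions $ss\to e$, neither of which enlarges the support), $\supp(u_1)\subset\supp(u)$. Then, using Lemma~\ref{lem:gate}, one chooses a geodesic $u_2$ from $x$ to $y$ passing through $\prj_F(y)$; its tail from $\prj_F(y)$ to $y$ is an $F$-escaping geodesic, hence has the same support as $u'$ (two geodesics with the same endpoints have the same support). This gives $\supp(u') \subset \supp(u_2) = \supp(u_1) \subset \supp(u)$. The point is that equality of supports is a statement about \emph{paths with common endpoints}, not about individual walls.
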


\begin{proof}
	Let $u_1\subset\Sii$ be a geodesic path from $x$ to $y$. By Tits' solution to the word problem of Coxeter groups, we know $\supp(u_1)\subset\supp(u)$. By Lemma~\ref{lem:gate}, there is a geodesic $u_2$ from $x$ to $y$ that goes through $\prj_F(y)$. Let $u'_2$ be the subsegment of $u_2$  between $y$ and $\prj_F(y)$. Then $\supp(u')=\supp(u'_2)\subset\supp(u_2)=\supp(u_1)\subset \supp(u)$.
\end{proof}

We say a proper standard subcomplex $F$ of $\Si$ is \emph{maximal} if $\supp(F)=S\setminus\{s\}$ for some $s\in S$, where $S$ is the vertex set of $\Gamma$. In this case we define the \emph{type} of $F$ to be $\hat s$.

\begin{lem}
	\label{lem:irreducible}
	Let $u$ be an $E$-escaping geodesic in $\Sii$ (or $\od^1$) ending at $y\notin E$, where $E$ is a proper maximal standard subcomplex $\Si$ of type $\hat s$. Then $\supp(u)$ is irreducible.
\end{lem}

\begin{proof}
	Suppose $\supp(u)$ is not irreducible. Let $I$ be the irreducible component of $\supp(u)$ that contains $s$ (note that $s\in \supp(u)$ as $u$ is $E$-escaping), and let $I'=\supp(u)\setminus I$. As $I\perp I'$, we know there are words $w_1$ and $w_2$ on $S$ such that 
	$$\w(u)=w_1w_2\ \mathrm{in}\ W_\Gamma, \supp(w_1)=I'\ \mathrm{and}\ \supp(w_2)=I.$$ The word $w_1w_2$ gives a geodesic path $u_1u_2$ with the same endpoints as $u$. Note that $\ell(u)=\ell(u_1u_2)$. Thus $u_1u_2$ is a geodesic. As $\supp(u_1)=I'$, we know $s\notin I'$. Thus $\supp(u_1)\subset \supp(E)$. Hence $u_1\subset E$. Then $d(y,\vertex E)\le \ell(u_2)<\ell(u)$, contradicting that $u$ is $E$-escaping.
\end{proof}

We say a positive path $u=e_1e_2\cdots e_n$ is \emph{left irreducible} if $\supp(e_1\cdots e_i)$ is irreducible for $1\le i\le n$. We say $u$ is \emph{right irreducible} if $\supp(e_i\cdots e_n)$ is irreducible for $1\le i\le n$.

\begin{lem}
	\label{lem:left irreducible}
	Take a face $F\subset \Si$ dual to $B\in\cq_\Ga$. Suppose $u=u_1u_2\cdots u_n$ is a concatenation of elementary $B$-segments from $x$ to $y$. 
	\begin{enumerate}
		\item  Let $F_x$ be the face parallel to $F$ and containing $x$. We define $F_y$ similarly. Then each irreducible component of $\supp(F_x)$ is either contained in $\supp(u)$, or commutes with $\supp(u)$. A similar statement holds for $F_y$. Moreover,  $(\supp(F_x)\setminus \supp(u))=(\supp(F_y)\setminus \supp(u))$.
		\item  If $\supp(u_1\cdots u_i)$ is irreducible for any $1\le i\le n$, then $u$ is left irreducible. Similarly, if $\supp(u_i\cdots u_n)$ is irreducible for any $1\le i\le n$, then $u$ is right irreducible. 
	\end{enumerate}
\end{lem}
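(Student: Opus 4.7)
For part (1), the moreover statement is immediate: since $F_x, F_y$ are parallel to $F$, we have $\supp(F_x) = \supp(F_y) = \supp(F)$. For the main assertion, I would apply Lemma~\ref{lem:irreducible segment}(1) to each elementary $B$-segment $u_i$. Letting $F^{(i-1)}, F^{(i)} \subset F_0^{(i)}$ denote the adjacent parallel faces joined by $u_i$, with $\supp(F_0^{(i)}) = \supp(F) \cup \{s_i\}$ for some $s_i \notin \supp(F)$, the cited lemma identifies $\supp(u_i)$ as the irreducible component of the Dynkin diagram on $\supp(F) \cup \{s_i\}$ containing $s_i$. Hence $\supp(u) = \bigcup_i \supp(u_i)$. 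For an irreducible component $I$ of $\supp(F)$, there is then a dichotomy: either some $s_i$ is Dynkin-adjacent to $I$, in which case $I \subseteq \supp(u_i) \subseteq \supp(u)$; or $I$ is Dynkin-disconnected from every $s_i$, and then $I$ commutes with each $\supp(u_j)$ and hence with $\supp(u)$.

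For part (2), I would argue by induction on $n$. The crucial base case $n = 1$ asserts that every edge-wise prefix of a single elementary $B$-segment $u_1$ has irreducible support. By Lemma~\ref{lem:irreducible segment}(1), $u_1$ can be restricted to the subface $F_u \subset F_0$ with $\supp(F_u) = \supp(u_1)$, reducing the problem to a minimal positive path between two adjacent parallel faces inside an irreducible Coxeter cell. The key rigidity is that the first edge of any such reduced expression must be labeled by the ``new'' generator $s$ (the unique element of $\supp(F_0) \setminus \supp(F^{(0)})$ after the reduction): any initial edge labeled by a generator of $\supp(F^{(0)})$ would keep the path inside $F^{(0)}$ and thereby prevent it from reaching $p(x) \in F^{(1)}$ in the required minimal number of steps. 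Iterating this descent argument, every edge-wise prefix of $u_1$ has support forming a connected, hence irreducible, subdiagram of the Dynkin diagram on $\supp(u_1)$.

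For the inductive step, given an edge-wise prefix of $u = u_1 \cdots u_n$ ending inside $u_n$, write it as $u_1 \cdots u_{n-1} \cdot v$ with $v$ an edge-wise prefix of $u_n$. By hypothesis $\supp(u_1 \cdots u_{n-1})$ is irreducible and $\supp(u_1 \cdots u_n) = \supp(u_1 \cdots u_{n-1}) \cup \supp(u_n)$ is irreducible; combining this with the base case applied to $u_n$, and tracking how the Dynkin edge connecting $\supp(u_1 \cdots u_{n-1})$ to $\supp(u_n)$ manifests among the first few edges of $u_n$, one obtains that $\supp(u_1 \cdots u_{n-1}) \cup \supp(v)$ is irreducible. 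The right-irreducible statement is proved by the symmetric argument reversing the roles of source and target. The main technical obstacle is justifying the rigidity in the base case, which should follow from the Exchange Condition combined with the support-tracking machinery of Lemma~\ref{lem:tracking label}.
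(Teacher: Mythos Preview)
Your proof of part (1) rests on the claim that parallel faces have the same support: you write ``since $F_x, F_y$ are parallel to $F$, we have $\supp(F_x) = \supp(F_y) = \supp(F)$''. This is false. Already in a hexagonal Coxeter cell (type $I_2(3)$) the two edges dual to a given wall carry different labels. More generally, adjacent parallel faces $F^{(i-1)}, F^{(i)}$ inside $F_0^{(i)}$ satisfy $\supp(F_0^{(i)}) = \supp(F^{(i-1)}) \cup \{s_i\} = \supp(F^{(i)}) \cup \{t_i\}$ with $s_i \neq t_i$ in general, so $\supp(F^{(i-1)}) \neq \supp(F^{(i)})$. Your formula $\supp(F_0^{(i)}) = \supp(F) \cup \{s_i\}$ therefore does not hold, and the whole dichotomy argument for irreducible components of $\supp(F)$ collapses. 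The paper instead inducts on $n$: for the single segment $u_n$ one has by Lemma~\ref{lem:irreducible segment}(1) that any irreducible component $I$ of $\supp(F_n)$ not contained in $\supp(u_n)$ both commutes with $\supp(u_n)$ \emph{and} is an irreducible component of $\supp(F_{n-1})$; one then feeds this into the inductive hypothesis for $u_1\cdots u_{n-1}$. The ``moreover'' statement is exactly what lets the induction propagate.

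For part (2), your inductive step is too vague, and in fact the obstruction you gloss over is real: it can happen that the first edge $e_{i1}$ of $u_i$ has $\supp(e_{i1}) \perp \supp(u_1\cdots u_{i-1})$ even though $\supp(u_1\cdots u_i)$ is irreducible, unless one invokes part (1). The paper argues by contradiction: writing $\supp(F_i) = I_1 \sqcup I_2$ with $I_2$ the components contained in $\supp(u_1\cdots u_{i-1})$, part (1) gives $I_1 \perp \supp(u_1\cdots u_{i-1})$; if also $\supp(e_{i1}) \perp \supp(u_1\cdots u_{i-1})$ then $\supp(e_{i1}) \perp I_2$, which forces $\supp(u_i) \subset I_1 \cup \supp(e_{i1})$ and hence $\supp(u_i) \perp \supp(u_1\cdots u_{i-1})$, contradicting irreducibility of $\supp(u_1\cdots u_i)$. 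Your base case for left irreducibility of a single $u_i$ is salvageable, but the paper gets it more cleanly: any prefix of an $F^{(i-1)}$-escaping geodesic is again $F^{(i-1)}$-escaping, so Lemma~\ref{lem:irreducible} applies directly to each prefix.
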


\begin{proof}
	For (1), we induct on $n$. The case $n=1$ follows from Lemma~\ref{lem:irreducible segment} (1). Suppose (1) is true for $u=u_1\cdots u_{n-1}$. Let $F_{n-1}$ and $F_{n}$ be faces parallel to $F$ that contain $\sft(u_{n-1})$ and $\sft(u_n)$ respectively. Take an irreducible component $I$ of $\supp(F_{n})$ which is not contained in $\supp(u)$. Then $I\nsubseteq \supp(u_n)$. By the $n=1$ case, $I\perp\supp(u_n)$ and $I$ is also an irreducible component of $\supp(F_{n-1})$. As $I\nsubseteq \supp(u)$, we know $I\nsubseteq \supp(u_1\cdots u_{n-1})$. By the $n-1$ case, we know $I\perp  \supp(u_1\cdots u_{n-1})$ and $I$ is also an irreducible component of $\supp(F_x)$. Thus $I\perp \supp(u_1\cdots u_n)$. The last part of Assertion 1 follows immediately.
	
	For (2), note that any elementary $(F_1,F_2)$-segment is both $F_1$-escaping and $F_2$-escaping. As $F_1$ and $F_2$ are proper maximal standard subcomplexes of the face containing them with one dimensional higher, by Lemma~\ref{lem:irreducible}, each $u_i$ is both left irreducible and right irreducible. Suppose $u_i=e_{i1}\cdots e_{ik}$. Thus it suffices to show $\supp(u_1\cdots u_{i-1}e_{i1})$ is irreducible for each $i\ge 2$.
	Let $F_i$ be the face parallel to $F$ containing $\sfs(u_i)$. Suppose $\supp(F_i)=I_1\sqcup I_2$ where $I_1$ is the union of irreducible components of $\supp(F_i)$ which are not contained in $\supp(u_1\cdots u_{i-1})$. Then $\supp(u_1\cdots u_{i-1})\perp I_1$ by (1). By Lemma~\ref{lem:irreducible segment} (1), $\supp(u_i)$ is the irreducible component of $I_1\cup I_2\cup \supp(e_{i1})$ that contains $\supp(e_{i1})$. 
	If $$\supp(e_{i1})\perp\supp(u_1\cdots u_{i-1}),$$ then $\supp(e_{i1})\perp I_2$ (as $I_2\subset \supp(u_1\cdots u_{i-1})$). As $I_1\perp I_2$ by definition,  $(\supp(e_{i1})\cup I_1)\perp I_2$.
	Thus $\supp(u_i)$ is the irreducible component of $I_1\cup \supp(e_{i1})$ that contains $\supp(e_{i1})$. As $$(I_1\cup \supp(e_{i1}))\perp \supp(u_1\cdots u_{i-1}),$$ we know $\supp(u_i)\perp \supp(u_1\cdots u_{i-1})$, and this is a contradiction to our assumption. The other part of (2) can be proved similarly.
\end{proof}
We record the following immediate consequence of Lemma~\ref{lem:left irreducible} (1) and Lemma~\ref{lem:conjugator1}.
\begin{lem}
	\label{cor:commute}
	Suppose $A_S$ is spherical.	Let $S_1,S_2\subset S$ such that $c_{S_1}$ and $c_{S_2}$ commute (see Section~\ref{subsec:Artin} for the definition of $c_{S_i}$). Then for any irreducible component $I_1\subset S_1$ and any irreducible component $I_2\subset S_2$, we know either $I_1\subset I_2$, or $I_2\subset I_1$, or $I_1\perp I_2$.
\end{lem}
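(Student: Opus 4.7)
The plan is to use Lemma~\ref{lem:conjugator1} to factor $c_{S_2}$ into elementary conjugators, then apply Lemma~\ref{lem:left irreducible}(1) to each factor, tracking how a given irreducible component $I_1$ of $S_1$ propagates.

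Since $c_{S_1}$ and $c_{S_2}$ commute, $c_{S_2}c_{S_1}c_{S_2}^{-1}=c_{S_1}\in A_+$. Applying Lemma~\ref{lem:conjugator1} with $X=S_1$ and $g=c_{S_2}$ yields a decomposition $c_{S_2}\eq u_1u_2\cdots u_nu_{n+1}$ with $u_{n+1}\in A_{S_1}$ and each $u_i$ ($i\le n$) an elementary $(X_{i-1},X_i)$-conjugator; here $X_n=S_1$, and from $\Delta_{X_0}^p=g\Delta_X^pg^{-1}=c_{S_1}=\Delta_{S_1}^p$ together with Lemma~\ref{lem:contain}(1) we also get $X_0=S_1$. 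Geometrically (Section~\ref{subsec:elementary segments}), each $u_i$ is realized by an elementary $B_i$-segment in $\widehat\Sigma_\Gamma$ between two adjacent parallel faces of supports $X_{i-1}$ and $X_i$. Applying Lemma~\ref{lem:left irreducible}(1) to this one-term concatenation gives: every irreducible component of $X_{i-1}$ (and of $X_i$) is either contained in $\supp(u_i)$ or orthogonal to it, and $X_{i-1}\setminus\supp(u_i)=X_i\setminus\supp(u_i)$.

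Now fix an irreducible component $I_1$ of $S_1=X_0$ and track it forward through $X_0,X_1,\ldots,X_n$. At step $i$, if $I_1$ is an irreducible component of $X_{i-1}$ with $I_1\perp\supp(u_i)$, then $I_1\subseteq X_{i-1}\setminus\supp(u_i)=X_i\setminus\supp(u_i)\subseteq X_i$; the crucial point to verify is that $I_1$ remains an irreducible component of $X_i$ (not merely a subset), which follows because Lemma~\ref{lem:left irreducible}(1) partitions $X_i$ into a piece contained in $\supp(u_i)$ and a piece orthogonal to $\supp(u_i)$, and these two pieces are therefore mutually orthogonal. The induction yields a dichotomy: either some $u_{i^*}$ absorbs $I_1$, so $I_1\subseteq\supp(u_{i^*})\subseteq S_2$, or $I_1\perp\bigcup_{i=1}^n\supp(u_i)$; in the latter case, since $\supp(u_{n+1})\subseteq S_1$ and $\bigcup_{i=1}^{n+1}\supp(u_i)=\supp(c_{S_2})=S_2$, we conclude $I_1\perp(S_2\setminus S_1)$. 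This persistence step is the main obstacle and is exactly where the moreover clause of Lemma~\ref{lem:left irreducible}(1) is essential.

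Running the symmetric argument (swapping $S_1,S_2$) gives the analogous statement for any irreducible component $I_2$ of $S_2$: either $I_2\subseteq S_1$ or $I_2\perp(S_1\setminus S_2)$. To conclude, fix a pair $(I_1,I_2)$. If $I_1\subseteq S_2$, then $I_1$ (being connected) lies in a single irreducible component of $S_2$, so either $I_1\subseteq I_2$ or $I_1\perp I_2$; symmetrically if $I_2\subseteq S_1$. Otherwise $I_1\perp(S_2\setminus S_1)$ and $I_2\perp(S_1\setminus S_2)$: if $I_1\cap I_2=\emptyset$, then $I_1\setminus S_2\perp I_2$ by the second orthogonality and $I_1\cap S_2\subseteq S_2\setminus I_2$ is a union of other irreducible components of $S_2$, each orthogonal to $I_2$, giving $I_1\perp I_2$; if $I_1\cap I_2\ne\emptyset$ and $I_1\not\subseteq I_2$, connectedness of $I_1$ in the Dynkin diagram produces an edge from $I_1\cap I_2\subseteq I_2$ to $I_1\setminus I_2\subseteq(S_1\setminus S_2)\cup(S_2\setminus I_2)$, contradicting one of the two orthogonality constraints, so $I_1\subseteq I_2$.
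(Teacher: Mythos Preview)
Your proof is correct and follows exactly the route the paper indicates (Lemma~\ref{lem:conjugator1} together with Lemma~\ref{lem:left irreducible}(1)). One minor simplification: instead of tracking $I_1$ step by step through $X_0,\dots,X_n$, you can apply Lemma~\ref{lem:left irreducible}(1) once to the entire concatenation $u_1\cdots u_n$ (with $F_y$ having support $X_n=S_1$), which immediately gives that each irreducible component of $S_1$ is either contained in $\supp(u_1\cdots u_n)\subseteq S_2$ or orthogonal to it, bypassing the persistence verification.
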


\section{Relative Artin complexes, the labeled 4-cycle condition and the bowtie free condition}
\label{sec:relative}

In this section we introduce the notion of relative Artin complexes and discuss some of its basic properties. We introduce relative Artin complexes in Section~\ref{subsec:relative Artin}. Then we discuss two important properties on certain classes of relative Artin complexes, namely the bowtie free property, discussed in Section~\ref{subsec:bowtie free} and the labeled 4-cycle property, discussed in Section~\ref{subsec:labeled 4-cycle}. The bowtie free property will depend on a choice of order on suitable subsets of the generating sets of the Artin groups, and the definition of labeled 4-cycle property restricts ourselves to the case when the Dynkin diagram is a tree. 
The relation between these two properties are also discussed in Section~\ref{subsec:labeled 4-cycle}, where we prove they are equivalent when the Dynkin diagram is a linear graph.
We also collect some additional properties of relative Artin complexes in Section~\ref{subsec:labeled 4-cycle property} for later use.

\subsection{Relative Artin complexes}
\label{subsec:relative Artin}
\begin{definition}
Let $A_S$ be an Artin group with presentation graph $\Gamma$ and Dynkin diagram $\Lambda$. Let $S'\subset S$ such that $S'\neq\emptyset$. The \emph{$(S,S')$-relative Artin complex $\Delta_{S,S'}$} is defined to be the induced subcomplex of the Artin complex $\Delta_S$ of $A_S$ spanned by vertices of type $\hat s$ with the node $s\in S'$. In other words, vertices of $\Delta_{S,S'}$ correspond to left cosets of $\{A_{\hat s}\}_{s\in S'}$, and a collection of vertices span a simplex if the associated cosets have nonempty common intersection.

Let $\Gamma'$ and $\Lambda'$ be the induced subgraphs of $\Gamma$ and $\Lambda$ spanned by $S'$. Then we will also refer a $(S,S')$-relative Artin complex as $(\Gamma,\Gamma')$-relative Artin complex or $(\Lambda,\Lambda')$-relative Artin complex, and denote it by $\Delta_{\Gamma,\Gamma'}$ or $\Delta_{\Lambda,\Lambda'}$.
\end{definition}

The proof of the following lemma is identical to \cite[Lemma 4]{cumplido2020parabolic}.
\begin{lem}
	\label{lem:relative sc}
	Suppose $S'$ has at least three elements. Then the $(S,S')$-relative Artin complex is simply-connected. 
\end{lem}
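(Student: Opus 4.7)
The plan is to adapt the proof of simple-connectivity of the full Artin complex given in \cite[Lemma 4]{cumplido2020parabolic}, where the analogous statement is obtained for $\Delta_S$ under the hypothesis $|S|\ge 3$; the argument there carries over essentially verbatim once one checks that restricting the allowed types to $S'$ does not interfere with the filling of loops.

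First I would verify connectivity of $\Delta_{S,S'}$. Since $|S'|\ge 2$, for every generator $s\in S$ one can choose $s'\in S'$ with $s'\ne s$, and then $s\in A_{\hat{s'}}$; hence $\bigcup_{s'\in S'} A_{\hat{s'}}$ generates $A_S$. Combined with the transitive action of $A_S$ on vertices of each type by left multiplication, this yields, via a standard argument, an edge path joining any two vertices through common-coset neighbours of another type.

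For simple-connectivity, I would invoke the classical principle (going back to Tits, also used in \cite{cumplido2020parabolic}) that the coset complex of a group $G$ built from a family of subgroups $\{H_i\}_{i\in I}$ jointly generating $G$ is simply-connected if and only if $G$ is the colimit, in the category of groups, of the $H_i$ along their pairwise intersections $H_i\cap H_j$. Concretely, given a combinatorial loop $\gamma$ in $\Delta_{S,S'}$ based at $v_0 = A_{\hat{s_0}}$, one lifts it to a word $w$ in $A_S$ whose letters are drawn from the subgroups $A_{\hat{s'}}$ with $s'\in S'$; the loop $\gamma$ is null-homotopic through $2$-simplices of $\Delta_{S,S'}$ provided $w$, which evaluates to $1$ in $A_S$, can be reduced to the trivial word using (conjugates of) relations that each sit inside some single $A_{\hat{s'}}$.

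The decisive technical point, and the reason the hypothesis $|S'|\ge 3$ is needed, is the following: every defining relator of $A_S$ is a braid relation $\underbrace{sts\cdots}_{m}=\underbrace{tst\cdots}_{m}$ involving just two generators $s,t\in S$. Under $|S'|\ge 3$ we may pick $s'\in S'\setminus\{s,t\}$, so the entire relator lies in $A_{\hat{s'}}$; geometrically, the corresponding loop is contained in the star of a single vertex of type $\hat{s'}$, which is a simplex in $\Delta_{S,S'}$ and hence contractible there. This lets us fill each relator by a disk in $\Delta_{S,S'}$, and together with the connectivity step reduces any loop to the trivial one. The main obstacle I anticipate is simply the bookkeeping in making the lift $\gamma\leadsto w$ and its null-homotopy explicit, but all ingredients are standard and follow \cite{cumplido2020parabolic} without essential change.
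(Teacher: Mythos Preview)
Your proposal is correct and takes essentially the same approach as the paper: the paper's entire proof is the single sentence ``identical to \cite[Lemma 4]{cumplido2020parabolic}'', and you have spelled out precisely that argument, including the key observation that each defining braid relator involves only two generators and so, under $|S'|\ge 3$, lies in some $A_{\hat{s'}}$ with $s'\in S'$.
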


%Note that this lemma can alternatively be proved by using the main result of \cite{bjorner2003nerves}, as the Cayley complex $X$ of $A_\Gamma$ is simply-connected, each vertex of $\Delta_\Gamma$ gives a subcomplex of $X$ spanned by vertices in the associated coset, these subcomplexes cover $X$ when $|S|\ge 3$ with nerve of the covering being $\Delta_\Gamma$, and any non-empty intersection of members from these collection is simply-connected \cite{lek}. 

\begin{lem}
	\label{lem:embedding}
Suppose $S_1\subset S$ and $S'_1\subset S'\cap S_1$. Then each left coset of $gA_{S_1}$ in $A_S$ and choice of base point $h\in gA_{S_1}$ induces a canonical embedding $\Delta_{S_1,S'_1}\to \Delta_{S,S'}$.
\end{lem}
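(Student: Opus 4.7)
The plan is to define the map directly on vertices by left translation and verify well-definedness, injectivity, and simplicial extension using van der Lek's theorem on parabolic intersections. Concretely, for each vertex $v = kA_{S_1 \setminus \{s\}}$ of $\Delta_{S_1,S_1'}$ (with $k \in A_{S_1}$ and $s \in S_1'$), I will set $\Phi_h(v) := hk\, A_{S \setminus \{s\}}$, viewed as a vertex of $\Delta_{S, S'}$ of type $\hat s$. This makes sense because $S_1' \subset S'$ and $A_{S_1 \setminus \{s\}} \subset A_{S \setminus \{s\}}$, the latter inclusion ensuring that the assignment does not depend on the choice of coset representative $k$.

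For injectivity on vertices, suppose $\Phi_h(kA_{S_1 \setminus \{s\}}) = \Phi_h(k'A_{S_1 \setminus \{s'\}})$. Since types are preserved, $s = s'$, and then $k^{-1}k' \in A_{S_1} \cap A_{S \setminus \{s\}}$. At this point I would invoke van der Lek's parabolic intersection theorem \cite{lek} to conclude $A_{S_1} \cap A_{S \setminus \{s\}} = A_{S_1 \cap (S \setminus \{s\})} = A_{S_1 \setminus \{s\}}$, where the last equality uses $s \in S_1' \subset S_1$. This yields $kA_{S_1 \setminus \{s\}} = k'A_{S_1 \setminus \{s\}}$, so $\Phi_h$ is injective on vertex sets.

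To verify that $\Phi_h$ extends as a simplicial map, let $\{v_i = k_iA_{S_1 \setminus \{s_i\}}\}_{i \in I}$ be a family of vertices spanning a simplex of $\Delta_{S_1, S_1'}$, so that there exists $y \in \bigcap_i k_i A_{S_1 \setminus \{s_i\}}$. Then $hy$ lies in $\bigcap_i hk_i A_{S_1 \setminus \{s_i\}} \subset \bigcap_i hk_i A_{S \setminus \{s_i\}}$, exhibiting a common element of the corresponding cosets in $A_S$, so $\{\Phi_h(v_i)\}$ spans a simplex in $\Delta_{S, S'}$. Combined with vertex injectivity and the fact that simplices are determined by their vertex sets, this shows $\Phi_h$ is an injective simplicial map.

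It remains to note the dependence on $h$: if $h' = hm$ for some $m \in A_{S_1}$ is another element of the same coset $gA_{S_1}$, then $\Phi_{h'}$ equals the composition of $\Phi_h$ with the simplicial automorphism of $\Delta_{S_1, S_1'}$ given by left multiplication by $m$ on cosets; so the embeddings are canonically related through the natural $A_{S_1}$-action on the source, and in particular their common image in $\Delta_{S, S'}$ depends only on the coset $gA_{S_1}$. The only substantive ingredient in the argument is van der Lek's intersection formula $A_{T} \cap A_{T'} = A_{T \cap T'}$ for standard parabolics; every other step is a formal verification.
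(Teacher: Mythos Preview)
Your proof is correct and follows essentially the same approach as the paper's: define the map on vertices via $kA_{S_1\setminus\{s\}}\mapsto hkA_{S\setminus\{s\}}$, use van der Lek's intersection formula $A_{S_1}\cap A_{S\setminus\{s\}}=A_{S_1\setminus\{s\}}$ for injectivity, and observe the simplicial extension is automatic. Your write-up is in fact more explicit than the paper's (you spell out well-definedness, the simplicial check, and the dependence on the choice of $h$), but the substance is the same.
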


\begin{proof}
First we look at the case of the identity coset $A_{S_1}$ in $A_S$ and we choose the base point to be identity.
Note that each left coset of form $g A_{S_1\setminus\{s\}}$ for $g\in A_{S_1}$ and $s\in S'_1$ gives a left coset of form $g A_{S\setminus\{s\}}$, which gives a map $\vertex \Delta_{S_1,S'_1}\to \vertex \Delta_{S,S'}$. Recall from \cite{lek} that $A_{S\setminus\{s\}}\cap A_{S_1}=A_{S_1\setminus\{s\}}$, thus $g A_{S\setminus\{s\}}\cap A_{S_1}=g A_{S_1\setminus\{s\}}$ for any $s\in S'_1$ and $g\in A_{S_1}$. Hence the map $\vertex \Delta_{S_1,S'_1}\to \Delta_{S,S'}$ is injective, which clearly extends to a simplicial embedding.
The general case of $gA_{S_1}$ with a choice of base point $h\in gA_{S_1}$ follows immediately as there is unique left translation of $A_S$ sending $A_{S_1}$ to $gA_{S_1}$ and identity to $h$.
\end{proof}

\begin{lem}
	\label{lem:link}
Let $\Delta$ be the $(\Lambda,\Lambda')$-relative Artin complex, and let $v\in \Delta$ be a vertex of type $\hat s$ with node $s\in \Lambda'$. Let $\Lambda_s$ and $\Lambda'_s$ be the induced subgraphs of $\Lambda$ and $\Lambda'$ respectively spanned all the nodes which are not equal to $s$. 
Then the following assertions are true.
\begin{enumerate}
	\item There is a type-preserving isomorphism between $\lk(v,\Delta)$ and the $(\Lambda_s,\Lambda'_s)$-relative Artin complex.
	\item Let $I_s$ be the union of connected components of $\Lambda_s$ that contain at least one component of $\Lambda'_s$. Then $\Lambda'_s\subset I_s$ and there is a type-preserving isomorphism between $\lk(v,\Delta)$ and the $(I_s,\Lambda'_s)$-relative Artin complex.
	\item Let $\{I_i\}_{i=1}^k$ be the connected components of $I_s$. Then $\lk(v,\Delta)=K_1*\cdots*K_k$ where $K_i$ is the induced subcomplex of $\lk(v,\Delta)$ spanned by vertices of type $\hat t$ with $t\in I_i$.
\end{enumerate}
\end{lem}

\begin{proof}
For (1), let $gA_{\hat s}$ be the coset associated with $v$. Then vertices in $\lk(v,\Delta)$ are in 1-1 correspondence with cosets of form $hA_{\hat t}$ with $t\in \Lambda'$ such that $hA_{\hat t}\cap gA_{\hat s}\neq\emptyset$ and $hA_{\hat t}\neq gA_{\hat s}$. Then $t\neq s$, hence $t\in \Lambda'_s$. As $A_{\hat t}\cap A_{\hat s}=A_{S\setminus\{s,t\}}$ (cf. \cite{lek}), we know $hA_{\hat t}\cap gA_{\hat s}$ is a left coset of $A_{S\setminus\{s,t\}}$ in $gA_{\hat s}$. This associated each vertex in $\lk(v,\Delta)$ a vertex in the $(\Lambda_s,\Lambda'_s)$-relative Artin complex, which extends to a type-preserving isomorphism. For (2), $\Lambda'_s\subset I_s$ is clear. Suppose $\Lambda_s=I_s\sqcup J_s$. Then $A_{\Lambda_s}=A_{I_s}\times A_{J_s}$. Thus for any $t\in \Lambda'_s$, there is a 1-1 correspondence between left $A_{\Lambda_s\setminus \{t\}}$ cosets in $A_{\Lambda_s}$ and left $A_{I_s\setminus\{t\}}$ cosets in $A_{I_s}$. Assertion 2 follows. The last assertion is a consequence of the fact that $A_{I_s}=\prod_{i=1}^k A_{I_i}$, and $gA_{I_s\setminus \{t\}}\cap hA_{I_s\setminus\{r\}}\neq\emptyset$ for any $g,h\in A_{I_s}$ and pair $t,r$ which are in different components of $I_s$. 
\end{proof}

The following is an immediate consequence of Lemma~\ref{lem:link}.
\begin{cor}
	\label{cor:adj}
Let $\Delta$ be the $(\Lambda,\Lambda')$-relative Artin complex. For $i=1,2,3$, let $x_i\in \Delta$ be a vertex of type $\hat s_i$ with node $s_i\in \Lambda'$. Suppose $s_1$ and $s_3$ are in different components of $\Lambda\setminus\{s_2\}$.

If $x_1$ is adjacent to $x_2$, and $x_2$ is adjacent to $x_3$, then $x_1$ is adjacent to $x_3$.
\end{cor}

\subsection{Bowtie free relative Artin complexes}
\label{subsec:bowtie free}

We will be interested in a special kind of relative Artin complex.

\begin{definition}
	An induced subgraph $\Lambda'$ of $\Lambda$ is \emph{admissible} if for any node $s\in \Lambda'$, if $s_1,s_2\in \Lambda'$ are in different connected components of $\Lambda'\setminus\{s\}$, then they are in different components of $\Lambda\setminus\{s\}$.
\end{definition}

\begin{lem}
	\label{lem:poset}
Suppose $\Lambda'$ is an admissible linear subgraph of the Dynkin diagram $\Lambda$ of $A_S$. We choose an orientation on $\Lambda'$ and let $\{s_i\}_{i=1}^n$ be nodes of $\Lambda'$ from left to right.  Let $\Delta$ be the $(\Lambda,\Lambda')$-relative Artin complex. Then the following are true.
\begin{enumerate}
	\item If $v\in\Delta$ is a vertex of type $\hat s_i$ with $2\le i\le n-1$, then $\lk(v,\Delta)=K_1*K_2$ where $K_1$ is the induced subcomplex of $\lk(v,\Delta)$ spanned by vertices of type $\hat s_j$ with $j<i$, and $K_2$ is the induced subcomplex of $\lk(v,\Delta)$ spanned by vertices of type $\hat s_j$ with $j>i$.
	\item Let $V$ be the vertex set of $\Delta$. For a vertex $x\in V$, we write $\type(x)=i$ if $x$ has type $\hat s_i$. Define a relation $<$ on  $V$ such that $x<y$ if $x$ and $y$ are adjacent in $\Delta$ and $\type(x)<\type(y)$. Then $(V,\le)$ is a graded poset.
\end{enumerate}
\end{lem}

\begin{proof}
Assertion 1 is a consequence of Assertion 3 of Lemma~\ref{lem:link}. The part of Assertion 2 concerning $(V,\le)$ being a poset follows from Assertion 1. To see $(V,\le)$ is graded, it suffices to show for $i<j<k$ and $g_i A_{\hat s_i}\cap g_k A_{\hat s_k}\neq\emptyset$, there exists $g_j\in A_\Lambda$ such that $g_i A_{\hat s_i}\cap g_j A_{\hat s_j}\neq\emptyset$ and $g_j A_{\hat s_j}\cap g_k A_{\hat s_k}\neq\emptyset$. However, $g_i A_{\hat s_i}\cap g_k A_{\hat s_k}=g_{ik} A_{\Lambda\setminus\{s_i,s_k\}}$ for some $g_{ik}\in A_\Lambda$ by \cite{lek}.
\end{proof}

%So we only prove Assertion 2. Take $\{v_1,v_2,v_3\}\subset V$ with $v_1<v_2$ and $v_2<v_3$. Let $g_i A_i,g_jA_j,g_kA_k$ be the left cosets corresponding to $\{v_1,v_2,v_3\}$. Then $g_iA_i\cap g_jA_j\neq\emptyset$ and $g_jA_j\cap g_kA_k\neq\emptyset$. Note that $g_iA_i\cap g_jA_j$ is a left coset of form $g_{ij}A_{ij}$ where $A_{ij}=A_{S\setminus\{s_i,s_j\}}$. Similarly, $g_kA_k\cap g_jA_j$ is a left coset of form $g_{jk}A_{jk}$ where $A_{jk}=A_{S\setminus\{s_j,s_k\}}$. Let $\{S_q\}_{q=1}^m$ be the collection of connected components of $\Lambda\setminus\{s_j\}$. Then $A_j$ is a direct sum of all the $A_{S_q}$'s. By our assumption, $s_i$ and $s_k$ are in different components of $\Lambda\setminus\{s_j\}$. We assume without loss of generality that $s_i\in S_1$ and $s_j\in S_2$. Then $A_{S_q}\subset A_{ij}$ for all $q\neq 1$ and $A_{S_q}\subset A_{jk}$ for all $q\neq 2$. Thus an arbitrary $A_{ij}$ left coset in $A_j$ has nonempty intersection with all the $A_{jk}$ left cosets. Thus $g_{ij}A_{ij}\cap g_{jk}A_{jk}\neq\emptyset$. Then $g_iA_i\cap g_kA_k\neq\emptyset$ and $v_i<v_k$. 

The following is motivated by the work of Haettel \cite{haettel2021lattices,haettel2022link}. We refer to Definition~\ref{def:bowtie free0} and the discussion before for the definition of bowtie in a poset, as well as bowtie free posets.
\begin{definition}
	\label{def:bowtie free}
Suppose $\Lambda'$ is an admissible linear subgraph of the Dynkin diagram $\Lambda$ of $A_S$ with consecutive nodes of $\Lambda'$ being $\{s_i\}_{i=1}^n$. We say the $(\Lambda,\Lambda')$-relative Artin complex is \emph{bowtie free} if the poset in Lemma~\ref{lem:poset} (2) is bowtie free. Note that the property of being bowtie free does not depend on the choice of one of the two orientation of $\Lambda'$.
\end{definition}

Note that the property of being bowtie free is defined for the relative Artin complex $\Delta_{\Lambda,\Lambda'}$ only when $\Lambda'$ is a line.
We record the following direct consequence of the definition.
%
%We record an immediate consequence of the definition.
%\begin{lem}
%	\label{lem:inherit}
%Let $\Lambda,\Lambda'$ be as in Definition~\ref{def:bowtie free}. Let $\Lambda''$ be a linear subgraph of $\Lambda'$. Let $\Lambda_0$ be an induced subgraph of $\Lambda$ containing $\Lambda'$.
%If the $(\Lambda,\Lambda')$-relative Artin complex is bowtie free, so is the $(\Lambda,\Lambda'')$-relative Artin complex and the $(\Lambda_0,\Lambda')$-relative Artin complex.
%\end{lem}

\begin{lem}
	\label{lem:inherit}
Suppose $\Lambda'$ is an admissible linear subgraph of the Dynkin diagram $\Lambda$. Suppose $\Delta_{\Lambda,\Lambda'}$ is bowtie free. Then
\begin{enumerate}
	\item if $s$ is an endpoint of $\Lambda'$, then the link of any vertex of type $\hat s$ in $\Delta_{\Lambda,\Lambda'}$ is bowtie free;
	\item if $\Lambda''$ is a linear subgraph of $\Lambda'$.  then $\Delta_{\Lambda,\Lambda''}$ is bowtie free.
\end{enumerate}
\end{lem}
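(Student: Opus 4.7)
The plan is to observe that in both cases the poset on the smaller complex can be realised as a sub-poset of the poset $(V,\le)$ on $\Delta_{\Lambda,\Lambda'}$, so that a witness for a bowtie supplied by the bowtie free hypothesis automatically lies in the smaller complex. The only real work is to set up admissibility and the identification of the posets; after that each statement reduces to transitivity of the poset relation.

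First I would verify that the new linear subgraph is admissible. In (1), with $s=s_1$ an endpoint of $\Lambda'$, the subgraph $\Lambda'_s=\Lambda'\setminus\{s\}$ is linear and sits inside $\Lambda_s=\Lambda\setminus\{s\}$. In (2), since $\Lambda'$ is linear and $\Lambda''$ is a linear subgraph of it, $\Lambda''$ must be a subpath of $\Lambda'$ made of consecutive vertices $s_i,\dots,s_j$. In either case, admissibility of $\Lambda'$ in $\Lambda$ passes to admissibility of the new subgraph, because removing additional vertices from $\Lambda$ can only refine the decomposition into connected components.

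For (1), apply Lemma~\ref{lem:link} to identify $\lk(v,\Delta_{\Lambda,\Lambda'})$ with a relative Artin complex on $\Lambda'_s$; under this identification the poset on the link (from the induced linear order $s_2<\cdots<s_n$) equals the restriction of $(V,\le)$ to the vertex set of $\lk(v,\Delta_{\Lambda,\Lambda'})$. Given a bowtie $\{x_1,x_2,y_1,y_2\}$ in this link, it is simultaneously a bowtie in $(V,\le)$, so the hypothesis supplies $z$ with $x_i\le z\le y_j$ for all $i,j$. Since $s_1$ has the smallest type, $v<x_i\le z$, and transitivity (Lemma~\ref{lem:poset}) gives $v<z$; so $z$ is adjacent to $v$ and distinct from it, i.e.\ $z\in\lk(v,\Delta_{\Lambda,\Lambda'})$, which supplies the required witness.

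For (2), $\Delta_{\Lambda,\Lambda''}\subset\Delta_{\Lambda,\Lambda'}$ is an induced subcomplex and inherits its poset as the restriction of $(V,\le)$ to its vertex set. A bowtie in $\Delta_{\Lambda,\Lambda''}$ is also a bowtie in $\Delta_{\Lambda,\Lambda'}$, and the witness $z$ supplied by the hypothesis satisfies $\type(x_i)\le\type(z)\le\type(y_j)$ for all $i,j$; since the types of the $x_i$ and $y_j$ all lie in the consecutive block $\Lambda''\subset\Lambda'$, so does $\type(z)$, hence $z\in\Delta_{\Lambda,\Lambda''}$. I expect the only mild difficulty to be the bookkeeping that under the link identification in (1) and the subcomplex inclusion in (2) the notions of adjacency and type comparison remain consistent between the ambient and the smaller relative Artin complex; once that is secured both parts collapse into one-line transitivity arguments.
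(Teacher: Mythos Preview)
Your proposal is correct and is precisely the natural unpacking of what the paper means by ``direct consequence of the definition'' (the paper gives no proof beyond that phrase). In both parts you correctly identify the smaller poset as a sub-poset of $(V,\le)$ and use transitivity (Lemma~\ref{lem:poset}) to force the witness $z$ back into the link or into $\Delta_{\Lambda,\Lambda''}$; the bookkeeping you flag about admissibility and the consistency of adjacency under the inclusion/link identification all goes through as you indicate.
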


We give several more criterion for relative Artin complexes to be bowtie free.
\begin{lem}
	\label{lem:bowtie free criterion}
Suppose $\Lambda'$ is an admissible linear subgraph of the Dynkin diagram $\Lambda$ of $A_S$ with consecutive nodes of $\Lambda'$ being $\{s_i\}_{i=1}^n$. Let $\Delta$ be the $(\Lambda,\Lambda')$-relative Artin complex with its vertex set $V$ endowed with the order in Lemma~\ref{lem:poset} (2). 
We assume that
\begin{enumerate}
	\item for each $v\in V$ of type $\hat s_1$ or $\hat s_n$, the vertex set of $\lk(v,\Delta)$ with the induced order from $(V,\le)$ is a bowtie free poset; 
	\item for any embedded 4-cycle $x_1y_1x_2y_2$ in $\Delta$ such that $x_1,x_2$ have type $\hat s_1$ and $y_1,y_2$ have type $\hat s_n$, there is a vertex $z\in V$ such that $x_i\le z\le y_j$ for $1\le i,j\le 2$.
\end{enumerate}
Then $(V,\le)$ is bowtie free. 
\end{lem}

\begin{proof}
Take embedded 4-cycle $x_1y_1x_2y_2$ in $\Delta$ with $x_i\le y_j$ for $1\le i,j\le 2$. We need to find $z\in V$ such that $x_i\le z\le y_j$ for $1\le i,j\le 2$. For $z\in V$, we write $\type(z)=i$ if $z$ is of type $\hat s_i$.
	
First we consider the case $\type(y_1)=\type(y_2)=n$.
If $\type(x_1)=\type(x_2)=1$, we are done by assumption.  If $\type(x_1)=1$ and $\type(x_2)>1$, then we take $x'_2\le x_2$ such that $\type(x'_2)=1$. By Assumption 2, there is $z'$ such that $\{x_1,x'_2\}< z'< \{y_1,y_2\}$. As $z',y_1,x_2,y_2$ form a 4-cycle in $\lk(x'_2,\Delta)$ and $\{z',x_2\}<\{y_1,y_2\}$, by Assumption 1, there is $z$ such that $\{z',x_2\}<z<\{y_1,y_2\}$. Note that $x_1<z'<z$, thus $x_1<z$ by Lemma~\ref{lem:poset}, as desired. If $\type(x_1)>1$ and $\type(x_2)>1$, then we take $x'_2\le x_2$ such that $\type(x'_2)=1$ and repeat the previous argument. 

Now we assume $\type(y_1)=\type(y_2)=n$ is not true. By a similar discussion as before, we only need to look at the case $\type(y_1)=n$ and $\type(y_2)<n$. Take $y'_2$ such that $y_2<y'_2$ and $\type(y'_2)=n$. Then the previous paragraph implies that there is $z'$ such that $\{x_1,x_2\}<z'<\{y_1,y'_2\}$. As $x_1,z',x_2,y_2$ form a 4-cycle in $\lk(y'_2,\Delta)$ and $\{x_1,x_2\}<\{z',y_2\}$, by Assumption 1 there is $z$ such that $\{x_1,x_2\}<z<\{z',y_2\}$. Note that $z<z'<y_1$, thus $z<y_1$ by Lemma~\ref{lem:poset}, as desired.
\end{proof}

%\begin{cor}
%	\label{cor:bowtie free criterion}
%Suppose $\Lambda'$ is an admissible linear subgraph of the Dynkin diagram $\Lambda$ of $A_S$ with consecutive vertices of $\Lambda'$ being $\{s_i\}_{i=1}^n$. Let $\Delta=\Delta_{\Lambda,\Lambda'}$ with its vertex set $V$ endowed with the order in Lemma~\ref{lem:poset} (2). 
%We assume that for any $\{x_1,x_2,y_1,y_2\}$ in $\Delta$ such that $x_i\le y_j$ for $1\le i,j\le 2$, $x_1,x_2$ have the same type and $y_1,y_2$ have the same type, there is a vertex $z\in V$ such that $x_i\le z\le y_j$ for $1\le i,j\le 2$.
%Then $(V,\le)$ is bowtie free. 
%\end{cor}

\begin{lem}
	\label{lem:connected intersection}
	Suppose $\Lambda'$ is an admissible linear subgraph of the Dynkin diagram $\Lambda$ of $A_S$ with consecutive nodes of $\Lambda'$ being $\{s_i\}_{i=1}^n$. Let $Y=\Delta_{\Lambda,\Lambda'}$ with its vertex set $V$ endowed with the order in Lemma~\ref{lem:poset} (2).
	Assume
	\begin{enumerate}
		\item for each $v\in V$ of type $\hat s_1$ or $\hat s_n$, the vertex set of $\lk(v,Y)$ with the induced order from $(V,\le)$ is a bowtie free poset; 
		\item for any pair of vertices $y_1,y_2$ in $Y$ of type $\hat s_n$, the induced subcomplex of $Y$ spanned by all vertices that are adjacent to both $y_1$ and $y_2$ is either empty or connected.
	\end{enumerate}
	Then $(V,\le)$ is bowtie free. 
\end{lem}

\begin{proof}
It suffices to verify Assumption 2 of Lemma~\ref{lem:bowtie free criterion}. Take a 4-cycle $x_1y_1x_2y_2$ in $Y$ such that $x_1$ and $x_2$ have type $\hat s_1$ and $y_1$ and $y_2$ have type $\hat s_n$. We assume the 4-cycle is embedded.
For $i=1,2$, let $Y_i$ be the induced subcomplex of $X$ spanned by vertices of $X$ that are adjacent to $y_i$. 
Note that $\{x_1,x_2\}\subset Y_1\cap Y_2$. As $Y_1\cap Y_2$ is connected and $x_1\neq x_2$, there exists an edge path $\omega\subset Y_1\cap Y_2$ from $x_1$ to $x_2$. In particular, each vertex of $\omega$ is not an isolated vertex in $Y_1\cap Y_2$. Let $\{w_i\}_{i=1}^k$ be consecutive vertices of $\omega$ with $w_1=x_1$ and $w_k=x_2$.
	
	For each $w_i$, define $\Delta_i$ to be the induced subcomplex of $\lk(w_i,Y)$ spanned by vertices $x$ with $\type(x)>\type(w_i)$. Let $\Delta'_i=\{w_i\}*\Delta_i$. The vertex set $V'_i$ of $\Delta'_i$ is endowed with the order inherit from $Y$. Note that $w_i$ is a lower bound of $V'_i$. By Lemma~\ref{lem:link}, if $\type(w_i)=1$, then $\lk(w_i,Y)$ is bowtie free by Assumption 1, hence the same is true for $\Delta'_i$. If $\type(w_i)>1$, then we take $v_i$ with $\type(v_i)=1$ and $v_i<w_i$. Note that $w_i\in\lk(v_i,Y)$ and  $\Delta_i$ is the induced subcomplex of $\lk(v_i,Y)$ spanned by vertices which are bigger than $w_i$. As $\lk(v_i,Y)$ is bowtie free, $\Delta_i$ is bowtie free.
	
	For $1\le i\le k$, let $z_i$ be the meet of $y_1$ and $y_2$ in $(V'_i,\le)$. Such $z_i$ exists as $y_1$ and $y_2$ has lower bound in $V'_i$ (which is $w_i$) and $(V'_i,\le)$ satisfies the assumption of Lemma~\ref{lem:posets}. It suffices to show $z_i=z_{i+1}$, as this implies $z=z_1=z_2=\cdots=z_k$ is adjacent in $Y$ to each of $\{x_1,x_2,y_1,y_2\}$, which implies Assumption 2 of Lemma~\ref{lem:bowtie free criterion}.
	We assume without loss of generality that $\type(w_i)<\type(w_{i+1})$. As elements in $V'_{i+1}$ is lower bounded by $w_{i+1}$, hence they are lower bounded by $w_i$. Thus $\Delta'_{i+1}\subset \Delta'_{i}$. As $w_{i+1}$ is a lower bound of $y_1$ and $y_2$ in $\Delta'_i$, we know $z_i\ge w_{i+1}$ in $\Delta'_i$. Thus $z_i\in \Delta'_{i+1}$. Then $z_i=z_{i+1}$ follows.
\end{proof}

Sometimes it is more convenient to use the following small variation of Lemma~\ref{lem:connected intersection}.
\begin{cor}
	\label{cor:connected intersection}
Suppose $\Lambda'$ is an admissible linear subgraph of the Dynkin diagram $\Lambda$ of $A_S$ with consecutive nodes of $\Lambda'$ being $\{s_i\}_{i=1}^n$. Let $Y=\Delta_{\Lambda,\Lambda'}$ with its vertex set $V$ endowed with the order in Lemma~\ref{lem:poset} (2). For $1\le k\le n$, let $\Lambda_k$ be the linear subgraph of $\Lambda'$ spanned by $\{s_i\}_{i=1}^k$. Assume
\begin{enumerate}
	\item for each $v\in V$ of type $\hat s_1$ or $\hat s_n$, the vertex set of $\lk(v,Y)$ with the induced order from $(V,\le)$ is a bowtie free poset; 
	\item there exists $k\le n$ such that the $(\Lambda,\Lambda_k)$-relative Artin complex $X$ satisfies the following property: for any pair of vertices $y_1,y_2$ in $Y$ of type $\hat s_n$, the induced subcomplex of $X$ spanned by all vertices that are adjacent to both $y_1$ and $y_2$ in $Y$ is either empty or connected.
\end{enumerate}
Then $(V,\le)$ is bowtie free. 
\end{cor}

\begin{proof}
We will view 	$\lk(y_1,Y)$ as a subcomplex of $Y$.
Note that for any vertex $x\in \lk(y_1,Y)\cap \lk(y_2,Y)$, there exists a vertex $y$ of type $\hat s_1$ such that $y\le x$. As $x\le y_1$ and $x\le y_2$, we know $y\le y_i$ for $i=1,2$. Then Lemma~\ref{lem:poset} implies that 
$y\in \lk(y_1,Y)\cap \lk(y_2,Y)$. It follows that $$\lk(y_1,Y)\cap \lk(y_2,Y)\neq \emptyset\ \textrm{if}\ \textrm{and}\ \textrm{only}\ \textrm{if}\lk(y_1,Y)\cap \lk(y_2,Y)\cap X\neq\emptyset,$$
and $\lk(y_1,Y)\cap \lk(y_2,Y)$ is connected if $\lk(y_1,Y)\cap \lk(y_2,Y)\cap X$ is connected. Now the corollary follows from Lemma~\ref{lem:connected intersection}.
\end{proof}

\subsection{The labeled 4-cycle condition on relative Artin complexes}
\label{subsec:labeled 4-cycle}
Let $\Delta_{\Lambda,\Lambda'}$ be a $(\Lambda,\Lambda')$-relative Artin complex, for a Dynkin diagram $\Lambda$ and $\Lambda'$ being an induced subgraph of $\Lambda$. We will introduce a combinatorial condition on $\Delta_{\Lambda,\Lambda'}$, which is only defined when $\Lambda'$ is a tree that is also admissible in $\Lambda$. This condition serves as a replacement for the bowtie free condition in Definition~\ref{def:bowtie free} when $\Lambda'$ is not a linear graph, and it does not use a partial ordering on the vertices of $\Delta_{\Lambda,\Lambda'}$ (which is needed for the bowtie free condition). Moreover, as we will see later in Lemma~\ref{lem:connect}, when $\Lambda'$ is
a straight line segment, the two definitions agree.

\begin{definition}
	\label{def:labeled 4-cycle}
We refer to Figure~\ref{fig:l4cycle}.	
Suppose $X$ is the $(\Lambda,\Lambda')$-relative Artin complex, with $\Lambda'$ being an admissible tree subgraph of $\Lambda$. Recall that each vertex $x$ of $X$ is labeled by its type, $\hat s$, with $s$ being a node of $\Lambda'$. In this case, $s$ is called the \emph{corresponding node} for the vertex $x$. We say $X$ satisfies the \emph{labeled 4-cycle} condition if for any induced 4-cycle in $X$ with consecutive vertices being $\{x_i\}_{i=1}^4$, the following conditions are satisfied:
\begin{enumerate}
	\item there exists a vertex $x\in X$ adjacent to each of $x_i$;
	\item if $x_i$ is of type $\hat s_i$ for the corresponding node $s_i\in \Lambda'$ for $1\le i\le 4$, then the vertex $x\in X$ in the previous item can be chosen so it satisfies, in addition, that $x$ is of type $\hat s$ for corresponding node $s\in \Lambda'$ such that the $s$ is contained the smallest subtree of $\Lambda'$ spanned by $\{s_i\}_{i=1}^4$. As $\Lambda'$ is a tree, equivalently $s$ is contained in the convex hull of $\{s_i\}_{i=1}^4$ in $\Lambda'$.
\end{enumerate}
\end{definition}
	\begin{figure}[h]
	\centering
	\includegraphics[scale=0.8]{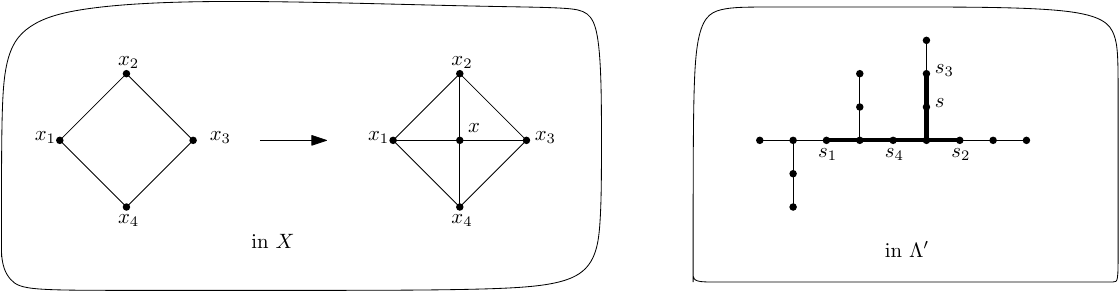}
	\caption{The left picture indicates that, given an induced 4-cycle, we can always find a vertex $x$ in the middle connecting to all vertices of the 4-cycle. The thickened subtree of the right picture is the convex hull of the nodes $\{s_1,s_2,s_3,s_4\}$ in the Dynkin diagram $\Lambda'$.}
	\label{fig:l4cycle}
\end{figure}

\begin{lem}
	\label{lem:connect}
If $\Lambda'$ is an admissible linear subgraph of $\Lambda$, then $\Delta_{\Lambda,\Lambda'}$ satisfies bowtie free condition if and only if it satisfies the labeled 4-cycle condition.
\end{lem}

\begin{proof}
Suppose $\Delta_{\Lambda,\Lambda'}$ satisfies bowtie free condition. Take an induced 4-cycle $x_1y_1x_2y_2$ in $\Delta_{\Lambda,\Lambda'}$. We cannot have $x_1<y_1<x_2$ or $x_1>y_1>x_2$, otherwise $x_1$ is adjacent to $x_2$ by Lemma~\ref{lem:link}, contradicting that we have an induced 4-cycle. Thus up to reordering, we can assume $x_i<y_j$ for $1\le i,j\le 2$. By Definition~\ref{def:bowtie free0}, there is a vertex $x\in \Delta_{\Lambda,\Lambda'}$ such that $x_i\le x\le y_j$ for $1\le i,j\le 2$. As the cycle $x_1y_1x_2y_2$ is induced, we know $x\notin \{x_1,y_1,x_2,y_2\}$. So the vertex $x$ is adjacent to each of $\{x_1,y_1,x_2,y_2\}$ in $\Delta_{\Lambda,\Lambda'}$. Moreover, the type of $x$ satisfies the desired condition in Definition~\ref{def:labeled 4-cycle} (2).

Conversely suppose $\Delta_{\Lambda,\Lambda'}$ satisfies the labeled 4-cycle condition. If we have $\{x_1,y_1,x_2,y_2\}$ satisfying $x_i< y_j$ for $1\le i,j\le 2$, then these 4 vertices form a 4-cycle in $\Delta_{\Lambda,\Lambda'}$. If $x_1$ and $x_2$ are comparable, or if $y_1$ and $y_2$ are comparable, then the bowtie free condition clearly holds for $\{x_1,y_1,x_2,y_2\}$. Now we assume $x_1$ and $x_2$ are not comparable, and $y_1$ and $y_2$ are not comparable. Then the 4-cycle is an induced 4-cycle. Suppose $x_1$ has type $\hat s_{x_1}$ for node $s_{x_1}\in \Lambda'$. Similarly we define nodes $s_{x_2},s_{y_1}$ and $s_{y_2}$ of $\Lambda'$. We assume without loss of generality that the segment $\Lambda''$ from $s_{x_1}$ to $s_{y_2}$ contains all of $\{s_{x_1},s_{x_2},s_{y_1},s_{y_2}\}$. Then the labeled 4-cycle condition implies that there is a vertex $z$ adjacent to each vertex of the 4-cycle such that $z$ has type $\hat s_z$ with the node $s_z\in \Lambda''$. Clearly $x_1<z<y_2$. Now we show $x_2<z$. If this is not true, as $x_2$ and $z$ are adjacent, we must have $x_2>z$, then $x_1<x_2$, contradicting the assumption that $x_1$ and $x_2$ are not comparable. Similarly, $z<y_1$. Thus the bowtie free condition is satisfied. 
 \end{proof}

%
%\begin{definition}
%	\label{def:order tripod}
%Let $\Lambda'$ be an admissible tripod subgraph of $\Lambda$ with the valence one vertices of $\Lambda'$ being $\{s_i\}_{i=1}^3$ and the center vertex of $\Lambda'$ being $s$. Let $\Delta$ be the $(\Lambda,\Lambda')$-relative Artin complex.
%Each $s_i$ for $1\le i\le 3$ will induce an order on the vertex set $V$ of $\Delta$. We now define the order induced by $s_1$.
%For two vertices $x_1,x_2\in \Delta$ of types $\hat t_1$ and $\hat t_2$ with $t_1,t_2\in \Lambda'$, $x_1<x_2$ if $t_2$ lies in the geodesic in $\Lambda'$ from $t_1$ to $s_1$ (it is possible that $t_2=s_1$). As $\Lambda'$ is an admissible subgraph, we know $(V,\le)$ is an order by Lemma~\ref{lem:link}.
%\end{definition}

Now we describe a construction for converting a 4-cycle in the Artin complex $\Delta=\Delta_\Lambda$ of $A_\Lambda$ to a concatenation of four words in $A_\Lambda$. These four words are well-defined up to an appropriate notion of equivalence, explained below.

\begin{cons}
	\label{def:4cycle}
A \emph{chamber} in $\Delta$ is a top-dimensional simplex in $\Delta$. There is a 1-1 correspondence between chambers in $\Delta$ and elements in $A_\Lambda$.	
Let $\{x_i\}_{i=1}^4$ be four consecutive vertices of an induced 4-cycle $\omega$ in $\Delta$ and suppose $x_i$ has type $\hat a_i$ with node $a_i\in \Lambda$. 
For each edge of $\omega$, take a chamber of $\Delta$ containing this edge. We name these chambers by $\{\Theta_i\}_{i=1}^4$ with $\Theta_1$ containing the edge $\overline{x_1x_2}$. Each $\Theta_i$ gives an element $g_i\in A_\Lambda$. Then for $i\in \mathbb Z/4\mathbb Z$, $g_i=g_{i-1}w_{i}$ for $w_i\in A_{\hat a_i}$ (recall that $A_{\hat a_i}$ is defined to be $A_{S\setminus\{a_i\}}$). This defines four words $\{w_1,w_2,w_3,w_4\}$ from the induced $4$-cycle, such that $w_1w_2w_3w_4=1$ in $A_\Lambda$.

The word $w_1w_2w_3w_4$ depends on the choice of $\{\Theta_i\}_{i=1}^4$. A different choice would lead to a word of form $u_1u_2u_3u_4$ such that there exist elements $q_i\in A_{S\setminus\{a_i,a_{i+1}\}}$ such that $u_i=q^{-1}_{i-1}w_i q_i$ for $i\in\mathbb Z/4\mathbb Z$. In this case we will say the words $u_1u_2u_3u_4$ and $w_1w_2w_3w_4$ are equivalent. If, in addition, there exists a parabolic subgroup $A'$ of $A_\Lambda$ such that $q_i\in A_{S\setminus\{a_i,a_{i+1}\}}\cap A'$, then we say $w_1w_2w_3w_4$ is \emph{equivalent} to $u_1u_2u_3u_4$ in $A'$.
\end{cons}

Our next goal is to prove Proposition~\ref{prop:4-cycle}. We start with a preparatory lemma.

\begin{lem}
	\label{lem:4cycle}
Let $\Lambda$ be an arbitrary Dynkin diagram. Let $\omega$ be a 4-cycle in the Artin complex $\Delta_\Lambda$ with consecutive vertices $\{x_i\}_{i=1}^4$, with $x_i$ being of type $\hat a_i$ for node $a_i\in \Lambda$. Suppose $a_1\neq a_3$. Then there exists a vertex $x'_1\in\Delta_\Lambda$ such that $x'_1$ is adjacent to $\{x_1,x_2,x_4\}$, and $x'_1$ is of type $\hat a_3$.
\end{lem}

\begin{proof}
	 Let $w_1w_2w_3w_4$ be a word associated this 4-cycle in $\Delta_\Lambda$ as in Construction~\ref{def:4cycle}.
	Let $\bC$ be the Coxeter complex for $W_\Lambda$. Let $\Si$ and $\od$ be the Davis complex and oriented Davis complex of $A_\Lambda$.
	There is a 1-1 correspondence between vertices of $\bC$ and proper maximal standard subcomplexes of $\Si$. 
	Let $\bar \pi:X\to \bC$ be as in Definition~\ref{def:barpi}.
Let $\bar z_i=\bar\pi(x_i)$. Then $\bar z_i$ has type $\hat a_i$. Let $C_i$ be the proper maximal standard subcomplex in $\Si$ associated with $\bar z_i$ (in the sense of Definition~\ref{def:associated subcomplex}). Let $\widehat{C}_i$ be the subcomplex in $\od$ corresponding to $C_i$.
As $a_1\neq a_3$, we know $C_1\neq C_3$.  Then $w_1w_2w_3w_4$ gives a null-homotopic edge loop in $\s_{\Lambda}$, which lifts to null-homotopic edge loop $P_1P_2P_3P_4$ in $\od$ with $P_i\subset \widehat C_i$.
Let $E_{i,j}=\p_{C_i}(C_j)$ for $1\le i,j\le 4$. 
Let $I=\supp(E_{1,3})$. Suppose $I=I_1\sqcup I_2$ where $I_1$ is the union of irreducible components of $I$ that are contained in $\supp(C_2)=S\setminus\{a_2\}$. By Lemma~\ref{lem:two cells}, there is a vertex $\bar x'\in C_2\cap E_{3,1}$ such that $\bar x=p(\bar x')\in C_2\cap E_{1,3}$, where $p:E_{3,1}\to E_{1,3}$ is the parallel translation. 
For $i=1,2$, let $U_i$ be the face in $E_{1,3}$ containing $\bar x$ such that $\supp(U_i)=I_i$. Then $E_{1,3}=U_1\times U_2$ and $U_1\subset C_{1}\cap C_{2}$. As $\bar x\in C_2\cap C_1$ and $\bar x'\in C_2\cap C_3$, up to passing to a word equivalent to $w_1w_2w_3w_4$, we can assume $P_1$ ends at $\bar x$, $P_2$ travels from $\bar x$ to $\bar x'$ and $P_3$ starts at $\bar x'$.

For $2\le i\le 4$, let $Q_i=\Pi_{\widehat C_1}(P_i)$. Then $Q_i\subset \widehat C_1\cap \widehat C_i$ for $i=2,4$, and $Q_3\subset E_{1,3}$. Then $\Pi_{\widehat C_1}(P_1P_2P_3P_4)=P_1Q_2Q_3Q_4$. As $E_{1,3}=U_1\times U_2$, we know that $\widehat E_{1,3}=\widehat U_1\times \widehat U_2$, hence $Q_3$ is homotopic rel endpoints in $\widehat E_{1,3}\subset \widehat C_1$ to $Q_{31}Q_{32}$ with $$Q_{31}\subset \widehat U_1\subset \widehat C_{1}\cap \widehat C_{2}\ \mathrm{and}\ \supp(Q_{32})\subset \supp(\widehat U_2).$$ As $P_1Q_2Q_3Q_4$ is null-homotopic in $\widehat C_1$, we know $P_1$ is homotopic rel endpoints in $\widehat C_1$ to $$\bar Q_4\bar Q_{32}(\bar Q_{31}\bar Q_2),$$ where $\bar Q_i$ denotes the inverse path of $Q_i$. Then $w_1=w_{11}w_{12}w_{13}$ with $w_{11}=\w(\bar Q_4),w_{12}=\w(\bar Q_{32})$ and $w_{13}=\w(\bar Q_{31}\bar Q_2)$. Let $S$ be the collection of nodes in $\Lambda$. Then $\supp(w_{11})\subset S\setminus\{a_1,a_4\}$, and $\supp(w_{13})\subset S\setminus\{a_1,a_2\}$. Moreover, $\supp(w_{12})\subset \supp(\widehat U_2)\subset S\setminus\{a_1,a_3\}$ by Lemma~\ref{lem:two cells} (2). Thus $w_1w_2w_3w_4$ is equivalent to
$
w_{12}(w_{13}w_2)w_3(w_4w_{11})
$ in the sense of Construction~\ref{def:4cycle}. Let $g_iA_{\hat a_i}$ be the left coset associated with $x_i$ for $1\le i\le 4$.
As $w_{12}\subset A_{S\setminus\{a_1,a_3\}}$, this implies that there exists a coset $gA_{S\setminus\{a_1,a_3\}}$ contained in $g_1A_{\hat a_1}$ such that $gA_{S\setminus\{a_1,a_3\}}$ has nonempty intersection with $g_iA_{\hat a_i}$ for $i=2,4$. Now the lemma follows. 
\end{proof}

\begin{prop}
	\label{prop:4-cycle}
	Suppose $\Lambda'$ is an admissible tree subgraph of $\Lambda$. Then $\Delta=\Delta_{\Lambda,\Lambda'}$ satisfies the labeled 4-cycle condition if and only if 
	for any maximal linear subgraph $\Lambda''\subset \Lambda'$, $\Delta_{\Lambda,\Lambda''}$ satisfies the labeled 4-cycle condition. 
\end{prop}

\begin{proof}
The ``only if'' direction is clear. We prove the ``if'' direction.
Let $\{x_i\}_{i=1}^4$ be four consecutive vertices of an induced 4-cycle $\omega$ in $\Delta$ and suppose $x_i$ has type $\hat a_i$ with node $a_i\in \Lambda'$. We view $\Delta$ as a subcomplex of $\Delta_\Lambda$.  We will be using the following notation. Given nodes $b_1,\ldots,b_k \in \Lambda'$, let $\Lambda'_{b_1,\ldots,b_k}$ be the smallest subtree of $\Lambda'$ containing each of $b_i$ for $1\le i\le k$.
	
%We claim that if the nodes satisfy $a_1\neq a_3$, then there exists a vertex $x'_1\in X$ such that $x'_1$ is adjacent to $x_1,x_2,x_4$ such that $x'_1$ is of type $\hat a_3$. Now we prove this claim. 

If $a_1=a_3$ and $a_2=a_4$, then the proposition follows from our assumption, as these two nodes are contained in a maximal linear subgraph of $\Lambda'$.

Now we consider the case $a_2=a_4$ and $a_1\neq a_3$. As $\omega$ is an induced 4-cycle, by Lemma~\ref{lem:link}, $a_1$ and $a_3$ are in the same component of $\Lambda'\setminus\{a_2\}$. By the assumption of the proposition, it suffices to consider the case when $\Lambda'_{a_1,a_2,a_3}$ is not a linear graph,. Then $a_1,a_2,a_3$ are the valence one nodes of a tripod subgraph of $\Lambda''$ with center $a$ (as in Assertion 2).  Lemma~\ref{lem:4-cycle} implies that there is a vertex $x'_1$ of type $\hat a_3$ such that $x'_1$ is adjacent to $x_1,x_2,x_4$. As $a_2$ and $a_3$ are contained in a maximal linear subgraph of $\Lambda'$, by our assumption, there is a vertex $z\in \Delta$ of type $\hat a'$ with node $a'\in \Lambda'_{a_2,a_3}$ such that $z$ is adjacent to $x'_1,x_2,x_3,x_4$.

If $a'\in \Lambda'_{a,a_2}$, then $\{a_1,a,a',a_2\}$ is contained in a maximal linear subgraph of $\Lambda'$. By Lemma~\ref{lem:connect}, $\Delta_{\Lambda,\Lambda'_{a_1,a_2}}$ is bowtie free. We consider the linear order on $\Lambda'_{a_1,a_2}$ from $a_1$ to $a_2$, which induces an order on the set of vertices of $\Delta_{\Lambda,\Lambda'_{a_1,a_2}}$. Then both $x_1$ and $z$ are lower bounds of the set $\{x_2,x_4\}$. Thus there is a vertex $y\in \Delta$ such that $\{x_1,z\}\le y\le \{x_2,x_4\}$ in $\Delta_{\Lambda,\Lambda'_{a_1,a_2}}$. In particular, $y$ adjacent or equal to each of $x_1,x_2,z,x_4$, and $y$ is of type $\hat a_y$ with node $a_y\in \Lambda'_{a',a_2}$. If $y$ and $z$ have the same type, then $y=z$. If $y$ and $z$ have different types, then $a_y$ and $a_3$ are in different components of $\Lambda\setminus\{a'\}$, implying $y$ and $x_3$ are adjacent by Lemma~\ref{lem:link}. Thus in either case $y$ is adjacent to each of $x_1,x_2,x_3,x_4$.

If $a'\notin \Lambda'_{a,a_2}$, then we consider the 4-cycle $x_1x_2zx_4$ and repeat the previous argument. Namely Lemma~\ref{lem:4-cycle} implies that there is a vertex $x''_1$ adjacent to $x_1,x_2,x_4$ such that $x''_1$ has type $a'$. As before our assumption implies that there is a vertex $z'$ adjacent to $x''_1,x_2,z,x_4$ such that $z'$ is of type $a''$ with $a''\subset \Lambda'_{a',a_2}$. As $a_3$ and $a''$ are in different connected components of $\Lambda\setminus\{a'\}$, we know $x_3$ and $z''$ are adjacent by Lemma~\ref{lem:link}. If $a''\in \Lambda_{a,a_2}$, then we are done by the previous paragraph. Otherwise we repeat this argument on the 4-cycle $x_1x_2z''x_4$ to produce 4-cycle $x_1x_2z'''x_4$ with vertex $z'''$ adjacent to $x_3$ and the type of $z'''$ gives a node in $\Lambda'_{a_2,a_3}$ which is further away from the node $a_3$.
After finitely many iterations, we find a vertex $y$ with its type in $\Lambda_{a,a_2}$ which is adjacent to each of $x_1,x_2,x_3,x_4$. This justifies the labeled 4-cycle condition.

It remains to treat the case that $\{a_1,a_2,a_3,a_4\}$ are mutually distinct. The case when $\Lambda'_{a_1,a_2,a_3,a_4}$ is linear follows from our assumption. Now assume $\Lambda'_{a_1,a_2,a_3,a_4}$ is a tripod with center node $a$. Without loss of generality, we assume $a_1,a_2,a_3$ are valence one nodes of this tripod. Lemma~\ref{lem:link} implies that $a_4\notin \Lambda_{a_1,a_3}$ (otherwise $x_1$ is adjacent to $x_3$). Then $a_4\in \Lambda'_{a,a_2}$. Lemma~\ref{lem:4-cycle} implies that there is a vertex $x'_2$ of type $\hat a_4$ such that $x'_2$ is adjacent to $x_1,x_2,x_3$. If $x'_2$ is adjacent to $x_4$, then we are done, otherwise $x_1x'_2x_3x_4$ is an induced 4-cycle in the 1-skeleton of $\Delta$. Now the previous paragraph implies that there is a vertex $y\in \Delta$ of type $\hat a_y$ with corresponding node $a_y\in \Lambda'_{a_1,a_3,a_4}$ such that $y$ is adjacent to $x_1,x'_2,x_3,x_4$. Note that the nodes $a_2$ and $a_y$ are in different components of $\Lambda\setminus\{a_4\}$, thus $y$ is adjacent to $x_2$ by Lemma~\ref{lem:link}, as desired.

The remaining case is $\Lambda_{a_1,a_2,a_3,a_4}$ is neither linear or a tripod subgraph. We induct on the distance from $a_4$ to the tripod subgraph $\Lambda'_{a_1,a_2,a_3}$. The base case when the distance is 0 is already treated before. Lemma~\ref{lem:4-cycle} implies that there is a vertex $x'_2$ of type $\hat a_4$ such that $x'_2$ is adjacent to $x_1,x_2,x_3$. If $x'_2$ is adjacent to $x_4$, then we are done, otherwise $x_1x'_2x_3x_4$ is an induced 4-cycle in the 1-skeleton of $\Delta$. Now the previous discussion implies that there is a vertex $x'_4\in \Delta$ of type $\hat a'_4$ with the corresponding node $a'_4\in \Lambda'_{a_1,a_3,a_4}$ such that $x'_4$ is adjacent to $x_1,x'_2,x_3,x_4$. Note that $a'_4\notin \Lambda'_{a_1,a_3}$, otherwise $x_1$ and $x_3$ are adjacent by Lemma~\ref{lem:link}, contradicting that $\omega$ is an induced cycle.
	As $a'_4\neq a_4$, $a'_4$ is smaller distance to $\Lambda'_{a_1,a_2,a_3}$ compared to $a_4$. We divide into two subcases.
	\begin{enumerate}
		\item If $a'_4\notin \Lambda'_{a_1,a_2,a_3}$, then by induction there is a vertex $y$ adjacent to each of $x_1,x_2,x_3,x'_4$ such that $y$ has type $\hat a_y$ with node $a_y\in \Lambda'_{a_1,a_2,a_3,a'_4}$. As $a'_4$ separates $a_4$ from $a_y$ in $\Lambda'$, we know from Lemma~\ref{lem:link} that $y$ is adjacent to $x_4$, thus the labeled 4-cycle condition is satisfied.
		\item If $a'_4\in \Lambda'_{a_1,a_2,a_3}$, then $a'_4\in \Lambda'_{a_2,a}$ with $a$ being the center of $\Lambda'_{a_1,a_2,a_3}$ (as $a'_4\notin \Lambda'_{a_1,a_3}$). By the previous discussion, there is a vertex $y$ adjacent to each of $x_1,x_2,x_3,x'_4$ such that $y$ has type $\hat a_y$ with node $a_y\in \Lambda'_{a_1,a_3,a'_4}$. Then it is still true that  $a'_4$ separates $a_4$ from $a_y$ in $\Lambda'$, we know from Lemma~\ref{lem:link} that $y$ is adjacent to $x_4$, as desired.
	\end{enumerate}
This finishes the proof.
\end{proof}

We also record the following consequence of the above proof.
\begin{lem}
	\label{lem:tripod}
	Suppose $\Lambda'$ is an admissible tree subgraph of $\Lambda$. Let $\{x_i\}_{i=1}^4$ be four consecutive vertices of an induced 4-cycle $\omega$ in $\Delta=\Delta_{\Lambda,\Lambda'}$ and suppose $x_i$ has type $\hat a_i$ with $a_i\in \Lambda'$. We assume
	\begin{enumerate}
		\item $\Delta$ satisfies the labeled 4-cycle condition;
		\item  $\{a_1,a_2,a_3\}$ are the valence one nodes of a tripod subgraph $\Lambda''$ of $\Lambda'$ and $a_2=a_4$.
	\end{enumerate}
Then there is a vertex $y\in\Delta$ adjacent to each of $x_i$ such that $y$ has type $\hat a_y$ with the corresponding node $a_y$ contained in the linear subgraph between the valence three node of $\Lambda''$ and the node $a_2$.
\end{lem}

\subsection{An inherited property of the labeled 4-cycle condition}
\label{subsec:labeled 4-cycle property}
This subsection discusses when the labeled 4-cycle condition are inherited by certain subcomplexes of a relative Artin complex.
We start with an elementary lemma, whose proof is left to the reader.
\begin{lem}
	\label{lem:subgraph}
	Let $\Lambda$ be a connected simplicial graph. Let $\Lambda'\subset \Lambda$ be an induced subgraph which is also connected. Then there exists a finite sequence of graphs $\Lambda_1=\Lambda,\Lambda_2,\ldots,\Lambda_n=\Lambda'$ such that for each $1\le i\le n-1$, there is a node $s_i\in \Lambda_i$ such that $\Lambda_{i}\setminus\{s_i\}$ has only one component, which is $\Lambda_{i+1}$.
\end{lem}

\begin{prop}
	\label{prop:inherit}
	Let $\Lambda'$ be an admissible linear subgraph of $\Lambda$. Suppose $\Delta_{\Lambda,\Lambda'}$ is bowtie free. Then for any induced subgraph $\Lambda_0$ of $\Lambda$ satisfying $\Lambda'\subset \Lambda_0$, $\Delta_{\Lambda_0,\Lambda'}$ is bowtie free.
\end{prop}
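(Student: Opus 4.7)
The plan is to reduce to the removal of a single vertex outside $\Lambda'$ and then transfer the labeled 4-wheel condition across the canonical embedding of Lemma~\ref{lem:embedding}.

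First, by iterating Lemma~\ref{lem:subgraph} applied to the connected component of $\Lambda$ containing $\Lambda'$, the proof reduces to the case $\Lambda_0 = \Lambda \setminus \{t\}$ with $t \notin \Lambda'$ and $\Lambda_0$ connected; vertices in components of $\Lambda$ disjoint from $\Lambda'$ commute with all of $\Lambda'$ and so do not affect the relative Artin complexes. Admissibility of $\Lambda'$ is preserved, since removing $t \notin \Lambda'$ can only split, never merge, the components of $\Lambda \setminus \{x\}$ for $x \in \Lambda'$. Lemma~\ref{lem:embedding} applied with the identity coset and basepoint then yields a type-preserving simplicial embedding $\iota \colon \Delta_{\Lambda_0, \Lambda'} \hookrightarrow \Delta_{\Lambda, \Lambda'}$ sending $g A_{\Lambda_0 \setminus \{s\}}$ to $g A_{\Lambda \setminus \{s\}}$.

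By Lemma~\ref{lem:connect}, bowtie freeness is equivalent to the labeled 4-wheel condition, so I would take any induced 4-cycle $C = x_1 y_1 x_2 y_2$ in $\Delta_{\Lambda_0, \Lambda'}$ with types $\hat a_1, \hat b_1, \hat a_2, \hat b_2$, writing $x_i = g_i A_{\Lambda_0 \setminus \{a_i\}}$ and $y_j = h_j A_{\Lambda_0 \setminus \{b_j\}}$ for $g_i, h_j \in A_{\Lambda_0}$. Next, pass to the (possibly non-induced) image cycle $\iota(C)$ in $\Delta_{\Lambda, \Lambda'}$ and apply the labeled 4-wheel hypothesis there, either directly when $\iota(C)$ is induced, or by combining Lemma~\ref{lem:link} with the coset intersections witnessing the extra adjacencies when $\iota(C)$ is not induced; in either case this produces a common neighbor $\tilde z = g A_{\Lambda \setminus \{c\}}$ of all four $\iota(x_i), \iota(y_j)$, with $c$ in the smallest subtree of $\Lambda'$ spanned by $\{a_1, b_1, a_2, b_2\}$. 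The final step is to show that $\tilde z$ admits a representative in $A_{\Lambda_0}$, i.e.\ $g A_{\Lambda \setminus \{c\}} \cap A_{\Lambda_0} \neq \emptyset$, using the four adjacency equations together with van der Lek's property $A_{\Lambda_0} \cap A_{\Lambda \setminus \{s\}} = A_{\Lambda_0 \setminus \{s\}}$ from~\cite{lek}; once this is done, $\tilde z \in \iota(\Delta_{\Lambda_0, \Lambda'})$ and pulls back to the desired witness in $\Delta_{\Lambda_0, \Lambda'}$.

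The hard part I expect is the final coset-descent step: given the four intersection conditions $g A_{\Lambda \setminus \{c\}} \cap g_i A_{\Lambda \setminus \{a_i\}} \neq \emptyset$ and $g A_{\Lambda \setminus \{c\}} \cap h_j A_{\Lambda \setminus \{b_j\}} \neq \emptyset$ with $g_i, h_j \in A_{\Lambda_0}$, one must select a representative $g' \in g A_{\Lambda \setminus \{c\}}$ that lies in $A_{\Lambda_0}$. This is ultimately a question about how products of standard parabolic subgroups of $A_\Lambda$ intersect the subgroup $A_{\Lambda_0}$, and it is precisely why reducing to the single-vertex case $A_\Lambda = A_{\Lambda_0 \cup \{t\}}$ is essential: the inclusion $A_{\Lambda_0} \hookrightarrow A_\Lambda$ then has a simple relative structure dictated by the link of $t$ in $\Lambda$, making the required representative choice accessible by direct manipulation rather than by a general product-of-parabolics statement in arbitrary Artin groups.
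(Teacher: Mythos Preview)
Your reduction to removing a single vertex $t\notin\Lambda'$ is correct, and your identification of the embedding $\iota$ with the link inclusion $\Delta_{\Lambda_0,\Lambda'}\cong\lk(x,\Delta_{\Lambda,\Lambda'})$ for $x$ of type $\hat t$ is the right starting point; the paper uses the same reduction. The genuine gap is exactly where you flagged it: the coset-descent step is not a matter of ``direct manipulation,'' and your sketch does not supply an argument. Knowing that $gA_{\Lambda\setminus\{c\}}$ meets each $g_iA_{\Lambda\setminus\{a_i\}}$ and $h_jA_{\Lambda\setminus\{b_j\}}$ with $g_i,h_j\in A_{\Lambda_0}$ only tells you $g\in A_{\Lambda_0}\cdot A_{\Lambda\setminus\{a_i\}}\cdot A_{\Lambda\setminus\{c\}}$ for each $i$, and there is no general coset-product identity forcing $g\in A_{\Lambda_0}\cdot A_{\Lambda\setminus\{c\}}$ from this. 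Bowtie freeness in $\Delta_{\Lambda,\Lambda'}$ hands you \emph{some} center $\tilde z$, not one guaranteed to be adjacent to $x$, and in general it will not be.

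The paper confronts precisely this obstruction. After finding the center $z$ in $\Delta_{\Lambda,\Lambda'}$, it explicitly treats the case where $z$ is \emph{not} adjacent to $x$: the six vertices $\{x_1,x_2,y_1,y_2,x,z\}$ form an octahedron in $\Delta_\Lambda$, and the argument passes to words in the Artin group read off from chambers, lifts them to paths in the oriented Davis complex $\widehat\Sigma_\Gamma$, and uses the retractions $\Pi_{\widehat C}$ of Definition~\ref{def:retraction} together with the product decomposition of the projection $\Pi_{C_{\bar x}}(C_{\bar z})$ (via Lemma~\ref{lem:two cells}) to show that the word $w_1w_2w_3w_4$ associated to the 4-cycle is equivalent \emph{inside $A_{\hat t}$} to one avoiding the letter $s_{i_0}$ corresponding to $z$. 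This produces a \emph{new} center adjacent to $x$. None of this is visible from the group-theoretic adjacency data alone; the geometry of the Davis complex and its projections is doing the work. Your proposal, as written, stops at the point where the actual content of the proof begins.
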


\begin{proof}
	Note that if $\Lambda'_0$ is the connected component of $\Lambda_0$ that contains $\Lambda'$, then $\Delta_{\Lambda_0,\Lambda'}=\cong \Delta_{\Lambda'_0,\Lambda'}$. Thus it suffices to consider the case when $\Lambda_0$ is connect. By a similar reasoning, we can assume $\Lambda$ is connect. We use a double induction on the the number of nodes in $\Lambda$, then on the number of nodes in $\Lambda'$. 
	The base case when $\Lambda$ has one node is trivial. Now we assume the statement of the proposition is true for any $\Lambda$ with $n-1$ nodes and any admissible linear subgraph of $\Lambda'$ of $\Lambda$. Take $\Lambda$ with $n$ nodes such that $\Delta_{\Lambda,\Lambda'}$ is bowtie free. We wish to show $\Delta_{\Lambda_0,\Lambda'}$ is bowtie free. 
	By Lemma~\ref{lem:subgraph} (applying to $\Lambda_0\subset \Lambda$) and the induction assumption, it suffices to consider the special case when $\Lambda_0$ is the only component of $\Lambda\setminus\{s\}$ for some node $s\in \Lambda\setminus\Lambda'$. Then Lemma~\ref{lem:link} implies that $\Delta_{\Lambda_0,\Lambda'}$ can be identified with $\lk(x,\Delta_{\Lambda,\Lambda'})$ for a vertex $x$ of type $\hat s$. 
	
	Now we employ an inner layer of induction on the number of nodes in $\Lambda'$. The base case when $\Lambda'$ has one node is clear. For more general $\Lambda'$, we will verify that the assumptions of Lemma~\ref{lem:bowtie free criterion} hold for $\Delta_{\Lambda_0,\Lambda'}$. For Assumption 1 of Lemma~\ref{lem:bowtie free criterion}, we label consecutive nodes of $\Lambda'$ by $\{s_i\}_{i=1}^n$, and endow vertices of $\Delta_{\Lambda,\Lambda'}$ with the associated order.
	Let $\mathsf{C}_{s_1}$ be the component of $\Lambda\setminus \{s_1\}$ that contains the remaining nodes of $\Lambda'$ and let $\Lambda'_{s_1}=\mathsf{C}_{s_1}\cap \Lambda'$. Then for any vertex $x'\in \Delta_{\Lambda_0,\Lambda'}$ of type $\hat s_1$, by Lemma~\ref{lem:link} (2), $\lk(x',\Delta_{\Lambda_0,\Lambda'})$ can be identified with the $(\Lambda_0\cap \mathsf{C}_{s_1},\Lambda'_{s_1})$-relative Artin complex. As we are assuming $\Delta_{\Lambda,\Lambda'}$ is bowtie free, it follows from the definition that $\Delta_{\Lambda,\Lambda'_{s_1}}$ is bowtie free. As $\Lambda'_{s_1}$ has less nodes compared to $\Lambda'$, by the inner layer induction assumption, the $(\Lambda_0\cap \mathsf{C}_{s_1},\Lambda'_{s_1})$-relative Artin complex is bowtie free, which justifies Assumption 1 of Lemma~\ref{lem:bowtie free criterion}.
	
	Now we justify Assumption 2 of Lemma~\ref{lem:bowtie free criterion}. 
	Let $x_1,x_2,x_3,y_4$ be four vertices in $\Delta_{\Lambda_0,\Lambda'}$. Suppose $x_1$ and $x_3$ are of type $\hat s_1$; and $x_2$ and $x_4$ are of type $\hat s_n$. We will also assume $\{x_i\}_{i=1}^4\subset \lk(x,\Delta_{\Lambda,\Lambda'})$, where $x$ is the vertex of type $\hat s$ in the previous paragraph and we view $\lk(x,\Delta_{\Lambda,\Lambda'})$ as a subcomplex of $\Delta_{\Lambda,\Lambda'}$. As $\Delta_{\Lambda,\Lambda'}$ is bowtie free, there exists $z\in \Delta_{\Lambda,\Lambda'}$ of type $\hat s_{i_0}$ with $1<i_0<n$ such that $z$ is adjacent to each of $\{x_1,x_2,x_3,x_4\}$. If $z$ is adjacent to $x$, then we are done as $\Delta_{\Lambda_0,\Lambda'}$ is identified with $\lk(x,\Delta_{\Lambda,\Lambda'})$. Now we assume $z$ is not adjacent to $x$.
	
	Note that $\{x_1,x_2,x_3,x_4,x,z\}$ form the vertex set of an embedded boundary of octahedron in $\Delta_{\Lambda,\Lambda'}$, where the 4-cycle $\omega$ through $x_1\to x_2\to x_3\to x_4\to x_1$ is in the belt of the octahedron, $z$ is the north pole and $x$ is the south pole.
	For $1\le i\le 4$, let $C_{\bar x_i}$ be the proper maximal standard subcomplex in the Davis complex $\Si$ associated with $\bar\pi(x_i)=\bar x_i$ in the sense of Definition~\ref{def:associated subcomplex} (where $\bar\pi$ is defined in Definition~\ref{def:barpi}), and let $\whC_{\bar x_i}$ be the associated subcomplex in $\widehat \Si$. Similarly we define $C_{\bar x}$ and $C_{\bar z}$, as well as their hat versions. We write $C_i=C_{\bar x_i}$ and $\whC_i=\whC_{\bar x_i}$.
	
	Let $e_i=\overline{x_ix_{i+1}}$. For $1\le i\le 4$, let $K_i$ (resp. $K'_i$) be the triangle spanned by $e_i$ and $x$ (resp. $z$). For each $i$, let $\Delta_i$ (resp. $\Delta'_i$) be a chamber (i.e. a top-dimensional simplex) of $\Delta_\Lambda$ that contains $K_i$ (resp. $K'_i$). As there is a 1-1 correspondence between chambers in $\Delta_\Lambda$ and elements in $A_\Lambda$, we let $g_i,g'_i\in A_\Lambda$ be the element associated with $\Delta_i,\Delta'_i$. Suppose $g_{i+1}=g_iw_i$, $g'_{i+1}=g'_iw'_i$ and $g'_i=g_ih_i$. Then $w_1,w_3\in A_{S\setminus\{s,s_n\}}$, $w_2,w_4\in A_{S\setminus\{s,s_1\}}$,  $w'_1,w'_3\in A_{S\setminus\{s_{i_0},s_n\}}$, $w'_2,w'_4\in A_{S\setminus\{s_{i_0},s_1\}}$, and $h_i\in A_{S\setminus\{s_1,s_n\}}$.
	
In the rest of the proof, we will show that  $w_1w_2w_3w_4$ is equivalent in $A_{S\setminus\{s\}}$  (cf. Construction~\ref{def:4cycle}) to a word $u_1u_2u_3u_4$ such that $u_j\in A_{S\setminus\{s_{i_0}\}}$ for $1\le j\le 4$. Note that this statement implies that up to modifying $\Delta_i$,  $\{g_i\}_{i=1}^4$ are contained in the same left $A_{\hat s_{i_0}}$-coset (denoted by $g'A_{\hat s_{i_0}}$) which has non-empty intersection with the left $A_{\hat s}$-coset associated with $x$. Then the vertex in $\Delta_{\Lambda,\Lambda'}$ corresponding to $g'A_{\hat s_{i_0}}$ is contained in $\lk(x,\Delta_{\Lambda,\Lambda'})$, and this vertex is adjacent to each of $x_i$ for $1\le i\le 4$. This implies Assumption 2 of Lemma~\ref{lem:bowtie free criterion}.
	
	\begin{figure}
		\centering
		\includegraphics[scale=1]{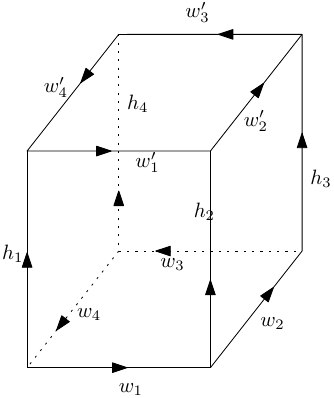}
		\caption{Words on the 1-skeleton of the cube.}
		\label{fig:1}
	\end{figure}
	
	We can put these words on the 1-skeleton $Z^1$ of a 3-cube $Z$ as in Figure~\ref{fig:1}. These words induces a map from $Z^1$ to the Salvetti complex $\s$ of $A_\Lambda$, which extends to $Z^{(2)}\to\s$ as the boundary of each 2-face gives null-homotopic loops. As $Z^{(2)}$ is simply-connected, this map lifts to $Z^{(2)}\to \od$. Thus $w_1w_2w_3w_4$ (resp. $w'_1w'_2w'_3w'_4$) gives four paths $P_1P_2P_3P_4$ (resp. $P'_1P'_2P'_3P'_4$) whose concatenation is a null-homotopic loop in $\od$ such that $P_i\in \whC_i\cap \whC_{\bar x}$ (resp. $P'_i\in \whC_i\cap \whC_{\bar z}$). Moreover, for each $i\in \mathbb Z/4\mathbb Z$, $h_i$ gives a path $Q_i\subset \whC_i\cap \whC_{i-1}$ from the starting point of $P_i$ to the starting point of $P'_i$, such that $Q_iP'_i$ is homotopic rel endpoints to $P_iQ_{i+1}$ in $\od$.

	We use $\bar Q_i$ to denote the inverse path of $Q_i$. By the previous discussion, $$\Pi_{\whC_{\bar{x}}}(Q_iP'_i\bar Q_{i+1}\bar P_i)=\Pi_{\whC_{\bar{x}}}(Q_i)\Pi_{\whC_{\bar{x}}}(P'_i)\Pi_{\whC_{\bar{x}}}(\bar Q_{i+1})\bar P_i$$ is a null-homotopic loop in $\widehat C_{\bar x}$, thus $P_i$ is homotopic rel endpoints in $\widehat C_{\bar x}$ to $$\Pi_{\whC_{\bar{x}}}(Q_i)\Pi_{\whC_{\bar{x}}}(P'_i)\Pi_{\whC_{\bar{x}}}(\bar Q_{i+1}).$$
	Thus 
	$$
	w_i=\w(\Pi_{\whC_{\bar{x}}}(Q_i))\w(\Pi_{\whC_{\bar{x}}}(P'_i))\w(\Pi_{\whC_{\bar{x}}}(\bar Q_{i+1})).$$
	Note that $\Pi_{\whC_{\bar x}}(Q_i)\subset \whC_{i}\cap \whC_{i-1}$ (by Lemma~\ref{lem:more gate} (1) and Lemma~\ref{lem:retraction property}), so the first word and the third word in the above decomposition of $w_i$ is contained in $A_{S\setminus\{s_1,s_n\}}\cap A_{\hat s}$.
	It follows that $$\w(\Pi_{\whC_{\bar{x}}}(P'_1))\w(\Pi_{\whC_{\bar{x}}}(P'_2))\w(\Pi_{\whC_{\bar{x}}}(P'_3))\w(\Pi_{\whC_{\bar{x}}}(P'_4))$$ is equivalent to $w_1w_2w_3w_4$ in $A_{\hat s}$. Thus by replacing $P_i$ by $\Pi_{\whC_{\bar x}}(P'_i)$, we can assume $P_i\subset \Pi_{C_{\bar x}}(C_{\bar z})$.
	
	Let $E=\Pi_{C_{\bar{x}}}(C_{\bar z})$. Note that $C_{i}\cap C_{i+1}$ has nonempty intersection with both $C_{\bar z}$ and $C_{\bar x}$. Let $I=\supp(E)$. Let $I_1$ be the union of irreducible components of $I$ that are contained entirely in $$\supp(C_i\cap C_{i+1})=S\setminus\{s_1,s_n\}.$$ Note that $I_1$ does not depend on $i$. Let $I_2$ be the union of the remaining irreducible components of $I$. By Lemma~\ref{lem:two cells} (2), $s_{i_0}\notin I_2$.
	Then $E=E_1\times E_2$ with $\supp(E_i)=I_i$. Hence $\widehat E=\widehat E_1\times \widehat E_2$. Thus $P_i$ is homotopic rel endpoints in $\widehat E$ to $P_{i1}P_{i2}$ where $P_{ij}$ is contained in a $\widehat E_j$-slice of $\widehat E$ for $j=1,2$. Hence $w_i=w_{i1}w_{i2}$ with $\supp(w_{ij})\in I_j$ for $j=1,2$. Note that $w_{i1}\subset A_{S\setminus\{s_1,s_n\}}\cap A_{\hat s}$. Thus $w_1w_2w_3w_4$ is equivalent to $w_{12}w_{22}w_{32}w_{42}$ in $A_{\hat s}$ via the following sequence of equivalent words:
	\begin{align*}
		& w_{21}w_{11}\cdot w_{21}w_{22}\cdot w_{31}w_{32}\cdot w_{41}w_{42}\\
		& w_{12}\cdot (w_{11}w_{21}) w_{22}\cdot w_{31}w_{32}\cdot w_{41}w_{42}\\
		& w_{12}\cdot w_{22}\cdot (w_{11}w_{21}w_{31}) w_{32}\cdot w_{41}w_{42}\\
		& w_{12}\cdot w_{22}\cdot w_{32}\cdot (w_{11}w_{21}w_{31}w_{41}) w_{42}
	\end{align*}
	note that $w_{11}w_{21}w_{31}w_{41}=1$. As $s_{i_0}\notin I_2$, we know $w_{i2}\in S\setminus\{s_{i_0}\}$. This finishes the proof.
\end{proof}

\begin{cor}
	\label{cor:inherit}
	Let $\Lambda'$ be an admissible tree subgraph of a Dynkin diagram $\Lambda$. 
	Suppose $\Delta_{\Lambda,\Lambda'}$ satisfies the labeled 4-cycle condition. Then for any induced subgraph $\Lambda_0$ of $\Lambda$, $\Delta_{\Lambda_0,\Lambda'\cap \Lambda_0}$ satisfies the labeled 4-cycle condition.
\end{cor}

\begin{proof}
It follows from the definition of labeled 4-cycle condition that $\Delta_{\Lambda,\Lambda''}$ satisfies the labeled 4-cycle condition for any subtree $\Lambda''$ of $\Lambda$. So we can assume without loss of generality that $\Lambda'\subset \Lambda_0$.
By Proposition~\ref{prop:4-cycle} and Lemma~\ref{lem:inherit}, for any linear subgraph $\Lambda''\subset \Lambda'$, $\Delta_{\Lambda,\Lambda''}$ is bowtie free. By Proposition~\ref{prop:inherit}, $\Delta_{\Lambda_0,\Lambda''}$ is bowtie free. Thus by Proposition~\ref{prop:4-cycle} again, $\Delta_{\Lambda_0,\Lambda'}$ satisfies the labeled 4-cycle condition.
\end{proof}

\section{On homotopy types of relative Artin complexes}
\label{subsec:homotopy}
\begin{lem}
	\label{lem:dr}
	Let $\Lambda_1\subset \Lambda_2$ be two induced subgraphs of $\Lambda$ such that $\Lambda_2\setminus\Lambda_1$ contains exactly one node, denoted by $s$. If $\lk(x,\Delta_{\Lambda,\Lambda_2})$ is contractible for some (hence any) vertex $x\in \Delta_{\Lambda,\Lambda_2}$ of type $\hat s$, then $\Delta_{\Lambda,\Lambda_2}$ deformation retracts onto $\Delta_{\Lambda,\Lambda_1}$.
\end{lem}

\begin{proof}
	Let $\Theta$ be the collection of type $\hat s$ vertices in $\Delta_{\Lambda,\Lambda_2}$. Note that if $x_1$ and $x_2$ are distinct elements in $\Theta$, then the intersection of the open stars $$\st^o(x_1,\Delta_{\Lambda,\Lambda_2})\cap \st^o(x_2,\Delta_{\Lambda,\Lambda_2})=\emptyset.$$
	Then $\Delta_{\Lambda,\Lambda_2}$ is a disjoint union  $$\Delta_{\Lambda,\Lambda_1}\sqcup (\sqcup_{x\in \Theta}\st^o(x,\Delta_{\Lambda,\Lambda_2})).$$
	As $\lk(x,\Delta_{\Lambda,\Lambda_2})$ is contractible, we know that $\st(x,\Delta_{\Lambda,\Lambda_2})$ deformation retracts onto $\st(x,\Delta_{\Lambda,\Lambda_2})\cap \Delta_{\Lambda,\Lambda_1}$ for each $x\in \Theta$. Thus the lemma follows.
\end{proof}

\begin{prop}
	\label{prop:contractible}
Let $(\mathcal C_1,\mathcal C_2)$ be two classes of connected Dynkin diagrams such that
\begin{enumerate}
	\item if $\Lambda'$ is an induced subgraph of $\Lambda$ with $\Lambda'\in \mathcal C_1$ and $\Lambda\in \mathcal C_2$,
	then $\Delta_{\Lambda,\Lambda'}$ is contractible;
	\item if $\Lambda'$ is an induced subgraph of $\Lambda$ with $\Lambda'\in \mathcal C_1$ and $\Lambda\in \mathcal C_2$, then for any connected induced subgraph $\Lambda''$ of $\Lambda$ containing $\Lambda'$, we have $\Lambda''\in\mathcal C_2$.
\end{enumerate}
Then $\Delta_\Lambda$ is contractible for each $\Lambda\in\mathcal C_2$ which contains an induced sub-diagram that is in $\mathcal C_1$. If we assume, in addition, that 
\begin{enumerate}[resume]
	\item if $\Lambda\in \mathcal C_2$ and $\Lambda'$ is an induced subgraph of $\Lambda$ such that $\Lambda'$ does not contain any induced subgraphs that are in $\mathcal C_1$, then $A_{\Lambda'}$ satisfies the $K(\pi,1)$-conjecture.
\end{enumerate}
 Then $A_\Lambda$ satisfies the $K(\pi,1)$-conjecture for any $\Lambda\in \mathcal C_2$.
\end{prop}

\begin{proof}
The contractibility of $\Delta_\Lambda$ is a consequence of a more general claim, namely $\Delta_{\Lambda,\Lambda''}$ is contractible whenever $\Lambda\in \mathcal C_2$, $\Lambda''$ is connected and $\Lambda''$ contains an induced subgraph $\Lambda'$ which is in $\mathcal C_1$.
	We prove this claim by induction on the number of nodes in $\Lambda''\setminus \Lambda'$. The base case when $\Lambda''=\Lambda'$ follows from Assumption 1. For the general case, we apply Lemma~\ref{lem:subgraph} to the pair $\Lambda'\subset \Lambda''$ to obtain $\{\Lambda_i\}_{i=1}^n$ and $\{s_i\}_{i=1}^{n-1}$ as in Lemma~\ref{lem:subgraph}, such that $\Lambda_1=\Lambda''$ and $\Lambda_n=\Lambda'$.
	As $\Delta_{\Lambda,\Lambda'}$ is contractible by Assertion 1, it suffices to show that for each $1\le i\le n-1$, $\Delta_{\Lambda,\Lambda_i}$ deformation retracts onto $\Delta_{\Lambda,\Lambda_{i+1}}$. By Lemma~\ref{lem:dr}, we need to show $\lk(v,\Delta_{\Lambda,\Lambda_i})$ is contractible for any vertex  $v\in X=\Delta_{\Lambda,\Lambda_i}$ of type $\hat s_i$. By Lemma~\ref{lem:link}, $\lk(v,\Delta_{\Lambda,\Lambda_i})$ admits a type-preserving isomorphism to $\Delta_{\Lambda_{s_i},\Lambda_{i+1}}$ where $\Lambda_{s_i}$ is the component of $\Lambda\setminus \{s_i\}$ that contains $\Lambda_{i+1}$. As $\Lambda'\subset \Lambda_{s_i}\subset\Lambda$, we know $\Lambda_{s_i}\in \mathcal C_2$ by Assumption 2. Note that $\Lambda'\subset \Lambda_{i+1}\subset\Lambda_{s_i}$. As $\Lambda_{i+1}\subsetneq \Lambda''$, $\Lambda_{i+1}\setminus \Lambda'$ has less nodes compared to $\Lambda''\setminus \Lambda'$. By our induction assumption, $\Delta_{\Lambda_{s_i},\Lambda_{i+1}}$ is contractible. The claim follows. In particular, $\Delta_{\Lambda}$ is contractible for any $\Lambda\in \mathcal C_2$ that contains an induced sub-diagram in $\mathcal C_1$.
	
Now we prove the last sentence of the proposition by induction on the number of nodes in $\Lambda\in \mathcal C_2$. The base case when $\Lambda$ has one node is clear. For the general case, if $\Lambda$ is spherical, then we are done by \cite{deligne}. Note that the $\Lambda=\Lambda'$ case of Assumption 3 implies that if $\Lambda\in\mathcal C_2$ does not contain any sub-diagrams which are in $\mathcal C_1$, then $A_\Lambda$ satisfies the $K(\pi,1)$-conjecture. Now assume $\Lambda$ contains an induced sub-diagram that is in $\mathcal C_1$ and $\Lambda$ is not spherical, then $\Delta_\Lambda$ is contractible by the previous paragraph. Now we use Theorem~\ref{thm:combine}. For $s\in \Lambda$, we need to show $A_{\Lambda\setminus \{s\}}$ satisfies the $K(\pi,1)$-conjecture. As $A_{\Lambda\setminus\{s\}}$ is the direct sum of $A_{\Theta}$ where $\Theta$ runs over the components of $\Lambda\setminus \{s\}$. So, the $K(\pi,1)$-conjecture holds for $A_{\Lambda\setminus\{s\}}$ if and only if it holds for each $A_\Theta$.
It suffices to show $A_\Theta$ satisfies the $K(\pi,1)$-conjecture. If $\Theta$ contains an induced subgraph in $\mathcal C_1$, then by Assumption 2, $\Theta\in \mathcal C_2$. As $\Theta$ has fewer nodes compared to $\Lambda$, we know $A_{\Theta}$ satisfies the $K(\pi,1)$-conjecture by induction assumption. If $\Theta$ does not contain any induced subgraphs which are members in $\mathcal C_1$, then $A_\Theta$ satisfies the $K(\pi,1)$-conjecture by Assumption 3. 
\end{proof}
		
Sometimes it is more convenient to use the following small variation of Proposition~\ref{prop:contractible}.
	\begin{cor}
		\label{cor:contractible}
		Let $(\mathcal C_1,\mathcal C_2)$ be two classes of connected Dynkin diagrams such that
		\begin{enumerate}
			\item if $\Lambda'$ is an induced subgraph of $\Lambda$ with $\Lambda'\in \mathcal C_1$ and $\Lambda\in \mathcal C_2$,
			then $\Delta_{\Lambda,\Lambda'}$ is contractible;
			\item if $\Lambda'$ is an induced subgraph of $\Lambda$ with $\Lambda'\in \mathcal C_1$ and $\Lambda\in \mathcal C_2$, then for any connected induced subgraph $\Lambda''$ of $\Lambda$ containing $\Lambda'$, we have $\Lambda''\in\mathcal C_2$.
			\item any $\Lambda\in \mathcal C_2$ contains an induced subgraph in $\mathcal C_1$;
			\item for each pair of Dynkin diagrams $\Lambda'\subset \Lambda$ such that $\Lambda'\in \mathcal C_1,\Lambda\in\mathcal C_2$ and $\Lambda'$ is an induced subgraph of $\Lambda$, and for any node $s\in \Lambda'$, we know each component of $\Lambda\setminus \{s\}$ satisfies the $K(\pi,1)$-conjecture.
		\end{enumerate}
		Then $A_\Lambda$ satisfies the $K(\pi,1)$-conjecture, and $\Delta_\Lambda$ is contractible for any $\Lambda\in \mathcal C_2$.
	\end{cor}

\begin{proof}
We induct on the number of nodes in $\Lambda$ and assume $\Lambda$ is not spherical as before. Note that for any $\Lambda\in\mathcal C_2$, $\Delta_\Lambda$ is contractible by Proposition~\ref{prop:contractible} and Assumption 3.
By Theorem~\ref{thm:combine}, it remains to show $A_{\hat s}$ satisfies the $K(\pi,1)$-conjecture for any $s\in S$. By Assumption 3, 
there is an induced subgraph $\Lambda'\subset \Lambda$ with $\Lambda'\in \mathcal C_1$. If $s\in \Lambda'$, then we are done by Assumption 4. If $s\notin \Lambda'$, we need to show $A_\Theta$ satisfies the $K(\pi,1)$-conjecture for each component $\Theta$ of $\Lambda\setminus \{s\}$. If $\Theta$ and $\Lambda'$ are in the same component of $\Lambda\setminus \{s\}$, then $\Lambda'\subset \Theta\subset \Lambda$. By Assumption 2, $\Theta\in \mathcal C_2$. As $\Theta$ has fewer nodes compared to $\Lambda$, we know $A_\Theta$ satisfies the $K(\pi,1)$-conjecture by induction assumption. Now assume $\Theta$ and $\Lambda'$ are not in the same component of $\Lambda\setminus \{s\}$. Then $\Theta\cap \Lambda'=\emptyset$. In particular, for any $s'\in \Lambda'$, $\Theta$ is an induced subgraph of a component of $\Lambda\setminus \{s'\}$. By Assumption 4, each direct summand of $A_{\Lambda\setminus\{s'\}}$ coming from components of $\Lambda\setminus\{s'\}$ satisfies the $K(\pi,1)$-conjecture, we know $A_{\Lambda\setminus \{s'\}}$ satisfies the $K(\pi,1)$-conjecture. Thus $A_\Theta$ satisfies the $K(\pi,1)$-conjecture by \cite[Corollary 2.4]{godelle2012k}. This finishes the proof.
\end{proof}

\begin{cor}
	\label{cor:contractible1}
Suppose for any connected almost spherical Dynkin diagram $\Lambda'$ and any connected Dynkin diagram $\Lambda$ which contains $\Lambda'$ as an induced subgraph, $\Delta_{\Lambda,\Lambda'}$ is contractible. Then the $K(\pi,1)$-conjecture holds for any Artin group.
\end{cor}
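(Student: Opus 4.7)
The plan is to invoke Proposition~\ref{prop:contractible} with the classes
\[
\mathcal C_1 = \{\text{connected almost spherical Dynkin diagrams}\},
\]
\[
\mathcal C_2 = \{\text{connected non-spherical Dynkin diagrams with no } \infty\text{-labeled edges}\},
\]
after two standard reductions. By the reduction principle of \cite{ellis2010k, godelle2012k} it suffices to prove the $K(\pi,1)$-conjecture for Artin groups whose presentation graph is complete, equivalently whose Dynkin diagram has no $\infty$-labeled edges. If such a Dynkin diagram $\Lambda$ is disconnected, then $A_\Lambda$ is a direct product of the Artin groups associated with the connected components of $\Lambda$, and since being $K(\pi,1)$ is preserved under direct products, one may further assume $\Lambda$ is connected. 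The spherical case is Deligne's theorem \cite{deligne}, so the remaining case is exactly $\Lambda \in \mathcal C_2$.

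Next I would verify the three assumptions of Proposition~\ref{prop:contractible}. Almost spherical Dynkin diagrams automatically have no $\infty$-labeled edges, since the presentation graph in their definition is complete, hence $\mathcal C_1 \subset \mathcal C_2$. Assumption (1) is exactly the hypothesis of the corollary. Assumption (2) is immediate: a connected induced subgraph $\Lambda''$ of some $\Lambda \in \mathcal C_2$ containing a member $\Lambda' \in \mathcal C_1$ inherits the absence of $\infty$-edges from $\Lambda$, is connected by choice, and is non-spherical because it contains the non-spherical $\Lambda'$; so $\Lambda'' \in \mathcal C_2$.

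The heart of the argument is Assumption (3), which reduces to the observation that every $\Xi \in \mathcal C_2$ contains a member of $\mathcal C_1$ as an induced subgraph. Given such $\Xi$, choose a minimal non-spherical connected induced subgraph $\Lambda_0 \subset \Xi$. For each $s \in \Lambda_0$, every connected component of $\Lambda_0 \setminus \{s\}$ is a proper connected induced subgraph of $\Lambda_0$, hence spherical by minimality; because $\Lambda_0$ inherits the absence of $\infty$-edges from $\Xi$, the group $A_{\Lambda_0 \setminus \{s\}}$ decomposes as a direct product of spherical Artin groups and is therefore itself spherical. Thus $\Lambda_0$ is almost spherical and lies in $\mathcal C_1$. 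Contrapositively, any induced subgraph $\Lambda' \subset \Lambda \in \mathcal C_2$ containing no element of $\mathcal C_1$ must have every connected component spherical, so $A_{\Lambda'}$ is a direct product of spherical Artin groups and satisfies the $K(\pi,1)$-conjecture by \cite{deligne}. This verifies Assumption (3), and Proposition~\ref{prop:contractible} then yields the $K(\pi,1)$-conjecture for every $\Lambda \in \mathcal C_2$, completing the proof.

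There is essentially no genuine obstacle; the corollary is a clean packaging of Proposition~\ref{prop:contractible} together with the standard fact that minimal non-spherical connected subdiagrams are almost spherical and the reduction to complete presentation graphs. The only point that deserves mild care is checking that the absence of $\infty$-labeled edges persists under induced subgraphs, which is immediate, and that disconnected Dynkin diagrams without $\infty$-edges give Artin groups which are honest direct products of the component Artin groups.
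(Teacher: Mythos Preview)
Your proposal is correct and takes essentially the same approach as the paper: apply Proposition~\ref{prop:contractible} with $\mathcal C_1$ the connected almost spherical diagrams, reduce Assumption~(3) to the observation that a connected diagram with no almost spherical induced subdiagram must be spherical, and invoke \cite{deligne}. The only cosmetic difference is that the paper takes $\mathcal C_2$ to be \emph{all} connected Dynkin diagrams (relying on the hypothesis as stated, which does not exclude $\infty$-edges) rather than restricting to non-spherical $\infty$-free ones up front, but the verification of the three assumptions is otherwise identical and your added detail on the minimal non-spherical subdiagram is a faithful unpacking of what the paper leaves implicit.
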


\begin{proof}
Let $\mathcal C_2$ be the class of all connected Dynkin diagrams. Let $\mathcal C_1$ be the class of all connected almost spherical Dynkin diagrams. By \cite{godelle2012k}, it suffices to show $A_\Lambda$ satisfies the $K(\pi,1)$-conjecture for any $\Lambda\in \mathcal C_2$. We verify the three assumptions of Proposition~\ref{prop:contractible} are satisfied. Assumptions 1 and 2 are clear. Let $\Lambda,\Lambda'$ be as in Assumption 3 of Proposition~\ref{prop:contractible}. We assume without loss of generality that $\Lambda'$ is connected. If $\Lambda'$ does not contain any irreducible almost spherical sub-diagrams, then $\Lambda'$ must be spherical, hence  $A_{\Lambda'}$ satisfies the $K(\pi,1)$-conjecture by \cite{deligne}. 
\end{proof}

\section{Filling 4-cycles in relative Artin complexes}
\label{sec:4-cycle}
The main goal of this section is to prove the following.
\begin{thm}
	\label{thm:bowtie free}
Let $A_\Gamma$ be an irreducible spherical Artin group with its Dynkin diagram $\Lambda$ and its set of nodes $S$. Take a linear subgraph $\Lambda'\subset\Lambda$.
Then $\Delta_{\Lambda,\Lambda'}$ is bowtie free.
 \end{thm}

\begin{proof}
As $\Lambda$ is a tree, $(V,\le)$ is indeed a poset by Lemma~\ref{lem:poset} (2). Now we are done by Corollary~\ref{cor:algebraic} below, \cite[Theorem 9.5]{cumplido2019parabolic} and Proposition~\ref{prop:key} below.
\end{proof} 
We will prove Corollary~\ref{cor:algebraic} in Section~\ref{subsec:lattice} and Proposition~\ref{prop:key} in Section~\ref{subsec:conjugator}.

We record the following consequence of Theorem~\ref{thm:bowtie free}, Lemma~\ref{lem:connect} and Proposition~\ref{prop:4-cycle}.

\begin{cor}
	\label{cor:wheel}
	Let $A_\Gamma$ be an irreducible spherical Artin group with its Dynkin diagram $\Lambda$ and its set of nodes $S$. Then its Artin complex (i.e. its spherical Deligne complex) satisfies the labeled 4-cycle condition.
\end{cor}

\subsection{An algebraic condition for trimming 4-cycles}
\label{subsec:lattice}
We prove a preparatory results for dealing with 4-cycles in general Artin complexes (whose associated Artin groups are not necessarily spherical). 

\begin{prop}
	\label{prop:group bowtie}
Let $\Lambda$ be a Dynkin graph (not necessarily spherical).
Suppose $\Lambda'$ is an admissible linear subgraph of $\Lambda$ with its consecutive nodes denoted by $\{s_i\}_{i=1}^n$. For $1\le i\le n$, let $A_i=A_{S\setminus\{s_i\}}$. Suppose for any commuting pair $u_1\in A_{i_1}$ and $u_2\in A_{i_2}$ ($i_1\le i_2$), there exists $g\in A_{i_1}\cap A_{i_2}$ and $i_1\le i_3\le i_2$ such that $gu_jg^{-1}\in A_{i_3}$ for $j=1,2$. Then $\Delta_{\Lambda,\Lambda'}$ is bowtie free.
\end{prop}

We endow the set of nodes of $\Lambda'$ with the order as in Lemma~\ref{lem:poset}. Let $\bC$ be the Coxeter complex for $W_\Lambda$. Let $\Si$ and $\od$ be the Davis complex and oriented Davis complex of $A_\Lambda$. Let $\bar \pi:\Delta_{\Lambda}\to \bC$ be the quotient map in Definition~\ref{def:barpi}. We view $\Delta_{\Lambda,\Lambda'}$ as a subcomplex of $\Delta_\Lambda$, hence $\bar\pi$ is also defined on $\Delta_{\Lambda,\Lambda'}$. The following is the main lemma for proving Proposition~\ref{prop:group bowtie}.

\begin{lem}
	\label{lem:commute}
Let $\Lambda,\Lambda'$ be as in Proposition~\ref{prop:group bowtie}.
Let $x_1,y_1,x_2,y_2$ be four vertices in $\Delta_{\Lambda,\Lambda'}$ such that $x_i<y_j$ for all $i,j\in\{1,2\}$, $x_1$ and $x_2$ are of type $\hat s_n$; and $y_1$ and $y_2$ are of type $\hat s_1$. Let $\omega$ be the 4-cycle $x_1y_1x_2y_2x_1$ in $\Delta_{\Lambda,\Lambda'}\subset\Delta_\Lambda$ and let $w_nw_1w'_nw'_1$ be an associated word as in Construction~\ref{def:4cycle}.
We assume this 4-cycle is \emph{non-degenerate}, i.e. $x_1\neq x_2$ and $y_1\neq y_2$. 
Then $w_nw_1w'_nw'_1$ is equivalent to a word of form $u_nu_1u^{-1}_nu^{-1}_1$ in the sense of Construction~\ref{def:4cycle}.
\end{lem}

\begin{proof}
 We carry out a case analysis for the proof. Recall that edges in $\od$ are labeled by elements in $S$. For a subset $E\subset\od$, we define $\supp(E)$ to be the collection of labels of edges in $E$. In the following, we write $A_1$ for $A_{\hat s_1}$ and $A_n$ for $A_{\hat s_n}$. Then $w_n,w'_n\in A_n$, and $w_1,w_1'\in A_1$.

\medskip
\noindent
\underline{Case 1: the $\bar\pi(\omega)$ is a single edge $\bar x\bar y$ in $\bC$.} Recall that $\bar x$ corresponds a left coset in $W_\Lambda$. Let $C_{\bar x}$ be the standard subcomplex of $\Si_\Lambda$ spanned by this left coset. Similarly we define $C_{\bar y}$. Let $\widehat C_{\bar x}$ and $\widehat C_{\bar y}$ be the associated standard subcomplexes of $\od$. Then $w_nw_1w'_nw'_1=1$ gives an edge loop $P_nP_1P'_nP'_1$ in $\od$ such that $P_n,P'_n\subset \widehat C_{\bar x}$ and $P_1,P'_1\subset \widehat C_{\bar y}$. Up to replacing $w_nw_1w'_nw'_1$ by an equivalent word, we can assume $P_n,P_1,P'_n,P'_1$ are loops. For any edge path $P$ in $\od$, we use $\bar P$ to denote the inverse path of $P$.

By Lemma~\ref{lem:retraction property}, $\p_{\widehat C_{\bar y}}(\widehat C_{\bar x})=\widehat{\p_{C_{\bar y}}(C_{\bar x})}=\whC_{\bar x}\cap \whC_{\bar y}$. As $P_nP_1P'_nP'_1$ is null-homotopic in $\od$, we know $$\p_{\widehat C_{\bar x}}(P_nP_1P'_nP'_1)=P_nQ_1P'_nQ'_1$$ is null-homotopic in $\whC_{\bar x}$, where $Q_1=\p_{\widehat C_{\bar x}}(P_1)$ and $Q'_1=\p_{\widehat C_{\bar x}}(P'_1)$ are loops in $\whC_{\bar x}\cap \whC_{\bar y}$. Thus $P_nQ_1$ and $P'_nQ'_1$ represent elements in $\pi_1(\whC_{\bar x}\cap \whC_{\bar y})$ which are inverse of each other. By replacing $P_nP_1P'_nP'_1$ with $$(P_nQ_1)(\bar Q_1P_1)(P'_nQ'_1)(\bar Q'_1P'_1),$$ we can assume without loss of generality that $w'_n=w^{-1}_n$ and $P'_n=\bar P_n$. Thus $w_nw_1w^{-1}_{n}w'_1=1$. 
Now we consider the edge loop $$\p_{\widehat C_{\bar y}}(P_nP_1\bar P_nP'_1),$$ which gives a loop $R_nP_1\bar R_nP'_1$ that is null-homotopic in $\whC_{\bar y}$, where $R_n=\p_{\widehat C_{\bar y}}(P_n)$ is a loop in $\whC_{\bar x}\cap \whC_{\bar y}$. Thus $P'_1$ is homotopic rel endpoints in $\whC_{\bar y}$ to $R_n\bar P_1\bar R_n$. Then we can assume $P'_1=R_n\bar P_1\bar R_n$. Now the loop $P_nP_1\bar P_nP'_1$ is homotopic to $P_nP_1\bar P_nR_n\bar P_1\bar R_n$ in $\od$, which we read off the word of form 
$$w_nw_1w^{-1}_n (r_nw^{-1}_1r^{-1}_n).$$
As $R_n\subset\whC_{\bar x}\cap \whC_{\bar y}$, we know $r_n\in A_1\cap A_n$. Thus $w_nw_1w^{-1}_n (r_nw^{-1}_1r^{-1}_n)$ is equivalent to  $(r^{-1}_nw_n)w_1w^{-1}_n (r_nw^{-1}_1)$, which is equivalent to $(r^{-1}_nw_n)w_1(w^{-1}_n r_n)w^{-1}_1$. Then we are done by taking $u_n=r^{-1}_nw_n$ and $u_1=w_1$.

%It remains to prove the claim. Suppose the contrary is true, i.e. each vertex of $\Lambda'$ is contained in either $S_1$ or $S_n$. As $c_{S_i}\subset A_i$, we know  $S_i\subset S\setminus\{s_i\}$ for $i=1,n$. Thus $s_1\in S_n\setminus S_1$. Let $T_n$ be the irreducible component of $S_n\cap \Lambda'$ that contains $s_1$ and suppose $T_n=\{s_1,\ldots,s_i\}$. We now prove by contradiction that $i=n$. If $i<n$, then $s_{i+1}\notin S_n$ (otherwise $s_{i+1}\in T_n$). Thus $s_{i+1}\in S_1$. Let $T_1$ be the irreducible component of $S_1\cap \Lambda'$ that contains $s_{i+1}$. As $s_i$ and $s_{i+1}$ do not commute, and $s_{i+1}\notin S_n$, by Lemma~\ref{lem:commute}, the only possibility left is that $T_n\subset T_1$. But this is still impossible as $s_1\in T_n$ and $s_1\notin S_1$. Thus $i=n$. However, this contradicts that $s_n\notin S_n$.  Thus the claim is proved.
\medskip
\noindent
\underline{Case 2: $\bar\pi(\omega)$ is a union of two edges $\bar x\bar y$ and $\bar y\bar z$ in $\bC$.} Assume without loss of generality that $\bar x$ and $\bar z$ are of type $\hat s_n$ and $\bar y$ is of type $\hat s_1$. We define $\widehat C_{\bar x}$, $\widehat C_{\bar y}$, $\widehat C_{\bar z}$ and $P_nP_1P'_nP'_1$ as before. Up to replacing $w_nw_1w'_nw'_1$ by an equivalent word, we assume $P_n$ and $P'_n$ are loops.
Note that 
\begin{enumerate}
	\item $P_n\subset \widehat C_{\bar x}$, $P_1,P'_1\subset \widehat C_{\bar y}$ and $P'_n\subset \widehat C_{\bar z}$;
	\item  $\supp(\p_{\widehat C_{\bar x}}(\widehat C_{\bar y}))\subset S\setminus\{s_1,s_n\}$.
\end{enumerate}
Note that $\p_{\whC_{\bar x}}(P'_n)\subset\p_{\whC_{\bar x}}(\whC_{\bar z})$. Let $U=\Pi_{C_{\bar x}}(C_{\bar z})$ and let $I=\supp(U)$. Suppose $I=I_1\sqcup I_2$ where $I_1$ is the union of irreducible components of $U$ that are contained in $\supp(C_{\bar y})=S\setminus\{s_1\}$. Let $x'\in P'_n$ be the base point of the loop $P'_n$ and let $x=\Pi_{C_{\bar x}}(x')$. As $x'\in C_{\bar y}\cap C_{\bar z}$, by Lemma~\ref{lem:two cells}, $x\in C_{\bar x}\cap C_{\bar y}$. For $i=1,2$, let $U_i$ be the standard subcomplex in $U$ containing $x$ such that $\supp(U_i)=I_i$. Then $U=U_1\times U_2$ and $U_1\subset C_{\bar x}\cap C_{\bar y}$. Thus $\widehat U=\widehat U_1\times \widehat U_2$ and $\widehat U_1\subset \whC_{\bar x}\cap \whC_{\bar y}$. Thus $\p_{\whC_{\bar x}}(P'_n)$ is homotopic to a concatenation of two edge loops $P_{\widehat U_1}P_{\widehat U_2}$ based $x$ such that $P_{\widehat U_i}\subset \widehat U_i$ for $i=1,2$.

Let $P_{n1}=\p_{\whC_{\bar x}}(P_1)P_{\widehat U_1}\subset \whC_{\bar x}\cap \whC_{\bar y}$, $P_{n2}=P_{\widehat U_2}$ and $P_{n3}=\p_{\whC_{\bar x}}(P'_1)\subset \whC_{\bar x}\cap \whC_{\bar y}$. 
Then $$\p_{\widehat C_{\bar x}}(P_nP_1P'_nP'_1)=P_nP_{n1}P_{n2}P_{n3}$$ is null-homotopic in $\widehat C_{\bar x}$. Thus $P_n$ is homotopic rel endpoints in $\widehat C_{\bar x}$ to $\bar P_{n3}\bar P_{n2}\bar P_{n1}$, where $\bar P_{ni}$ denotes the inverse path of $P_{ni}$. Let $w_{ni}$ be the word we read off from $\bar P_{ni}$. Then $w_n=w_{n3}w_{n2}w_{n1}$.  
Note that $\supp(w_{ni})\subset A_1\cap A_n$ for $i=1,3$. By our choice of $P_{n2}$ and Lemma~\ref{lem:two cells}, we know $\bar P_{n2}$ and $\Pi_{\whC_{\bar z}}(\bar P_{n2})$ correspond to the same word in $A_\Lambda$. 
As $w_nw_1w'_nw'_1$ is equivalent to $w_{n2}(w_{n1}w_1)w'_n(w'_1w_{n3})$, we can assume without loss of generality that $P_n=P_{n2}\subset \widehat U_2$ is a loop based at $x$ and $P'_n$ is still a loop based at $x'$.

Now let $U'=\Pi_{C_{\bar z}}(C_{\bar x})$ and let $I'=\supp(U')$. Suppose $I'=I'_1\sqcup I'_2$ where $I'_1$ is the union of irreducible components of $U'$ that are contained in $\supp(C_{\bar y})=S\setminus\{s_1\}$. For $i=1,2$, let $U'_i$ be the standard subcomplex in $U'$ containing $x'$ such that $\supp(U'_i)=I'_i$. Then $U'=U'_1\times U'_2$ and $U'_1\subset C_{\bar z}\cap C_{\bar y}$. Thus $\widehat U'=\widehat U'_1\times \widehat U'_2$ and $\widehat U'_1\subset \whC_{\bar z}\cap \whC_{\bar y}$. 
By Lemma~\ref{lem:two cells} and our choice of $P_n$, 
$\p_{\whC_{\bar z}}(P_n)\subset \widehat U'_2$, moreover, $P_n$ and $\p_{\whC_{\bar z}}(P_n)$ trace out the same word in the Artin group.

Consider $\p_{\widehat C_{\bar z}}(P_nP_1P'_nP'_1)$ and argue as before, we know $P'_n$ is homotopic rel endpoints in $\widehat C_{\bar z}$ to $P'_{n3}P'_{n2}P'_{n1}$ satisfying:
\begin{enumerate}
	\item  $P'_{ni}\subset \p_{\whC_{\bar z}}\cap\p_{\whC_{\bar y}}$ for $i=1,3$;
	\item $P'_{n2}\subset \widehat U'_2$ and $P'_{n2}$ is the inverse path of $\p_{\whC_{\bar z}}(P_n)$.
\end{enumerate}
Let $w'_{ni}$ be the word we read off from $\bar P'_{ni}$. Then $w'_n=w'_{n3}w'_{n2}w'_{n1}$ and $w'_{ni}\in A_1\cap A_n$ for $i=1,3$. Moreover, $w'_{n2}=w^{-1}_n$ by our choice of $P_n$ (Lemma~\ref{lem:two cells} (3)).
Thus $$w_nw_1w'_nw'_1=w_nw_1(w'_{n3}w^{-1}_nw'_{n1})w'_1$$ 
is equivalent to $w_n(w_1w'_{n3})w^{-1}_n(w'_{n1}w'_1)$. From now on, we may assume $P_n$ is a loop in $\widehat U_2$ based at $x$, $P'_n$ is a loop in $\widehat U'_2$ based at $x'$, $\bar P_n=\Pi_{\whC_{\bar x}}(P'_n)$ and $\bar P'_n=\Pi_{\whC_{\bar z}}(P_n)$.

Now we consider the edge loop $$\p_{\widehat C_{\bar y}}(P_nP_1P'_nP'_1),$$ which gives a loop $R_nP_1R'_nP'_1$ that is null-homotopic in $\whC_{\bar y}$, where $R_n=\p_{\whC_{\bar y}}(P_n)$ is a loop in $\whC_{\bar x}\cap \whC_{\bar y}$ and $R'_n=\p_{\whC_{\bar y}}(P'_n)$ is a loop in $\whC_{\bar z}\cap \whC_{\bar y}$. Thus $P'_1$ is homotopic rel endpoints in $\whC_{\bar y}$ to $\bar R'_n\bar P_1\bar R_n$. Then we can assume $P'_1=\bar R'_n\bar P_1\bar R_n$. 

Let $r_n=\w(R_n)$ and $r'_n=\w(R'_n)$. We claim $r'_n=r^{-1}_n$. To see this, let $u$ be a geodesic in $\Si$ from $x$ to $x'$. Then $\supp(u)\perp \supp(U'_2)=\supp(U_2)$ by Lemma~\ref{lem:two cells}. Let $V$ be the smallest standard subcomplex of $\Si$ containing $U_2$ and $U'_2$. Then $\supp(V)=U_2\times V_{u}$ where $V_u$ is the standard subcomplex containing $u$ with support being $\supp(u)$. As $x,x'\in C_{\bar y}$, we know $u\subset C_{\bar y}$ and $V_u\subset C_{\bar y}$. Thus $V\cap C_{\bar y}=(U_2\cap C_{\bar y})\times V_u$. Thus by Lemma~\ref{lem:more gate} (1), for any vertex $z\in V$, $\prj_{C_{\bar y}}(z)$ is the vertex in $(U_2\cap C_{\bar y})\times V_u$ that are closest to $z$. Then the map $\prj_{C_{\bar y}}|_{\vertex V}$ respects the product decomposition of $V$, more precisely,
\begin{equation}
	\label{e:projection}
	\prj_{C_{\bar y}}|_{\vertex V}= (\prj_{C_{\bar y}}|_{U_2})\times (\mathrm{Id}|_{V_u})= (\prj_{C_{\bar y}}|_{U'_2})\times(\mathrm{Id}|_{V_u}).
\end{equation}
Thus for any edge path $P\subset U_2$, $\w(\Pi_{C_{\bar y}}(P))=\w(\Pi_{C_{\bar y}}(p(P)))$ where $p:U_2\to U'_2$ is the parallel translation. Now the claim $r'_n=r^{-1}_n$ follows.

The claim implies $\w(P'_1)=r_nw^{-1}_1r^{-1}_n$ for some $r_n\in A_1\cap A_n$. The word $w_nw_1w'_nw'_1$ becomes
$$w_nw_1w^{-1}_n (r_nw^{-1}_1r^{-1}_n),$$
which is equivalent to $(r^{-1}_nw_n)w_1(w^{-1}_n r_n)w^{-1}_1$. Then we are done.

%Up to passing to equivalent words of $w_nw_1w'_nw'_1$, (i.e. replace $w_1$ (resp. $w'_1$) by $b^{-k_1}w_1b^{-k_6}$  (resp. $b^{-k_4}w'_1b^{-k_3}$)), we have $a^{-k_2}w_1a^{-k_4}w'_1=1$.

%Note that $\p_{\widehat C}(\widehat C_{\bar y})$ is a single point. Thus $\p_{\widehat C}(P_1)$ and $\p_{\widehat C}(P'_1)$ give trivial path. Thus by considering $\p_{\widehat C}(P_nP_1P'_nP'_1)$, we conclude that $a^{-k_2}a^{-k_4}=1$, which implies $k_4=-k_2$.  Thus $a^{-k_2}w_1a^{k_2}w'_1=1$. As $P_n\subset\widehat C$ and $\p_{\widehat C_{\bar y}}(\widehat C)$ is a single point, we know $\p_{\widehat C_{\bar y}}(P_n)$ is a constant path. Similarly $\p_{\widehat C_{\bar y}}(P'_n)$ is a constant path. By considering $\p_{\widehat C_{\bar y}}(P_nP_1P'_nP'_1)$, we deduce that $w'_1=(w_1)^{-1}$. Thus $w_n=a^{k_2}$ and $w_1$ commute, and we conclude as in Case 1.

\medskip
\noindent
\underline{Case 3: the $\bar\pi(\omega)$ is a 4-cycle in $\bC$.} We denote the consecutive vertices of $\bar \pi(\omega)$ by $\{\bar z_i\}_{i=1}^4$. Then $\bar z_1$ and $\bar z_3$ have type $\hat s_n$, and $\bar z_2$ and $\bar z_4$ have type $\hat s_1$. Let $C_i$ be the standard subcomplex of $\Si$ associated with $\bar z_i$.  Let $E_{i,j}=\p_{C_i}(C_j)$ for $1\le i,j\le 4$. 

Let $I=\supp(E_{1,3})$. Suppose $I=I_1\sqcup I_2$ where $I_1$ is the union of irreducible components of $I$ that are contained in $\supp(C_2)=\supp(C_4)=S\setminus\{s_1\}$. By Lemma~\ref{lem:two cells}, there is a vertex $x'\in C_2\cap E_{3,1}$ such that $x=p(x')\in C_2\cap E_{1,3}$, where $p:E_{3,1}\to E_{1,3}$ is the parallel translation. 
For $i=1,2$, let $U_i$ be the standard subcomplex in $E_{1,3}$ containing $x$ such that $\supp(U_i)=I_i$. Then $E_{1,3}=U_1\times U_2$ and $U_1\subset C_{1}\cap C_{2}$. Similarly, by Lemma~\ref{lem:two cells}, there is a vertex $y'\in C_4\cap E_{3,1}$ such that $y=p(y')\in C_4\cap E_{1,3}$. We can assume $y\in U_2$. Indeed, if $y\notin U_2$, as both $x$ and $y$ are in $E_{1,3}=U_1\times U_2$, there is geodesic $v$ in $E_{1,3}$ from $x$ to $y$ such that $v=v_2v_1$ with $\supp(v_i)\subset \supp(U_i)$ for $i=1,2$. As $y\in C_4\cap E_{1,3}$ and $$\supp(v_1)\subset \supp(C_2\cap E_{1,3})=\supp(C_4\cap E_{1,3})=\supp(C_4)\cap\supp(E_{1,3}),$$
we know $v_1\subset C_4\cap E_{1,3}$. By replacing $y$ by the other endpoint of $v_1$ and modifying $y'$ accordingly, we still have $y'\in C_4\cap E_{3,1}$ (by Lemma~\ref{lem:two cells} (1)) and $y\in C_4\cap E_{1,3}$. Now $\supp(v)\subset \supp(U_2)$. Let $v'=p^{-1}(v)$, which is a geodesic in $E_{3,1}$ connecting $x'$ and $y'$. By Lemma~\ref{lem:two cells} $\supp(U_2)\perp\supp(u)$, where $u$ is a geodesic path in $C_2$ from $x$ to $x'$. Hence $$\supp(v')=\supp(v)\subset \supp(U_2).$$ 
Let $U'_2$ be the standard subcomplex in $E_{3,1}$ containing $v'$ with $\supp(U'_2)=I_2$. Let $V$ be the smallest standard subcomplex of $\Si$ containing $U_2$ and $U'_2$. Then $V=U_2\times V_{u}$ where $V_u$ is the standard subcomplex containing $u$ with support being $\supp(u)$.

Up to passing to a word equivalent to $w_nw_1w'_nw'_1$, we may assume $P_n$ is a path from $y$ to $x$, $P_1$ is a path from $x$ to $x'$, $P'_n$ is a path from $x'$ to $y'$ and $P'_1$ is a path from $y'$ to $y$. Then $\Pi_{\whC_1}(P'_n)\subset \widehat E_{1,3}=\widehat U_1\times \widehat U_2$. As $\Pi_{\whC_1}(P'_n)$ is a path connecting $x\in \widehat U_2$ and $y\in \widehat U_2$, $\Pi_{\whC_1}(P'_n)$ is homotopic rel endpoints in $\whC_1$ to a concatenation to two paths $P_{\widehat U_1}P_{\widehat U_2}$ such that $P_{\widehat U_1}$ is a loop in $\widehat U_1$ based at $x$ and $P_{\widehat U_2}$ is a path in $\widehat U_2$ connecting $x$ and $y$. Let $P_{n1}=\Pi_{\whC_1}(P_1)P_{\widehat U_1}\subset \whC_1\cap \whC_2$, $P_{n2}=P_{\widehat U_2}$ and $P_{n3}=\Pi_{\whC_1}(P'_1)\subset \whC_1\cap \whC_4$. Now we argue as in Case 2 to show that up to passing to an equivalent word, we may assume $P_n=P_{n2}\subset\widehat U_2$. 

Now considering $\Pi_{\whC_3}(P_nP_1P'_nP'_1)$. Arguing as in Case 2, up to passing to an equivalent word, we may assume $P'_n$ is a path in $\widehat U'_2$ connecting $x'$ and $y'$, $\bar P'_n=\Pi_{\whC_3}(P_n)$ and $\bar P_n=\Pi_{\whC_1}(P'_n)$. Hence $w'_n=w^{-1}_n$ by Lemma~\ref{lem:two cells} (3). Now we consider $\Pi_{\whC_2}(P_nP_1P'_nP'_1)$. By the argument in Case 2 again (using \eqref{e:projection}), we know $\Pi_{\whC_2}(P_n)$ and $\Pi_{\whC_2}(P'_n)$ give two words that are inverse of each other. Then we can finish the proof as in Case 2.
\end{proof}

\begin{proof}[Proof of Proposition~\ref{prop:group bowtie}]	
First we show for $x_1,y_1,x_2,y_2$ as in Lemma~\ref{lem:commute}, there exists vertex $z\in \Delta_{\Lambda,\Lambda'}$ such that $x_i\le z\le y_j$ for all $i,j\in \{1,2\}$. We choose the four chambers $\{\Delta_i\}_{i=1}^4$ of $\Delta_\Lambda$ as in Construction~\ref{def:4cycle} with $\overline{x_1y_1}\subset\Delta_1$ and $\overline{y_1x_2}\subset \Delta_2$. Let $g_i\in A_\Lambda$ be the element associated with $\Delta_i$. By Lemma~\ref{lem:commute}, there exists $u_i\in A_i$ for $i=1,n$ such that $g_2=g_1u_n$, $g_3=g_2u_1$, $g_4=g_3u^{-1}_n$ and $g_1=g_4u^{-1}_1$.
Let $g\in A_1\cap A_n$ and $s\in \Lambda'$ be as in the assumption of Proposition~\ref{prop:group bowtie} such that $gu_ig^{-1}\in A_{S\setminus\{s\}}$ for $i=1,n$. In particular, for $i=1,n$, we have $u_i=g^{-1}v_ig$ for some $v_i\in A_{S\setminus\{s\}}$. As $g\in A_1\cap A_n$, the word $u_nu_1u^{-1}_nu^{-1}_1$ is equivalent to $v_nv_1^{-1}v_nv^{-1}_1$. In other words, we can choose the four chambers $\{\Delta_i\}_{i=1}^4$ in Construction~\ref{def:4cycle} such that $g_2=g_1v_n$, $g_3=g_2v_1$, $g_4=g_3v^{-1}_n$ and $g_1=g_4v^{-1}_1$. Up to a left translation we can assume $g_1$ is the trivial element.
Let $z\in \Delta_{\Lambda,\Lambda'}$ be the vertex corresponding to $A_{\hat s}$. Then $z\in \Delta_i$ for $1\le i\le 4$. In particular $z$ is adjacent or equal to each of $x_1,x_2,y_1,y_2$. Thus $x_i\le z\le y_j$ for $i,j\in\{1,2\}$, as desired.

To show Proposition~\ref{prop:group bowtie} in full generality, we induct on the number of nodes in $\Lambda'$, and use Lemma~\ref{lem:bowtie free criterion}. Assumption 2 of Lemma~\ref{lem:bowtie free criterion} follows from the previous paragraph. For $i=1,n$, let $\Lambda_i$ be the component of $\Lambda\setminus\{s_i\}$ containing all vertices of $\Lambda'$ expect $s_i$. Let $\Lambda'_i=\Lambda_i\cap\Lambda'$. By induction, $\Delta_{\Lambda,\Lambda'_i}$ is bowtie free for $i=1,n$. Hence $\Delta_{\Lambda'_i,\Lambda_i}$ is bowtie free for $i=1,n$ by Proposition~\ref{prop:inherit}. Thus Assumption 1 of Lemma~\ref{lem:bowtie free criterion} holds by Lemma~\ref{lem:link}, as desired.
\end{proof}

%Now we prove by induction on $n$ that each $n\ge 2$, if $\Lambda'$ has $n$ vertices, the conclusion of Theorem~\ref{thm:bowtie free} is true (for all possible ambient Dynkin diagram $\Lambda$). The base case $n=2$ is follows immediately from the previous paragraph. For arbitrary $n$, note that Assumption 1 of Lemma~\ref{lem:bowtie free criterion} holds true by induction and Lemma~\ref{lem:link}. Assumption 2 of Lemma~\ref{lem:bowtie free criterion} is true by the previous paragraph. This finishes the proof.

%Recall that the notion of almost spherical is in Definition~\ref{def:almost spherical}.

\begin{cor}
	\label{cor:algebraic}
Suppose $\Lambda$ is a Dynkin diagram, and $\Lambda\setminus \{s\}$ is spherical for any node $s\in \Lambda$. Suppose$\Lambda'\subset \Lambda$ is an admissible linear subgraph with its consecutive nodes being $\{s_i\}_{i=1}^n$. Assume that
\begin{enumerate}
	\item the intersection of any two parabolic subgroups of $A_\Lambda$ is parabolic;
	\item for any commuting pair of parabolic subgroups $\{\mathsf{P_i}\}_{i=1,2}$ such that $c_{\mathsf P_1}c_{\mathsf P_2}=c_{\mathsf P_2}c_{\mathsf P_1}$, $c_{\mathsf P_1}\in A_{i_1}$ and $c_{\mathsf P_2}\in A_{i_2}$, there exists $g\in A_{i_1}\cap A_{i_2}$ and $i_1\le i_3\le i_2$ such that $gc_{\mathsf P_j}g^{-1}\in A_{i_3}$ for $j=1,2$ (see Section~\ref{subsec:Artin} for the definition of $c_{\mathsf P_1}$).
\end{enumerate}
Then $\Delta_{\Lambda,\Lambda'}$ is bowtie free.
\end{cor}

\begin{proof}
The \emph{parabolic closure} of an element $g\in A_S$ is the smallest parabolic subgroup containing $g$. By Theorem~\ref{thm:parabolic}, there is a upper bound on the length of strictly increasing chain of parabolic group, thus each element in $A_S$ has a well-defined parabolic closure by Assumption 1. 

We claim that for any subset $S_1\subsetneq S$ and $g\in A_S$, $gA_{S_1}g^{-1}=A_{S_1}$ implies $gc_{S_1}g^{-1}=c_{S_1}$.
The conjugation $gA_{S_1}g^{-1}=A_{S_1}$ induces an isomorphism $\varphi:A_{S_1}\to A_{S_1}$. Let $\{T_i\}_{i=1}^k$ be the irreducible components of $S_1$. By Theorem~\ref{thm:parabolic}, $gA_{T_i}g^{-1}=h_iA_{T'_i}h^{-1}_i$ for some $T'_i\subset S_1$ and $h_i\in A_{S_1}$. As $T'_i$ is irreducible, there exists $j$ such that $T'_i\subset T_j$. As $h_i\in A_{S_1}$, we know $\varphi(A_{T_i})\subset A_{T_j}$. Let $\{T_{i_m}\}_{m=1}^n\subset \{T_i\}_{i=1}^k$ be the collection of irreducible factors such that $\varphi(A_{T_{i_m}})\subset A_{T_j}$. Then $\varphi$ induces an isomorphism between the direct sum of $A_{T_{i_m}}$ with $1\le i\le n$ and $A_{T_j}$. This implies $n=1$ by \cite{MR2098788}. Thus $\varphi(A_{T_i})=A_{T_j}$ and $\varphi$ permutes the irreducible factors of $S_1$. As $c_{T_i}$ generates the center of $A_{T_i}$, we know $\varphi(c_{T_i})=c_{T_j}$ or $c^{-1}_{T_j}$. By considering the abelianization of $A_\Lambda$, we know $\varphi(c_{T_i})=c_{T_j}$. Thus $gc_{S_1}g^{-1}=c_{S_1}$.  It follows from this claim that for any proper parabolic subgroup $\mathsf{P}$ of $A_S$, $c_{\mathsf{P}}$ is well-defined.

%Conversely, if $gc_{S_1}g^{-1}=c_{S_1}$, then by Assumption 1, $c_{S_1}$ is contained in the parabolic subgroup $\mathsf P=gA_{S_1}g^{-1}\cap A_{S_1}$ of $A_S$. By Theorem~\ref{thm:parabolic}, $\mathsf P$ can also be viewed as a parabolic subgroup of the Artin group $A_{S_1}$. As $c_{S_1}$ is not contained in any proper standard parabolic subgroup of $A_{S_1}$ and $c_{S_1}$ is central, we know $c_{S_1}$ is not contained in any proper parabolic subgroup of $A_{S_1}$. Thus $A_{S_1}\subset gA_{S_1}g^{-1}$. The other inclusion is similar, and the claim is proved.

It suffices to justify the assumption of Proposition~\ref{prop:group bowtie}. Let $u_1$ and $u_2$ be as in Proposition~\ref{prop:group bowtie}. 
Let $\mathsf{P}_i$ be the parabolic closure of $u_i$ for $i=1,2$. Note that $\mathsf{P}_i$ is spherical for $i=1,2$.
As $u_2=u_1u_2u^{-1}_1$, we know $u_1\mathsf{P}_2u^{-1}_1$ and $\mathsf{P}_2$ are parabolic subgroups containing $u_2$. 
The minimality of $\mathsf{P}_2$ implies that $\mathsf{P}_2\subset u_1\mathsf{P}_2u^{-1}_1$ and $\mathsf{P}_2\subset u^{-1}_1\mathsf{P}_2u_1$. Thus $\mathsf{P}_2=u_1\mathsf{P}_2u^{-1}_1$. 
We define $Z_1=c_{\mathsf{P}_1}$ and $Z_2=c_{\mathsf{P}_2}$. By the above claim, $Z_2=u_1Z_2u^{-1}_1$. Thus $Z_2u_1Z^{-1}_2=u_1$, which implies that $Z_2\mathsf{P}_1Z^{-1}_2=\mathsf{P}_1$. Thus $Z_1Z_2=Z_2Z_1$. By Assumption 2, there is $g\in A_{i_1}\cap A_{i_2}$ and $i_1\le i_3\le i_2$ such that $gZ_jg^{-1}\in A_{i_3}$ for $j=1,2$. Next we will show  $g\mathsf{P}_jg^{-1}\subset A_{i_3}$, which would imply $gu_jg^{-1}\in A_{i_3}$. 
We write $g\mathsf{P}_jg^{-1}=hA_{S_j}h^{-1}$ for some $S_j\subset S$, then $gZ_jg^{-1}=hc_{S_j}h^{-1}$ by the discussion on the previous paragraph on $c_{\mathsf{P}}$ being well-defined. By Assumption 1, $\mathsf{P}=hA_{S_j}h^{-1}\cap A_{i_3}$ is a parabolic subgroup of $A_\Lambda$ containing $hc_{S_j}h^{-1}$. Then $h^{-1}\mathsf{P}h$ is a parabolic subgroup of $A_{S_j}$ (by Theorem~\ref{thm:parabolic}) containing $c_{S_j}$. As $c_{S_j}$ is contained in the center of $A_{S_j}$, we know $c_{S_j}$ is not contained in any proper parabolic subgroup of $A_{S_j}$. Thus $h^{-1}\mathsf{P}h=A_{S_j}$, which implies that $\mathsf{P}=hA_{S_j}h^{-1}$. Hence $g\mathsf{P}_jg^{-1}\subset A_{i_3}$. 
\end{proof}

\subsection{Control of conjugator}
\label{subsec:conjugator}
Now we show that Assumption of Corollary~\ref{cor:algebraic} holds when $A_\Lambda$ is spherical.
Let $A_S,\Lambda,\Lambda',s_1,s_n$ be as in the beginning of Section~\ref{sec:4-cycle}.  Let consecutive nodes in $\Lambda'$ be $\{s_i\}_{i=1}^n$. 
For each $s\in S$, let $A_{\hat s}$ be the parabolic subgroup generated by $S\setminus\{s\}$. We define $A_{1}=A_{\hat s_1}$ and $A_n=A_{\hat s_n}$.

\begin{prop}
	\label{prop:key}
Suppose $A_S$ is spherical.	For $i=1,n$, suppose $\mathsf{P}_i$ is a parabolic subgroup $A_S$ which is contained in $A_i$. Let $Z_i=c_{\mathsf{P}_i}$ (see Section~\ref{subsec:Artin} for the definition of $c_{\mathsf{P}_i}$). We assume that $Z_1Z_n=Z_nZ_1$. Then there exists a node $s\in \Lambda'$ and $g\in A_1\cap A_n$ such that $gZ_ig^{-1}\in A_{\hat s}$ for $i=1,n$.
\end{prop}
Note that without the requirement  $g\in A_1\cap A_n$, the proposition follows immediately from \cite[Theorem 2.2]{cumplido2019parabolic} when $\mathsf{P_i}$ is irreducible for $i=1,2$. The reducible case is not that much harder. However, most of the work here is for arranging the conjugator $g$ such that it belongs to $A_1\cap A_n$. Now we will prove  Proposition~\ref{prop:key}, assuming a technical lemma, which is Lemma~\ref{lem:triming}. The proof of Lemma~\ref{lem:triming} is postponed to the next subsection.

\begin{proof}
We will be assuming $Z_i\notin A_1\cap A_n$ for $i=1,n$, otherwise the proposition is trivial.
	We first claim there exists positive words $b,d,f,g$ on $S$ such that
	\begin{enumerate}
		\item $Z_1=b^{-1}c_{S_1}b$ and $Z_n=dc_{S_n}d^{-1}$ for some $S_1\subset S\setminus \{s_1\}$ and $S_n\subset S\setminus\{s_n\}$;
		\item $b\in A_1$ and $d\in A_n$;
		\item $bd=fg$;
		\item $f^{-1}c_{S_1}f=c_{S'_1}$ for some $S'_1\subset S$, and $gc_{S_n}g^{-1}=c_{S'_n}$ for some $S'_n\subset S$;
		\item $c_{S'_1}$ commutes with $c_{S'_n}$.
	\end{enumerate}

Now we prove the claim. By Theorem~\ref{thm:parabolic}, for $i=1,n$, we can assume $\mathsf{P}_i$ is also a parabolic subgroup of the Artin group $A_i$, i.e. $\mathsf{P}_i=g_iA_{T_i}g^{-1}_i$ for $g_i\in A_i$ and $T_i\subset S\setminus\{s_i\}$. 
	
	Let $Z_{n}=cd^{-1}$ be the $pn$-normal form of $Z_n$ in $A_n$. Then $c,d\in A_n$. By \cite[Theorem 4]{cumplido2019minimal}, $d^{-1}\mathsf{P}_nd=A_{S_n}$ for some $S_n\subset S\setminus\{s_n\}$. By \cite[Lemma 34]{cumplido2019minimal}, $Z_n=dc_{S_n}d^{-1}$.
	Let $Z^{-1}_{1}=a^{-1}b$ be the $np$-normal form of $Z^{-1}_1$ in $A_1$. Now consider $A_1$ with a new generating set $\bar T$ whose generators are inverses of the original generators.  Then $a^{-1}b$ can be viewed as the $pn$-normal form of $Z^{-1}_1$ in $A_1$ with respect to $\bar T$. Applying \cite[Theorem 4]{cumplido2019minimal} with $\bar T$ in mind, we know $b\mathsf{P}_1b^{-1}=A_{S_1}$ for some $S_1\subset S\setminus\{s_1\}$. By \cite[Lemma 34]{cumplido2019minimal} (with respect to $\bar T$), $bZ^{-1}_1b^{-1}=c^{-1}_{S_1}$. Thus $c_{S_1}=bZ_1b^{-1}$ and $b^{-1}c_{S_1}b=Z_1$.
	
	As $b^{-1}c_{S_1}bdc_{S_n}d^{-1}=dc_{S_n}d^{-1}b^{-1}c_{S_1}b$, $c_{S_1}$ commutes with $bdc_{S_n}d^{-1}b^{-1}$. 
	Let $ef^{-1}$ by the $pn$-normal form of $bdc_{S_n}d^{-1}b^{-1}$ in $A_\Gamma$. Then by using \cite{cumplido2019minimal} as before we know $bdc_{S_n}d^{-1}b^{-1}=fc_{S'_n}f^{-1}$ for some $S'_n\subset S$. By \cite[Theorem 2.6]{charney1995geodesic}, we know $bd=fg$ for some positive word $g$. Then $gc_{S_n}g^{-1}=c_{S'_n}$.

	As $c_{S_1}$ commutes with $fc_{S'_n}f^{-1}$, we know $c_{S_1}fc_{S'_n}f^{-1}c^{-1}_{S_1}=fc_{S'_n}f^{-1}$. As $fc_{S'_n}f^{-1}$ is a $pn$-normal form, by \cite[Theorem 2.6]{charney1995geodesic}, $fk= c_{S_1}f$ for some positive word $k$. In particular, $f^{-1}c_{S_1}f$ is positive. By \cite[Corollary 6.5]{cumplido2019parabolic}, $f^{-1}c_{S_1}f=c_{S'_1}$ for some $S'_1\subset S$. From $c_{S_1}fc_{S'_n}f^{-1}c^{-1}_{S_1}=fc_{S'_n}f^{-1}$, we know $f^{-1}c_{S_1}fc_{S'_n}f^{-1}c^{-1}_{S_1}f=c_{S'_n}$. Thus $c_{S'_1}$ commutes with $c_{S'_n}$.
	Thus the claim is proved.
	
	Let $\Si$ and $\od$ be the Davis complex and the oriented Davis complex associated with $W_S$ and $A_S$.  Recall that vertices of $\od^1$ are identified with $W_\Ga$. Let $x_0$ be the identity vertex. Let $E_i$ be the face of $\Si$ associated with $A_i$ for $i=1,n$.
	\begin{figure}
		\centering
		\includegraphics[scale=1]{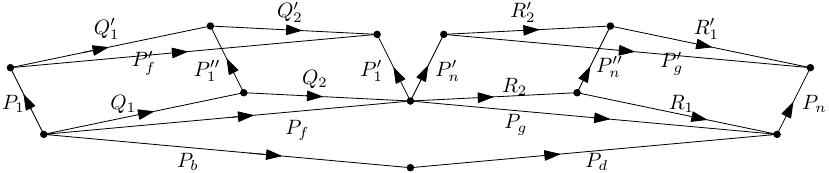}
		\caption{Some paths.}
			\label{fig:7}
	\end{figure} 
	We define positive paths $P_b,P_d,P_f,P'_f,P_g,P'_g,P_1,P'_1,P_n,P'_n$ on $\od^1$ through the following properties (see Figure~\ref{fig:7}):
	\begin{itemize}
		\item $\sft(P_b)=x_0$ and $\w(P_b)=b$; $\sfs(P_d)=x_0$ and $\w(P_d)=d$;
		\item $\sfs(P_f)=\sfs(P_b)=\sfs(P_1)$, $\w(P_f)=f$ and $\w(P_1)=c_{S_1}$;
		\item $\sfs(P'_f)=\sft(P_1)$ and $\w(P'_f)=f$;
		\item $\sfs(P'_1)=\sfs(P'_n)=\sfs(P_g)=\sft(P_f)$, $\w(P'_1)=c_{S'_1}$, $\w(P'_n)=c_{S'_n}$ and $\w(P_g)=g$;
		\item  $\sfs(P_n)=\sft(P_g)$ and $\w(P_n)=c_{S_n}$;
		\item  $\sft(P'_g)=\sft(P_n)$ and $\w(P'_g)=g$.
	\end{itemize}
	
	Note that $P_fP_g\eq P_bP_d$, $P_1P'_f\eq P_fP'_1$ and $P'_nP'_g\eq P_gP_n$. 
	Let $K$ be the convex hull of $\vertex E_1\cup\vertex E_n$ (see the beginning of Section~\ref{subsec:convex hull} for relevant definitions). We claim \begin{equation}
		\label{e:containment}
		\vertex([P_fP_g]\vee_p[\Pi_{\widehat E_1}(P_fP_g)])\subset K.
	\end{equation}
Now we prove this claim. Note that $[P_fP_g]\vee_p[\Pi_{\widehat E_1}(P_fP_g)]=[P_bP_d]\vee_p[\Pi_{\widehat E_1}(P_bP_d)]=[P_b]([P_d]\vee_p[\Pi_{\widehat E_1}(P_d)])$. As $\Pi_{\widehat E_1}(\widehat E_n)=\widehat{E_1\cap E_n}$ by Lemma~\ref{lem:more gate} and Lemma~\ref{lem:retraction property}, we know $\Pi_{\widehat E_1}(P_d)\subset \widehat E^1_n$. Then both $[\Pi_{\widehat E_1}(P_d)]$ and $[P_d]$ are $\prec (\delta_{x_0}(E_n))^k$ for some $k$. Thus $\vertex([P_d]\vee_p[\Pi_{\widehat E_1}(P_d)])\subset \vertex E_n$ (recall that this means any path representing $[P_d]\vee_p[\Pi_{\widehat E_1}(P_d)]$ has vertex set in $E_n$). Thus one path representing $[P_fP_g]\vee_p[\Pi_{\widehat E_1}(P_fP_g)]$ has vertex set in $\vertex E_1\cup\vertex E_n$. Thus \eqref{e:containment} follows.

	Next we claim there exist positive paths $Q_1$, $Q_2$, $Q'_1$, $Q'_2$ and $P''_1$ such that
	\begin{enumerate}
		\item $[P_f]=[Q_1Q_2]$, $[P'_f]=[Q'_1Q'_2]$, $[P_1Q'_1]=[Q_1P''_1]$ and $[P''_1Q'_2]=[Q_2P'_1]$;
		\item $Q_1\subset\widehat E^1_1$;
		\item $\w(Q_i)=\w(Q'_i)$ for $i=1,2$ and $\w(P''_1)=c_{S''_1}$ for some $S''_1\subset S\setminus \{s_1\}$;
		\item if $Q_2$ is non-trivial, then the first edge of $Q_2$ is labeled by $s_1$ and $Q_2$ is left irreducible.
	\end{enumerate}

	To see this, as $[P_fP'_1]=[P_1P'_f]$, $\w(P'_f)=\w(P_f)$ and $\w(P'_1)=c_{S'_1}$, we know from the type II decomposition of Corollary~\ref{cor:decomposition} that $$P_f\eq v_{n+1}v_n\cdots v_1\ \mathrm{and}\ P'_f\eq v'_{n+1}v'_n\cdots v'_1$$ with $v_i$ and $v'_i$ are elementary $B$-segments satisfying the requirements in Corollary~\ref{cor:decomposition}, for some suitable $B\in \cq_\Gamma$. In particular $v_{n+1}\subset\widehat E^1_1$. Let $i_0$ be the biggest possible such that $s_1\in\supp(v_{i_0})$ (note that $i_0\le n$). Let $u=v_{i_0}\cdots v_1$. 
	
	We first show $[u]=[\bar v\bar v_1\cdots \bar v_m]$ where each $\bar v_i$ is an elementary $B$-segment, $s_1\in\supp(\bar v_1)$, $\bar v$ is a concatenation of elementary $B$-segments such that $\bar v\subset \widehat E^1_1$, and $\supp(\bar v_1\cdots \bar v_i)$ is irreducible for $1\le i\le m$. This can be proved by induction on $i_0$. Namely by induction we can assume $[v_{i_0}\cdots v_2]=[\bar v\bar v_1\cdots \bar v_m]$ satisfying all the requirements. As $\supp(v_1)$ is irreducible (Lemma~\ref{lem:irreducible segment} (1)), if $\supp(v_1)\perp \supp(\bar v_1\cdots \bar v_m)$ is not true, then $\supp(\bar v_1\cdots \bar v_mv_1)$ is irreducible and we are done. If $\supp(v_1)\perp \supp(\bar v_1\cdots \bar v_m)$, by repeatedly applying  Lemma~\ref{lem:irreducible segment} (3), we know $[\bar v_1\cdots \bar v_mv_1]=[u_1\bar u_1\cdots \bar u_m]$ where $u_1$ and each $\bar u_i$ are elementary $B$-segments, $\supp(u_1)=\supp(v_1)$ and $\supp(\bar u_i)=\supp(\bar v_i)$ for each $i$. As $s_1\in\supp(\bar v_1)$, we know $s_1\notin \supp(v_1)=\supp(u_1)$. Then we can merge $u_1$ and $\bar v$ to form our new $\bar v\subset\widehat E^1_1$.
	
	We define $Q_1=v_{n+1}\cdots v_{i_0+1}\bar v$ and $Q_2=\bar v_1\cdots \bar v_m$. Lemma~\ref{lem:left irreducible} (1) implies that $Q_2$ is left irreducible. Corollary~\ref{cor:decomposition} implies that there exists a face $F$ (dual to $B$) containing $x=\sft(v_{n+1}\cdots v_{i_0+1})$ and $p>0$ such that $$[v_{n+1}\cdots v_{i_0+1}](\delta_x(F))^p=[P_1][v'_{n+1}\cdots v'_{i_0+1}].$$
	By repeatedly applying Lemma~\ref{lem:irreducible segment} (2), there exist a face $F'$ dual to $B$ containing $x'=\sft(\bar v)$ and positive paths $\bar v'$ and $Q'_2$ made of elementary $B$-segments with
	$$
	(\delta_x(F))^p[\bar v]=[\bar v'](\delta_{x'}(F'))^p\ \mathrm{and}\ (\delta_{x'}(F'))^p[Q'_2]=[Q_2P'_1],
	$$
	$\w(\bar v)=\w(\bar v')$ and $\w(Q'_2)=\w(Q_2)$. We define $P''_1=(\delta_{x'}(F))^p$ and $Q'_1=v'_{n+1}\cdots v'_{i_0+1}\bar v'$. 
	As $\bar v_1$ starts with an edge labeled by $s_1$, the claim is proved.
	
	Similarly, there exist positive paths $R_1$, $R_2$, $R'_1$, $R'_2$ and $P''_n$ such that
	\begin{enumerate}
		\item $[P_g]=[R_2R_1]$, $[P'_g]=[R'_2R'_1]$, $[P'_nR'_2]=[R_2P''_n]$ and $[P''_nR'_1]=[R_1P_n]$;
		\item $R_1\subset\widehat E^1_n$;
		\item $\w(R_i)=\w(R'_i)$ for $i=1,2$ and $\w(P''_n)=c_{S''_n}$ for some $S''_n\subset S\setminus \{s_n\}$;
		\item if $R_2$ is non-trivial, then the last edge of $R_2$ is labeled by $s_n$ and $R_2$ is right irreducible.
	\end{enumerate}
	
	Let $u_0=Q_2R_2$. The next claim we want to prove is that $\supp(u_0)$ does not contain each node of $\Lambda'$. We argue by contradiction and assume each node of $\Lambda'$ is contained in $\supp(u_0)$. 
	Let $k$ be the largest number such that $s_k\in \supp(Q_2)$. As $s_1\in \supp(Q_2)$, $\supp(Q_2)$ is irreducible and the Dynkin diagram $\Lambda$ is tree, we know $s_i\in\supp(Q_2)$ for $1\le i\le k$. Write $R_2=f_1f_2\cdots f_{n'}$ where each $f_i$ is an edge of $R_2$.
	As $R_2$ is right irreducible and ends with an edge labeled by $s_n$, for any $k<i<n$, we can always find an edge labeled by $s_{i+1}$ after the first occurrence of an edge in $R_2$ labeled $s_i$ (this uses that the Dynkin diagram $\Lambda$ is a tree as well).
	
	We turn $Q_2R_2$ into a left irreducible path in a similar way as before: either $Q_2f_1$ is left irreducible, or we must have $\supp(f_1)\perp\supp(Q_2)$. In the latter case $Q_2f_1=\bar f_1\bar Q_2$ with $\w(\bar f_1)=\w(f_1)$ and $\w(Q_2)=\w(\bar Q_2)$. As $s_1\in\supp(Q_2)$, we know $\w(f_1)\neq s_1$, thus $\bar f_1\subset\widehat E^1_1$. Now we run the same process by adding $f_2$ to the end of word produced in the first step. Repeating this process we will obtain $[Q_2R_2]=[UV]$ where $U$ is a positive path in $\widehat E^1_1$ and $V$ is a left irreducible positive path starting with an edge labeled by $s_1$. Moreover, the discussion on right irreducibility of $R_2$ in the previous paragraph and our construction of $V$ imply that $\supp(V)$ contains all the nodes in $\Lambda'$. To finish the prove the claim, it remains to show 
	\begin{equation}
		\label{e:join}
		\vertex([V]\vee_p[\Pi_{\widehat E_1}(V)])\subset K,
	\end{equation}
	as \eqref{e:join} and Lemma~\ref{lem:triming} in the next subsection imply that $s_n\notin \supp(V)$, which leads to a contradiction.
	Now we prove \eqref{e:join}.
	Recall from \eqref{e:containment} that $$\vertex([Q_1Q_2R_2R_1]\vee_p[\Pi_{\widehat E_1}(Q_1Q_2R_2R_1)])\subset K.$$ As $Q_1\subset\widehat E^1_1$, we know $\Pi_{\widehat E_1}(Q_1Q_2R_2R_1)=Q_1\Pi_{\widehat E_1}(Q_2R_2R_1)$. Thus $$[Q_1Q_2R_2R_1]\vee_p[\Pi_{\widehat E_1}(Q_1Q_2R_2R_1)]=[Q_1]([Q_2R_2R_1]\vee_p[\Pi_{\widehat E_1}(Q_2R_2R_1)]).$$ Then $\vertex([Q_2R_2R_1]\vee_p[\Pi_{\widehat E_1}(Q_2R_2R_1)])\subset K$. As $$[Q_2R_2]\vee_p[\Pi_{\widehat E_1}(Q_2R_2)]\prec [Q_2R_2R_1]\vee_p[\Pi_{\widehat E_1}(Q_2R_2R_1)],$$ we know $\vertex([Q_2R_2]\vee_p[\Pi_{\widehat E_1}(Q_2R_2)])\subset K$. As $[Q_2R_2]=[UV]$ and $U\subset \widehat E^1_1$, we deduce \eqref{e:join}.
	
Let $s'$ be a node of $\Lambda'$ not contained in $\supp(Q_2R_2)$. As $\supp(Q_2)$ is irreducible, and $\supp(R_2)$ is irreducible, we know each of $\supp(Q_2)$ and $\supp(R_2)$ is contained in a connected component of $\Lambda\setminus\{s'\}$. However, they are contained in different components of $\Lambda\setminus\{s'\}$ if they are both nonempty, as
$s_1\in\supp(Q_2)$ if $Q_2$ is nontrivial, $s_n\in\supp(R_2)$ if $R_2$ is nontrivial. It follows that
	$\supp(Q_2)\perp\supp(R_2)$, $s_n\notin \supp(Q_2)$ and $s_1\notin \supp(R_2)$.

%,  thus $\supp(Q_2)$ and $\supp(R_2)$ are in different connected components of $\Lambda\setminus\{s'\}$. 
	
	It follows that $[Q_2R_2]=[\widetilde R_2\widetilde Q_2]$ for positive paths $\widetilde R_2$ and $\widetilde Q_2$ with $\w(\widetilde Q_2)=\w(Q_2)$ and $\w(\widetilde R_2)=\w(R_2)$. As $\widetilde R_2$ starts with a vertex in $E_1$ and $\supp(\widetilde R_2)\subset\supp(E_1)$, we know $\widetilde R_2\subset\widehat E^1_1$. Similarly, $\widetilde Q_2\subset\widehat E^1_n$. Then $$[Q_1\widetilde R_2\widetilde Q_2R_1]=[P_bP_d].$$ Thus $$\w(Q_1\widetilde R_2\widetilde Q_2R_1)=\w(P_bP_d)=bd.$$ Let $q_1=\w(Q_1)$, $r_2=\w(\widetilde R_2)$, $q_2=\w(\widetilde Q_2)$ and $r_1=\w(R_1)$. Then $q_1r_2q_2r_1=bd$ in $A_\Gamma$. Then $b^{-1}q_1r_2=dr^{-1}_1q^{-1}_2$. Note that $b,q_1,r_2\in A_1$ and $d,r_1,q_2\in A_n$. Define $\alpha=b^{-1}q_1r_2$, then $\alpha\in A_1\cap A_n$. 
	From $[P_1Q'_1]=[Q_1P''_1]$, we know $c_{S_1}q_1=q_1c_{S''_1}$. Thus $$Z_1=b^{-1}c_{S_1}b=(b^{-1}q_1r_2)r^{-1}_2(q^{-1}_1c_{S_1}q_1)r_2(b^{-1}q_1r_2)^{-1}=\alpha(r^{-1}_2 c_{S''_1}r_2)\alpha^{-1}.$$ Similarly, $$Z_n=\alpha(q_2c_{S''_n}q^{-1}_2)\alpha^{-1}.$$ Thus $$\alpha^{-1}Z_1\alpha=r^{-1}_2 c_{S''_1}r_2\ \mathrm{and}\ \alpha^{-1}Z_n\alpha=q_2c_{S''_n}q^{-1}_2.$$ To finish the proof of the proposition, it suffices to show there exists $s\in\Lambda'$ such that $$r_2,c_{S''_1},q_2,c_{S''_n}\in A_{\hat s}.$$ 
	
	Let $I_1$ be the union of irreducible components of $\supp(c_{S''_1})$ that are not contained in $\supp(q_2)$. Let $I_n$ be the union of irreducible components of $\supp(c_{S''_n})$ that are not contained in $\supp(r_2)$. Lemma~\ref{lem:left irreducible} (1) implies that 
	\begin{align*}
		&I_1\perp \supp(q_2),\ \ I_n\perp \supp(r_2)\\
		&\supp(q_2)\cup \supp(c_{S''_1})= I_1\cup\supp(q_2)\\
		&\textrm{and}\ \supp(r_2)\cup \supp(c_{S''_n})= I_n\cup\supp(r_2).
	\end{align*}
	Lemma~\ref{lem:left irreducible} also implies that $I_1$ (resp. $I_n$) is the union of a collection of irreducible components of $\supp(c_{S'_1})$ (resp. $\supp(c_{S'_n})$). 
As $c_{S'_1}$ commutes with $c_{S'_n}$, by Lemma~\ref{cor:commute}, for an irreducible component from $I_1$ and an irreducible component from $I_2$, either they commute, or one is contained in another. 
	
	If both $q_2$ and $r_2$ are trivial, then $c_{S'_1}=c_{S''_1}\in A_1$ and $c_{S'_n}=c_{S''_n}\in A_n$. So $s_1\notin I_1$ and $s_n\notin I_n$. If $s_n\notin I_1$ or $s_1\notin I_n$, then we are done. Now assume $s_n\in I_1$ and $s_1\in I_n$. Then $s_n\in I_1\setminus I_n$. We claim there is a node of $\Lambda'$ which is not contained in  $I_1\cup I_n$. Suppose the contrary is true.
	Let $T_1$ be the irreducible component of $I_1\cap \Lambda'$ that contains $s_n$ and suppose $T_1=\{s_n,\ldots,s_i\}$. We now prove by contradiction that $i=1$. If $i>1$, then $s_{i-1}\notin I_1$ (otherwise $s_{i-1}\in T_1$). Thus $s_{i-1}\in I_n$. Let $T_n$ be the irreducible component of $I_n\cap \Lambda'$ that contains $s_{i-1}$. As $s_i$ and $s_{i-1}$ do not commute, and $s_{i-1}\notin I_1$, by Lemma~\ref{cor:commute}, the only possibility left is that $T_1\subset T_n$. But this is still impossible as $s_n\in T_1$ and $s_n\notin I_n$. Thus $i=1$. However, this contradicts that $s_1\notin I_1$. Thus $I_1\cup I_n$ cannot contain all nodes of $\Lambda'$.
	
	%If exactly one of $q_2$ and $r_2$, say $q_2$ is trivial, then $c_{S'_1}=c_{S''_1}\in A_1$ and $s_1\notin I_1$. As $s_n\in \supp(r_2)$, we put $i_1$ to be the largest number such that $s_{i_1}\notin \supp(r_2)$. As $\supp(Q_2R_2)$ does not contain each node of $\Lambda'$, we know $i_1$ exists. As $I_n\perp \supp(r_2)$, $s_{i_1}\notin I_n$. If $s_{i_1}\notin I_1$ or $s_1\notin I_n$, then we are done. Now assume $s_{i_1}\in I_1$ and $s_1\in I_n$. Then by the same argument as in the previous paragraph, there is a node $s$ in $\{s_i\mid 1\le i\le i_1\}$ which is not contained in $I_1\cup I_n$. Such node is not in $\supp(r_2)$ as well since $\supp(r_2)$ is irreducible. Thus  $r_2,c_{S''_1},q_2,c_{S''_n}\in A_{\hat s}$. 
	
	Now we assume at least one of $r_2$ and $q_2$ are nontrivial. 
Let $i_n$ be the biggest number such that $s_{i_n}\notin \supp(r_2)$, and let $i_1$ be the smallest number such that $s_{i_1}\notin \supp(q_2)$ (it is possible $i_n=n$ or $i_1=1$). As $\supp(q_2)\perp\supp(r_2)$, we know $i_1\le i_n$, $s_{i_n}\notin \supp(q_2)$ and $s_{i_1}\notin \supp(r_2)$. Now we show 
	\begin{equation}
		\label{e:ncontain}
		s_{i_1}\notin I_1 \ \mathrm{and}\  s_{i_n}\notin I_n.
	\end{equation}
	If $q_2$ and $r_2$ are both nontrivial, then $s_1\in \supp(q_2)$ and $s_n\in \supp(r_2)$. As $I_1\perp\supp(q_2)$ and $I_n\perp \supp(r_2)$, \eqref{e:ncontain} follows. If one of $q_2$ or $r_2$ is trivial, in which case $i_n=n$ or $i_1=1$, hence \eqref{e:ncontain} holds.
	If $s_{i_n}\notin I_1$ or $s_{i_1}\notin I_n$, then $s_{i_n}$ or $s_{i_1}$ is not contained in $$\supp(q_1)\cup\supp(r_2)\cup\supp_{c_{S''_1}}\cup\supp_{c_{S''_n}}.$$
	Now assume $s_{i_n}\in I_1$ and $s_{i_1}\in I_n$. As $s_{i_n}=I_1\setminus I_n$ and $s_{i_1}=I_n\setminus I_1$, by the argument in the previous paragraph, there is a node $s$ in $\{s_i\mid i_1\le i\le i_n\}$ such that $s\notin I_1\cup I_n$. Then $s\notin \supp(q_1)\cup\supp(r_2)\cup\supp(c_{S''_1})\cup\supp(c_{S''_n})$, as desired.
\end{proof}

\subsection{Proof of the main lemma}
Let $\od_\Ga$, $\g^+=\g^+(\Ga)$, $\g=\g(\Ga)$, and $\pi:\od_\Ga\to \Si_\Ga$ be as in Section~\ref{subsec:Deligne}.
Recall that edges of $\Si_\Ga$ and $\od_\Ga$ are labeled by elements in the generating set $S$ (cf. Definition~\ref{def:label}). We will write $\Si=\Si_\Ga$ and $\od=\od_\Ga$. In this subsection we allow the associated Artin group $A_S$ to be arbitrary (not necessarily spherical). We will prove the main lemma, which is Lemma~\ref{lem:triming}, needed in the proof of Proposition~\ref{prop:key}.
\begin{lem}
	\label{lem:tracking label}
	Let $F$ be a standard subcomplex of $\Sigma$. Let $e\subset \widehat F$ be a positive edge from $x$ to $y$. Let $u\subset\widehat F$ be a minimal positive path from $x$ to $z$ which is $F$-escaping. Suppose $[u]$ and $[e]$ have an upper bound in $\g^+_{x\to}$ (cf. Section~\ref{subsec:Deligne}). Then
	\begin{enumerate}
		\item there exists an $F$-escaping minimal positive path $\bar u$ starting from $y$ such that $\supp(\bar u)=\supp(u)$ and $[e\bar u]\prec ([u]\vee_p [e])$;
		\item if $\supp(e)\notin \supp(u)$ and $\supp(e)$ does not commute with $\supp(u)$, then there exists an $F$-escaping minimal positive path $\bar u$ starting from $y$ such that $\supp(\bar u)=\supp(u)\cup \supp(e)$ and $[e\bar u]\prec ([u]\vee_p [e])$.
	\end{enumerate}
\end{lem}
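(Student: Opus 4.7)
My plan is to construct $\bar u$ explicitly by reflecting $u$ across the wall $H_e$. Let $s \in S$ be the label of $e$ and let $r_e \in W_\Gamma$ be the reflection across the wall $H_e$ dual to $\pi(e)$; since $\pi(e) \subset F$, the reflection $r_e$ stabilizes $F$ setwise. I take $\pi(\bar u) := r_e \cdot \pi(u)$, the image of $\pi(u)$ under left multiplication by $r_e$ on the Davis complex $\Sigma$, and let $\bar u$ be the unique positive lift of $\pi(\bar u)$ to $\widehat\Sigma$ starting at $y$. Since the $W_\Gamma$-action on the Cayley graph preserves edge labels, $\supp(\bar u) = \supp(u)$ is automatic; and since $r_e$ stabilizes $F$ while the walls crossed by $\pi(u)$ are all disjoint from $F$ (as $u$ is $F$-escaping), the same holds for the walls crossed by $\pi(\bar u)$, so $\bar u$ is $F$-escaping.

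The heart of the argument is the prefix condition $[e\bar u] \prec [u] \vee_p [e]$, which I reduce to a statement in the Coxeter group. Let $w_u \in W_\Gamma$ be the element represented by $u$. Because $u$ is $F$-escaping, $H_e$ is not a wall of $\pi(u)$, so $\alpha_s \notin N(w_u)$ (equivalently, $s$ is not a left descent of $w_u$); hence $sw_u$ is reduced of length $\ell(u)+1$ and $\pi(e\bar u)$ is a geodesic with inversion set $\{\alpha_s\} \cup s\cdot N(w_u)$. Writing $m := w_u \vee_R s$ for the right weak order join in $W_\Gamma$ (which exists and corresponds to $[u]\vee_p[e]$), it suffices to show the inclusion $\{\alpha_s\} \cup sN(w_u) \subseteq N(m)$; the containment $\alpha_s \in N(m)$ is immediate, and for the remaining part, given $\alpha \in N(w_u)$, write $s(\alpha) = \alpha - c\alpha_s$ with $c \in \mathbb{Z}$: when $c \geq 0$, iterated coclosure of the biconvex inversion set $N(w_u)$ (using $\alpha_s \notin N(w_u)$) propagates along the root string from $s(\alpha)$ to $\alpha$ to yield $s(\alpha) \in N(w_u) \subseteq N(m)$, and when $c < 0$, iterated closure of $N(m)$ along the root string from $\alpha$ to $s(\alpha)$ (using $\alpha, \alpha_s \in N(m)$) yields $s(\alpha) \in N(m)$. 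By Matsumoto's theorem, $sw_u \leq_R m$ produces a reduced positive path representing $[m]$ that begins with $e\bar u$, giving the desired prefix relation.

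For Part 2, I extend $\bar u$ by appending a single positive edge $e''$ labeled $s$ at its endpoint $y_1$, setting $\bar u' := \bar u \cdot e''$. The support condition $\supp(\bar u') = \supp(u) \cup \{s\}$ is clear; $\bar u'$ remains $F$-escaping because the wall of $e''$ equals $y_1 s y_1^{-1}$, which differs from $H_e$ (the hypothesis forces $s$ not to commute with $w_u$, so $y_1 \notin Z_{W_\Gamma}(s)\cdot x$) and from every wall of $\bar u$ (since $s\notin \supp(u)$ implies $s$ is not a right descent of $w_u$). The prefix condition follows from the same biconvexity-based framework as in Part 1, this time establishing $sw_u s \leq_R m$; the non-commutativity hypothesis forces $sN(w_u)$ to leave $N(w_u)\cup\{\alpha_s\}$, so $\ell(m)$ strictly exceeds $\ell(u)+1$ and leaves room in $N(m)$ for the extra root contributed by $e''$. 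The main technical obstacle will be the root-theoretic step $sN(w_u) \subseteq N(m)$, whose case split on the sign of $c = \langle\alpha, \alpha_s^\vee\rangle$ and propagation along root strings must be carried out with care, particularly in non-crystallographic Coxeter types; a secondary subtlety is verifying that the stated non-commutation hypothesis on $\supp(u)$ indeed propagates to non-commutation of $s$ with $w_u$ itself, which is what the $F$-escaping argument in Part 2 actually uses.
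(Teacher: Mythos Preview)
Your construction of $\bar u=r(u)$ in Part~1 is exactly the paper's, and your support and $F$-escaping arguments match. You diverge only in proving the prefix condition: the paper observes that the join $u'=[u]\vee_p[e]$ is a minimal positive path crossing $H$ exactly once, so that $u'\eq uv_1e'$ (with $e'$ the $H$-crossing edge) and $e\cdot r(uv_1)$ are minimal positive paths with the same endpoints, hence positively equivalent; this gives $[e\bar u]\prec[e\,r(u)r(v_1)]=[u']$ with no appeal to root systems. Your route through right weak order and biconvexity also works, and in fact needs no ``iteration along root strings'': in any Coxeter group, inversion sets and their complements are closed under positive \emph{real} linear combinations that happen to be roots, so from $\alpha=s\alpha+c\alpha_s$ (when $c>0$) coclosure of $\Phi^+\setminus N(w_u)$ forces $s\alpha\in N(w_u)$ in one step, and from $s\alpha=\alpha+|c|\alpha_s$ (when $c<0$) closure of $N(m)$ gives $s\alpha\in N(m)$ in one step. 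Your worry about non-crystallographic types is therefore unfounded.

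Part~2, however, has a genuine gap. Your claim that ``$\ell(m)$ strictly exceeds $\ell(u)+1$ and leaves room in $N(m)$ for the extra root'' is not a proof of $sw_us\leq_R m$: having $|N(m)|>|N(sw_u)|$ says nothing about whether the \emph{specific} root $sw_u\alpha_s$ lies in $N(m)$. What is actually needed is the intermediate step $w_us\leq_R m$, equivalently $[ue'']\prec[u']$ where $e''$ is the $s$-edge at the endpoint of $u$; once this holds, one may reflect (equivalently, apply Part~1 with the $F$-escaping path $ue''$ in place of $u$, using Lemma~\ref{lem:extra edge}(2)) to obtain $[e\,r(ue'')]\prec[u']$. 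The paper proves $[ue'']\prec[u']$ by an induction along the edges $e_1\cdots e_m$ of $u$, establishing $[e_1\cdots e_{i-1}e'_i]\prec[u']$ at each step via the rank-two fact $[e_ie'_{i+1}]\prec[e_i]\vee_p[e'_i]$. There is no shortcut through biconvexity alone, since $w_u\alpha_s\notin N(w_u)$ and biconvexity gives you no direct handle on which roots outside $N(w_u)\cup\{\alpha_s\}$ belong to $N(m)$. A smaller issue: your $F$-escaping argument for $\bar u\cdot e''$ only shows the path is geodesic (the new wall is distinct from the previous ones), not that the new wall misses $F$; this is exactly what Lemma~\ref{lem:extra edge}(2) supplies, and the paper invokes it.
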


\begin{proof}
	For (1), let $H$ be the wall dual to $e$. As $u$ and $e$ are minimal positive paths, we know $[u]\vee_p [e]$ is represented by a minimal positive path $u'$. Suppose $[u']=[uv]$ for some positive path $v$. As $u'$ is minimal, $u'$ crosses $H$ exactly once, hence the same holds for $v$, as $u$ does not cross $H$ because $u$ is $F$-escaping. Let $e'$ be the edge of $v$ dual to $H$ and let $v_1$ be the subpath of $v$ made of all the edges before $e'$. Let $y$ (resp. $y'$) be the endpoint of $e$ (resp. $e'$). By Lemma~\ref{lem:geodesic in carrier}, there is a minimal positive path $u_1$ from $y$ to $y'$ made of elementary $H$-segments. Then $eu_1$ is a minimal positive path, implying that $[e]\prec[eu_1]$. On the other hand, $[uv_1e']=[eu_1]$. Thus $[u]\prec[eu_1]$, which implies that $[u']=[eu_1]$.
	
	Let $r\in W_\Ga$ be the reflection fixing $H$. Viewing $\od^1$ as the oriented Cayley graph of $W_\Ga$, we  have a left action of $W_\Gamma$ on $\od^1$ which preserves labeling and orientation of edges and sends minimal positive paths to minimal positive paths. Let $\bar u=r(u)$ and $\bar v_1=r(v_1)$. Then $\bar u\bar v_1$ is a minimal positive path from $y$ to $y'$. Thus $$[eu_1]=[e\bar u\bar v_1]=[u]\vee_p [e].$$ Clearly $\supp(\bar u)=\supp(u)$. As $u$ is $F$-escaping and $r(\widehat F^1)=\widehat F^1$, we know $\bar u$ is $F$-escaping. 
	
	%For (2), let $E$ be the standard subcomplex containing $x$ with $\supp(E)=\supp(u)\cup\supp(e)$. As $[u]$ and $[e]$ have an upper bound, by applying the retraction $\Pi_{\widehat F}:\od\to\widehat E$ as in Definition~\ref{def:retraction} to the positive path representing this upper bound, we can assume $[u]$ and $[e]$ have an upper bound represented by an $E$-path. Thus $\supp(u')\subset \supp(u)\cup \supp(e)$. 
	
	For (2), let $e''$ be the edge labeled by $\supp(e)$ starting from the endpoint of $u$. We claim $[u e'']\prec [uv_1]$. Assuming the claim, we can finish the proof by taking $\bar u=r(ue'')$ and repeating the argument of the previous paragraph. As $\supp(e)$ and $\supp(u)$ do not commute, Lemma~\ref{lem:extra edge} (2) implies that $ue''$ is $F$-escaping, so is $\bar u$ by the same argument as before.
	
	It remains to prove the claim. We write $u$ as concatenation of edges $e_1\cdots e_m$. For $2\le i\le m+1$, let $e'_i$ be the edge whose starting point is equal to the ending point of $e_{i-1}$, and $\supp(e'_i)=\supp(e)$. We will prove by induction that $[e_1\cdots e_{i-1}e'_i]\prec[u']$ for $2\le i\le m+1$. The case $i=2$ is clear as $[e_1e'_2]\prec[e_1]\vee_p[e]\prec[u]\vee_p[e]$. Now suppose $[e_1\cdots e_{i-1}e'_i]\prec[u']$ ($i\le m$). Then $$[e_1\cdots e_{i-1}e_i]\vee_p[e_1\cdots e_{i-1}e'_i]\prec[u'].$$ And it follows that $$[e_1\cdots e_{i-1}]([e_i]\vee_p[e'_i])\prec[u'].$$ As $[e_ie'_{i+1}]\prec [e_i]\vee_p[e'_i]$, $[e_1\cdots e_{i}e'_{i+1}]\prec[u']$. Taking $i=m+1$, we know $[ue''] \prec[u']$. By Lemma~\ref{lem:extra edge} (2), $ue''$ is $F$-escaping, it is contained on one side of $H$. Thus there is a positive minimal path $u''$ from the endpoint of $ue''$ to $y'$ such that $e'$ is the last edge of $u''$. As $[ue'']\prec[u']$ and $u'$ is a minimal positive path, we know $[u']=[ue''u'']$. As $u''=u'''e'$ for some minimal positive path $u'''$, we have $[ue''u'''e']=[uv_1e']$. Then $[ue''u''']=[uv_1]$ by Theorem~\ref{thm:deligne} (1). Hence $[ue'']\prec [uv_1]$.
\end{proof}

Recall that a proper standard subcomplex $F$ of $\Si$ is \emph{maximal} if $\supp(F)=S\setminus\{s\}$ for some $s\in S$, where $S$ is the vertex set of $\Gamma$. In this case we define the \emph{type} of $F$ to be $\hat s$.

\begin{lem}
	\label{lem:outside}
	Suppose $E_1$ and $E_n$ are proper maximal standard subcomplexes of $\Si$ of type $\hat s_1$ and $\hat s_n$ respectively such that $E_1\cap E_n\neq\emptyset$.
	Let $u$ be an $E_1$-escaping geodesic in $\Sii$ (or in $\od^1$) ending at $y\notin E_1$. Suppose $\supp(u)$ is irreducible and $s_1,s_n\in \supp(u)$. Then $y$ is not in the convex hull of the union of $\vertex E_1$ and $\vertex E_n$.
\end{lem}

\begin{proof}
	Let $x$ be the other endpoint of $u$. Then $x=\prj_{E_1}(y)$. We first show $y\notin E_n$. Indeed, if $y\in E_n$, then $x\in E_1\cap E_n$ by Lemma~\ref{lem:more gate}, which implies $u\subset E_n$ by the convexity of $\vertex E_n$. However, no edges of $E_n$ are labeled by $s_n$, which contradicts that $s_n\in \supp(u)$. Now let $u'$ be a geodesic in $\Sii$ from $y$ to $\prj_{E_n}(y)$. If $y$ is in the convex hull of $\vertex E_1\cup\vertex E_2$, then by Lemma~\ref{lem:hull}, we know $\supp(u')\perp\supp(u)$. In particular, $s_n\in \supp(u)$ implies $s_n\notin \supp(u')$. However, $y\notin E_n$ and the maximality of $E_n$ implies that  $s_n\in \supp(u')$, which is a contradiction.
\end{proof}

Recall that $\od^1$ denotes the 1-skeleton of $\od$. Let $\Pi_{\widehat F}:\od\to \widehat F$ be the map in Definition~\ref{def:retraction}. It also induces a map $\od^1\to \widehat F^1$. Recall that $\Pi_{\widehat F}\mid_{\vertex \Si}=\prj_{F}:\vertex\Si\to \vertex F$. Note that $\Pi_{\widehat F}$ sends positive paths to positive paths, though it is possible to send a non-trivial positive path to a trivial positive path.

\begin{lem}
	\label{lem:triming}
	Let $A_\Gamma$ be an Artin group (not necessarily spherical). Let $E_1,E_n$ be proper maximal standard subcomplexes of $\Si$ with $\supp(E_1)=S\setminus\{s_1\}$, $\supp(E_n)=S\setminus\{s_n\}$ such that $E_1\cap E_n\neq\emptyset$ and $s_1\neq s_n$.
	Let $K$ be the convex hull of $\vertex E_1\cup\vertex E_n$.	Suppose $u\subset \od^1$ is a positive path (not necessarily minimal) starting at a vertex $x_u\in E_1$ 
	such that
	\begin{enumerate}
		\item $[u]$ and $[\Pi_{\widehat E_1}(u)]$ have an upper bound in $\g^+_{x_u\to}$ (cf. Section~\ref{subsec:Deligne}), and the vertex set of one representative (hence any representative) of $[u]\vee_p[\Pi_{\widehat E_1}(u)]$ is contained in $K$;
		\item $u$ is left irreducible and starts with an edge labeled by $s_1$.
	\end{enumerate}
	Then $s_n\notin \supp(u)$.
\end{lem}

\begin{proof}
	%First we write $\w(u)=a_1a_2$ where $\supp(a_1)=I$ and $\supp(a_2)=\supp(u)\setminus I$. As $a_1$ and $a_2$ commute, $[u]=[u_{a_2}u_{a_1}]$ with $\supp(u_{a_i})=a_i$ for $i=1,2$. As $s_n\in I$, $s_n\notin \supp(u_{a_2})$. Thus $\supp(u_{a_2})\subset \supp(E_1)$ and $u_{a_2}\subset\Ga(\ca,E_1)$. Note that $u_{a_1}$ starts with an edge labeled by $s_n$ and the vertex set of any representative of $[u_{a_1}]\vee_p[\Pi_{\widehat E_1}(u_{a_1})]$ is contained in $K$. Thus it suffices to prove the lemma in the special case when $\supp(u)$ is irreducible, which we will assume from now on.
	
	We refer to Figure~\ref{fig:8} for a schematic picture of the proof.	
	\begin{figure}
		\centering
		\includegraphics[scale=1]{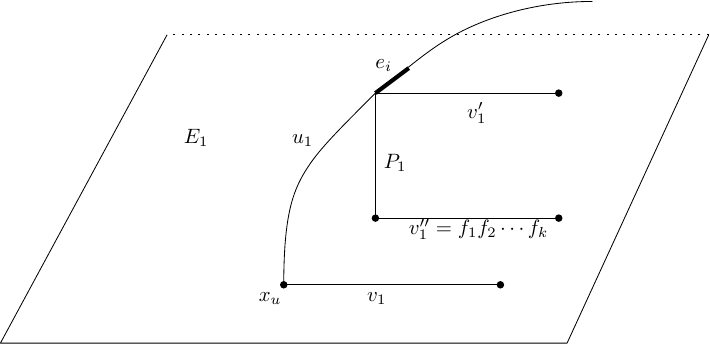}
		\caption{Proof of Lemma~\ref{lem:triming}.}
		\label{fig:8}
	\end{figure}
	We argue by contradiction and assume $s_n\in\supp(u)$. Subsequently we will assume that $\emptyset\perp T$ for any subset $T\subset S$.
	
	%We claim there exists a path $u_0\subset \Gamma(\ca,E_1)$ and a sequence of edges $\{e_i\}_{i=1}^m$ such that 
	%\begin{itemize}
	%	\item $[u]=[u_0e_1e_2\cdots e_m]$;
	%	\item $\supp(e_i)$ does not commute with $\supp(e_1\cdots e_{i-1})$ for any $1\le i\le m$;
	%	\item $\supp(e_1)=s_n$.
	%\end{itemize}
	%We prove the claim by induction on the length of $u$, we write $u$ as a sequence of edges $e'_1e'_2\cdots e'_{m'}$. Note that $\supp(e'_1)=s_n$ by assumption 2. By induction, we can assume $[e'_1\cdots e'_{m'-1}]=[u_0e_1\cdots e_{m}]$ with $u_0$ and $e_i$ satisfying the requirements in the claim. If $\supp(e'_{m'})$ does not commute with $\supp(e_1\cdots e_{m})$, then we write $[u]=[u_0(e_1e_2\cdots e_me'_{m'})]$. Otherwise $\w(e_1\cdots e_{m-1})$ and $\w(e'_{m'})$ commutes in $A_\Ga$, thus
	%$e_1\cdots e_{m-1}e'_{m'}\eq e''_{m'}e''_1\cdots e''_{m-1}$ with $\w(e''_1\cdots e''_{m-1})=\w(e_1\cdots e_{m-1})$ and $\w(e''_{m'})=\w(e'_m)$. As $s_n\in \supp(e_1\cdots e_{m-1})$, $\supp(e'_{m'})\in E_1$. Thus $e''_{m'}\subset E_1$ and  $[u]=[(u_0e''_{m'})(e''_1\cdots e''_{m-1})]$. Then the claim is proved.	

	Let $e$ be the first edge of $u$ labeled by $s_n$. Let $P$ be an $E_1$-escaping minimal positive path ending at the starting point $x$ of $e$ (it is possible that $P$ is trivial). By Lemma~\ref{lem:irreducible}, $\supp(P)$ is irreducible whenever it is nonempty. By Lemma~\ref{lem:support contain}, $\supp(P) \subset \supp(u')$ where $u'$ is the subpath of $u$ made of edges before $e$. Thus $\supp(e)\notin \supp(P)$ as $s_n\notin \supp(u')$.
	
	If $\supp(P)\neq\emptyset$ and $\supp(P)\cup \supp(e)$ is irreducible, then Lemma~\ref{lem:extra edge} implies that $Pe$ is an $E_1$-escaping minimal positive path. As $P$ leaves $E_1$, $s_1\in \supp(P)$. Thus $\{s_1,s_n\}\subset \supp(Pe)$. Then Lemma~\ref{lem:outside} implies that the endpoint of $e$ is not contained in $K$, which is a contradiction.
	
	Now we assume $\supp(e)\perp\supp(P)$ (including the case $\supp(P)=\emptyset$). We write $u=e_1\cdots e_m$ where $e_i$'s are edges of $u$. Let $v=\Pi_{\widehat E_1}(u)$. Then $v$ is non-trivial and it has an edge labeled by $s_n$, namely $\Pi_{\widehat E_1}(e)$. 
	Let $I$ be the irreducible component of $\supp(v)$ that contains $s_n$. Note that $\w(v)=w_1w_2$ in $A_\Gamma$, where $w_1$ and $w_2$ are positive words with $\supp(w_1)=I$ and $\supp(w_2)=\supp(u)\setminus I$. Thus $v\eq v_1v_2$ where $v_1$ is a positive path with $\supp(v_1)=I$.
	
	Recall that $\supp(e_1\cdots e_m)$ is irreducible, hence spans a connected subgraph of the Dynkin diagram $\Lambda$ of $W_\Gamma$. Moreover $\{s_1,s_n\}\in\supp(e_1\cdots e_m)$. On the other hand, $s_1\notin I$ (as $I\subset \supp(E_1)$) and $s_n\in I$. Thus there must exist one edge $f$ in $\{e_j\}_{j=1}^m$ such that $\supp(f)\notin I$ such that $\supp(f)$ does not commute with $I$. Let $e_i$ be the first such $f$ in $e_1\cdots e_m$. 
	
	We now show $\supp(e_1\cdots e_{i-1})\perp I$. Let $I_j=\supp(e_1\cdots e_{j})$. Then $I_1\subset I_2\subset\cdots\subset I_m$, and $I_j\setminus I_{j-1}$ contains at most one vertex.  By the left irreducibility of $u$, each $I_j$ spans a connected subgraph of $\Lambda$. Note that $I_1\cap I=\emptyset$ and $i$ is the smallest number such that $I_i\cap I$ contains a vertex not in $I$ that is adjacent to a vertex in $I$ in $\Lambda$. Then $I_{i}\cap I=\emptyset$, indeed, if by contradiction we assume $s\in I_i\cap I$, then $I_i$ contains an edge path in $\Lambda$ from $s_1\notin I$ to $s\in I$. This path then contains a vertex $s'\notin I$ such that $s'$ is adjacent to a vertex in $I$. By the minimality of $i$, $s'=I_i\setminus I_{i-1}$. As $s\neq s'$, $s\in I_{i-1}$. Then $I_{i-1}$ spans a connected subgraph of $\Lambda$ that contains both vertices in $I$ and vertices outside $I$. Thus $I_{i-1}$ must contain a vertex outside $I$ that is adjacent to a vertex in $I$. This contradicts the minimality of $i$. Hence $I_i\cap I=\emptyset$ is proved. The minimality of $i$ implies there is exactly one vertex in $I_i$  which does not commute with $I$, and this vertex is in $I_i\setminus I_{i-1}$. Thus $I_{i-1}\perp I$.
	
	Let $P_1$ be an $E_1$-escaping minimal positive path ending at the starting point of $e_i$ (it is possible that $P_1$ is the trivial path). Let $P'_1=P_1e_i$. We now show $\supp(P'_1)$ is irreducible. By Lemma~\ref{lem:irreducible}, $\supp(P_1)$ is irreducible. Thus if $\supp(P'_1)$ is reducible, then $$\supp(P_1)\neq\emptyset\ \mathrm{and}\ \supp(P_1)\perp \supp(e_i).$$ As $P_1$ is a non-trivial path leaving $E_1$, $s_1\in \supp(P_1)$. Hence $\supp(e_i)\in \supp(E_1)$.
	Then $d_i=\Pi_{\widehat E_1}(e_i)$ is an edge with the same support as $e_i$. As $\supp(d_i)\notin I$ and $\supp(d_i)$ does not commute with $I$, $I$ is not an irreducible component of $\supp(v)$, contradicting the choice of $I$.
	
	We also have $s_1\in \supp(P'_1)$. Indeed, if this is not true, then $P_1$ is the trivial path and $e_i\subset E_1$. Then $e_i=\Pi_{\widehat E_1}(e_i)$. As $\supp(e_i)\notin I$ and $\supp(e_i)$ does not commute with $I$, we deduce a contradiction to the choice of $I$ as before.
	
	Let $u_1=e_1\cdots e_{i-1}$. Let $[u_2]=[u_1e_i]\vee_p [v_1]$. Let $v'_1$ be the positive path starting at the endpoint of $u_1$ such that $\w(v'_1)=\w(v_1)$, (see Definition~\ref{def:label} for $\w(v_1)$). Since $\supp(u_1)\perp\supp(v_1)$ as we proved before, we know $[u_1]\vee_p [v_1]=[u_1v'_1]$. Thus $$[u_2]=[u_1]([e_i]\vee_p [v'_1]).$$ 
	
	Let $v''_1$ be the positive path with the same starting point as $P_1$ such that $\w(v''_1)=\w(v'_1)$. As $\supp(P_1)\subset\supp(u_1)$ by Lemma~\ref{lem:support contain}, we know $$\supp(P_1)\perp\supp(v_1).$$ Thus $[P_1]\vee_p [v''_1]=[P_1v'_1]$. Hence $$[P_1e_i]\vee_p [v''_1]=[P_1]([e_i]\vee_p [v'_1]).$$

	As $\supp(P'_1)$ and $\supp(v''_1)$ are irreducible, and $\supp(P'_1)$ contains $\supp(e_i)$ which is adjacent to a vertex in $\supp(v''_1)=\supp(v_1)$ in the Dynkin diagram $\Lambda$, we know that $\supp(P'_1)\cup \supp(v''_1)$ is irreducible. We write $v''_1$ as a sequence of edges $f_1f_2\cdots f_k$. By Lemma~\ref{lem:extra edge} (2), $P'_1$ is $E_1$-escaping. We claim for $1\le i\le k+1$, there is an $E_1$-escaping minimal positive path $P'_i$ with the same endpoint as $f_{i-1}$ such that $$[f_1\cdots f_{i-1}P'_i]\prec ([P'_1]\vee_p [f_1\cdots f_{i-1}])$$ and $$\supp(P'_i)=\supp(P'_1)\cup \supp (f_1\cdots f_{i-1}).$$ Note that the claim is true for $i=1$. Suppose the claim is true for some $i$. Let $Q_i$ be a positive path such that $$[P'_1]\vee_p [f_1\cdots f_{i-1}]= [f_1\cdots f_{i-1}Q_i].$$ Then $[P'_i]\prec [Q_i]$. Note that $$[P'_1]\vee_p [f_1\cdots f_{i}]=[f_1\cdots f_{i-1}]([Q_i]\vee_p [f_i]).$$ 
	By Lemma~\ref{lem:tracking label}, there exists an $E_1$-escaping minimal positive path $P'_{i+1}$ with $\supp(P'_{i+1})=\supp(P'_{i})\cup\supp(f_i)$ and $[f_iP'_{i+1}]\prec [P'_{i}]\vee_p [f_i]$. Thus 
	\begin{align*}
		[f_1\cdots f_{i}P'_{i+1}]&\prec [f_1\cdots f_{i-1}]([P'_i]\vee_p [f_i])\prec [f_1\cdots f_{i-1}]([Q_i]\vee_p [f_i])\\
		&=[f_1\cdots f_{i-1}Q_i]\vee_p [f_1\cdots f_{i-1}f_i]=
		[P'_1]\vee_p [f_1\cdots f_{i}]
	\end{align*}
	and $\supp(P'_{i+1})=\supp(P'_1)\cup \supp (f_1\cdots f_{i})$. Thus the claim is proved.
	Take $i=k+1$, then 
	$$[v''_1P'_{k+1}]\prec [P'_1]\vee_p[v''_1]\ \textrm{and}\ \supp(P'_{k+1})=\supp(P'_1)\cup \supp (v''_1).$$ 
	Recall that $\supp(P'_1)\cup \supp(v''_1)$ is irreducible, $s_1\in \supp(P'_1)$ and $s_n\in\supp(v''_1)=\supp(v_1)$. By Lemma~\ref{lem:outside}, $\vertex P'_{k+1}\nsubseteq K$. On the other hand, we will now show $\vertex P'_{k+1}\subset K$, which leads to a contradiction. For a class of positive path $[w]$, we write $\vertex[w]\subset K$ if any member in $[w]$ has vertex set contained in $K$. The convexity of $K$ implies that this is equivalent to one member in $[w]$ has vertex set contained in $K$.
	By assumption 1, $$\vertex ([u]\vee_p[\Pi_{\widehat E_1}(u)])\subset K.$$
	As $[ u_1e_i]\prec[u]$ and $[ v_1]\prec[\Pi_{\widehat E_1}(u)]$,  we know $$\vertex([ u_1e_i]\vee_p[ v_1])\subset K.$$ 
	Recall that $[ u_1e_i]\vee_p[v_1]=[u_1]([e_i]\vee_p [v'_1])$, then it follows that $$\vertex([e_i]\vee_p [v'_1])\subset K.$$ 
	Note that $P_1$ is a geodesic between the starting point of $e_i$, which is in $K$, and a point in $E_1$, which is in $K$. Then the convexity of $K$ implies $\vertex P_1=K$. Then $$\vertex([P_1]([e_1]\vee_p [v'_1]))\subset K.$$ 
	Recall that $[P_1]([e_1]\vee_p [v'_1])=[P'_1]\vee_p [v''_1]$, we know that $$\vertex([P'_1]\vee_p [v''_1])\subset K.$$ As $[v''_1P'_{k+1}]\prec [P'_1]\vee_p[v''_1]$, it follows that $\vertex [v''_1P'_{k+1}]\subset K$. Thus $\vertex P'_{k+1}\subset K$, as desired.
\end{proof}
\section{Artin groups with tree Dynkin diagrams}
\label{sec:tree}

Throughout this section, we will use the following convention. Let $\Lambda$ be a simplicial graph and let $E$ be an arbitrary subset of $\Lambda$. We define a \emph{component} of $\Lambda\setminus E$, to be the induced subgraph spanned by nodes in a connected component of $\Lambda\setminus E$. Note that each component is connected.

The key result in this Section is Proposition~\ref{prop:tree contractible}. Section~\ref{subsec:preserve} and Section~\ref{subsec:contractible} is devoted to the proof of Proposition~\ref{prop:tree contractible}, and Section~\ref{subsec:application} discusses several applications of Proposition~\ref{prop:tree contractible}. The key step in the proof of Proposition~\ref{prop:tree contractible} is in Section~\ref{subsec:preserve}, where we discuss the relation between the $\CAT(0)$ property, the bowtie free property, and the labeled 4-cycle condition. 

\subsection{Two motivating examples}
\label{subsec:examples}
In this subsection we give an informal discussion about two examples, which motivates the rest of this section.
Let $\Lambda$ be a linear Dynkin diagram with consecutive nodes being $\{s_i\}_{i=1}^n$ with $n\ge 3$ such that the edge between $s_1$ and $s_2$ is labeled $6$, and all other edges are labeled $3$. 

We want to show $A_\Lambda$ satisfies the $K(\pi,1)$-conjecture. A key step is to show $\Delta_\Lambda$ is contractible. The proof has two steps. First is to show (under an appropriate induction assumption) that $\Delta_{\Lambda}$ deformation retracts to its core, $\Delta_{\Lambda,\Lambda'}$, where $\Lambda'$ is the full subgraph spanned by $\{s_1,s_2,s_3\}$. The second step is to show $\Delta_{\Lambda,\Lambda'}$ is contractible. As the second step takes more effort, we will focus on that. 

The plan is to metrize the 2-dimensional complex $\Delta_{\Lambda,\Lambda'}$ so that its 2-simplices are flat triangles, with angle $\pi/2$ at vertices of type $\hat s_2$, angle $\pi/3$ at vertices of type $\hat s_1$, and angle $\pi/6$ at vertices of type $\hat s_3$. As we already know $\Delta_{\Lambda,\Lambda'}$ is simply-connected (Lemma~\ref{lem:relative sc}), if it is locally CAT$(0)$, then it is CAT$(0)$, and contractibility of $\Delta_{\Lambda,\Lambda'}$ follows. Now we are reduced to checking link conditions at each vertex. The most interesting case is the link condition for vertices of type $\hat s_1$, where we need to check the links have girth $\ge 6$. Let $\Lambda_1$ be the component of $\Lambda\setminus\{s_1\}$, and $\Lambda'_1=\Lambda_1\cap\Lambda'$. Then for any vertex $x\in \Delta_{\Lambda,\Lambda'}$ of type $\hat s_1$, there is a label-preserving isomorphism between $\lk(x,\Delta_{\Lambda,\Lambda'})$ and $\Delta_{\Lambda_1,\Lambda'_1}$ by Lemma~\ref{lem:link}. As $\Lambda_1$ is of type $A_n$, it follows from the labeled 4-cycle condition on $\Delta_{\Lambda_1}$ (Corollary~\ref{cor:wheel}) that $\Delta_{\Lambda_1,\Lambda'_1}$ has girth $\ge 6$.

Now we look at our second motivating example. Let $\Lambda$ and $\{s_i\}_{i=1}^n$ be as before. However, we require the first edge $\overline{s_1s_2}$, and the last edge $\overline{s_{n-1}s_n}$ have label $=6$, and all other edges have label $=3$. We also assume $n\ge 4$. Let $\Lambda'$ be as before. We still want to show $\Delta_{\Lambda}$ deformation retracts onto a core $\Delta_{\Lambda,\Lambda'}$, and this core can be equipped as a CAT$(0)$ metric as before, hence contractible. Again, the interesting part is to check the link condition on vertices of type $\hat s_1$. 

Let $\Lambda_1$ and $\Lambda'_1$ be as before.
Then $\Lambda_1$ is no longer spherical. Actually $A_{\Lambda_1}$ is an Artin group about which we know very little from previous literature (when $n\ge 6$). It seems the only way we know how to prove $\Delta_{\Lambda,\Lambda_1}$ has girth $\ge 6$, is to prove a much stronger statement, namely $\Delta_{\Lambda_1}$ satisfies the labeled 4-cycle condition. While we do not know this property for $\Delta_{\Lambda_1}$, we know $\Delta_{\Lambda_1\setminus \{s_n\}}$ has this property, as $\Lambda_1\setminus\{s_n\}$ is a diagram of type $A_{n-1}$. So we need a mechanism for promoting the labeled 4-cycle condition on $\Delta_{\Lambda_1\setminus \{s_n\}}$ to the same condition on $\Delta_{\Lambda_1}$.

We commented earlier in Section~\ref{subsec:proof2} (after Definition~\ref{def:4wheel}) that the labeled 4-cycle condition is connected to the CAT$(1)$ property. However, if we know a much stronger property, say that $\Delta_{\Lambda_1}$ is CAT$(0)$, then this will be helpful for proving labeled 4-cycle condition, as this property is about filling a 4-cycle by certain type of combinatorial disks in the 2-skeleton. While $\Delta_{\Lambda_1}$ being CAT$(0)$ does not directly imply this, it becomes relevant as we can use CAT$(0)$ geometry to produce nice filling disk for 4-cycles. Again, the only way we know how to prove $\Delta_{\Lambda_1}$ satisfies the labeled 4-cycle property, is to prove a seemingly much stronger property that $\Delta_{\Lambda_1}$ has some form of non-positive curvature. We do not know how to prove $\Delta_{\Lambda_1}$ is CAT$(0)$. However, let $\Theta$ be the full subgraph of $\Lambda_1$ spanned by $\{s_{n-2},s_{n-1},s_n\}$. Then the discussion in the previous example implies that $\Delta_{\Lambda_1,\Theta}$ is CAT$(0)$. It turns out this CAT$(0)$ property does help in promoting the labeled 4-cycle condition on $\Delta_{\Lambda_1\setminus \{s_n\}}$ to the same condition on $\Delta_{\Lambda_1}$, see Lemma~\ref{lem:enlarge1}. The general principle for this kind of promotion, as we mentioned in the induction scheme in Section~\ref{subsec:core}, is that the relative Artin complex $\Delta_{\Lambda_1,\Theta}$ and $\Delta_{\Lambda_1}$ are acted upon by the same group $A_{\Lambda_1}$, which helps us to relate properties of $\Delta_{\Lambda_1}$ to non-positive curvature of $\Delta_{\Lambda_1,\Theta}$.

This example also shows why it is relevant in our strategy that we need to promote labeled 4-cycle condition in a smaller relative Artin complex, to the same condition in a bigger relative Artin complex. This is actually the key technical step in the proof of Proposition~\ref{prop:tree contractible}. In Section~\ref{subsec:preserve}, we discuss three scenarios that such promotion is possible, namely Lemma~\ref{lem:enlarge1}, Lemma~\ref{lem:enlarge2} and Lemma~\ref{lem:enlarge3}.

\subsection{Preservation of bowtie free properties}
\label{subsec:preserve}
\begin{definition}
Let $\Lambda$ be a Dynkin diagram. We take an edge $e\subset \Lambda$. We say $e$ is \emph{$m$-solid} in $\Lambda$ if the $(\Lambda,e)$-relative Artin complex is a graph with girth $\ge 2m$. 
\end{definition}

The following is a direct consequence of Lemma~\ref{lem:embedding}.

\begin{lem}
	\label{lem:solid}
Let $\Lambda'$ be an induced subgraph of $\Lambda$ that contains $e$. If $e$ is $m$-solid in $\Lambda$, then $e$ is $m$-solid in $\Lambda'$.
\end{lem}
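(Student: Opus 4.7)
The plan is to invoke Lemma~\ref{lem:embedding} directly. Taking $S=\Lambda$, $S'$ to be the vertex set of $e$, $S_1=\Lambda'$, and $S_1'=S'\cap S_1=S'$ (since $e\subset \Lambda'$), the lemma supplies, for each choice of coset representative in $A_\Lambda$, a simplicial embedding $\Delta_{\Lambda',e}\hookrightarrow \Delta_{\Lambda,e}$. In particular the identity coset gives a canonical embedding. Both complexes are $1$-dimensional (the type set has only two elements), so they are honest simplicial graphs, and the embedding from Lemma~\ref{lem:embedding} is a graph embedding onto an induced subgraph.

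The second step is the elementary observation that girth is monotone under passage to induced subgraphs: any cycle in $\Delta_{\Lambda',e}$ maps to a cycle of the same length in $\Delta_{\Lambda,e}$, so if every cycle in the latter has length at least $2m$, the same is true of every cycle in the former. Combining these two steps yields $\mathrm{girth}(\Delta_{\Lambda',e})\ge \mathrm{girth}(\Delta_{\Lambda,e})\ge 2m$, which is precisely the statement that $e$ is $m$-solid in $\Lambda'$.

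There is no real obstacle here; the statement is a direct corollary of Lemma~\ref{lem:embedding} together with the trivial monotonicity of girth. The only minor check worth writing out is that the embedding of vertex sets given by $gA_{\Lambda'\setminus\{s\}}\mapsto gA_{\Lambda\setminus\{s\}}$ (for $s$ an endpoint of $e$) is injective and sends edges to edges, which is essentially the content of Lemma~\ref{lem:embedding} and follows from $A_{\Lambda\setminus\{s\}}\cap A_{\Lambda'}=A_{\Lambda'\setminus\{s\}}$ (the intersection identity for standard parabolic subgroups used in the proof of Lemma~\ref{lem:embedding}).
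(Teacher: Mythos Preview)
Your proposal is correct and follows exactly the approach indicated in the paper, which states only that the lemma ``is a direct consequence of Lemma~\ref{lem:embedding}.'' You have simply made explicit the two-line argument the paper leaves implicit: the embedding $\Delta_{\Lambda',e}\hookrightarrow\Delta_{\Lambda,e}$ from Lemma~\ref{lem:embedding}, together with monotonicity of girth under passage to subgraphs.
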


\begin{lem}
	\label{lem:enlarge1}
Suppose the Dynkin diagram $\Lambda$ is a tree. Let $\Lambda'$ be a linear subgraph of $\Lambda'$ with its consecutive nodes being $\{s_i\}_{i=1}^n$. 
We assume
\begin{enumerate}
	\item the edge $\overline{s_1s_2}$ is $6$-solid in the component of $\Lambda\setminus\{s_3\}$ that contains $\overline{s_1s_2}$;
	\item if $\Lambda'_1$ is the subgraph of $\Lambda'$ spanned by all the nodes in $\Lambda'$ except $s_1$ and $C_1$ is the component of $\Lambda\setminus\{s_1\}$ containing $\Lambda'_1$, then the  $(C_1,\Lambda'_1)$-relative Artin complex is bowtie free (cf. Definition~\ref{def:bowtie free}).
\end{enumerate}
Then the $(\Lambda,\Lambda')$-relative Artin complex is bowtie free.
\end{lem}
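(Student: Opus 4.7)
The plan is to verify the two hypotheses of Lemma~\ref{lem:bowtie free criterion} for $\Delta = \Delta_{\Lambda,\Lambda'}$, with the vertex set $V$ carrying the order from Lemma~\ref{lem:poset} induced by $s_1 < s_2 < \cdots < s_n$.

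First I would handle the link conditions. For a vertex $v$ of type $\hat s_1$, Lemma~\ref{lem:link} identifies $\lk(v,\Delta)$ with $\Delta_{C_1,\Lambda'_1}$, which is bowtie free by assumption~(2); this is immediate. For a vertex $v$ of type $\hat s_n$, Lemma~\ref{lem:link} gives $\lk(v,\Delta)\cong \Delta_{C_n,\Lambda'_n}$ where $\Lambda'_n=\{s_1,\dots,s_{n-1}\}$ and $C_n$ is the component of $\Lambda\setminus\{s_n\}$ containing $\Lambda'_n$. To obtain bowtie-freeness here I plan to combine Lemma~\ref{lem:inherit} (cutting $\Lambda'_1$ down to the linear subgraph $\{s_2,\ldots,s_{n-1}\}$ preserves bowtie-freeness) with Proposition~\ref{prop:inherit} (passing from $\Lambda$ to an induced subgraph containing $\Lambda'_n$), using that the bowtie-free property travels downward through induced subgraphs. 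The tree structure of $\Lambda$ controls which components of $\Lambda\setminus\{s_n\}$ actually appear.

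The main work is verifying hypothesis~(2) of Lemma~\ref{lem:bowtie free criterion}: given an induced 4-cycle $x_1 y_1 x_2 y_2$ in $\Delta$ with $x_1,x_2$ of type $\hat s_1$ and $y_1,y_2$ of type $\hat s_n$, produce a vertex $z\in V$ with $x_i\le z\le y_j$ for all $i,j\in\{1,2\}$. Here the 6-solid assumption enters in an essential way. I would use the reformulation via the labeled 4-wheel condition (Lemma~\ref{lem:connect}, Proposition~\ref{prop:4-cycle}) to reduce to finding a vertex $z$ of type $\hat s_k$ for some $1\le k\le n$ adjacent to all four corners, and then project the configuration onto the type-$\{\hat s_1,\hat s_2\}$ subcomplex. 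The 6-solid condition on $\overline{s_1 s_2}$ in $\Lambda\setminus\{s_3\}$ means the relative Artin complex $\Delta_{C,\{s_1,s_2\}}$ has girth $\ge 12$. The idea is that a bowtie that cannot be filled would produce a short closed loop in $\Delta_{C,\{s_1,s_2\}}$, contradicting this girth bound. Concretely, from the 4-cycle in $\Delta$ I plan to extract, via the link at $x_1$ (which is the bowtie-free $\Delta_{C_1,\Lambda'_1}$ from assumption~(2)), candidate type-$\hat s_2$ neighbors of $y_1$ and $y_2$, thereby producing a 4-cycle in $\Delta_{C,\{s_1,s_2\}}$ through $x_1,x_2$ and two type-$\hat s_2$ vertices; the girth bound forces this 4-cycle to collapse, which yields a common upper-type vertex adjacent to both $y_1$ and $y_2$, i.e.\ the desired $z$.

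The hard part will be this last step. The bowtie-free property of $\Delta_{C_1,\Lambda'_1}$ alone is too weak to fill a 4-cycle of $\hat s_1/\hat s_n$-vertices in $\Delta$ because it only describes the link at one vertex, not the interaction between two non-adjacent $\hat s_1$-vertices $x_1,x_2$. Bridging these two pieces of $\hat s_1$-data requires the geometric input of the 6-solid hypothesis, and the delicate point is arranging the projection to $\Delta_{C,\{s_1,s_2\}}$ so that the resulting short cycle lives entirely in the component $C$ of $\Lambda\setminus\{s_3\}$ where the girth bound applies. I expect this to be handled by a disk-diagram / combinatorial argument that tracks how paths in the bowtie-free link $\Delta_{C_1,\Lambda'_1}$ interact with $\overline{s_1 s_2}$-edges.
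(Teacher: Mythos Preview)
Your plan has a genuine gap in the main step, and the paper's route is structurally different.

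\textbf{The central issue.} The $6$-solid hypothesis gives girth $\ge 12$ on the graph $\Delta_{C,\overline{s_1s_2}}$, where $C$ is the component of $\Lambda\setminus\{s_3\}$ containing $\overline{s_1s_2}$. By Lemma~\ref{lem:link} this graph is the link of a type-$\hat s_3$ vertex inside the $2$-complex $\Delta_{\Lambda,\{s_1,s_2,s_3\}}$; it is \emph{not} $\Delta_{\Lambda,\overline{s_1s_2}}$. Your vertices $x_1,x_2$ are cosets of $A_{\Lambda\setminus\{s_1\}}$ in $A_\Lambda$, so any $4$-cycle you assemble through them lives in $\Delta_{\Lambda,\overline{s_1s_2}}$, and there is no retraction $\Delta_{\Lambda,\overline{s_1s_2}}\to\Delta_{C,\overline{s_1s_2}}$ (only the embedding in the other direction, Lemma~\ref{lem:embedding}). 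Hence the girth bound simply does not apply to the cycle you construct, and the ``collapse'' step fails. Moreover, the bowtie-freeness of $\lk(x_1,\Delta)\cong\Delta_{C_1,\Lambda'_1}$ does not by itself produce a type-$\hat s_2$ vertex below both $y_1$ and $y_2$: you would need a bowtie inside that link, i.e.\ two lower elements, which you do not have.

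\textbf{What the paper does instead.} The paper works in the $2$-dimensional complex $X=\Delta_{\Lambda,\Lambda_0}$ with $\Lambda_0=\{s_1,s_2,s_3\}$, metrized by flat triangles with angles $\pi/3,\pi/2,\pi/6$ at types $\hat s_1,\hat s_2,\hat s_3$. The $6$-solid hypothesis becomes exactly the $\CAT(1)$ link condition at type-$\hat s_3$ vertices; Assumption~(2) (via $3$-solidity of $\overline{s_2s_3}$ in $C_1$, from Lemma~\ref{lem:inherit}) gives the $\CAT(1)$ link at type-$\hat s_1$; links at type-$\hat s_2$ are complete bipartite. Thus $X$ is $\CAT(0)$. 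The paper then uses Corollary~\ref{cor:connected intersection} rather than Lemma~\ref{lem:bowtie free criterion} directly: for $y_1,y_2$ of type $\hat s_n$, the subcomplexes $Y_i\subset X$ spanned by vertices adjacent to $y_i$ in $\Delta_{\Lambda,\Lambda'}$ are shown to be \emph{convex} in $X$ (this is Lemma~\ref{lem:convex}, proved alongside), so $Y_1\cap Y_2$ is connected whenever nonempty. That connectedness is precisely hypothesis~(2) of Corollary~\ref{cor:connected intersection}. The point is that the girth information is only local (at each $\hat s_3$-vertex), and the $\CAT(0)$ geometry is what globalizes it.

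\textbf{A secondary gap.} Your argument for the link at $\hat s_n$ does not go through either: from $\Delta_{C_1,\Lambda'_1}$ bowtie free you can reach $\Delta_{C_1,\{s_2,\dots,s_{n-1}\}}$ via Lemma~\ref{lem:inherit}, but Proposition~\ref{prop:inherit} only lets you pass to induced subgraphs $\Lambda_0$ with $\Lambda_0\subset C_1$, whereas the target $\Delta_{C_n,\{s_1,\dots,s_{n-1}\}}$ has $s_1\in C_n\setminus C_1$. The paper handles this by induction on $n$: it checks that the pair $(C_n,\Lambda'_n)$ again satisfies hypotheses~(1) and~(2) of the lemma (hypothesis~(1) via Lemma~\ref{lem:solid}; hypothesis~(2) by identifying the relevant complex with $\lk(y,\lk(y',\Delta_{\Lambda,\Lambda'}))$ and using that the sub-poset of a bowtie-free poset below a fixed element is bowtie free).
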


\begin{proof}
In the following proof, we will view links of vertices as subcomplexes of the ambient complex.	
Let $\Lambda_0$ be the subgraph of $\Lambda$ spanned by $\{s_1,s_2,s_3\}$. Let $X$ be the $(\Lambda, \Lambda_0)$-relative Artin complex. Note that $X$ is made of triangles. We metrize the triangles  such that they are geodesic triangles in the Euclidean plane with angle $\pi/2$ at vertices of type $\hat s_2$, angle $\pi/6$ at vertices of type $\hat s_3$ and angle $\pi/3$ at vertices of type $\hat s_1$.  Lemma~\ref{lem:link} implies that if $v$ is a vertex of type $\hat s_3$ in $X$, then $\lk(v,X)$ is isomorphic to the $(C,\overline{s_1s_2})$-relative Artin complex, where $C$ is the component of $\Lambda\setminus\{s_3\}$ that contains $\overline{s_1s_2}$. Then Assumption 1 implies $\lk(v,X)$ with its angular metric is $\CAT(1)$. Now we consider the case when $v$ is of type $\hat s_1$. Then $\lk(v,X)$ is isomorphic to the $(C_1,\overline{s_2s_3})$-relative Artin complex. By Lemma~\ref{lem:inherit} and Assumption 2, $\Delta_{C_1,\overline{s_2s_2}}$ is bowtie free, thus  $\overline{s_2s_3}$ is $3$-solid in $C_1$. Thus $\lk(v,X)$ is $\CAT(1)$ when $v$ is of type $\hat s_1$. By Lemma~\ref{lem:link}, if $v$ is of type $\hat s_2$, then $\lk(v,X)$ is a complete bipartite graph with each edge having length $\pi/2$. Thus $\lk(v,X)$ is $\CAT(1)$ as well. This implies $X$ is locally $\CAT(0)$. As $X$ is simply-connected by Lemma~\ref{lem:relative sc}, $X$ is $\CAT(0)$. 

Let $Y=\Delta_{\Lambda,\Lambda'}$. We endow the vertex set of $Y$ with the order described in Lemma~\ref{lem:poset} (2).  As $\Lambda_0\subset\Lambda'$, we know there is a natural embedding $X\to Y$, whose image is the induced subcomplex of $Y$ spanned by vertices of type $\hat s_1,\hat s_2$ or $\hat s_3$.  We prove the lemma by induction on the number $n$ of nodes of $\Lambda'$. 

The base case is $n=3$. We will use Lemma~\ref{lem:connected intersection}. Assumption 1 of Lemma~\ref{lem:connected intersection} follows Assumption 1 and Assumption 2 of this lemma. Now we verify Assumption 2 of Lemma~\ref{lem:connected intersection}. Let $y_1,y_2$ be two vertices in $Y$ with type $\hat s_n$. Note that the closed stars of $y_1$ and $y_2$ are convex subset of $X$, thus their intersection is empty or convex (hence connected).

Now we look at case $n>3$ and we will use Corollary~\ref{cor:connected intersection}. If $y\in Y$ is a vertex of type $\hat s_1$, then $\lk(y,Y)$ is bowtie free by Assumption 2 and Lemma~\ref{lem:link}. If $y$ has type $\hat s_n$, then $\lk(y,Y)$ admits a type-preserving isomorphism to the $(\Theta,\Theta')$-relative Artin complex, where $\Theta$ is the component of $\Lambda\setminus\{s_n\}$ that contains $\{s_i\}_{i=1}^{n-1}$ and $\Theta'$ is the linear subgraph spanned by $\{s_i\}_{i=1}^{n-1}$. We now show $\lk(y,Y)$ is bowtie free, which implies Assumption 1 of Corollary~\ref{cor:connected intersection}. By induction, it suffices to show the $(\Theta,\Theta')$-relative Artin complex satisfies assumptions of Lemma~\ref{lem:enlarge1} with $(\Lambda,\Lambda')$ replaced by $(\Theta,\Theta')$. Note that Assumption 1 follows from Lemma~\ref{lem:solid}. Take vertex $y'\in\lk(y,Y)$ with type $\hat s_1$. For Assumption 2, by Lemma~\ref{lem:link} (2) we need to show $\lk(y',\lk(y,Y))$ is bowtie free. Note
 $$\lk(y',\lk(y,Y))=\lk(\overline{y'y},Y)=\lk(y,\lk(y',Y)).$$
As $\lk(y',Y)$ is bowtie free by assumption and $\lk(y,\lk(y',Y))$ is spanned by all vertices of $\lk(y',Y)$ that is $<y$, we know $\lk(y,\lk(y',Y))$ is bowtie free, so is $\lk(y',\lk(y,Y))$. It remains to verify Assumption 2 of Corollary~\ref{cor:connected intersection}, but it is an immediate consequence of Lemma~\ref{lem:convex} below.
\end{proof}

\begin{lem}
	\label{lem:convex}
Under the same assumption of Lemma~\ref{lem:enlarge1}, let  $\Lambda_0$ be the subgraph of $\Lambda$ spanned by $\{s_1,s_2,s_3\}$. Let $X=\Delta_{\Lambda,\Lambda_0}$, equipped with the $\CAT(0)$ metric as in the proof of Lemma~\ref{lem:enlarge1}. Suppose $n>3$ and let $y_1$ be a vertex in $Y=\Delta_{\Lambda,\Lambda'}$ of type $\hat s_n$ and let $Y_1$ be the induced subcomplex of $X$ spanned by vertices in $X$ that are adjacent to $y_1$ in $\Delta_{\Lambda,\Lambda'}$. Then $Y_1$ is a convex subcomplex of $X$.
\end{lem}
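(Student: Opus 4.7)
The plan is to show that $Y_1$ is a closed, connected, locally convex subcomplex of the $\CAT(0)$ complex $X$, and then invoke the standard fact that such a subset of a $\CAT(0)$ space is convex. Connectedness I would obtain by applying Lemma~\ref{lem:embedding} to produce a simplicial embedding $\Delta_{\hat s_n,\Lambda_0}\hookrightarrow X$ based at the coset $y_1 A_{\hat s_n}$; a direct verification shows that its image has precisely the vertex set of $Y_1$, and since $|\Lambda_0|=3$, Lemma~\ref{lem:relative sc} gives that $\Delta_{\hat s_n,\Lambda_0}$ is simply connected, so the image, and hence $Y_1$, is connected. (One may assume $n\ge 4$, since Lemma~\ref{lem:convex} is only invoked in Lemma~\ref{lem:enlarge1} in that range.)

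For local convexity at a vertex $v\in Y_1$, I would verify $\pi$-convexity of $\lk(v,Y_1)$ in $\lk(v,X)$ in three cases according to $\type(v)\in\{\hat s_1,\hat s_2,\hat s_3\}$. The two end-cases use join decompositions via Lemma~\ref{lem:link}(3). For $\type(v)=\hat s_3$, admissibility of $\Lambda'$ in $\Lambda$ shows that the two components $\{s_1,s_2\}$ and $\{s_4,\dots,s_n\}$ of $\Lambda'\setminus\{s_3\}$ lie in distinct components of $\Lambda\setminus\{s_3\}$, so $\lk(v,Y)=K_a*K_b$ with $K_a=\lk(v,X)$ and $y_1\in K_b$; the join forces every vertex of $\lk(v,X)$ to be adjacent to $y_1$ in $Y$, so $\lk(v,Y_1)=\lk(v,X)$ and $\pi$-convexity is trivial. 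For $\type(v)=\hat s_2$, $\lk(v,X)$ is a complete bipartite graph with edges of length $\pi/2$, so $\pi$-convexity reduces to the requirement that edges whose endpoints lie in $\lk(v,Y_1)$ lie in $\lk(v,Y_1)$, which is automatic since $Y_1$ is induced in $X$.

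The essential case is $\type(v)=\hat s_1$: here $\lk(v,X)=\Delta_{C_1,\{s_2,s_3\}}$ is bipartite of girth $\ge 6$ with edges of length $\pi/3$, and $\pi$-convexity at distance $2\pi/3$ requires, for every length-$2$ edge path from $u_1$ to $u_2$ through $w$ in $\lk(v,X)$ with $u_1,u_2\in\lk(v,Y_1)$, that $w$ be adjacent to $y_1$ in $Y$. By bipartiteness, $u_1$ and $u_2$ share a type. If both have type $\hat s_3$, I would apply Lemma~\ref{lem:link}(3) to $\lk(u_1,Y)$: the two components $\{s_1,s_2\}$ and $\{s_4,\dots,s_n\}$ of $\Lambda'\setminus\{s_3\}$ yield a join $K'_a*K'_b$ with $w\in K'_a$ (type $\hat s_2$) and $y_1\in K'_b$ (type $\hat s_n$), so $w$ and $y_1$ are adjacent in $\lk(u_1,Y)$, hence in $Y$. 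If instead $u_1,u_2$ have type $\hat s_2$, no join decomposition puts $w$ and $y_1$ on opposite sides; here I would exploit that $\lk(v,Y)=\Delta_{C_1,\Lambda'_1}$ is bowtie free by Assumption~2 of Lemma~\ref{lem:enlarge1} and Lemma~\ref{lem:inherit}. The four distinct vertices $\{u_1,u_2,w,y_1\}$ form a bowtie in the poset of Lemma~\ref{lem:poset}(2), with $u_1,u_2$ strictly below both $w$ and $y_1$. The resulting $z$ satisfying $\{u_1,u_2\}\le z\le\{w,y_1\}$ must have $\type(z)\in\{\hat s_2,\hat s_3\}$; since same-type vertices are never adjacent in the link, these constraints force $z=w$, and then $z\le y_1$ is precisely the adjacency of $w$ with $y_1$.

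The main obstacle is the last sub-case above: unlike the others, it cannot be handled by a join decomposition of any of $\lk(v,Y)$, $\lk(u_i,Y)$, $\lk(w,Y)$ or $\lk(y_1,Y)$, and it is where the bowtie-free hypothesis (Assumption~2 of Lemma~\ref{lem:enlarge1}) is used in an essential way. The other steps are largely formal consequences of Lemma~\ref{lem:link} together with admissibility of $\Lambda'$ in $\Lambda$.
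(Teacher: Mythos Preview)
Your proof is correct and follows essentially the same route as the paper's own argument: the same three-case analysis at vertices of type $\hat s_1,\hat s_2,\hat s_3$, with the join decomposition handling $\hat s_3$ and the easy sub-case of $\hat s_1$, and the bowtie-free hypothesis handling the essential sub-case $u_1,u_2$ of type $\hat s_2$, $w$ of type $\hat s_3$. One small difference: for the $\hat s_1$ sub-case with $u_1,u_2$ of type $\hat s_3$, the paper phrases the argument as transitivity of the poset order ($w<u_1<y_1$ hence $w<y_1$) rather than as a join decomposition of $\lk(u_1,Y)$, but these are the same argument in different language. Your explicit verification of connectedness (via Lemma~\ref{lem:embedding} and Lemma~\ref{lem:relative sc}) is a point the paper leaves implicit; it is indeed needed to pass from local convexity to convexity in a $\CAT(0)$ space, so including it is an improvement.
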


\begin{proof}
As $n>3$, we know $Y_1=\lk(y_1,Y)\cap X \cong \Delta_{\Lambda\setminus\{s_n\},\Lambda_0}$, hence $Y_1$ is connected. It remains to show $Y_1$ is locally convex in $X$.
Take $y\in Y_1$. It is clear that $Y_1$ is locally convex around $y$ in $X$ if $y$ is not a vertex. Now assume $y$ is a vertex of type $\hat s_3$. Then $\lk(y,X)=\lk(y,Y)\cap X$. By Corollary~\ref{cor:adj}, if a vertex of type $\hat s_i$ for $i=1,2$ is adjacent to $y$ in $\Delta_\Lambda$, then this vertex is adjacent to $y_1$ in $\Delta_\Lambda$. 
%By Lemma~\ref{lem:link} (3), $\lk(y,X)$ is the join factor of $\lk(y,Y)$ that is contained in $X$. As $y_1$ is in a different join factor of $\lk(y,Y)$ which is disjoint from $X$, $\lk(y_1,\lk(y,Y))\cap X=\lk(y,X)$. As $\lk(y,Y_1)=\lk(y,Y)\cap Y_1=\lk(y,Y)\cap \lk(y_1,Y)\cap X=\lk(y_1,\lk(y,Y))\cap X$. 
So $\lk(y,X)=\lk(y,Y_1)$ and $Y_1$ is locally convex around $y$.
If $y$ is a vertex of type $\hat s_2$, then $\lk(y,Y_1)$ is a complete bipartite graph, hence $Y_1$ is locally convex in $X$ around $y$. Suppose $y$ has type $\hat s_1$. To show local convexity, it suffices to rule out the possibility that there are two distinct vertices $z_1,z_2$ in $\lk(y,Y_1)$ which can be connected by an edge path $\omega$ in $\lk(y,X)$ such that $\omega$ is made of $2$ edges that are not contained in $\lk(y,Y_1)$. Let $z$ be the vertex in $\omega$ that is outside $\lk(y,Y_1)$. Note that $\type(z_1)=\type(z_2)$. If $\type(z_1)=\hat s_3$, then $\type(z)=\hat s_2$ and $z_1>z$. As $y_1>z_1$, we have $y_1>z$, implying $z\in\lk(y,Y_1)$, contradiction. Thus $\type(z_1)=\type(z_2)=\hat s_2$. Then $\type(z)=\hat s_3$. As $\{z_1,z_2,z,y_1\}\subset\lk(y,Y)$ and $\lk(y,Y)$ is bowtie free by Assumption 2 of Lemma~\ref{lem:enlarge1}, there is $w\in \lk(y,Y)$ such that $\{z_1,z_2\}\le w\le \{z,y_1\}$. This forces $z\le y_1$, implying $z\in\lk(y_1,Y)$, contradiction again. 
\end{proof}

\begin{lem}
	\label{lem:enlarge2}
	Suppose the Dynkin diagram $\Lambda$ is a tree. Let $\Lambda'$ be a linear subgraph with its consecutive nodes being $\{s_i\}_{i=1}^n$. For $1\le i\le 3$, let $\{b_i\}_{i=1}^3$ be the set of nodes of another linear subgraph $\Lambda''$ such that $\Lambda''\cap \Lambda'$ has at most one node. Let $b_3$ be the node of $\Lambda''$ which is furthest away from $\Lambda'$. We assume
	\begin{enumerate}
		\item $\overline{b_2b_3}$ is $6$-solid in the component of $\Lambda\setminus\{b_1\}$ that contains $\overline{b_2b_3}$; 
		\item for the component $C$ of $\Lambda\setminus\{b_3\}$ containing $\Lambda'$, the $(C,T')$-relative Artin complex satisfies the labeled 4-cycle condition where $T'$ is the minimal tripod subgraph of $\Lambda'$ spanned by $b_2$ and $\Lambda'$.
	\end{enumerate}
	Then the $(\Lambda,\Lambda')$-relative Artin complex is bowtie free.
\end{lem}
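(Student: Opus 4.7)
The proof plan follows the template of Lemma~\ref{lem:enlarge1}, with the three vertices $b_1, b_2, b_3$ of the tripod branch $\Lambda''$ playing the role that $s_1, s_2, s_3$ played there. Let $\Lambda_0 \subset \Lambda$ be the linear subgraph spanned by $\{b_1, b_2, b_3\}$ and set $X = \Delta_{\Lambda, \Lambda_0}$. Metrize each $2$-simplex of $X$ as a Euclidean triangle with angle $\pi/6$ at the $\hat b_1$-vertex, $\pi/2$ at the $\hat b_2$-vertex, and $\pi/3$ at the $\hat b_3$-vertex.

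First I will verify that $X$ is $\CAT(0)$ by checking the link condition at each vertex type, using Lemma~\ref{lem:link} and the two assumptions. For a vertex of type $\hat b_1$, the link is isomorphic to $\Delta_{C_1, \overline{b_2 b_3}}$, where $C_1$ is the component of $\Lambda \setminus \{b_1\}$ containing $\overline{b_2 b_3}$; by Assumption~1 this graph has girth $\geq 12$, so all cycles in the link have length $\geq 12 \cdot \pi/6 = 2\pi$. Links at $\hat b_2$-vertices are join-decomposable by Lemma~\ref{lem:link}(3) and hence automatically $\CAT(1)$. For a vertex of type $\hat b_3$, the link is $\Delta_{C, \overline{b_1 b_2}}$, because $b_3$ being the furthest vertex of $\Lambda''$ from $\Lambda'$ forces the component of $\Lambda \setminus \{b_3\}$ containing $\overline{b_1 b_2}$ to coincide with $C$. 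Since $\overline{b_1 b_2} \subset T'$, Corollary~\ref{cor:inherit} applied to Assumption~2 yields the labeled $4$-wheel condition on $(C, \overline{b_1 b_2})$, which for a single edge is precisely the statement that this graph has girth $\geq 6$. Cycles in the link then have length $\geq 6 \cdot \pi/3 = 2\pi$. Combined with the simple connectedness given by Lemma~\ref{lem:relative sc}, $X$ is $\CAT(0)$.

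From here I will deduce that $\Delta_{\Lambda, \Lambda'}$ is bowtie free by a double induction mirroring Lemma~\ref{lem:enlarge1}, applying Corollary~\ref{cor:connected intersection}. The endpoint link bowtie free conditions follow from the inner inductive hypothesis together with Corollary~\ref{cor:inherit}: the link at a vertex of type $\hat s_1$ or $\hat s_n$ is again a relative Artin complex of the form $\Delta_{\Lambda_s, \Lambda'_s}$ satisfying the hypotheses of this lemma with a linear subgraph of one fewer vertex, the relevant 6-solid edge inherited by Lemma~\ref{lem:solid}, and the labeled 4-wheel condition inherited from $(C, T')$ by Corollary~\ref{cor:inherit}.

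The main obstacle will be the connected intersection condition. Unlike in Lemma~\ref{lem:enlarge1}, here $\Lambda_0 \cap \Lambda'$ has at most one vertex, so $X$ is not a subcomplex of $Y = \Delta_{\Lambda, \Lambda'}$, and one cannot directly invoke convexity inside $Y$ as in Lemma~\ref{lem:convex}. The plan is to use the labeled $4$-wheel condition on $(C, T')$, via an analogue of Lemma~\ref{lem:tripod}, to replace any induced 4-cycle in $Y$ (with two opposite vertices of type $\hat s_n$) by one whose filling candidate lies in the leg of $T'$ running from $\Lambda'$ to $b_2$; via this replacement the intersection of the two closed stars in $Y$ can be matched with the intersection of two convex subcomplexes of $X$, which is connected since $X$ is $\CAT(0)$. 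Proving the convexity of the relevant subcomplexes of $X$ (an analogue of Lemma~\ref{lem:convex} adapted to the tripod setting) and the star-intersection correspondence is the delicate step I expect to absorb most of the work.
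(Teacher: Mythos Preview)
Your setup is correct and matches the paper: the $\CAT(0)$ complex $X=\Delta_{\Lambda,\Lambda''}$ with angles $(\pi/6,\pi/2,\pi/3)$ at $(\hat b_1,\hat b_2,\hat b_3)$, the link checks, and the inductive verification of Assumption~1 of Lemma~\ref{lem:bowtie free criterion} are all as in the paper.

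The main step, however, is substantially harder than your outline suggests, and the paper does \emph{not} go through Corollary~\ref{cor:connected intersection}. Because $\Lambda_0=\Lambda''$ shares at most one vertex with $\Lambda'$, there is no direct way to identify $\lk(y_1,Y)\cap\lk(y_2,Y)$ with the intersection of two convex subcomplexes of $X$; your proposed ``matching'' is where the real difficulty hides. The paper instead verifies Assumption~2 of Lemma~\ref{lem:bowtie free criterion} for a \emph{fixed} 4-cycle $x_1y_1x_2y_2$ by associating to \emph{each} of its four vertices a convex subcomplex of $X$ (the subcomplex of $X$ spanned by vertices adjacent in $\Delta_\Lambda$ to $x_i$, respectively $y_j$), obtaining four convex subcomplexes $X_1,X_2,Y_1,Y_2$ with $X_i\cap Y_j\neq\emptyset$. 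A separate $\CAT(0)$ lemma (Lemma~\ref{lem:4convexsubsets}) is then proved and applied: under link conditions tailored to the tripod situation, one of the two triple intersections $Y_1\cap Y_2\cap X_i$ or $X_1\cap X_2\cap Y_j$ must be nonempty. This lemma (a Gauss--Bonnet/minimal-perimeter argument on an admissible geodesic 4-gon) is the geometric heart of the proof and has no analogue in your plan.

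Even after obtaining vertices $w_1,w_2$ of type $\hat b_3$ in these triple intersections, there is further work. One passes to $\lk(w_i,\Delta)\cong\Delta_{C,\ast}$, takes meets $z_i$ of $y_1,y_2$ there, uses Lemma~\ref{lem:tripod} to force $z_i$ to lie on the leg from the tripod center $a$ to $s_n$, and finally compares with the meet in the auxiliary bowtie-free complex $\Delta_{\Lambda,T''}$ (where $T''$ is the segment from $b_3$ to $s_n$, whose bowtie-freeness comes from Lemma~\ref{lem:enlarge1}) to conclude $z_1=z_2$. Your invocation of Lemma~\ref{lem:tripod} is apt, but it enters only at this final stage, after the four-subcomplex geometry has produced the $w_i$.
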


\begin{proof}
For $i=1,n$, let
 $\Lambda_i$ be the subgraph of $\Lambda$ corresponding the shortest path from $b_2$ to $s_i$. It follows from Assumption 2, Lemma~\ref{lem:connect} and Proposition~\ref{prop:4-cycle} that  $(C,\Lambda_i)$-relative Artin complex is bowtie free.
	
Let $\Delta$ be the Artin complex of $A_\Lambda$. For any pair of induced subgraphs $\Lambda_1\subset \Lambda_2$ in $\Lambda$, let $\Delta_{\Lambda_2,\Lambda_1}$ be the $(\Lambda_2,\Lambda_1)$-relative Artin complex.
Let $X=\Delta_{\Lambda,\Lambda''}$, viewed as a subcomplex of $\Delta$.
Note that $X$ is 2-dimensional. We metric triangles in $X$ such that they are flat triangles in the Euclidean plane with angle $\pi/2$ at vertex of type $\hat b_2$, angle $\pi/6$ at vertex of type $\hat b_1$ and angle $\pi/3$ at vertex of type $\hat b_3$. By consider the embedding $\Delta_{C,\overline{b_1b_2}}\to \Delta_{C,\Lambda_i}$, we deduce from Assumption 2 that $\overline{b_1b_2}$ is $3$-solid in $C$. It follows from Lemma~\ref{lem:relative sc} and Assumption 1 that $X$ is $\CAT(0)$. 

%In particular, 
%\begin{center}
%	$(\star)$\ \ $\overline{b_1b_2}$ is $3$-solid in the component of $\Lambda\setminus\{b_3\}$ that contains $\overline{b_1b_2}$.
%\end{center}
%We will refer statement $(\star)$ as Assumption 1'.

We now prove the lemma by induction on the number of nodes in $\Lambda'$, and assume the lemma holds whenever $\Lambda'$ has $\le n-1$ nodes. We will be assuming that $\Lambda'$ and $\Lambda''$ are not contained in a common linear subgraph of $\Lambda$, otherwise we are reduced to Lemma~\ref{lem:enlarge1}. Let $T$ be the smallest subtree of $\Lambda$ containing $\Lambda'$ and $\Lambda''$. Note that $T$ is a tripod and $b_3,s_1,s_n$ are the three valence one nodes of $T$. Let $T'$ be the maximal subtree in $T\setminus\{b_3\}$.

Now we verify the Assumption 1 of Lemma~\ref{lem:bowtie free criterion}. For $i=1,n$, let $\Lambda_{s_i}$ be the component of $\Lambda\setminus\{s_i\}$ containing the rest of the nodes of $\Lambda'$. Let $\Lambda'_{s_i}=\Lambda'\cap \Lambda_{s_i}$. To justify Lemma~\ref{lem:bowtie free criterion} (1), we need to show $\Delta_{\Lambda_{s_i},\Lambda'_{s_i}}$ is bowtie free. By the induction assumption, it suffices to show the two assumptions of Lemma~\ref{lem:enlarge2} hold with $(\Lambda,\Lambda')$ replaced by $(\Lambda_{s_i},\Lambda'_{s_i})$ for $i=1,n$. Indeed, Assumption 1 follows from Lemma~\ref{lem:solid} and Assumption 2 follows from Corollary~\ref{cor:inherit}.

%For Assumption 2, let $C_{s_1}$ be the component of $\Lambda_{s_1}\setminus\{b_3\}$ containing $\Lambda'_{s_1}$. Take a vertex $v'\in \Delta_{C,\Lambda'}$ of type $\hat s_1$. Then $\Delta_{C_{s_1},\Lambda'_{s_1}}$ can be identified with $\lk(v',\Delta_{C,\Lambda'})$ by Lemma~\ref{lem:link}. Thus  $\Delta_{C_{s_1},\Lambda'_{s_1}}$ is bowtie free by Lemma~\ref{lem:inherit}. For Assumption 3, for $i=1,n$, let $\Lambda_{s_1,i}=\Lambda_{s_1}\cap \Lambda_i$. Take a vertex $v'\in \Delta_{C,\Lambda_i}$ of type $\hat s_1$. Then $\Delta_{C_{s_1},\Lambda_{s_1,i}}$ can be identified with $\lk(v',\Delta_{C,\Lambda_i})$ by Lemma~\ref{lem:link}. Thus $\Delta_{C_{s_1},\Lambda_{s_1,i}}$ is bowtie free by Lemma~\ref{lem:inherit}.

It remains to verify Assumption 2 of Lemma~\ref{lem:bowtie free criterion}. 
Take four mutually distinct vertices $\{x_1,x_2,y_1,y_2\}$ in $\Delta_{\Lambda,\Lambda'}$ such that $x_i<y_j$ for $1\le i,j\le 2$, $x_1$ and $x_2$ have type $\hat s_1$, and $y_1$ and $y_2$ have type $\hat s_n$.
Let $X_i$ be the subcomplex of $X$ spanned by vertices in $X$ that are adjacent to $x_i$ in $\Delta$. We define $Y_i$ similarly. By Assumption 2 of Lemma~\ref{lem:enlarge2}, the $(C,T'_i)$-relative Artin complex satisfies the labeled 4-cycle condition for $i=1,n$, where $T'_i$ is the segment in $T'$ from $b_2$ to $s_i$. Hence $\Delta_{C,T'_i}$ satisfies the bowtie free condition by Lemma~\ref{lem:connect}.
Now  by applying Lemma~\ref{lem:convex} to $T'_1\cup \overline{b_2b_3}$ and $T'_n\cup\overline{b_2b_3}$, we know $X_i$ and $Y_i$ are convex subcomplexes of $X$. As $x_i$ is adjacent to $y_j$ in $\Delta$ for $1\le i,j\le 2$, we know $X_i\cap Y_j\neq\emptyset$.

Next we prove either $Y_1\cap Y_2\cap X_i\neq\emptyset$ for $i=1,2$, or $X_1\cap X_2\cap Y_j\neq\emptyset$ for $j=1,2$. It suffices to verify the assumptions of Lemma~\ref{lem:4convexsubsets} below. Take a vertex $p_1\in X_i\cap Y_j$. We assume without loss of generality that $i=1,j=1$. Suppose $p_1$ is of type $\hat b_1$. By Corollary~\ref{cor:adj}, if a vertex of $\Delta$ of type $\hat b_3$ or $\hat b_2$ is adjacent to $p_1$ in $\Delta$, then this vertex is also adjacent to $x_1$ and $y_1$ in $\Delta$.
%Note that $$
%\lk(p_1,X_1)=\lk(p_1,\Delta)\cap X_1=\lk(p_1,\Delta)\cap \lk(x_1,\Delta)\cap X=\lk(\overline{p_1x_1},\Delta)\cap X.
%$$ By applying Lemma~\ref{lem:link} twice, we know $\lk(\overline{p_1x_1},\Delta)$ and $\lk(p_1,\Delta)$ share a common join factor, which is exactly the join factor that has non-trivial intersection with $X$ (i.e. containing vertices of type $\hat b_2$ and $\hat b_3$). Thus $\lk(\overline{p_1x_1},\Delta)\cap X=\lk(p_1,\Delta)\cap X$, implying 
Thus $\lk(p_1,X)=\lk(p_1,X_1)=\lk(p_1,Y_1)$. 

If $p_1$ is of type $\hat b_2$, then Lemma~\ref{lem:link} implies that $\lk(p_1,X)$, $\lk(p_1,X_1)$ and $\lk(p_1,Y_1)$ are complete bipartite graphs. We write $\lk(p_1,X)=B_1* B_3$, where $B_i$ is made of all vertices in $\lk(p_1,X)$ of type $\hat b_i$ for $i=1,3$. Both $B_1$ and $B_3$ are discrete. By Corollary~\ref{cor:adj}, if a vertex of $\Delta$ of type $\hat b_3$ is adjacent to $p_1$ in $\Delta$, then this vertex is also adjacent to $x_1$ and $y_1$ in $\Delta$.
%By Lemma~\ref{lem:link} and the discussion in the previous paragraph, we know $\lk(\overline{p_1x_2},\Delta)$ and $\lk(p_1,\Delta)$ share a common join factor, namely the join factor containing vertices of type $\hat b_3$. 
Thus $B_3\subset \lk(p_1,X_1)$ and $B_3\subset \lk(p_1,Y_1)$. 

Suppose $p_1$ is of type $\hat b_3$. Take a vertex $\xi\in \lk(p_1,X_1)$ and a vertex $\eta\in\lk(p_1,Y_1)$ such that they are connected by an edge $e$ in $\lk(p_1,X)$. To establish Assumption 3 of Lemma~\ref{lem:4convexsubsets}, it suffices to show $e\subset \lk(p_1,X_1\cup Y_1)$. Suppose (without loss of generality) that $\xi$ is of type $\hat b_2$ and $\eta$ is of type $\hat b_1$. We will view $\lk(p_1,X)$ as a subgraph of $\lk(p_1,\Delta)$, and view $\lk(p_1,\Delta)$ as a subcomplex of $\Delta$. Then $y_1$ and $\eta$ are adjacent in $\Delta$, so is $\eta$ and $\xi$. 
%Moreover, Lemma~\ref{lem:link} implies that $\lk(\eta,\lk(p_1,\Delta))=\lk(\overline{\eta p_1},\Delta)$ admits a join decomposition, with $y_1$ and $\xi$ in different join factors. 
By Corollary~\ref{cor:adj}, $y_1$ and $\xi$ are adjacent in $\lk(p_1,\Delta)$. So $\xi$ is adjacent to both $x_1$ and $y_1$ in $\lk(p_1,\Delta)$, implying that $\xi\in \lk(p_1,X_1\cap Y_1)$. Thus both $\xi$ and $\eta$ are in $\lk(p_1,Y_1)$, hence $e\subset \lk(p_1,Y_1)$, as desired.

%If two adjacent sides of $P$ are trivial, then $X_1\cap X_2\cap Y_1\cap Y_2\neq\emptyset$. Take a vertex $q$ in this intersection.  Then we can always assume $X_1\cap X_2\cap Y_1\cap Y_2$ contains a vertex $q$ of type $\hat b_3$. Let $\Phi_{\Lambda'}$ be the induced subcomplex in $\lk(q,\Delta)$ spanned by vertices of type $\hat s$ with $s\in \Lambda'$. Lemma~\ref{lem:link} implies that $\Phi_{\Lambda'}$ is type isomorphic to $\Delta_{C,\Lambda'}$. As $\{x_1,x_2,y_1,y_2\}\subset \Phi_{\Lambda'}$, Assumption 2 of Lemma~\ref{lem:bowtie free criterion} follows from Assumption 2 of Lemma~\ref{lem:enlarge2}.

Thus the conclusion of Lemma~\ref{lem:4convexsubsets} holds.
We assume without loss of generality that  $Y_1\cap Y_2\cap X_i\neq\emptyset$ for $i=1,2$. Take a vertex $w_1\in Y_1\cap Y_2\cap X_1$, and a vertex $w_2\in Y_1\cap Y_2\cap X_2$. If $w_1$ is type $\hat b_i$ with $i=1,2$, then $b_i$ separates $\{s_1,s_n\}$ from $b_3$. Thus if we have $w'_1$ of type $\hat b_3$ which is adjacent in $w_1$ in $X$, then $w'_1$ is adjacent in each of $\{x_1,y_1,y_2\}$ in $\Delta$ by Corollary~\ref{cor:adj}. Thus $w'_1\in Y_1\cap Y_2\cap X_1$.
Thus up to replacing $w_1$ by $w'_1$, we can assume $w_1$ is of type $\hat b_3$, and similarly that $w_2$ has type $\hat b_3$. If $w_1=w_2$, then $x_1y_1x_2y_2$ is a 4-cycle in $\lk(w_1,\Delta)\cong \Delta_{C,T'}$. By Assumption 2, Proposition~\ref{prop:4-cycle} and Lemma~\ref{lem:connect}, $\Delta_{C,\Lambda'}$ is bowtie free. As $x_1y_1x_2y_2$ can also be viewed a 4-cycle in $\Delta_{C,\Lambda'}$ via the type-preserving isomorphism $\lk(w_1,\Delta)\cong \Delta_{C,T'}$, this finishes the justification of Assumption of Lemma~\ref{lem:bowtie free criterion} of this 4-cycle. Now we assume $w_1\neq w_2$.

%As $Y_1\cap Y_2$ is connected, there is an edge path $\omega\subset Y_1\cap Y_2$ from $w_1$ to $w_2$. Let $\{q_i\}_{i=1}^k$ be consecutive vertices of $\omega$ with $w_1=q_1$ and $w_2=q_k$.

Let $T'$ be as in the statement of the lemma. 
For $i=1,2$,
let $\Phi_{\Lambda',i}$ (resp. $\Phi_{T',i}$) be the induced subcomplex in $\lk(w_i,\Delta)$ spanned by vertices of type $\hat s$ with node $s\in \Lambda'$ (resp. $s\in T'$). Then by Lemma~\ref{lem:link}, $\Phi_{\Lambda',i}$ admits a type-preserving isomorphism to $\Delta_{C,\Lambda'}$ and $\Phi_{T',i}$ admits a type-preserving isomorphism to $\Delta_{C,T'}$. 
Note that $w_i$ is adjacent to $y_1,y_2,x_i$ in $\Phi_{\Lambda',i}$. Vertices of $\Phi_{\Lambda',i}$ are ordered according to the linear order on $\Lambda'$ such that vertices of type $\hat s_n$ are maximal elements. As $\Phi_{\Lambda',i}$ is bowtie free by Assumption 2, Proposition~\ref{prop:4-cycle} and Lemma~\ref{lem:connect}, we know $y_1$ and $y_2$ has a meet in $\Phi_{\Lambda',i}$, denoted by $z_i$. In particular $x_i\le z_i$ in $\Phi_{\Lambda',i}$. Now we show $z_1=z_2$, which will finish the proof.

Let $T''$ be the linear subgraph in $T$ from $b_3$ to $s_n$, with the linear order such that $s_n$ is maximal. Lemma~\ref{lem:enlarge1} implies that $\Delta_{\Lambda,T''}$ is bowtie free. Let $z$ be the meet of $y_1$ and $y_n$ in $\Delta_{\Lambda,T''}$. Then $w_i\le z$ for $i=1,2$ in $\Delta_{\Lambda,T''}$. 
In particular, $w_i$ and $z$ are either adjacent or identical in $\Delta$. As $w_1\neq w_2$, we know $z$ is not of type $\hat b_3$, thus $z\neq w_i$ for $i=1,2$. Suppose $z$ is of type $\hat s_z$ for node $s_z\in \Lambda$. Suppose $a$ is the center of $T'$. We now show that $s_z$ is contained in the smallest subtree $T'_{a,s_n}$ of $T$ containing $a$ and $s_n$. 

Indeed, suppose this is not true. Let $T_z$ be the minimal subtree of $T'$ spanned by $s_1,s_n$ and $s_z$. Then we can viewed the 4-cycle $x_1y_1zy_2$ as a 4-cycle in $\Delta_{C,T_z}$ via the type preserving isomorphism $\lk(w_1,\Delta)\cong \Delta_{C,T'}$. By applying Proposition~\ref{prop:4-cycle} and Lemma~\ref{lem:tripod} to the 4-cycle $x_1y_1zy_2$ in $\Delta_{C,T_z}$, we deduce that $y_1$ and $y_2$ are adjacent in $\Delta$ to a common vertex whose type corresponds to a node in $T'_{a,s_n}$, which contradicts that $z$ is the meet of $y_1$ and $y_n$ in $\Delta_{\Lambda,T''}$. Thus $s_z\in T'_{a,s_n}$.
It follows that $z\in \Phi_{\Lambda',i}$ for $i=1,2$. We now deduce from the choice of $z_1,z_2$ and $z$ that $z_1=z_2=z$.
\end{proof}

\begin{lem}
	\label{lem:4convexsubsets}
Let $X$ be a simplicial complex of type $\{\hat b_1,\hat b_2,\hat b_3\}$ such that its triangles are flat triangles with angle $\pi/2$ at vertex of type $\hat b_2$, angle $\pi/6$ at vertex of type $\hat b_1$ and angle $\pi/3$ at vertex of type $\hat b_3$. Suppose $X$ is $\CAT(0)$ with such metric. Let $X_1,X_2,Y_1,Y_2$ be convex subcomplexes of $X$ such that $X_i\cap Y_j\neq\emptyset$ for $1\le i,j\le 2$. We assume, in addition, that for any $1\le i,j\le 2$, and any vertex $p\in X_i\cap Y_j$, then following holds:
\begin{enumerate}
	\item if $p$ is of type $\hat b_1$, then either $\lk(p,X_i)\subset \lk(p,Y_j)$ or $\lk(p,Y_j)\subset\lk(p,X_i)$;
	\item if $p$ is of type $\hat b_2$, then $\lk(p,X)$, $\lk(p,X_i)$ and $\lk(p,Y_j)$ are complete bipartite graphs such that they share the same join factor made of vertices of type $\hat b_3$;
	\item if $p$ is of type $\hat b_3$, then there does not exist an edge $e$ in $\lk(p,X)$ connecting a vertex in $\lk(p,X_i)$ and a vertex in $\lk(p,Y_j)$ such that $e$ is not contained in $\lk(p,X_i\cup Y_j)$.
\end{enumerate}
Then either $Y_1\cap Y_2\cap X_i\neq\emptyset$ for $i=1,2$, or $X_1\cap X_2\cap Y_j\neq\emptyset$ for $j=1,2$.
\end{lem}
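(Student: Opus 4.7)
\medskip

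\noindent\textbf{Proof proposal for Lemma~\ref{lem:4convexsubsets}.}

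The plan is to combine the $\CAT(0)$ geometry of $X$ with the local link conditions (1)--(3) to produce the desired common vertex. First I would choose vertices $p_{ij}\in X_i\cap Y_j$ for each $(i,j)\in\{1,2\}^2$, selected so as to minimize the total geodesic perimeter $d(p_{11},p_{12})+d(p_{12},p_{22})+d(p_{22},p_{21})+d(p_{21},p_{11})$ of the configuration (alternatively one can minimize the sum of all six pairwise distances, using finiteness to ensure the minimum is attained on a finite-dimensional cell). By convexity of each $X_i$ and $Y_j$, the geodesic $[p_{11},p_{12}]$ lies in $X_1$, the geodesic $[p_{12},p_{22}]$ lies in $Y_2$, the geodesic $[p_{22},p_{21}]$ lies in $X_2$, and the geodesic $[p_{21},p_{11}]$ lies in $Y_1$, so the four points form a geodesic quadrilateral whose sides lie in the appropriate convex subcomplexes.

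Next I would reduce to the case where all four $p_{ij}$ are distinct vertices: if $p_{11}=p_{21}$ then this common vertex lies in $X_1\cap X_2\cap Y_1$, and using a second base point $p_{22}'\in X_1\cap Y_2$ near $p_{12}$ one finds a companion vertex in $X_1\cap X_2\cap Y_2$ (details by a short projection argument), giving the $X_1\cap X_2\cap Y_j\neq\emptyset$ conclusion; analogous collapses give the other conclusion. So assume nondegeneracy.

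The main step is to track the geodesic diagonal $\gamma=[p_{11},p_{22}]$ in the $\CAT(0)$ 2-complex $X$. Because $p_{11}\in X_1$ and $p_{22}\in X_2$, $\gamma$ meets both $X_1$ and $X_2$; by convexity let $t_X$ be the last time $\gamma$ is in $X_1$ (equivalently, the first time it leaves $X_1$) and $s_X$ the first time it enters $X_2$, and define $t_Y,s_Y$ analogously for $Y_1,Y_2$. If $t_X\ge s_X$ then $\gamma(s_X)\in X_1\cap X_2$, and a symmetric analysis with the other diagonal $[p_{12},p_{21}]$ produces a second point of $X_1\cap X_2$ which lies in either $Y_1$ or $Y_2$; this provides one of the triple intersection points required by the second conclusion. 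The harder subcase is when both pairs are separated, i.e.\ $t_X<s_X$ and $t_Y<s_Y$; here I would show, using the minimality of the perimeter and the fact that $\gamma$ must enter or leave $X_i$ (resp. $Y_j$) through a vertex (since the transition cannot happen in the interior of an edge of $X$), that there is a distinguished vertex $v$ on $\gamma$ where a transition happens, and analyze its type.

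The heart of the argument is then a type-by-type application of the link hypotheses at $v$. When $v$ is of type $\hat b_2$, condition (2) says the shared $\hat b_3$-join factor sits in both $\lk(v,X_i)$ and $\lk(v,Y_j)$, which lets one slide $\gamma$ to pass through a common adjacent vertex of type $\hat b_3$, producing an element of the required triple intersection. When $v$ is of type $\hat b_1$, condition (1) gives a nested containment $\lk(v,X_i)\subseteq\lk(v,Y_j)$ or conversely, which immediately lets one replace the transition point by a vertex in $X_i\cap Y_j$ closer to either $p_{i'j}$ or $p_{ij'}$, contradicting minimality unless the configuration already degenerates. When $v$ is of type $\hat b_3$, condition (3) is used: any edge in $\lk(v,X)$ crossing from $\lk(v,X_i)$ to $\lk(v,Y_j)$ lies in $\lk(v,X_i\cup Y_j)$, which rules out the existence of a ``strict'' transition and again forces a degeneration giving the conclusion. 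The main obstacle will be this last case: condition (3) is the most subtle and requires carefully choosing the edge along $\gamma$ emanating from $v$ so that its link description applies and produces a vertex common to the right three subcomplexes. I expect the argument to run on a double induction on the combinatorial distance along $\gamma$ and on the perimeter of the quadrilateral, with the local conditions supplying the inductive step and minimality supplying the base case.
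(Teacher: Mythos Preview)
Your minimal-perimeter quadrilateral is the right starting point and matches the paper, but the diagonal-tracking argument has a genuine gap. The link hypotheses (1)--(3) are stated only for vertices $p\in X_i\cap Y_j$. A transition vertex $v$ on the diagonal $[p_{11},p_{22}]$ is, by definition, the last point of $\gamma$ in (say) $X_1$; there is no reason whatsoever for $v$ to lie in any $Y_j$. So when you write ``type-by-type application of the link hypotheses at $v$'' and invoke $\lk(v,X_i)$ versus $\lk(v,Y_j)$, the hypotheses simply do not apply. The same objection kills the claim that transitions must occur at vertices: convex subcomplexes of a 2-complex are bounded by edges, and a geodesic can exit through the interior of a boundary edge.

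The paper avoids this entirely by never looking at the diagonal. Instead it stays at the four \emph{corners} $p_1,\dots,p_4$, which are in $X_i\cap Y_j$ by construction, and proves an angle estimate: at each corner where both incident sides are nondegenerate, the Alexandrov angle is $\ge 2\pi/3$. The argument runs by showing (via minimality of the perimeter) that the short arc in $\lk(p_1,X)$ from $\log_{p_1}(p_4)$ to $\log_{p_1}(p_2)$ cannot meet $\lk(p_1,X_1\cap Y_1)$, then ruling out types $\hat b_1$ and $\hat b_2$ using conditions (1) and (2), and finally, for type $\hat b_3$, using condition (3) to force the arc to contain at least two edges of length $\pi/3$. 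Once every nondegenerate corner has angle $\ge 2\pi/3$, the $\CAT(0)$ quadrilateral inequality \cite[II.2.12]{BridsonHaefliger1999} forces at least two sides of the 4-gon to collapse, and the conclusion follows immediately (two opposite sides collapsing gives one alternative, two adjacent sides collapsing gives both). Your degenerate-case sketch is also incomplete for the same reason: knowing $p_{11}=p_{21}\in X_1\cap X_2\cap Y_1$ does not by itself produce a point of $X_1\cap X_2\cap Y_2$ without the angle argument.
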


\begin{proof}
A $4$-gon in $X$ is \emph{admissible} if it is made of fours geodesic segments $\{S_i\}_{i=1}^4$ such that $S_1$ goes from $p_1\in X_1\cap Y_1$ to $p_2\in Y_1\cap X_2$, $S_2$ goes from $p_2\in Y_1\cap X_2$ to $p_3\in Y_2\cap X_2$, $S_3$ goes from $p_3\in Y_2\cap X_2$ to $p_4\in Y_2\cap X_1$ and $S_4$ goes from $Y_2\cap X_1$ to $Y_1\cap X_1$. Let $\ell(S_i)$ denotes the length of $S_i$. The \emph{perimeter} of this 4-gon is defined to be $\sum_{i=1}^4\ell(S_i)$.
Let $P$ be a 4-gon with perimeter minimized among all admissible 4-gons. Indeed, as $X$ is a piecewise Euclidean complex with finitely many shapes, it follows from the ``quasi-compactness'' argument in \cite[Chapter I.7]{BridsonHaefliger1999} that the minimum of perimeter achieved by one admissible $4$-gon (the upshot is that each such 4-gon is contained in a finite subcomplex and there are only finitely many combinatorial types of such finite subcomplexes that contain these 4-gons with a given upper bound on their perimeter).

We claim that in $P$, if $\angle_{p_1}(p_2,p_4)<\pi$, then $Y_1\cap S_4=\{p_1\}$ and $X_1\cap S_1=\{p_1\}$. Indeed, if $Y_1\cap S_4=\{p_1\}$ is not true, then by the convexity of $Y_1$ and $X_1$, we know a small subsegment $\overline{p_1p'_1}$ of $S_4$ is contained in $X_1\cap Y_1$. Then we consider the geodesic 4-gon $P'$ with vertices $p_4,p'_1,p_2,p_3$, which is still admissible by the convexity of $X_1$. We write $\overline{p'_1p_2}$ for the geodesic between $p'_1$ and $p_2$. As $\angle_{p_1}(p_2,p_4)<\pi$, by $\CAT(0)$ geometry we have $\ell(\overline{p'_1p_2})<\ell(\overline{p'_1p_1})+\ell(\overline{p_1p_2})$. This implies that $P'$ has perimeter smaller than $P$, which is a contradiction. Thus $Y_1\cap S_4=\{p_1\}$. Similarly we can prove $X_1\cap S_1=\{p_1\}$.

We denote the point in $\lk(p_1,X)$ arising from $\overline{p_1p_2}$ to be $\log_{p_1}(p_2)$.
We claim if $\angle_{p_1}(p_2,p_4)<\pi$, $p_4\neq p_1$ and $p_2\neq p_1$, then the shortest arc in $\lk(p_1,X)$ from $\log_{p_1}(p_4)$ to $\log_{p_1}(p_2)$ does not contain any point in $\lk(p_1,X_1\cap Y_1)$. By the previous paragraph, the two endpoints of this arc is not contained in $\lk(p_1,X_1\cap Y_1)$. Suppose an interior point of this arc is in $\lk(p_1,X_1\cap Y_1)$. Then it corresponds to a small geodesic segment $\overline{p_1p}\subset X_1\cap Y_1$ emanating from $p_1$. Take $p'_4\in S_4$ close to $p_1$ and $p'_2\in S_2$ close to $p_1$. As a small neighborhood of $p_1$ in $X$ is isometric to a Euclidean cone (cf. \cite[Theorem I.7.16]{BridsonHaefliger1999}), $p'_4,p_1,p'_2$ span a small flat triangle in $X$ which is cut through by subsegment of  $\overline{p_1p}$. We assume $p\in\overline{p'_4p'_2}$ by possibly shortening $\overline{p_1p}$. As $\angle_{p_1}(p'_2,p'_4)<\pi$,
\begin{align*}
	&	\ell(\overline{p_4p})+\ell(\overline{pp_2})\le \ell(\overline{p_4p'_4})+\ell(\overline{p'_4p'_2})+\ell(\overline{p'_2p_2})\\
	&<\ell(\overline{p_4p'_4})+\ell(\overline{p'_4p_1})+\ell(\overline{p_1p'_1})+\ell(\overline{p'_1p_2})=\ell(S_4)+\ell(S_1).
\end{align*}
By the convexity of $X_1$ and $Y_1$, we know $\overline{p_4p}\subset X_1$ and $\overline{pp_2}\subset Y_1$. Thus the 4-gon with vertices $\{p_2,p_3,p_4,p\}$ is admissible and its perimeter is smaller than the perimeter of $P$, contradiction.

We claim if $p_4\neq p_1$ and $p_2\neq p_1$, then $\angle_{p_1}(p_4,p_2)\ge 2\pi/3$. It suffices to consider the case that $\angle_{p_1}(p_4,p_2)<\pi$. Then $p_1$ cannot be in the interior of a 2-cell, otherwise this 2-cell is contained in $Y_1\cap X_1$, which means small subsegments of $S_1$ and $S_4$ starting at $p_1$ is contained in $Y_1\cap X_1$, contradicting one of the previous claims. Also $p_1$ cannot be in the interior of an edge $e$, otherwise $p_1\in e\subset Y_1\cap X_1$. As a small neighborhood of $p_1$ in $X$ splits as a Cartesian product of a small segment of $e$ and a tree, we know that $e$ gives rise to a point in the shortest arc in $\lk(p_1,X)$ from $\log_{p_1}(p_4)$ to $\log_{p_1}(p_2)$, which contradicts the previous paragraph. Thus $p_1$ must be a vertex. 

Let $\alpha$ be the shortest arc in $\lk(p_1,X)$ from $\log_{p_1}(p_4)$ to $\log_{p_1}(p_2)$. If $p_1$ is of type $\hat b_1$, then by Assumption 1, one endpoint of $\alpha$ is contained in $\lk(p_1,X_1\cap Y_1)$, contradiction. If $p_2$ is of type $\hat b_2$, then Assumption 2 implies that $X_1\cup Y_1$ is locally convex in $X$ in a small neighborhood around $p_1$. Hence the shortest arc $\alpha$ in $\lk(p_1,X)$ from $\log_{p_1}(p_4)$ to $\log_{p_1}(p_2)$ is contained in $\lk(p_1,X_1\cup Y_1)$. As $\alpha$ joins a point in $\lk(p_1,X_1)$ and a point in $\lk(p_1,Y_1)$, $\alpha$ contains at least one point in $\lk(p_1,X_1\cap Y_1)$, contradiction. The only possibility left is that $p_1$ is of type $\hat b_3$. By the previous claim, $$\alpha\cap \lk(p_1,X_1\cap Y_1)=\emptyset.$$ Let $\alpha_{X_1}=\alpha\cap \lk(p_1,X_1)$ and let $\alpha_{Y_1}=\alpha\cap\lk(p_1,Y_1)$. Then $\alpha_{X_1}\cap \alpha_{Y_1}=\emptyset$. Let $\alpha'$ be the closure of $\alpha\setminus(\alpha_{X_1}\cup \alpha_{Y_1})$. Then $\alpha'$ connects a vertex $\xi\in \lk(p_1,X_1)$ to a vertex $\eta\in\lk(p_1,Y_1)$. Assumption 3 implies that $\alpha'$ is made of at least two edges. Thus $\angle_{p_1}(p_4,p_2)\ge 2\pi/3$.

The same argument implies that for the $4$-gon $P$, for each $p_i$, if the two sides of $P$ containing $p_i$ are non-degenerate, then the angle of these two sides at $p_i$ is $\ge 2\pi/3$. 
Now by \cite[Chapter II.2.12]{BridsonHaefliger1999}, at least two sides of the 4-gon must be a point.  Then the lemma follows.
\end{proof}
\begin{lem}
	\label{lem:enlarge3}
	Suppose the Dynkin diagram $\Lambda$ is a tree. Let $\Lambda'$ be a linear subgraph with its consecutive nodes being $\{s_i\}_{i=1}^n$. Let $b_3$ be a node outside $\Lambda'$ such that $b_3$ is adjacent to a node $b_2\in \Lambda'$. Let $C$ be the component of $\Lambda\setminus\{b_3\}$ containing $\Lambda'$.
    We assume that
	\begin{enumerate}
		\item for any node $b\in \Lambda'$ adjacent to $b_2$, the edge $\overline{b_2b_3}$ is $6$-solid in the component of $\Lambda\setminus\{b\}$ that contains $\overline{b_2b_3}$; 
		\item the $(C,\Lambda')$-relative Artin complex  is bowtie free.
	\end{enumerate}
	Then the $(\Lambda,\Lambda')$-relative Artin complex is bowtie free.
\end{lem}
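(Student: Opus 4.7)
I would split the proof into two cases based on the position of $b_2$ in $\Lambda'$, handling each by reduction to a previously established lemma.

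First I would dispose of the case where $b_2$ is an endpoint of $\Lambda'$, say $b_2=s_1$, so that the unique neighbor of $b_2$ in $\Lambda'$ is $s_2$. In this case, set $\Lambda'^+=\{b_3,s_1,\ldots,s_n\}$, which is a linear subgraph of $\Lambda$ extending $\Lambda'$ with $b_3$ as the new initial vertex. I would apply Lemma~\ref{lem:enlarge1} to the pair $(\Lambda,\Lambda'^+)$ by letting $b_3$ play the role of $s_1$ and $s_1$ play the role of $s_2$ in that lemma. Assumption (1) of Lemma~\ref{lem:enlarge1} (the edge $\overline{b_3 s_1}$ is $6$-solid in the component of $\Lambda\setminus\{s_2\}$ that contains it) is precisely Assumption~(1) of Lemma~\ref{lem:enlarge3} for the neighbor $b=s_2$, and Assumption~(2) of Lemma~\ref{lem:enlarge1} is precisely Assumption~(2) of Lemma~\ref{lem:enlarge3}. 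Lemma~\ref{lem:enlarge1} then yields that $\Delta_{\Lambda,\Lambda'^+}$ is bowtie free, and Lemma~\ref{lem:inherit}(2) specializes this to $\Delta_{\Lambda,\Lambda'}$.

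Next, in the interior case $b_2=s_k$ with $1<k<n$, I would mimic the strategy of Lemma~\ref{lem:enlarge2}. For a neighbor $b$ of $b_2$ in $\Lambda'$, take $\Lambda_0=\{b,b_2,b_3\}$ (a linear subgraph with $b_2$ in the middle) and form $X=\Delta_{\Lambda,\Lambda_0}$. Metrize triangles of $X$ as flat triangles with angles $\pi/6$ at $\hat b$, $\pi/2$ at $\hat b_2$, $\pi/3$ at $\hat b_3$. Using Lemma~\ref{lem:link}, the link of a $\hat b_2$-vertex is complete bipartite (edges of length $\pi/2$, so $\CAT(1)$); the link of a $\hat b_3$-vertex is $\Delta_{C,\overline{bb_2}}$, which by Assumption~(2) and Lemma~\ref{lem:inherit} has girth $\ge 6$ (edges of length $\pi/3$, so $\CAT(1)$); and the link of a $\hat b$-vertex is $\Delta_{C'',\overline{b_2b_3}}$, which by Assumption~(1) has girth $\ge 12$ (edges of length $\pi/6$, so $\CAT(1)$). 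Simple connectivity comes from Lemma~\ref{lem:relative sc}, so $X$ is $\CAT(0)$.

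With the $\CAT(0)$ complex in hand, I would induct on $|\Lambda'|$ and verify the two hypotheses of Lemma~\ref{lem:bowtie free criterion} for $Y=\Delta_{\Lambda,\Lambda'}$. Hypothesis~(1) (links of $\hat s_1$ and $\hat s_n$ are bowtie free) follows from Lemma~\ref{lem:link} by identifying the links as relative Artin complexes of the form $(\Lambda_{s_i},\Lambda'_{s_i})$ and checking that the hypotheses of Lemma~\ref{lem:enlarge3} persist for these smaller configurations; Assumption~(1) propagates via Lemma~\ref{lem:solid}, Assumption~(2) via Lemma~\ref{lem:inherit}, and either the inductive hypothesis applies or we fall into the endpoint case already handled. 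Hypothesis~(2) (filling $\hat s_1,\hat s_n,\hat s_1,\hat s_n$-type $4$-cycles) is the geometric core: I would establish an analog of Lemma~\ref{lem:convex} for this setting, showing that for any $\hat s_n$-vertex $y_1$ the subcomplex of $X$ spanned by vertices adjacent to $y_1$ in $Y$ is convex, and then run the perimeter-minimization argument of Lemma~\ref{lem:4convexsubsets} on the four convex subcomplexes associated to $x_1,x_2,y_1,y_2$.

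\textbf{Expected main obstacle.} The hardest step will be hypothesis~(2) of Lemma~\ref{lem:bowtie free criterion} in the interior case: while the link at a $\hat b_3$-vertex is controlled by Assumption~(2), the $\hat s_1$ and $\hat s_n$ vertices need not lie in $\Lambda_0$, so a single choice of neighbor $b$ only gives ``partial'' convexity information, and one may need to use both neighbors of $b_2$ in $\Lambda'$ (hence Assumption~(1) for all such $b$) and to track the meet of $y_1,y_2$ across two different auxiliary $\CAT(0)$ complexes. Carrying this through requires a careful adaptation of the link-decomposition arguments in Lemma~\ref{lem:convex} and Lemma~\ref{lem:4convexsubsets}, using the bowtie freeness provided by the inductive hypothesis to produce the common neighbor.
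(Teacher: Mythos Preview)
Your plan is correct and matches the paper's approach closely. The endpoint case is handled exactly as you describe (reduction to Lemma~\ref{lem:enlarge1} via the extended segment), and in the interior case the paper indeed builds two $\CAT(0)$ complexes $X,X'$ using \emph{both} neighbors $b_1,b'_1$ of $b_2$ in $\Lambda'$, inducts on $|\Lambda'|$, verifies Assumption~1 of Lemma~\ref{lem:bowtie free criterion} via Lemma~\ref{lem:solid} and Lemma~\ref{lem:inherit}, proves the convexity of the four subcomplexes (the delicate link being the type-$\hat b'_1$ vertex, handled with Lemma~\ref{lem:girth}), and then invokes Lemma~\ref{lem:4convexsubsets}---exactly as you anticipate in your obstacle paragraph. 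The only substantial step you leave implicit is the endgame after Lemma~\ref{lem:4convexsubsets}: one must treat the two outcomes $X_1\cap X_2\cap Y_j\neq\emptyset$ and $Y_1\cap Y_2\cap X_i\neq\emptyset$ separately, in each case passing to the auxiliary linear segments from $b_3$ to $s_1$ (resp.\ $s_n$), using Lemma~\ref{lem:enlarge1} to get bowtie freeness there, and comparing the join of $x_1,x_2$ (resp.\ meet of $y_1,y_2$) computed in the link of a type-$\hat b_3$ vertex with the one computed in $\Delta_{\Lambda,\Lambda_1}$ to conclude they coincide; your remark about ``tracking the meet of $y_1,y_2$ across two different auxiliary $\CAT(0)$ complexes'' is precisely this.
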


\begin{proof}
Suppose $b_2$ is an interior vertex of $\Lambda'$, otherwise we are reduced to Lemma~\ref{lem:enlarge1}.	
Let $b_1$ (resp. $b'_1$) be the node in $\Lambda'$ adjacent to $b_2$ such that $b_1$ and $s_1$ (resp. $s_n$) are contained in the same component of $\Lambda'\setminus\{b_2\}$.
	Let $\Lambda_{b_1}$ (resp. $\Lambda_{b'_1}$) be the linear subgraph spanned by $\{b_1,b_2,b_3\}$ (resp. $\{b'_1,b_2,b_3\}$). Let $X=\Delta_{\Lambda,\Lambda_{b_1}}$ and $X'=\Delta_{\Lambda,\Lambda_{b'_1}}$, which are 2-dimensional complexes. We metrize triangles in $X$ (resp. $X'$) such that they are flat triangles in the Euclidean plane with angle $\pi/2$ at vertex of type $\hat b_2$, angle $\pi/6$ at vertex of type $\hat b_1$ (resp. $\hat b'_1$) and angle $\pi/3$ at vertex of type $\hat b_3$. 
By Assumption 2, $\overline{b_1b_2}$ and $\overline{b'_1b_2}$ are $3$-solid in $C$. This together with Assumption 1 and Lemma~\ref{lem:relative sc} imply that $X$ and $X'$ are $\CAT(0)$. 

We now prove the lemma by induction on the number of nodes in $\Lambda'$, and assume the lemma holds whenever $\Lambda'$ has $\le n-1$ nodes. Now we verify the assumptions of Lemma~\ref{lem:bowtie free criterion} for $\Delta_{\Lambda,\Lambda'}$. Let $\Lambda_{s_1}$ be the component of $\Lambda\setminus\{s_1\}$ that contains the rest of the nodes of $\Lambda'$. Let $\Lambda'_{s_1}=\Lambda'\cap \Lambda_{s_1}$ and $C_{s_1}=\Lambda_{s_1}\cap C$. To justify Lemma~\ref{lem:bowtie free criterion} (1), we need to show $\Delta_{\Lambda_{s_1},\Lambda'_{s_1}}$ is bowtie free. By the induction assumption, it suffices to show the assumptions of Lemma~\ref{lem:enlarge3} hold with $(\Lambda,\Lambda')$ and $C$ replaced by $(\Lambda_{s_1},\Lambda'_{s_1})$ and $C_{s_1}$, which follows by repeating the corresponding argument in the proof of Lemma~\ref{lem:enlarge2}.

Now we justify Assumption 2 of Lemma~\ref{lem:bowtie free criterion}. 
 Take mutually distinct vertices $\{x_1,x_2,y_1,y_2\}$ in $\Delta_{\Lambda,\Lambda'}$ such that $x_i<y_j$ for $1\le i,j\le 2$, $x_1$ and $x_2$ have type $\hat s_{1}$, and $y_1$ and $y_2$ have type $\hat s_{n}$. Let $\Lambda_i$ (resp. $\Lambda'_i$) be the linear subgraph of $\Lambda$ from $b_3$ (resp. $b_2$) to $s_i$ ($i=1,n$). Let $\Delta$ be the Artin complex of $A_\Lambda$, and let $X_i$ and $Y_i$ (resp. $X'_i$ and $Y'_i$) be the induced subcomplexes of $X$ (resp. $X'$) spanned by vertices that are adjacent or equal to $x_i$ (resp. $y_i$) in $\Delta$. 

Next we prove $X_i,Y_i$ (resp. $X'_i,Y'_i$) are convex subcomplexes of $X$ (resp. $X'$). By symmetry, we only treat the cases of $X_1$ and $X'_1$. The convexity of $X_1$ is clear when $b_1=s_1$. Now suppose $s_1\neq b_1$.
Note that Assumption 2 and Lemma~\ref{lem:inherit} imply that $\Delta_{C,\Lambda'_i}$ is bowtie free.
The convexity of $X_1$ follows from Lemma~\ref{lem:convex} as $\{b_3,b_2,b_1,s_1\}$ are contained in $\Lambda_1$.  

Now we show $X'_1$ is convex in $X'$. As $X'_1=\operatorname{lk}(x_1,\Delta)\cap X'\cong \Delta_{\Lambda\setminus\{s_1\},\Lambda_{b'_1}}$, we know $X'_1$ is connected. It remains to show $X'_1$ is locally convex in $X'$ around each vertex $q\in X'_1$, which amounts to show $\lk(q,X'_1)$ is $\pi$-convex in $\lk(q,X')$ with respect to the angular metric on both graphs. The cases of $q$ of type $\hat b_3$ and of type $\hat b_2$ is similar to the proof of Lemma~\ref{lem:convex}. Now we assume $q$ is of type $\hat b'_1$. We claim if two vertices of $\lk(q,X'_1)$ has combinatorial distance $\le 5$ in $\lk(q,X')$ (i.e. they can be connected by an edge path in $\lk(q,X')$ with $\le 5$ edges), then the unique geodesic between these two vertices are contained in $\lk(q,X'_1)$. This claim will imply the desired $\pi$-convex property, hence imply the convexity of $X'_1$ in $X$. 

Now we prove the claim. If we view $\lk(q,X')$ (resp. $\lk(q,X'_1)$) as a subgraph of $X'$ (resp. $X'_1$), and view $X'$ and $X'_1$ as subcomplexes of $\Delta$, then $\lk(q,X'_1)$ is the collection of vertices in $\lk(q,X')$ that are adjacent to $x_1$.
Let $C_{b'_1}$ be the component of $\Lambda\setminus\{b'_1\}$ that contains $b_2$. 
%Then $\overline{b_1b_2}$ is 3-solid in $C_{b'_1}$ and $\overline{b_2b_3}$ is 6-solid in $C_{b'_1}$ by Lemma~\ref{lem:solid}. Then we metric $X_{b'_1}$ with flat triangle with angles $(\pi/2,\pi/3,\pi/6)$ as before.  
Let $X_{b'_1}=\Delta_{C_{b'_1},\Lambda_{b_1}}$. By Lemma~\ref{lem:link}, we can identify $X_{b'_1}$ with the induced subcomplex of $\lk(q,\Delta)$ spanned by vertices of type $\hat b_1,\hat b_2,\hat b_3$. Then the induced subgraph of $X_{b'_1}$ spanned by vertices of type $\hat b_2$ and $\hat b_3$ is identified with $\lk(q,X')$. Note that $x_1\in \lk(q,\Delta)$. Let $Z_1$ be the induced subcomplex of $X_{b'_1}$ spanned by vertices that are adjacent to $x_1$ inside $\Delta$. Then $\lk(q,X'_1)$ is identified with the induced subgraph of $Z_1$ spanned by vertices of type $\hat b_2$ and $\hat b_3$. We metric $X_{b'_1}$ with flat triangle with angles $(\pi/2,\pi/3,\pi/6)$ as before.
By Lemma~\ref{lem:girth} below, if we know 
\begin{enumerate}
	\item $X_{b'_1}$ with such metric is $\CAT(0)$ and $Z_1$ is a convex subcomplex of $X_{b'_1}$;
\item for any vertex
	$v$ in $Z_1$ of type $\hat b_1$, we have $\lk(v,Z_1)=\lk(v,X_{b'_1})$;
\end{enumerate}
then the claim follows. Item 2 follows from Corollary~\ref{cor:adj}. For item 1, by Lemma~\ref{lem:convex}, it suffices to show the $(C_{b'_1}\cap C,C_{b'_1}\cap \Lambda')$-relative Artin complex is bowtie free and $\overline{b_2b_3}$ is $6$-solid in the component of $C_{b'_1}\setminus \{b_1\}$ that contains $b_2$. The former is true because $\Delta_{C,\Lambda'}$ is bowtie free, hence by Lemma~\ref{lem:inherit} $\Delta_{C,\Theta}$ is bowtie free with $\Theta$ being the segment in $\Lambda'$ from $s_1$ to $b'_1$, hence the link of a vertex of type $\hat b'_1$ in $\Delta_{C,\Theta}$ is bowtie free, which admits a type-preserving isomorphism to the $(C_{b'_1}\cap C,C_{b'_1}\cap \Lambda')$-relative Artin complex by Lemma~\ref{lem:link}. The latter is true because of Assumption 1 and Lemma~\ref{lem:solid}. This finishes the proof of the convexity of $X_i,Y_i,X'_i,Y'_i$.

\medskip

Next we show either $Y_1\cap Y_2\cap X_i\neq\emptyset$ for $i=1,2$, or $X_1\cap X_2\cap Y_i\neq\emptyset$ for $i=1,2$. It suffices to justify the assumptions of Lemma~\ref{lem:4convexsubsets} for $q\in X_i\cap Y_j$. We only look at the case $i=1,j=1$, as other cases are similar. If $q$ is of type $\hat b_1$, then $\lk(q,X_1)=\lk(q,X)$. Hence $\lk(q,Y_1)\subset\lk(q,X_1)$. If $q$ is of type $\hat b_2$, then $\lk(q,Y_1),\lk(q,X_1)$ and $\lk(q,X)$ are all complete bipartite graphs, and they share a joint factor, namely the factor made of vertices of type $\hat b_3$. If $q$ is of type $\hat b_3$, we need to show there does not exist an edge in $\lk(q,X)$ connecting a vertex in $\lk(q,X_1)$ and a vertex in $\lk(q,Y_1)$ such that this edge is not contained in $\lk(q,X_1\cup Y_1)$. Suppose such edge exists, with it endpoint $\xi\in \lk(q,X_1)$ and $\eta\in \lk(q,Y_1)$. Now we identify $\Delta_{C,\Lambda'}$ as the induced subcomplex of $\lk(q,\Delta)$ spanned by vertices of type $\hat s$ with node $s\in \Lambda'$. Then we view $x_1,\xi,\eta,y_1$ as four vertices in $\Delta_{C,\Lambda'}$, which form a 4-cycle in such order. Note that one of $\{\xi,\eta\}$ has type $\hat b_1$, and the other has type $\hat b_2$. If $\xi$ has type $\hat b_1$, then $x_1$ is adjacent to $\eta$ in $\Delta_{C,\Lambda'}$ by Corollary~\ref{cor:adj}, which implies that $\eta\in \lk(q,X_1)$ and $\overline{\eta\xi}\subset \lk(q,X_1)$, contradiction. If $\xi$ has type $\hat b_2$, by the bowtie free property of $\Delta_{C,\Lambda'}$, there exists $z\in \Delta_{C,\Lambda'}$ such that $\{x_1,\eta\}\le z\le \{\xi,y_1\}$. As $\eta\le z\le \xi$ in $\Delta_{C,\Lambda'}$, we know either $z=\eta$ or $z=\xi$, thus either $\eta\in \lk(q,X_1)$ or $\xi\in \lk(q,Y_1)$, which implies $\overline{\eta\xi}\subset \lk(q,X_1\cup Y_1)$, contradiction again. 

\medskip
\noindent
\underline{Case 1: $X_1\cap X_2\cap Y_i\neq\emptyset$ for $i=1,2$.}
 We first claim $X_1\cap X_2\cap Y_i$ has a vertex of type $\hat b_3$. This follows from Lemma~\ref{lem:link} if $X_1\cap X_2\cap Y_i$ has a vertex of type $\hat b_2$, as the node $b_2$ sits in the shortest path from $b_3$ to $s_j$ for $j=1,n$.
If $X_1\cap X_2\cap Y_i$ has a vertex $w_i$ of type $\hat b_1$, then $$\lk(w_i,Y_i)\subset \lk(w_i,X_1)=\lk(w_i,X_2)=\lk(w_i,X)$$ by the previous paragraph, thus any vertex of type $\hat b_3$ in $\Delta$ which is adjacent to both $w_i$ and $y_i$ (such vertex always exists) is contained in $X_1\cap X_2\cap Y_i$. Thus the claim follows. 
For $i=1,2$, take a vertex $w_i\in X_1\cap X_2\cap Y_i$ of type $\hat b_3$. If $w_1=w_2$, then the 4-cycle $x_1y_1x_2y_2$ is contained in $\lk(w_1,\Delta)$. Note that the induced subcomplex of  $\lk(w_1,\Delta)$ spanned by vertices of type $\hat s$ with node $s\in \Lambda'$ admits a type-preserving isomorphic to $\Delta_{C,\Lambda'}$. As $\Delta_{C,\Lambda'}$ is bowtie free, Assumption 2 of Lemma~\ref{lem:bowtie free criterion} holds for the 4-cycle $x_1y_1x_2y_2$ in $\Delta_{\Lambda,\Lambda'}$. So we assume $w_1\neq w_2$.

%As $X_1\cap X_2$ is connected, there is an edge path $\omega\subset X_1\cap X_2$ from $w_1$ to $w_2$. Let $\{q_i\}_{i=1}^k$ be consecutive vertices of $\omega$ with $w_1=q_1$ and $w_2=q_k$.

Let $\Lambda_1$ (resp. $\Lambda'_1$) be the linear subgraph from $b_3$ (resp. $b_2$) to $s_1$ with the orientation such that $s_1$ is the minimal node. Recall that $\Delta_{C,\Lambda'_1}$ is bowtie free. This together with Assumption 1 and Lemma~\ref{lem:enlarge1} imply that $\Delta_{\Lambda,\Lambda_1}$ is bowtie free. As both $x_1,x_2$ are adjacent to $w_1$ and $w_2$, we know $x_i\le w_j$ in $\Delta_{\Lambda,\Lambda_1}$ for $1\le i,j\le 2$. Let $z$ be the join of $x_1$ and $x_2$ in $\Delta_{\Lambda,\Lambda_1}$. Then $z$ is adjacent to each of $x_1,x_2,w_1,w_2$ in $\Delta_{\Lambda,\Lambda_1}$. As $w_1\neq w_2$, we know $z$ is of type $\hat s_z$ with $s_z\in \Lambda'_1$. 

For $i=1,2$, let $\Phi_{\Lambda',i}$ be the induced subcomplex of $\lk(w_i,\Delta)$ spanned by vertices of type $\hat s$ with node $s\in \Lambda'$. Then $z\in \Phi_{\Lambda',i}$.
By Lemma~\ref{lem:link}, $\Phi_{\Lambda',i}$ admits a type-preserving isomorphism to $\Delta_{C,\Lambda'}$. Note that
$w_i$ is adjacent to $x_1,x_2,y_i,z$ in $\Phi_{\Lambda',i}$. As $\Phi_{\Lambda',i}$ is bowtie free by Assumption 2, we know $x_1$ and $x_2$ have a join in $\Phi_{\Lambda',i}$, denoted by $z_i$. As $y_i$ is a common upper bound of $x_1$ and $x_2$, we know $z_i$ is adjacent to $y_i$ in $\Delta$.
As $z$ is adjacent to both $x_1$ and $x_2$ in $\Delta$, $z$ is a common upper bound for $x_1,x_2$ in $\Phi_{\Lambda',i}$, thus $z_i$ has type $\hat s_{z_i}$ with node $s_{z_i}$ contained in the segment of $\Lambda'$ from $s_1$ to $s_z$. Thus, $s_{z_i}\in \Lambda'_1$. Moreover, by the choice of $z_i$, we know $z$ is either equal or adjacent to $z_i$ in $\Delta$.
If $z_i\neq z$, then $z_i<z$ in $\Delta_{\Lambda,\Lambda_1}$, which is a contradiction as $z$ is the join of $x_1$ and $x_2$ in $\Delta_{\Lambda,\Lambda_1}$. Thus $z=z_1=z_2$ and $z$ is adjacent to each of $\{x_1,x_2,y_1,y_2\}$.

%, as desired.

%In particular $y_i\ge z_i$ in $\Phi_{\Lambda',i}$. 

%Let $T''$ be the linear subgraph in $T$ from $b_3$ to $s_n$. Lemma~\ref{lem:enlarge1} implies that $\Delta_{\Lambda,T''}$ is bowtie free. Let $z$ be the meet of $y_1$ and $y_n$ in $\Delta_{\Lambda,T''}$. Then $q_i\le z$ for $i=1,2$ in $\Delta_{\Lambda,T''}$. In particular, $z\in \Phi_{\Lambda',i}$ for $i=1,2$. As $z_1,z_2\in \Delta_{\Lambda,T''}$ by the previous paragraph, we know $z_1=z=z_2$.
\medskip
\noindent
\underline{Case 2: $Y_1\cap Y_2\cap X_i\neq\emptyset$ for $i=1,2$.} If $Y_1\cap Y_2$ is a single vertex $w$, then $w\in X_i$ for $i=1,2$ and $w$ is adjacent to each of $\{x_1,y_1,x_2,y_2\}$ in $\Delta$. If $w$ is of type $\hat b_1$ or $\hat b_2$, then we are done. The case $w$ is of type $\hat b_3$ can be handled using the argument in Case 1. 

Now we assume $Y_1\cap Y_2$ is not a single vertex.
We claim that for $i=1,2$, $Y_1\cap Y_2\cap X_i$ contains a vertex $w_i$ of type $\hat b_2$ or $\hat b_3$. Indeed, if $w_i$ has type $\hat b_1$, then we can find $w'_i\in Y_1\cap Y_2$ which is adjacent to $w_i$, as $Y_1\cap Y_2$ is connected (from the convexity) and not an isolated point. Then $w'_i$ is of type $\hat b_2$ or $\hat b_3$. Note that the geodesic from $s_1$ to $b_2$ or $b_3$ passes through $b_1$, and recall that $x_i$ is adjacent or equal to $w_i$. So $x_i$ is adjacent to $w'_i$ by Corollary~\ref{cor:adj}, which implies that $w'_i\in Y_1\cap Y_2\cap X_i$. We can replace $w_i$ by $w'_i$ and the claim follows. 

The case $w_1=w_2$ can be handled as before. So we assume $w_1\neq w_2$.

Let $\Lambda_n$ (resp. $\Lambda'_n$) be the linear subgraph from $b_3$ (resp. $b_2$) to $s_n$ with the orientation such that $s_n$ is the maximal node. As in Case 1, we know $\Delta_{\Lambda,\Lambda_n}$ is bowtie free. Note that $y_1$ and $y_2$ have common lower bounds $\{w_1,w_2\}$ in $\Delta_{\Lambda,\Lambda_n}$. Let $z$ be their meet in $\Delta_{\Lambda,\Lambda_n}$. As $w_1\neq w_2$, $z$ has type $\hat s_z$ with node $s_z\in \Lambda'_n$. If $w_i$ is of $\hat b_3$, then by the argument in Case 1, we know $x_i$ is adjacent to $z$. Now suppose $w_i$ is of type $\hat b_2$. Then $w_i\in \Delta_{\Lambda,\Lambda_n}$ and $w_i$ is a lower bound for $\{y_1,y_2\}$ in $\Delta_{\Lambda,\Lambda_n}$. Thus $w_i\le z$ in $\Delta_{\Lambda,\Lambda_n}$ by our choice of $z$. In particular, $w_i$ is adjacent or equal to $z$ in $\Delta$. 
As $x_i$ is adjacent to $w_i$ in $\Delta$, and the segment in $\Lambda'$ from $s_1$ to $s_z$ contains $b_2$ (if $s_z\neq b_2$), by Corollary~\ref{cor:adj}, $x_i$ is adjacent to $z$. Thus we always know that $z$ is adjacent to each of $\{x_1,y_1,x_2,y_2\}$ in $\Delta$, and we finish as before.
\end{proof}

\begin{lem}
	\label{lem:girth}
Suppose $X$ is a simplicial complex of type $\{\hat s_1,\hat s_2,\hat s_3\}$ such that all the triangles in $X$ are convex triangles in the Euclidean plane or the hyperbolic plane with angle $\pi/m_i$ at the vertex of type $\hat s_i$ for $1\le i\le 3$ with $\frac{1}{m_1}+\frac{1}{m_2}+\frac{1}{m_3}\le 1$. We assume $X$ is $\CAT(0)$. Let $Y\subset X$ be a convex subcomplex of $X$.
Let $\Theta_X$ (resp. $\Theta_Y$) be the induced subcomplex of $X$ (resp. $Y$) spanned by vertices of type $\hat s_2$ and $\hat s_3$. Then
\begin{enumerate}
	\item $\Theta_X$ has girth $\ge 2m_1$;
	\item if for any vertex $v$ in $Y$ of type $\hat s_1$, we have $\lk(v,Y)=\lk(v,X)$, then for any two vertices of $\Theta_Y$ of combinatorial distance $<m_1$, any shortest edge path connecting these two vertices in $\Theta_X$ is contained in $\Theta_Y$.
\end{enumerate}
\end{lem}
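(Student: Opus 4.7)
For part (1), I plan to use the combinatorial Gauss--Bonnet formula applied to a reduced disk diagram $D \to X$ filling a shortest embedded cycle $\gamma$ in $\Theta_X$ of length $n$; such a diagram exists because $X$ is $\CAT(0)$ and hence simply connected. Three ingredients do the work. First, every boundary vertex $v$ of $D$ satisfies $k_v \ge 2$, where $k_v$ denotes the number of $2$-cells of $D$ at $v$: if $k_v = 1$, the unique $2$-cell at $v$ has only two edges at $v$, one of which ends at the triangle's $\hat s_1$-vertex, forcing a boundary edge of $\gamma$ to end at a $\hat s_1$-vertex -- impossible since $\gamma \subset \Theta_X$. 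Consequently $\kappa(v) \le \pi(1 - 2/m_i)$ at a boundary vertex of type $\hat s_i$. Second, at every interior vertex $v$ of type $\hat s_i$, the link $\lk(v,D)$ is an immersed closed loop in $\lk(v,X)$, which is $\CAT(1)$ with edges of length $\pi/m_i$; its combinatorial girth is at least $2m_i$, so $k_v \ge 2m_i$ and $\kappa(v) \le 0$. Third, every $2$-cell of $D$ has a $\hat s_1$-vertex, and these are all interior to $D$, so $V_{\mathrm{int}} \ge 1$ and Euler's formula $F = 2V_{\mathrm{int}} + n - 2$ gives $F \ge n$. Gauss--Bonnet then reads
\[
n\pi\Bigl(1 - \tfrac{1}{m_2} - \tfrac{1}{m_3}\Bigr) \ \ge\ \sum_{v \in \partial D} \kappa(v) \ \ge\ 2\pi + F\pi c \ \ge\ 2\pi + n\pi c,
\]
where $c = 1 - \sum_{i=1}^3 1/m_i \ge 0$; since $1 - 1/m_2 - 1/m_3 - c = 1/m_1$, rearrangement yields $n \ge 2m_1$.

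For part (2), I plan to combine (1) with induction on $d := d_{\Theta_X}(p,q) < m_1$. A direct corollary of (1) is a uniqueness observation: if $\alpha_1, \alpha_2$ are reduced edge paths in $\Theta_X$ from $p$ to $q$ with $|\alpha_1| + |\alpha_2| < 2m_1$, then the closed loop $\alpha_1 \bar\alpha_2$ has length below the girth, contains no embedded simple cycle, and hence reduces freely to the trivial loop, forcing $\alpha_1 = \alpha_2$ since both are reduced. In particular the shortest edge path between vertices at $\Theta_X$-distance $< m_1$ is unique. The base cases $d \le 1$ are immediate: $d = 0$ is trivial and $d = 1$ uses that $Y$ is a full (induced) subcomplex of $X$. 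For $d \ge 2$, let $\alpha = p_0 p_1 \cdots p_d$ be the shortest path; if some intermediate vertex $p_j \in \Theta_Y$ with $1 \le j \le d-1$, then both subpaths $p_0 \cdots p_j$ and $p_j \cdots p_d$ are shortest paths of length strictly less than $d$ between vertices of $\Theta_Y$, so by induction they lie in $\Theta_Y$, whence $\alpha \subset \Theta_Y$.

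The main obstacle, and the only remaining case, is when $p_1, \ldots, p_{d-1} \notin Y$. Here the hypothesis $\lk(v,Y) = \lk(v,X)$ for every $\hat s_1$-vertex $v \in Y$ is decisive: it implies $\st(v,X) \subset Y$, so the $\hat s_1$-vertex of \emph{every} triangle of $X$ containing an edge of $\alpha$ must lie outside $Y$ (otherwise both endpoints of that edge would be forced into $Y$). Taking a shortest path $\beta$ in $\Theta_Y$ from $p$ to $q$ of length $d_Y$, the uniqueness observation settles the subcase $d + d_Y < 2m_1$ at once: one would have $\alpha = \beta$, contradicting $\alpha \not\subset \Theta_Y$. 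In the harder subcase $d_Y \ge 2m_1 - d$ (where the main technical work lies), I fill the null-homotopic loop $\alpha \bar\beta$ by a reduced disk diagram $D \to X$ and examine the preimage $D_Y \subset D$ of $Y$. The assumption $\st(v,X) \subset Y$ for $\hat s_1$-vertices $v \in Y$ forces every $\hat s_1$-vertex of $D$ lying in $D_Y$ to be interior to $D_Y$, so the frontier arc $\delta$ between $D_Y$ and $D \setminus D_Y$ carries no $\hat s_1$-vertex and is therefore an edge path in $\Theta_Y$ from $p$ to $q$. Applying the Gauss--Bonnet estimate of (1) to the sub-diagram $D \setminus D_Y$ together with the $\CAT(0)$-convexity of $Y$ in $X$ should then produce a $\Theta_Y$-path strictly shorter than $\beta$, contradicting the minimality of $d_Y$ and completing the proof.
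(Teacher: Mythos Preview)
Your treatment of part~(1) is essentially the paper's argument: both fill a shortest embedded cycle of $\Theta_X$ by a disk diagram and apply Gauss--Bonnet. The paper packages the computation as a comparison with a model diagram $\mathbb D_0$ made of $2m_1$ triangles glued cyclically around a single $\hat s_1$-vertex, while you compute directly via $F = 2V_{\mathrm{int}} + n - 2 \ge n$; the two are equivalent once one observes that boundary vertices satisfy $k_v\ge 2$ (which the paper uses implicitly when asserting $\kappa(v)\le\kappa(v_0)$). One minor point: your Euler identity presupposes that the reduced diagram is a topological disk rather than a singular one, which deserves a sentence of justification.

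Part~(2) is where your approach diverges, and here there is a genuine gap. The paper does not use disk diagrams at all: it applies the $\CAT(0)$ nearest-point projection $X\to Y$ to the geodesic path $\omega$. This projection is $1$-Lipschitz, and its image avoids the relative interior of $Y$ in $X$; since interior points of $2$-cells of $Y$, and every $\hat s_1$-vertex $v\in Y$ (by the hypothesis $\lk(v,Y)=\lk(v,X)$), lie in that relative interior, the projected path $\omega'$ is forced into $\Theta_Y$. One thus obtains an edge path $\omega''\subset\Theta_Y$ with at most $m_1-1$ edges, and then your own uniqueness observation finishes the proof. In particular this yields $d_{\Theta_Y}(p,q)\le d$, so your ``harder subcase'' $d_Y\ge 2m_1-d$ never occurs.

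Your proposed treatment of that subcase is not complete. First, you tacitly assume a shortest path $\beta$ in $\Theta_Y$ from $p$ to $q$ exists, but without something like the projection argument it is not clear that $p$ and $q$ lie in the same component of $\Theta_Y$. Second, and more seriously, the assertion that Gauss--Bonnet on $D\setminus D_Y$ ``should then produce a $\Theta_Y$-path strictly shorter than $\beta$'' is unsupported: you give no curvature bound at frontier vertices of the sub-diagram, and convexity of $Y$ in $X$ does not obviously transfer to angle estimates in the possibly non-injective map $D\to X$. The nearest-point projection is exactly the missing idea that eliminates this case.
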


\begin{proof}
	For Assertion 1, let $\omega$ be an embedded cycle in $\Theta_X$ of shortest length and let $\{v_i\}_{i\in \mathbb Z/k\mathbb Z}$ be consecutive vertices of $\omega$. First we consider the case The case $\frac{1}{m_1}+\frac{1}{m_2}+\frac{1}{m_3}<1$, then each triangle in $X$ has a hyperbolic metric.
	Let $\mathbb D$ be a minimal area singular disk diagram for $\omega$ (see Section~\ref{ss:disk} for relevant definitions). Then 
	\begin{equation}
		\label{eq:GB2}
		\sum_{v\in \bD^{(0)}}\kappa(v)-\sum_{C\in \bD^{(2)}}\mathrm{Area}(C)=2\pi.
	\end{equation}
	We can compare this disk diagram with another disk diagram $\bD_0$ obtained by assembling $2m_1$ copies of some triangle in $X$ in a cyclic fashion around the vertex of type $\hat s_1$. The Gauss-Bonnet formula for $\bD_0$ is 
	\begin{equation}
		\label{eq:GB3}
		\sum_{v\in \bD^{(0)}_0}\kappa(v)-\sum_{C\in \bD^{(2)}_0}\mathrm{Area}(C)=2\pi.
	\end{equation}
	Note that 
	\begin{enumerate}
		\item $\bD$ must has an interior vertex, otherwise we can find two non consecutive vertices of $\omega$ that are adjacent in $X$, contradicting $\omega$ is an embedded cycle of shortest length; moreover, we can find an interior vertex of $\bD$ which maps to a vertex of type $\hat s_3$; thus the area of $\bD$ is $\ge$ the area of $\bD_0$ as $\bD$ is locally $\CAT(0)$ around this vertex;
		\item for any pair of vertices $v\in \partial \bD$ and $v_0\in \partial \bD_0$ of the same type, we must have $\kappa(v)\le \kappa(v_0)$;
		\item if $v$ is a vertex in the interior of $\bD$, then $\kappa(v)\le 0$; if $v$ is a vertex in the interior of $\bD_0$, then $\kappa(v)=0$.
	\end{enumerate}
	Comparing \eqref{eq:GB2} and \eqref{eq:GB3}, we know the number of vertices on $\partial \bD$ is $\ge$ the number of vertices in $\partial \bD_0$. Thus $k\ge 2m_1$. The case $\frac{1}{m_1}+\frac{1}{m_2}+\frac{1}{m_3}=1$ is similar.

	For Assertion 2, take two vertices $q_1$ and $q_2$ of $\Theta_Y$ connecting by a shortest edge path $\omega$ in $\Theta_X$ made of $\le m_1-1$ edges. We can assume $\omega\cap \Theta_Y$ only in its two endpoints. Let $\omega'$ be the image of $\omega$ under the nearest point projection $X\to Y$. The definition of nearest point projection implies that $\omega'$ cannot contain any point in the interior of a 2-face of $Y$. As for any vertex $v$ in $Y$ of type $\hat s_1$, we have $\lk(v,Y)=\lk(v,X)$, thus $\omega'$ does not contain any vertices of type $\hat s_1$, as well as any interior point of an edge with one type $\hat s_1$ endpoint. Thus $\omega'\subset \Theta_Y$. As the length of $\omega'$ is upper bounded by the length of $\omega$, we know $q_1$ and $q_2$ can be connected by an edge path $\omega''$ in $\Theta_Y$ with $\le m_1-1$ edges. Note that $\omega$ and $\omega''$ form a loop in $\Theta_X$, which has girth $2m_1$. As $\omega$ and $\omega''$ are both shortest edge paths connecting two vertices, we have $\omega=\omega''$.
\end{proof}

%
%If $Y_1\cap Y_2\cap X_i$ contains a vertex $w_i$ of type $\hat b_3$ for $i=1,2$, then $w_i$ is also a vertex in $Y'_1\cap Y'_1\cap X'_i$, and by symmetry we are done by repeating the argument in the previous case. 
%
%
%
%
%
%Let $w_1$ be as before. If $X_1\cap Y_1\cap Y_2$ has a vertex of type $\hat b_2$, then it also has a vertex of type $\hat b_3$, and we can handle this case as before.
%
%If $X_1\cap Y_1\cap Y_2$ does not contain a vertex of type $\hat b_2$. If $X_1\cap Y_1\cap Y_2$ is one vertex, then it is not hard. Otherwise it has a vertex of type $\hat b_3$. 
%
%Thus either $X_1\cap Y_1\cap Y_2$ is exactly one vertex, or it has a vertex of type $\hat b_3$. In the latter case, we consider the complex $X'$ for $b_3,b_2,b'_1$. Then we still know $X'_1\cap Y'_1\cap Y'_2$ is nonempty as it has a vertex of type $\hat b_3$. 
%
%
%Now we show $z_1=z_2$, which will finish the proof.
%
%
%
%
%It follows from the above analysis of $\lk(w_1,Y_1)$ and $\lk(w_1,X_1)$ that if $w_1$ is of type $1$ or $2$, then there is a vertex in $Y_1\cap Y_2\cap X_1$ of type $\hat b_3$. Thus we can assume $w_1$ is of type $\hat b_3$, and similarly that $w_2$ has type $\hat b_3$. As $Y_1\cap Y_2$ is connected, there is an edge path $\omega\subset Y_1\cap Y_2$ from $w_1$ to $w_2$. Let $\{q_i\}_{i=1}^k$ be consecutive vertices of $\omega$ with $w_1=q_1$ and $w_2=q_k$.

\subsection{Contractibility of the Artin complexes}
\label{subsec:contractible}

\begin{prop}
	\label{prop:tree}
	Suppose $\Lambda$ is tree Dynkin diagram. Suppose there exists a collection $E$ of open edges with label $\ge 6$ such that for each component $\Lambda'$ of $\Lambda\setminus E$ the Artin complex $\Delta_{\Lambda'}$ satisfies the labeled 4-cycle condition.  Then 
	\begin{enumerate}
		\item $\Delta_{\Lambda,\Lambda'}$ satisfies the labeled 4-cycle condition for each component $\Lambda'$ of $\Lambda\setminus E$; and each edge $e\in E$ is $6$-solid in $\Lambda$;
		\item if there exists an edge $e_1\in E$ and a different edge $e_2$ such that they intersect in a node, then $\Delta_{\Lambda,e_1\cup e_2}$ is contractible.
	\end{enumerate}
\end{prop}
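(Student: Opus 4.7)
The plan is to prove (1) and (2) together, with (1) by induction on $|E|$, and (2) following from (1) by a flat Euclidean $\CAT(0)$ metric on $\Delta_{\Lambda, e_1 \cup e_2}$.

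\textbf{Part (1), induction on $|E|$.} The base case $E = \emptyset$ is immediate from the hypothesis on $\Delta_\Lambda$. For the inductive step I pick a ``leaf'' edge $e_0 \in E$ in the sense that one of the two sides of $\Lambda \setminus \{e_0\}$ is precisely a single component $\Lambda'_0$ of $\Lambda \setminus E$; such an $e_0$ exists because the components of $\Lambda \setminus E$ form a tree under the adjacency given by $E$. Writing $\Lambda_- = \Lambda \setminus \Lambda'_0$, the inductive hypothesis applied to $(\Lambda_-, E \setminus \{e_0\})$ supplies the labeled $4$-wheel on the $\Delta_{\Lambda_-, \Lambda''}$ for $\Lambda'' \ne \Lambda'_0$ and the $6$-solidity of all $e \in E \setminus \{e_0\}$ in $\Lambda_-$. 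The $6$-solidity of the edges of $E \setminus \{e_0\}$ transfers from $\Lambda_-$ to $\Lambda$ by Lemma~\ref{lem:solid}. For the extension of the labeled $4$-wheel condition from the components to $\Delta_{\Lambda,\Lambda'}$, by Proposition~\ref{prop:4-cycle} it suffices to verify bowtie-freeness on every maximal linear subgraph $\Lambda''$ of every component $\Lambda'$ of $\Lambda \setminus E$, which I will do by iteratively applying the enlargement Lemmas~\ref{lem:enlarge1}, \ref{lem:enlarge2}, \ref{lem:enlarge3}, starting from a seed inside $\Lambda'$ (where the labeled $4$-wheel is provided by hypothesis) and growing the target linear subgraph one vertex at a time; which enlargement lemma to invoke depends on the position of the new vertex relative to the current subgraph. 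The $6$-solidity assumptions required by these lemmas are supplied by the current and previous inductive steps together with Lemma~\ref{lem:solid}. The outstanding step is to prove the $6$-solidity of $e_0$ itself in $\Lambda$; I discuss this in the last paragraph.

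\textbf{Part (2).} Write $e_1 \cup e_2 = \{s_1, s_2, s_3\}$ where $s_2$ is the shared vertex and, without loss of generality, $e_1 = \overline{s_1 s_2} \in E$. Metrize each $2$-simplex of $\Delta_{\Lambda, \{s_1, s_2, s_3\}}$ as a Euclidean triangle with angles $\pi/3, \pi/2, \pi/6$ at the vertices of types $\hat s_1, \hat s_2, \hat s_3$ respectively (which sum to $\pi$, giving a flat metric). The complex is simply connected by Lemma~\ref{lem:relative sc}, so by the piecewise-Euclidean link condition it suffices to show each vertex link is $\CAT(1)$, i.e., has girth $\geq 2\pi/\alpha_i$ for the appropriate angle $\alpha_i$. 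The link at a $\hat s_2$-vertex splits as a join $K_1 * K_3$ by Lemma~\ref{lem:link}(3), hence is complete bipartite with girth $\geq 4$. The link at a $\hat s_3$-vertex is $\Delta_{\Lambda_{s_3}, e_1}$, of girth $\geq 12$ by Part~(1)(b) and Lemma~\ref{lem:solid}. The link at a $\hat s_1$-vertex is $\Delta_{\Lambda_{s_1}, e_2}$; if $e_2 \in E$ this has girth $\geq 12$ directly, and otherwise $e_2$ lies in a component $\Lambda'$ of $\Lambda \setminus E$, so Part~(1)(a) together with Corollary~\ref{cor:inherit} gives the labeled $4$-wheel on $\Delta_{\Lambda_{s_1}, \Lambda'}$, and since any filling vertex for a $4$-cycle in the bipartite subgraph $\Delta_{\Lambda_{s_1}, e_2}$ would have type $\hat s_2$ or $\hat s_3$ (impossible for a vertex adjacent to two distinct vertices of the same type in a bipartite graph), no $4$-cycle exists, hence girth $\geq 6$. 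All three link bounds match the required $\CAT(1)$ inequalities, so $\Delta_{\Lambda, e_1 \cup e_2}$ is $\CAT(0)$ and therefore contractible.

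\textbf{Main obstacle.} The delicate step is proving the $6$-solidity of $e_0$ in $\Lambda$ in the inductive step of Part~(1): the enlargement lemmas only produce bowtie-freeness (girth $\geq 6$) for a two-vertex target, whereas here I must produce the stronger girth bound $\geq 12$. The strategy will be to exploit both the label $\geq 6$ on $e_0$ and the labeled $4$-wheel just established for the two components of $\Lambda \setminus E$ flanking $e_0$, in order to ``trim'' any would-be short cycle in $\Delta_{\Lambda, e_0}$ back to a cycle living inside a single dihedral apartment (which has girth $\geq 12$). I expect this to follow the pattern of the algebraic trimming arguments in Section~\ref{sec:4-cycle}, notably the proof of Proposition~\ref{prop:key}, rather than from any direct link-condition reasoning.
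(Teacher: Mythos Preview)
Your induction scheme has a fatal error: you invoke Lemma~\ref{lem:solid} to transfer $6$-solidity of edges in $E\setminus\{e_0\}$ from $\Lambda_-$ up to $\Lambda$, but Lemma~\ref{lem:solid} only goes \emph{downward} (from a graph to an induced subgraph). Solidity is a girth lower bound on a relative Artin complex, and adding vertices to the ambient Dynkin diagram can only make that girth smaller, not larger. So the entire inductive step of Part~(1) collapses: you cannot pass the $6$-solidity of any $e\in E$ from $\Lambda_-$ to $\Lambda$.

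The paper avoids this by inducting on the number of edges of $\Lambda$, not on $|E|$. This is the right variable because every link computation in the enlargement lemmas and in the $\CAT(0)$ verification involves subgraphs of the form $\Lambda\setminus\{s\}$ or components thereof, which have strictly fewer edges; the induction hypothesis then applies directly to those, and Lemma~\ref{lem:solid} (in its correct direction) lets you pass solidity further down when needed. In particular, given $e\in E$ and an adjacent edge $e'$, the link of a type-$\hat c$ vertex in $\Delta_{\Lambda,e\cup e'}$ is $\Delta_{C_1,e}$ with $C_1$ a component of $\Lambda\setminus\{c\}$, and the induction hypothesis gives $6$-solidity of $e$ in $C_1$---not in $\Lambda$. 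That is enough to make $\Delta_{\Lambda,e\cup e'}$ $\CAT(0)$.

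Your guess for the ``main obstacle'' also points in the wrong direction. The $6$-solidity of $e$ in $\Lambda$ is obtained not by algebraic trimming but by exactly the $\CAT(0)$ construction you carried out in Part~(2): once $X=\Delta_{\Lambda,e\cup e'}$ is $\CAT(0)$ with the $(\pi/3,\pi/2,\pi/6)$ angles, an embedded cycle in $\Delta_{\Lambda,e}\subset X$ avoiding type-$\hat c$ vertices is a geodesic polygon whose interior angles are $\ge 2\pi/3$ at type-$\hat a$ vertices and $\ge\pi$ at type-$\hat b$ vertices; the $\CAT(0)$ comparison inequality (\cite[II.2.12]{BridsonHaefliger1999}) then forces at least $12$ edges. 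So Part~(2) and the $6$-solidity half of Part~(1) are really the same argument, and your separate treatment creates a circularity (you use (1)(b) to prove (2), but need (2)-type geometry to prove (1)(b)) that the paper's induction on $|\text{edges}(\Lambda)|$ dissolves.
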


\begin{proof}
We first prove (1) and 	we induct on the number of edges in $\Lambda$. The base case when $\Lambda$ has only one edge follows from \cite[Lemma 6]{appel1983artin}. Now we assume the statement is true for $\Lambda$ with $\le (n-1)$ edges. Take $\Lambda$ with $n$ edges. If $E=\emptyset$, then the statement is clearly true. Now assume $E\neq\emptyset$. Let $\Lambda'$ be a component of $\Lambda\setminus E$, and let $\Upsilon$ be an arbitrary maximal linear subgraph of $\Lambda'$. By Proposition~\ref{prop:4-cycle} and Lemma~\ref{lem:connect}, we need to show $\Delta_{\Lambda,\Upsilon}$ is bowtie free.
	
	As $E\neq\emptyset$, there exists an edge $e=\overline{ab}$ with $e\in E$ such that $e\cap \Lambda'\neq\emptyset$. Suppose $a=e\cap \Lambda'$. Let $C$ be the component of $\Lambda\setminus\{b\}$ that contains $a$. Then $\Lambda'\subset C$. Let $E_C=E\cap C$. Note that components of $C\setminus E_C$ are also components of $\Lambda\setminus E$. Thus $C$ also satisfies the assumption of the proposition, but $C$ has fewer edges compared to $\Lambda$. By induction, $\Delta_{C,\Lambda'}$ satisfies the labeled 4-cycle condition. By Proposition~\ref{prop:4-cycle}, Lemma~\ref{lem:connect} and Lemma~\ref{lem:inherit}, for any linear subgraph $\Lambda''\subset \Lambda'$, $\Delta_{C,\Lambda''}$ is bowtie free.
	
	Now we go back to $\Upsilon$. Note that either $e\cap \Upsilon$ is an endpoint of $\Upsilon$, or $e\cap \Upsilon$ is an interior point of $\Upsilon$, or $e\cap \Upsilon=\emptyset$. We first consider the case $e\cap \Upsilon$ is an endpoint of $\Upsilon$. Let $c\in \Up$ be the node adjacent to $a\in e$. Let $C_e$ be the component of $\Lambda\setminus \{c\}$ that contains $e$. Note that each component of $C_e\setminus (E\cap C_e)$ is contained in a component of $\Lambda\setminus E$ (though the containment could be proper). By Lemma~\ref{cor:inherit}, $C_e$ satisfies the assumption of the proposition. As $C_e$ has fewer edges compared to $\Lambda$, by induction $e$ is $6$-solid in $C_e$. Thus by Lemma~\ref{lem:enlarge1}, Lemma~\ref{lem:inherit} and the previous paragraph, $\Delta_{\Lambda,\Upsilon}$ is bowtie free. The case of $e\cap \Upsilon$ is an interior point of $\Upsilon$, and the case of $e\cap \Upsilon=\emptyset$ can be treated similarly, and we use Lemma~\ref{lem:enlarge3} and Lemma~\ref{lem:enlarge2} instead.
	
	It remains to show for any edge $e\in E$ is 6-solid in $\Lambda$. Let $e'$ be an edge of $\Lambda$ such that $e\cap e'$ is a node, denoted $b$. Let the other endpoint of $e$ and $e'$ to be $a$ and $c$ respectively. Let $X=\Delta_{\Lambda,e\cup e'}$. Then $X$ is simply connected. We metrize triangles in $X$ as flat Euclidean triangle such that it has angle $\pi/3$ at vertex of type $\hat a$, angle $\pi/6$ at vertex of type $\hat c$, and angle $\pi/2$ at vertex of type $\hat b$. Take a vertex $x\in X$. By Lemma~\ref{lem:link}, if $x$ has type $\hat c$, then $\lk(x,X)$ admits a type-preserving isomorphism to $\Delta_{C_1,e}$ where $C_1$ is the component of $\Lambda\setminus\{c\}$ that contains $e$. As before, we know $C_1$ and $C_1\cap E$ satisfies the assumption of the proposition. Thus the induction assumption implies that $e$ is 6-solid in $C_1$, thus $\lk(x,X)$ has girth $\ge 12$. If $x$ has type $\hat a$, then $\lk(x,X)$ admits a type-preserving isomorphism to $\Delta_{C_2,e'}$ where $C_2$ is the component of $\Lambda\setminus\{a\}$ that contains $e'$. Note that $C_2$ and $C_2\cap E$ satisfy the assumptions of the proposition. Thus if $e'\in E$, then $e'$ is 6-solid in $C_2$. If $e'\notin E$, then $\Delta_{C_2,C_{e'}}$ satisfies labeled 4-cycle condition, where $C_{e'}$ is the component of $C_2\setminus (C_2\cap E)$ containing $e'$. Thus $\Delta_{C_2,e'}$ satisfies labeled 4-cycle condition, which implies that $e'$ is 3-solid in $C_2$. Thus in either case $\lk(x,X)$ has girth $\ge 6$. If $x$ has type $\hat b$, then $\lk(x,X)$ is a complete bipartite graph. Thus $X$ is locally $\CAT(0)$, hence $\CAT(0)$. By Lemma~\ref{lem:girth} (1), $e$ is 6-solid in $\Lambda$. This proves Assertion 1. Assertion 2 follows from the contractibility of $X$.
\end{proof}
%it suffice to show there does not exists an embedded edge loop $\omega$ in $X$ such that $\omega$ does not contain any vertex of type $\hat c$ and $\omega$ has $\le 11$ edges. We view such edge loop as a geodesic polygon in a $\CAT(0)$ space. Note that the angle between two adjacent edges of $\omega$ meeting at a vertex of type $\hat a$ (resp. $\hat b$) is $\ge 2\pi/3$ (resp. $\pi$). As vertices of $\omega$ alternates between type $\hat a$ vertices and type $\hat b$ vertices, by applying \cite[Chapter II.2.12 (1)]{BridsonHaefliger1999}, we know $\omega$ has $\ge$ 12 edges, as desired. 

\begin{prop}
	\label{prop:tree contractible}
	Suppose $\Lambda$ is tree Dynkin diagram. Suppose there exists a collection $E$ of open edges with label $\ge 6$ such that for each component $\Lambda'$ of $\Lambda\setminus E$ the Artin complex $\Delta_{\Lambda'}$ satisfies the labeled 4-cycle condition. Then
	\begin{enumerate}
		\item if $\Lambda$ contains at least two open edges, with one of them in $E$, then the Artin complex $\Delta_{\Lambda}$ of $\Lambda$ is contractible;
		\item if each component of $\Lambda\setminus E$ satisfies the $K(\pi,1)$-conjecture, then $A_\Lambda$ satisfies the $K(\pi,1)$-conjecture.
	\end{enumerate}
\end{prop}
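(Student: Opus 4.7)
The plan is to deduce both parts from Proposition~\ref{prop:tree} via the deformation-retraction framework of Section~\ref{subsec:homotopy}. For (1), since $\Lambda$ is a connected tree containing at least two open edges with one of them $e_1\in E$, there is an edge $e_2\neq e_1$ of $\Lambda$ sharing a vertex with $e_1$. Proposition~\ref{prop:tree}(2) then furnishes contractibility of the ``seed'' complex $\Delta_{\Lambda,e_1\cup e_2}$, and the task reduces to deformation retracting $\Delta_\Lambda$ onto this seed.

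I would prove the following strengthened statement by induction on $|\Lambda|$: for every tree $\Lambda$ satisfying the hypothesis of the proposition and every connected induced subgraph $\Lambda''\subset\Lambda$ containing a 2-edge path $e_1\cup e_2$ with $e_1\in E$, the relative complex $\Delta_{\Lambda,\Lambda''}$ is contractible. The outer induction step uses an inner induction on $|\Lambda''|$. The inner base case $\Lambda''=e_1\cup e_2$ is Proposition~\ref{prop:tree}(2). For the inner step, Lemma~\ref{lem:subgraph} applied to the inclusion $e_1\cup e_2\subset\Lambda''$ produces a vertex $s\in\Lambda''$ such that $\Lambda''\setminus\{s\}$ has a unique component $\Lambda'''$, which still contains $e_1\cup e_2$. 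Lemma~\ref{lem:dr} reduces the deformation retraction $\Delta_{\Lambda,\Lambda''}\to\Delta_{\Lambda,\Lambda'''}$ to contractibility of $\lk(x,\Delta_{\Lambda,\Lambda''})$ for some vertex $x$ of type $\hat s$, and Lemma~\ref{lem:link}(2) identifies this link with $\Delta_{\Lambda_s,\Lambda'''}$, where $\Lambda_s$ is the component of $\Lambda\setminus\{s\}$ containing $\Lambda'''$. Now $\Lambda_s$ is a strictly smaller tree which, equipped with $E\cap\Lambda_s$, still satisfies the hypothesis of the proposition: components of $\Lambda_s\setminus(E\cap\Lambda_s)$ are connected induced subgraphs of components of $\Lambda\setminus E$, so the labeled 4-wheel condition transfers by Corollary~\ref{cor:inherit}. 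Since $\Lambda'''\subset\Lambda_s$ still contains $e_1\cup e_2$, the outer induction hypothesis applied to $\Lambda_s$ gives contractibility of $\Delta_{\Lambda_s,\Lambda'''}$, completing the inner step. Specializing to $\Lambda''=\Lambda$ proves (1).

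For (2), I would induct on $|\Lambda|$ using Theorem~\ref{thm:combine}. If $\Lambda$ itself is a single component of $\Lambda\setminus E$ (i.e.\ $E$ contains no edge of $\Lambda$), the hypothesis directly gives that $A_\Lambda$ is $K(\pi,1)$. If $\Lambda$ has a single edge, then $A_\Lambda$ is a spherical dihedral Artin group and Deligne's theorem applies. Otherwise $\Lambda$ has at least two open edges with one in $E$, so (1) yields contractibility of $\Delta_\Lambda$, and Theorem~\ref{thm:combine} reduces the problem to showing that $A_{\hat s}$ is $K(\pi,1)$ for each $s\in\Lambda$. Now $A_{\hat s}$ is a direct product of the groups $A_\Theta$ over the components $\Theta$ of $\Lambda\setminus\{s\}$. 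Each such $\Theta$ is a strictly smaller tree; equipped with $E\cap\Theta$ it satisfies the hypothesis of the proposition, since each component of $\Theta\setminus(E\cap\Theta)$ is a connected induced subgraph of a component of $\Lambda\setminus E$, hence inherits the labeled 4-wheel condition via Corollary~\ref{cor:inherit} and the $K(\pi,1)$-conjecture via the fact that standard parabolic subgroups of $K(\pi,1)$ Artin groups are themselves $K(\pi,1)$ (cf.\ \cite{godelle2012k}). The outer induction hypothesis then gives that $A_\Theta$ is $K(\pi,1)$, and hence so is $A_{\hat s}$. The main challenge here is purely bookkeeping; the genuine geometric content, namely contractibility of the seed complex, is already packaged into Proposition~\ref{prop:tree}, and what remains is simply threading it through the appropriate inductive reductions.
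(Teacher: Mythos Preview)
Your proposal is correct and follows essentially the same approach as the paper: both use Proposition~\ref{prop:tree}(2) as the seed contractible complex and then deformation retract via Lemma~\ref{lem:dr} and the link identification Lemma~\ref{lem:link}. The only difference is organizational: the paper packages the inductive reduction once and for all into Proposition~\ref{prop:contractible} (with $\mathcal C_1$ the two-edge paths having an edge labeled $\ge 6$ and $\mathcal C_2$ the diagrams satisfying the hypothesis), checking its three assumptions via Proposition~\ref{prop:tree}, Corollary~\ref{cor:inherit}, and \cite{godelle2012k}, whereas you unwind that same argument directly as a double induction.
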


\begin{proof}
Let $\mathcal C_2$ be the collection of Dynkin diagrams satisfying the assumption of this proposition. Let $\mathcal C_1$ be the collection of Dynkin diagrams which are the union of two edges with one edge labeled by a number $\ge 6$. Now we verify $\mathcal C_1$ and $\mathcal C_2$ satisfy the assumptions Proposition~\ref{prop:contractible}. Assumption 1 of Proposition~\ref{prop:contractible} follows from Proposition~\ref{prop:tree}. Assumption 2 of Proposition~\ref{prop:contractible} follows from Proposition~\ref{prop:inherit}, Proposition~\ref{prop:4-cycle} and Lemma~\ref{lem:connect}. Thus Assertion 1 of the proposition follows by Proposition~\ref{prop:contractible}. Let $\Lambda$ and $\Lambda'$ be as in Assumption 3 of Proposition~\ref{prop:contractible}. Then each component of $\Lambda'$ is either a single edge, hence satisfies the $K(\pi,1)$-conjecture \cite{deligne}; or is contained in a component of $\Lambda\setminus E$, hence satisfies the $K(\pi,1)$-conjecture by \cite[Corollary 2.4]{godelle2012k} and our assumption. Thus $A_{\Lambda'}$ satisfies the $K(\pi,1)$-conjecture. This finishes the proof.
\end{proof}

\subsection{More Artin complexes with the labeled 4-cycle condition}
\label{subsec:application}
In order to apply Proposition~\ref{prop:tree contractible} to obtain concrete class of Artin groups satisfying the $K(\pi,1)$-conjecture, we need examples of Artin groups with tree Dynkin diagrams whose Artin complexes satisfy the labeled 4-cycle condition. We already know this is satisfied for spherical Artin groups, see Corollary~\ref{cor:wheel}. Now we consider another class. Recall that an Artin group is \emph{locally reducible} if any irreducible spherical standard parabolic subgroup has rank $\le 2$. A Dynkin diagram is \emph{locally reducible} if the associated Artin group is locally reducible.

\begin{lem}
	\label{lem:locally reducible}
Suppose $\Lambda$ is locally reducible. If an edge $e\subset\Lambda$ has label $m$, then it is $m$-solid.
\end{lem}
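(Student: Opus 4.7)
The plan is to induct on the number of vertices of $\Lambda$ and realize $\Delta_{\Lambda, e}$ as the $1$-skeleton of a piecewise Euclidean $\CAT(0)$ $2$-complex, from which the girth bound $\ge 2m$ follows by the same angle comparison argument used in the proof of Lemma~\ref{lem:girth}. The base case $\Lambda = e$ is immediate: $\Delta_{e,e}$ is the spherical Deligne complex of the dihedral Artin group $A_{I_2(m)}$, which is a $2m$-gon by Deligne \cite{deligne} (or Appel--Schupp \cite[Lemma 6]{appel1983artin}).

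For the inductive step, write $e = \overline{ab}$. Since $\Lambda$ is locally reducible, for every $c \in S \setminus \{a,b\}$ the subgroup $A_{\{a,b,c\}}$ is not an irreducible spherical parabolic of rank $3$, so either $c$ commutes with $a$, or $c$ commutes with $b$, or $A_{\{a,b,c\}}$ is of infinite type. I would build a $2$-complex $X$ by starting with $\Delta_{\Lambda, e}$ and attaching, for each triangular simplex of $\Delta_{\Lambda, e \cup \{c\}}$ meeting $\Delta_{\Lambda, e}$, a flat Euclidean triangle whose angles $(\pi/m,\pi/m',\pi/2)$ are prescribed by the labels on $\{a,b,c\}$ (with $m'$ the label of the edge from $c$ to $a$ or to $b$, and angle $\pi/2$ when the relevant edge is absent, i.e.\ when the associated generators commute). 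By construction $\Delta_{\Lambda, e}$ is the subgraph of $X^{(1)}$ spanned by vertices of types $\hat a$ and $\hat b$.

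The local $\CAT(0)$ property of $X$ is then verified via Lemma~\ref{lem:link}: at a type-$\hat a$ or type-$\hat b$ vertex, the link is a graph whose girth is bounded below by invoking the inductive hypothesis on the strictly smaller locally reducible diagram arising from the component of $\Lambda \setminus \{a\}$ (respectively $\Lambda \setminus \{b\}$) containing the other endpoint of $e$, combined with Corollary~\ref{cor:inherit}; at a type-$\hat c$ vertex the link decomposes as a join of complete bipartite graphs by Lemma~\ref{lem:link}(3) (since $\{a,b,c\}$ is reducible), hence is $\CAT(1)$.

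The main obstacle will be establishing the global simple connectivity of $X$, so that the local $\CAT(0)$ condition globalizes. This should follow from Lemma~\ref{lem:relative sc} applied to each relative complex $\Delta_{\Lambda, e \cup \{c\}}$, together with a standard Mayer--Vietoris style argument showing the $2$-dimensional slabs attach consistently along the common $1$-subcomplex $\Delta_{\Lambda, e}$. Once $X$ is $\CAT(0)$, an embedded cycle of length $< 2m$ in $\Delta_{\Lambda, e} \subseteq X^{(1)}$ would bound a locally geodesic polygon with corner angles $\ge (m-1)\pi/m$ at type-$\hat a$ corners and $\ge \pi/2$ at type-$\hat b$ corners, in contradiction with the Gauss--Bonnet formula \eqref{eq:GB} applied to a minimal singular disk diagram filling the cycle. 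This contradiction yields the required $m$-solid property.
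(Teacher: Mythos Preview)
Your construction is more elaborate than necessary and has genuine gaps. By gluing in triangles for \emph{every} $c\in S\setminus\{a,b\}$, you produce a hybrid $2$-complex $X$ that is not itself a relative Artin complex, and this is where the argument breaks down. First, the link analysis at a type-$\hat c$ vertex is misjustified: you only established that $\{a,b,c\}$ is either reducible \emph{or} of infinite irreducible type, and in the latter case Lemma~\ref{lem:link}(3) gives no join decomposition. (The link there is actually $\Delta_{\Lambda\setminus\{c\},e}$, which could be handled by induction, but not for the reason you gave.) More seriously, at a type-$\hat a$ vertex the link in your $X$ is a union $\bigcup_c \Delta_{\Lambda\setminus\{a\},\{b,c\}}$ glued only along the common type-$\hat b$ vertices; this is \emph{not} a relative Artin complex, so neither the inductive hypothesis nor Corollary~\ref{cor:inherit} applies to it, and you have given no argument that it is $\CAT(1)$. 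Finally, the simple connectivity of $X$, which you flag as the main obstacle, is indeed not settled by a vague Mayer--Vietoris gesture.

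The paper sidesteps all of this by choosing a \emph{single} adjacent edge. Pick any edge $e_2=\overline{s_2 s_3}$ sharing an endpoint with $e_1=\overline{s_1 s_2}$ and set $X=\Delta_{\Lambda,e_1\cup e_2}$. This $X$ is a genuine relative Artin complex with three vertex types, hence simply connected by Lemma~\ref{lem:relative sc}. Metrize its triangles with angles $(\pi/m_1,\pi/m_2,\pi/m_3)$ at the vertices of types $\hat s_3,\hat s_1,\hat s_2$, where $m_1,m_2$ are the labels of $e_1,e_2$ and $m_3$ is the label of $\overline{s_1 s_3}$ (or $2$ if absent); local reducibility gives $\tfrac{1}{m_1}+\tfrac{1}{m_2}+\tfrac{1}{m_3}\le 1$. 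Each vertex link is, by Lemma~\ref{lem:link}, a relative Artin complex for a strictly smaller locally reducible diagram, so the inductive hypothesis yields $\CAT(1)$ links directly. Then $X$ is $\CAT(0)$ (or $\CAT(-1)$), and Lemma~\ref{lem:girth}(1) applied to the induced subgraph on types $\hat s_1,\hat s_2$ gives the girth bound $\ge 2m_1$.
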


\begin{proof}
We prove this by induction on the number of edges in $\Lambda$. The case when $\Lambda$ has only one edge follows from \cite[Lemma 6]{appel1983artin}. For the general case, let $e_1\in \Lambda$ be an edge labeled by $m_1$. If $e_1$ is an isolated edge, then we are done by \cite[Lemma 6]{appel1983artin}. If $e_1$ is not an isolated edge, let $e_2$ be a different edge intersecting $e_1$ in a node. Suppose $e_1=\overline{s_1s_2}$ and $e_2=\overline{s_2s_3}$. Let $X=\Delta_{\Lambda,e_1\cup e_2}$. We metrize triangles with metrics of constant curvature $1,0$ or $-1$ such that each triangle has angle $\pi/m_1$ at the vertex of type $\hat s_3$, angle $\pi/m_2$ at the vertex of type $\hat s_1$ and angle $\pi/m_3$ at the vertex of type $\hat s_2$ -- here $m_2$ is the label of $e_2$, and $m_3=2$ (if $s_1$ and $s_3$ are not adjacent) or $m_3$ is the label of $\overline{s_1s_3}$. As $\Lambda$ is locally reducible, we have $\frac{1}{m_1}+\frac{1}{m_2}+\frac{1}{m_3}\le 1$, thus each triangle is either flat or hyperbolic. For a vertex $x\in X$ of type $\hat s_1$, $\lk(x,X)$ admits a type-preserving isomorphism to $\Delta_{\Lambda\setminus\{s_1\},e_2}$. By induction assumption, $\lk(x,X)$ has girth $\ge 2m_2$, thus it is $\CAT(1)$ with the angular metric induced from $X$. Similarly the link of each vertex of $X$, with the angular metric, is $\CAT(1)$. As $X$ is simply connected by Lemma~\ref{lem:relative sc}, $X$ is either $\CAT(0)$ or $\CAT(-1)$. We need to show $\Delta_{\Lambda,e_1}$ has girth $\ge 2 m_1$. Note that $\Delta_{\Lambda,e_1}$ can be identify with the induced subgraph of $X$ spanned by vertices of type $\hat s_1$ and $\hat s_2$. Thus we are done by Assertion 1 of Lemma~\ref{lem:girth}.
\end{proof}

\begin{cor}
	\label{cor:locally reducible}
Suppose $\Lambda$ is a locally reducible tree. Then $\Delta_\Lambda$ satisfies the labeled 4-cycle condition. 
\end{cor}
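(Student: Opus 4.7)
The plan is to induct on the number of vertices of $\Lambda$. By Proposition~\ref{prop:4-cycle} and Lemma~\ref{lem:connect}, it suffices to prove that for every linear subgraph $\Lambda'' = s_1 s_2 \cdots s_n$ of $\Lambda$, the relative Artin complex $\Delta_{\Lambda, \Lambda''}$ is bowtie free. The case $n \le 2$ is immediate from Lemma~\ref{lem:locally reducible}: since every edge of $\Lambda$ has label $m \ge 3$, the graph $\Delta_{\Lambda, \overline{s_1 s_2}}$ has girth $\ge 2m \ge 6$, ruling out bowties.

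For $n \ge 3$, the idea is to adapt Lemma~\ref{lem:enlarge1} (and its tripod/extension variants, Lemmas~\ref{lem:enlarge2} and \ref{lem:enlarge3}) to the locally reducible setting. The key modification is that instead of giving the auxiliary 2-dimensional complex $X = \Delta_{\Lambda, \{s_1, s_2, s_3\}}$ the fixed flat metric with triangle angles $(\pi/6, \pi/2, \pi/3)$, one metrizes its triangles with angles $(\pi/m_1, \pi/2, \pi/m_3)$, where $m_1$ and $m_3$ denote the labels of $\overline{s_2 s_3}$ and $\overline{s_1 s_2}$. Because $\Lambda$ is a locally reducible tree, $s_1$ and $s_3$ are not adjacent and the triple $\{s_1,s_2,s_3\}$ is not spherical of rank $3$, so $\frac{1}{m_1} + \frac{1}{m_3} \le \frac{1}{2}$; the triangles are thus flat or hyperbolic. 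Lemma~\ref{lem:locally reducible}, applied to the components of $\Lambda \setminus \{s_i\}$ containing the other two vertices (which are still locally reducible trees), supplies the link girth $\ge 2m_i$ needed to make $X$ locally $\CAT(0)$ or $\CAT(-1)$, and simple-connectedness from Lemma~\ref{lem:relative sc} upgrades this to a global $\CAT(0)$ or $\CAT(-1)$ structure on $X$.

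With $X$ in place, bowtie freeness of $\Delta_{\Lambda, \Lambda''}$ follows by essentially the same strategy as in Lemma~\ref{lem:enlarge1}: one invokes Lemma~\ref{lem:connected intersection} when $n = 3$, or Corollary~\ref{cor:connected intersection} when $n \ge 4$, where the link bowtie freeness required at vertices of type $\hat s_1$ and $\hat s_n$ is supplied by the inductive hypothesis applied to the smaller locally reducible trees $\Lambda \setminus \{s_1\}$ and $\Lambda \setminus \{s_n\}$, and the connectedness of the intersection of two vertex-adjacency subcomplexes in $X$ is derived from their convexity in the $\CAT(0)$ geometry of $X$. The main obstacle is adapting Lemmas~\ref{lem:convex}, \ref{lem:4convexsubsets}, and \ref{lem:girth} to the present setting where the triangle angles depend on the edge labels of $\Lambda$ rather than being fixed at $(\pi/6, \pi/2, \pi/3)$; however, the proofs of those lemmas rely only on the $\CAT(0)$ (or $\CAT(-1)$) structure together with the girth of links, so they carry over with routine modifications once the link girths from Lemma~\ref{lem:locally reducible} are matched to the chosen triangle angles, completing the induction.
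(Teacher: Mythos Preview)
Your overall strategy matches the paper's: induct on $|\Lambda|$, reduce via Proposition~\ref{prop:4-cycle} and Lemma~\ref{lem:connect} to linear subgraphs, put a $\CAT(0)$ (or $\CAT(-1)$) metric on $X=\Delta_{\Lambda,\{s_1,s_2,s_3\}}$ with angles coming from the edge labels (exactly as in Lemma~\ref{lem:locally reducible}), and then feed convexity of adjacency subcomplexes into Corollary~\ref{cor:connected intersection}. The references to Lemmas~\ref{lem:enlarge2}, \ref{lem:enlarge3} and \ref{lem:4convexsubsets} are unnecessary here---the paper's proof of Corollary~\ref{cor:locally reducible} uses none of them.

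The genuine gap is the assertion that Lemma~\ref{lem:convex} ``carries over with routine modifications.'' In Lemma~\ref{lem:convex} the angle at a type $\hat s_1$ vertex is $\pi/3$, so local convexity of $Y_1$ amounts to ruling out a single two-edge shortcut in $\lk(y,X)$, and this is dispatched by one application of bowtie freeness in $\lk(y,Y)$. In the locally reducible setting the angle at $\hat s_1$ is $\pi/m$ where $m$ is the label of $\overline{s_2s_3}$, so you must rule out shortcuts of up to $m-1$ edges; the two-edge bowtie argument does not propagate along such a path (the interior vertices are not known to be comparable with $y_1$), and there is no evident way to make it do so. The paper does \emph{not} adapt Lemma~\ref{lem:convex} here. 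Instead it proves, by a separate induction on $|\Lambda|$, a convexity claim: it introduces the next vertex $s_4$ along the path from $s_3$ to $s$, passes to the auxiliary $\CAT(0)$ complex $X'=\Delta_{\Lambda\setminus\{s_1\},\{s_2,s_3,s_4\}}$, uses the inductive hypothesis to get that the adjacency subcomplex $Y'\subset X'$ is convex with $\lk(v,Y')=\lk(v,X')$ at type $\hat s_4$ vertices, and then applies Assertion~2 of Lemma~\ref{lem:girth} to conclude that geodesics of length $<m$ in $\lk(y,X)$ between vertices of $\lk(y,Y_1)$ stay in $\lk(y,Y_1)$. This recursive use of Lemma~\ref{lem:girth} is the missing idea; note that Lemma~\ref{lem:girth} itself is already stated for arbitrary angles and needs no adaptation.
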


\begin{proof}
We first claim that for linear subgraph $\Lambda'\subset \Lambda$ with three nodes and any node $s\in \Lambda\setminus\Lambda'$ such that $\{s,\Lambda'\}$ is contained in a linear subgraph of $\Lambda$, the induced subcomplex $Y$ of $X:=\Delta_{\Lambda,\Lambda'}$ spanned by vertices which are adjacent to a vertex in $y_0\in \Delta_\Lambda$ of type $\hat s$ is a convex subcomplex of $X$. Here we view $\Delta_{\Lambda,\Lambda'}$ as a subcomplex of $\Delta_\Lambda$.

We prove the claim by induction on the number of nodes in $\Lambda$. The base case when $\Lambda$ has three nodes is trivial. For the induction step, let consecutive nodes of $\Lambda'$ be $\{s_1,s_2,s_3\}$ and we assume without loss of generality that the shortest path in $\Lambda$ from $s_1$ to $s$ passes through $s_2$ and $s_3$. We endow $X$ with the metric as in Lemma~\ref{lem:locally reducible}. Then $X$ is $\CAT(0)$ by Lemma~\ref{lem:locally reducible}. It suffices to show $Y$ is locally convex in $X$ around each vertex of $Y$. By an identical argument as in the proof of Lemma~\ref{lem:convex}, we know $\lk(x,Y)=\lk(x,X)$ when $x$ is a vertex of type $\hat s_3$ in $Y$. Clearly $Y$ is locally convex in $X$ around such $x$. The local convexity of $Y$ around a vertex of type $\hat s_2$ is also identical to the proof of Lemma~\ref{lem:convex}. 

It remains to consider $x\in Y$ is of type $\hat s_1$. Let $m$ be the label of $\overline{s_2s_3}$. It suffices to show that if two vertices in $\lk(x,Y)$ are joined by edge path in $\lk(x,X)$ with $\le m-1$ edges, then this edge path is contained in $\lk(x,Y)$.

Let $\Lambda_1$ be the component of $\Lambda\setminus\{s_1\}$ containing the node $s_2$. Then Lemma~\ref{lem:link} implies that $\lk(x,X)$ admits a type-preserving isomorphism to $\Delta_{\Lambda_1,\overline{s_2s_3}}$, moreover, as $y_0\in \lk(x,\Delta_\Lambda)$, we can also view $y_0$ as a vertex in $\Delta_{\Lambda_1}$.
Let $s_4$ be the node adjacent to $s_3$ in the shortest path of $\Lambda$ from $s_3$ to $s$ (it is possible that $s_4=s$). Let $\Lambda_2$ be the subgraph of $\Lambda_1$ spanned by $\{s_2,s_3,s_4\}$. Then we view $\Delta_{\Lambda_1,\overline{s_2s_3}}$, hence $\lk(x,X)$, as the induced subcomplex of $\Delta_{\Lambda_1,\Lambda_2}$ spanned by vertices of type $\hat s_2$ and $\hat s_3$. Let $X'=\Delta_{\Lambda_1,\Lambda_2}$, viewed as a subcomplex of $\Delta_{\Lambda_1}$. 
Let $Y'$ be the induced subcomplex of $X'$ spanned by vertices of $X'$ that are adjacent to $y_0$. We metrize $X'$ as a $\CAT(0)$ space as in Lemma~\ref{lem:locally reducible}.
By induction assumption, $Y'$ is a convex subcomplex of $X'$. Moreover, by the proof of Lemma~\ref{lem:convex}, $\lk(x',Y')=\lk(x',X')$ for any vertex $x'\in Y'$ of type $\hat s_4$. As we identified $\lk(x,X)$ with the subcomplex of $X'$ spanned by vertices of type $\hat s_2$ and $\hat s_3$, under such identification, $\lk(x,Y)$ is identified with the induced subcomplex of $Y'$ spanned by vertices of type $\hat s_2$ and $\hat s_3$. Now by Lemma~\ref{lem:girth}, the statement in the previous sentence holds true. This finishes the proof of the claim.

For the corollary, the proof is by induction on the number of nodes in $\Lambda$. The base case when $\Lambda$ has two nodes follow from \cite[Lemma 6]{appel1983artin}. For the induction step, 
by Lemma~\ref{lem:connect} and Proposition~\ref{prop:4-cycle}, we need to show $\Delta_{\Lambda,\Lambda'}$ is bowtie free for each maximal linear subgraph $\Lambda'$ of $\Lambda$. For this we will show the two assumptions of Corollary~\ref{cor:connected intersection} are satisfied. The first assumption follows from the induction assumption, Lemma~\ref{lem:link}, Lemma~\ref{lem:connect} and Proposition~\ref{prop:4-cycle}. For the second assumption, let $\{s_i\}_{i=1}^n$ be consecutive nodes in $\Lambda'$ and let $\Lambda''$ be the linear subgraph spanned by $\{s_1,s_2,s_3\}$. Let $y_1$ and $y_2$ be two vertices of type $\hat s_n$ in $\Delta_{\Lambda,\Lambda'}$.
We view $\lk(y_1,\Delta_{\Lambda,\Lambda'})$ as a subcomplex of 
$\Delta_{\Lambda,\Lambda'}$.
Let 
$$
C_i=\lk(y_i,\Delta_{\Lambda,\Lambda'})\cap \Delta_{\Lambda,\Lambda''}
$$
for $i=1,2$ (we view $\Delta_{\Lambda,\Lambda''}$ as a subcomplex of $\Delta_{\Lambda,\Lambda'}$). By Lemma~\ref{lem:locally reducible}, we metrize $\Delta_{\Lambda,\Lambda''}$ as a $\CAT(0)$ complex, and the above claim implies that $C_i$ is a convex subcomplex of $\Delta_{\Lambda,\Lambda''}$ for $i=1,2$. Then $C_1\cap C_2$ is connected if it is not empty. Thus Assumption 2 of Corollary~\ref{cor:connected intersection} follows.
\end{proof}

We have yet another class of Artin groups whose Artin complexes satisfy the bowtie free condition, which can be deduced from work of Haettel \cite{haettel2021lattices}.
\begin{prop}
	\label{prop:widetilde C}
Suppose $A_{\Lambda}$ is an affine Artin group of type $\widetilde C_n$. Then $\Delta_{\Lambda}$ is bowtie free.
\end{prop}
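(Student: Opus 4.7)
The plan is to apply the main construction of Haettel \cite{haettel2021lattices} to the Artin group of type $\widetilde C_n$. Since the Dynkin diagram $\Lambda = \widetilde C_n$ is a linear graph, it is an admissible linear subgraph of itself, so after fixing an orientation and labelling its consecutive vertices $s_1,\dots,s_{n+1}$, Lemma~\ref{lem:poset} endows the vertex set $V$ of $\Delta_\Lambda$ with a graded poset structure $(V,\le)$. The proposition amounts to showing that $(V,\le)$ is bowtie free.

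The input from \cite{haettel2021lattices} is that the three assumptions of Theorem~\ref{thm:contractible} are verified for $\Delta_\Lambda$ in the $\widetilde C_n$ case. Simple connectivity is Lemma~\ref{lem:relative sc}; the partial order condition on vertex-links is immediate from Lemma~\ref{lem:poset} together with the join structure in Lemma~\ref{lem:link}; and the bowtie free condition on links reduces, using Lemma~\ref{lem:link}, to the lattice property for links of type $\hat s_1$ and $\hat s_{n+1}$, which correspond to spherical Deligne complexes of type $B_n$ (equivalently $C_n$) and are covered by Haettel's injective metric construction in \cite{haettel2021lattices} (together with \cite[Lemma 4.1]{charney2004deligne} for the interior links that split as a join). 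Concretely, following the framework of Lemma~\ref{lem:4-cycle}, this data produces a $\mathbb Z$-equivariant weakly ordered set $P = V \times \mathbb Z$, generated by the covering shift $\varphi$, whose intervals $[x,\varphi(x)]$ are lattices and whose quotient by $\mathbb Z$ is the $1$-skeleton of $\Delta_\Lambda$ together with the poset $(V,\le)$.

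Given this lattice structure on $(P,\le_t)$, the bowtie free property of $(V,\le)$ follows exactly as in the last step of the proof of Lemma~\ref{lem:4-cycle}. For four distinct vertices $x_1,x_2,y_1,y_2 \in V$ with $x_i<y_j$ for all $i,j\in\{1,2\}$, I would lift the $x_i$ to elements of $P$ sitting inside an interval of the form $[\widetilde a,\varphi(\widetilde a)]$ which also contains lifts of $y_1$ and $y_2$, take the join $\widetilde z = \widetilde x_1 \vee \widetilde x_2$ provided by the lattice property of the interval (cf.\ Lemma~\ref{lem:posets}), and project back to $V$ to obtain $z$ with $x_i \le z \le y_j$ for all $i,j$.

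The main obstacle is arranging the four elements inside a common lattice interval of $P$; this is the same technical step that drives the proof of Lemma~\ref{lem:4-cycle} and is handled by using that under the action $\mathbb Z \curvearrowright P$ any two elements are bounded above and below by translates of each other, so a single interval $[\widetilde a,\varphi^k(\widetilde a)]$ contains the required lifts. Once inside this interval the join/meet computations are formal, and bowtie freeness of $(V,\le)$ follows.
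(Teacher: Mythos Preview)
Your approach has a genuine gap: the three assumptions of Theorem~\ref{thm:contractible} are \emph{not} satisfied by $\Delta_\Lambda$ for $\Lambda=\widetilde C_n$, so the lattice machinery of Lemma~\ref{lem:4-cycle} is not available. The problem is Assumption~3. Theorem~\ref{thm:contractible} requires a \emph{cyclic} order on $S$, and for a vertex $x$ of type $\hat s_i$ the relation $<_x$ on $\lk(x,\Delta_\Lambda)$ uses the induced order $s_{i+1}<\cdots<s_{n+1}<s_1<\cdots<s_{i-1}$. For an interior vertex, say $i=2$, the link is a join $K_1*K_2$ with $K_1$ the discrete set of type~$\hat s_1$ vertices and $K_2$ the Deligne complex on $\{s_3,\dots,s_{n+1}\}$; in the cyclic-induced order the $K_1$ vertices become \emph{maximal}. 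Now pick distinct $a_1,a_2\in K_2$ of type $\hat s_{n+1}$ and distinct $b_1,b_2\in K_1$: the join structure forces $a_i<_x b_j$ for all $i,j$, but any $z$ with $a_i\le z\le b_j$ must have type $\hat s_{n+1}$ or $\hat s_1$, forcing $z=a_1=a_2$ or $z=b_1=b_2$. So the link is not bowtie free, the intervals $[x,\varphi(x)]$ in your $P=V\times\mathbb Z$ are not lattices, and the argument collapses. Your appeal to \cite[Lemma~4.1]{charney2004deligne} for interior links only gives bowtie freeness with the \emph{linear} order, which is a different statement.

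The paper avoids this by using Lemma~\ref{lem:bowtie free criterion} instead of Theorem~\ref{thm:contractible}. That criterion only requires bowtie freeness of the \emph{extremal} links (types $\hat s_1$ and $\hat s_{n+1}$), which are honest type~$B_n$ Deligne complexes and are handled by \cite[Proposition~6.6]{haettel2021lattices}. For the remaining $4$-cycle condition (Assumption~2 of Lemma~\ref{lem:bowtie free criterion}), the paper does not work with $\Delta_\Lambda^{(1)}$ directly but with a thickened graph $Y$ on the same vertex set, in which $x_1,x_2$ are declared adjacent whenever there exist $x$ of type $\hat s_1$ and $y$ of type $\hat s_{n+1}$ with $x\le\{x_1,x_2\}\le y$. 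Haettel's \cite[Proposition~7.5]{haettel2021lattices} shows $Y$ is Helly, hence satisfies the $4$-wheel condition, and one checks that the resulting centre $z$ is adjacent in $\Delta_\Lambda$ to all four corners of the original $4$-cycle. So the correct input from Haettel for $\widetilde C_n$ is this Helly graph, not the cyclic lattice framework of Theorem~\ref{thm:contractible}.
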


\begin{proof}
We verify the assumptions of Lemma~\ref{lem:bowtie free criterion}. Assumption 1 is a direct consequence of \cite[Proposition 6.6]{haettel2021lattices}. Let $x_1y_1x_2y_2$ be a 4-cycle in $\Delta_{\Lambda}$ as in Assumption 2 of Lemma~\ref{lem:bowtie free criterion}. Let $Y$ be the graph defined as follows. The vertices of $Y$ are vertices of $\Delta_{\Lambda}$. Two vertices $x_1,x_2\in \Delta_{\Lambda}$ are adjacent in $Y$ if there exist $x\in \Delta_{\Lambda}$ of type $\hat s_1$ and $y\in \Delta_{\Lambda}$ of type $\hat s_n$ such that $x\le \{x_1,x_2\}\le y$. It is shown in \cite[Proposition 7.5]{haettel2021lattices} that $Y$ is a Helly graph. As any Helly satisfies the 4-wheel condition (see e.g. \cite[Proposition 3.25]{chalopin2020weakly}), there exists $z\in Y$ such that $z$ is adjacent to each of $\{x_1,y_1,x_2,y_2\}$ in $Y$. As $x_1,x_2$ are of type $\hat s_1$ and $y_1,y_2$ are of type $\hat s_n$, by the definition of edges in $Y$ we know $z$ is also adjacent to each of $\{x_1,y_1,x_2,y_2\}$ in $\Delta_\Lambda$, as required.
\end{proof}

Locally reducible Artin groups are known to satisfy the $K(\pi,1)$-conjecture \cite{charney2000tits}, and Artin groups of type $\widetilde C_n$ are known to satisfy the $K(\pi,1)$-conjecture \cite{okonek1979k}.
The following is a combination of Corollary~\ref{cor:wheel}, Corollary~\ref{cor:locally reducible}, Proposition~\ref{prop:widetilde C} and Proposition~\ref{prop:tree contractible}.

\begin{thm}
	\label{thm:combine1}
	Suppose $\Lambda$ is a tree Dynkin diagram. Suppose there exists a collection $E$ of edges with label $\ge 6$ such that each component of $\Lambda\setminus E$ is either spherical, or locally reducible, or of type $\widetilde C_n$. Then $A_\Lambda$ satisfies the $K(\pi,1)$-conjecture.
\end{thm}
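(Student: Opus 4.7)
The plan is to deduce this directly from Proposition~\ref{prop:tree contractible}, by verifying its two hypotheses on $(\Lambda, E)$: (i) for each component $\Lambda'$ of $\Lambda\setminus E$, the Artin complex $\Delta_{\Lambda'}$ satisfies the labeled 4-wheel condition; and (ii) for each such component $\Lambda'$, the Artin group $A_{\Lambda'}$ satisfies the $K(\pi,1)$-conjecture. Since $\Lambda$ is a tree and the components of $\Lambda\setminus E$ are connected subtrees, each component is itself a tree Dynkin diagram, which is the setting in which the labeled 4-wheel condition is defined.

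For (i), I would argue by cases on the three possibilities for a component $\Lambda'$. If $\Lambda'$ is spherical, then since $\Lambda'$ is connected it is irreducible spherical, and Corollary~\ref{cor:wheel} gives the labeled 4-wheel condition for $\Delta_{\Lambda'}$. If $\Lambda'$ is locally reducible, Corollary~\ref{cor:locally reducible} applies directly. If $\Lambda'$ is of type $\widetilde C_n$, then $\Lambda'$ is a linear graph, and Proposition~\ref{prop:widetilde C} asserts that $\Delta_{\Lambda'}$ is bowtie free; for a linear Dynkin diagram, Lemma~\ref{lem:connect} identifies the bowtie free condition with the labeled 4-wheel condition, so $\Delta_{\Lambda'}$ satisfies the latter as well.

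For (ii), the spherical case is Deligne's theorem \cite{deligne}, the locally reducible case is covered by the $K(\pi,1)$-conjecture for 2-dimensional Artin groups \cite{CharneyDavis} (locally reducible implies 2-dimensional in the Charney--Davis sense), and the $\widetilde C_n$ case is known from \cite{okonek1979k} (or alternatively from \cite{CharneyDavis,charney2004deligne,paolini2021proof,haettel2021lattices}).

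With both hypotheses verified, Proposition~\ref{prop:tree contractible}(2) yields that $A_\Lambda$ satisfies the $K(\pi,1)$-conjecture. The only even mildly non-routine step is the identification in the $\widetilde C_n$ case, where one must invoke Lemma~\ref{lem:connect} to translate Haettel's bowtie free statement (in the linear setting) into the labeled 4-wheel condition; everything else is a direct appeal to results already assembled in the preceding sections.
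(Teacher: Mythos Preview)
Your proof is correct and matches the paper's approach exactly: the paper simply states that the theorem is a combination of Corollary~\ref{cor:wheel}, Corollary~\ref{cor:locally reducible}, Proposition~\ref{prop:widetilde C}, and Proposition~\ref{prop:tree contractible}, and you have unpacked precisely how these combine, including the use of Lemma~\ref{lem:connect} to pass from bowtie free to labeled 4-wheel in the $\widetilde C_n$ case.

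One small correction: your parenthetical ``locally reducible implies 2-dimensional in the Charney--Davis sense'' is not accurate. Locally reducible only bounds the rank of \emph{irreducible} spherical parabolics by $2$, whereas $2$-dimensional bounds the rank of \emph{all} spherical parabolics; a locally reducible tree Dynkin diagram can still have reducible spherical subsets of rank $\ge 3$. The $K(\pi,1)$-conjecture for locally reducible Artin groups is nonetheless known, and the paper cites \cite{charney2000tits} for this; you should cite that rather than \cite{CharneyDavis}. This does not affect the validity of your argument.
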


The following follows from Corollary~\ref{cor:algebraic}, Corollary~\ref{cor:wheel}, Corollary~\ref{cor:locally reducible}, Proposition~\ref{prop:tree contractible}, Proposition~\ref{prop:widetilde C} and the main theorem of \cite{paolini2021proof}.

\begin{thm}
	\label{thm:combine2}
Suppose $\Lambda$ is a tree Dynkin diagram. Suppose there exists a collection $E$ of open edges with label $\ge 6$ such that each component of $\Lambda\setminus E$ is either spherical, or affine, or locally reducible. Suppose  the two assumptions of Corollary~\ref{cor:algebraic} hold for affine Artin groups of type $\widetilde B$, $\widetilde D$, $\widetilde E$ and $\widetilde F$.
Then $A_\Lambda$ satisfies the $K(\pi,1)$-conjecture.
\end{thm}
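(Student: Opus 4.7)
The plan is to apply Proposition~\ref{prop:tree contractible}(2). First I normalize $E$ by enlarging it to contain every edge of $\Lambda$ with label $\ge 6$. This preserves the hypothesis because any proper connected subdiagram of a spherical diagram is spherical, of an affine diagram is spherical, and of a locally reducible diagram is locally reducible; the additional cut components therefore remain of one of the allowed types. After this step no component of $\Lambda\setminus E$ contains an internal edge of label $\ge 6$, so the only affine components that can occur are those of types $\widetilde B_n, \widetilde C_n, \widetilde D_n, \widetilde E_6, \widetilde E_7, \widetilde E_8, \widetilde F_4$; the type $\widetilde A_n$ is ruled out because it is not a tree, and $\widetilde G_2$ because it carries a label-$6$ edge.

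I then verify the two hypotheses of Proposition~\ref{prop:tree contractible}(2) for every component $\Lambda'$ of $\Lambda\setminus E$. The $K(\pi,1)$-conjecture for $A_{\Lambda'}$ holds: by Deligne \cite{deligne} if $\Lambda'$ is spherical, by Paolini--Salvetti \cite{paolini2021proof} if $\Lambda'$ is affine, and by Charney--Davis if $\Lambda'$ is locally reducible. The labeled 4-wheel condition for $\Delta_{\Lambda'}$ is Corollary~\ref{cor:wheel} in the spherical case and Corollary~\ref{cor:locally reducible} in the locally reducible case. For $\Lambda'$ of type $\widetilde C_n$, Proposition~\ref{prop:widetilde C} gives that $\Delta_{\Lambda'}$ is bowtie free; since $\widetilde C_n$ is itself a linear subgraph of itself and linear subgraphs of trees are automatically admissible, Lemma~\ref{lem:connect} converts this to the labeled 4-wheel condition.

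The remaining cases are the non-linear affine types $\widetilde B_n, \widetilde D_n, \widetilde E_6, \widetilde E_7, \widetilde E_8, \widetilde F_4$. Here I invoke Proposition~\ref{prop:4-cycle}: the labeled 4-wheel condition on $\Delta_{\Lambda'}$ follows once it holds for $\Delta_{\Lambda',\Lambda''}$ for every maximal linear subgraph $\Lambda''\subset\Lambda'$. Each such $\Lambda''$ is admissible in the tree $\Lambda'$, so Lemma~\ref{lem:connect} reduces the problem to bowtie freeness of $\Delta_{\Lambda',\Lambda''}$. Since $\Lambda'$ is almost spherical (being irreducible affine), Corollary~\ref{cor:algebraic} yields the bowtie free property, provided its two algebraic assumptions hold for $A_{\Lambda'}$; these assumptions are granted for the four families in question by the hypothesis of the theorem. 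The genuine content is thus entirely encapsulated in those two algebraic statements---the parabolic-intersection property and the conjugator-commutation property for the central powers $c_{\mathsf{P}_1},c_{\mathsf{P}_2}$---and once they are taken as input, the rest is a bookkeeping assembly of the tools already developed in the preceding sections.
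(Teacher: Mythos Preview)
Your proof is correct and follows essentially the same approach as the paper, which simply cites Corollary~\ref{cor:algebraic}, Corollary~\ref{cor:wheel}, Corollary~\ref{cor:locally reducible}, Proposition~\ref{prop:tree contractible}, Proposition~\ref{prop:widetilde C} and \cite{paolini2021proof} without further elaboration. Your normalization step (enlarging $E$ to contain all label-$\ge 6$ edges) is a clean way to dispose of $\widetilde G_2$; alternatively one can note that $\widetilde G_2$ is itself a locally reducible tree, so Corollary~\ref{cor:locally reducible} already covers it.
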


We end this section with the following conjecture.

\begin{conj}
Suppose $\Lambda$ is a tree Dynkin diagram. Then the Artin complex $\Delta_{\Lambda}$ satisfies the labeled 4-cycle condition.
\end{conj}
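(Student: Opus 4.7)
The plan is to proceed by induction on the number of vertices of $\Lambda$, using Corollary~\ref{cor:wheel} (spherical trees) and Corollary~\ref{cor:locally reducible} (locally reducible trees) as base cases. For the inductive step, Proposition~\ref{prop:4-cycle} together with Lemma~\ref{lem:connect} reduce the labeled 4-wheel condition on $\Delta_\Lambda$ to the bowtie free property of $\Delta_{\Lambda,\Lambda'}$ for every maximal linear subgraph $\Lambda'\subset\Lambda$. Fix such a $\Lambda'$ with consecutive vertices $\{s_i\}_{i=1}^n$ and verify the two hypotheses of Lemma~\ref{lem:bowtie free criterion}. Hypothesis (1), that links of vertices of type $\hat s_1$ or $\hat s_n$ are bowtie free posets, follows from the induction hypothesis: by Lemma~\ref{lem:link}, such a link is type-isomorphic to a relative Artin complex over a strictly smaller tree, to which the induction hypothesis and Proposition~\ref{prop:inherit} apply.

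The substantive content is hypothesis (2): every induced 4-cycle $x_1y_1x_2y_2$ with $x_i$ of type $\hat s_1$ and $y_j$ of type $\hat s_n$ must be filled by a common vertex. By Proposition~\ref{prop:group bowtie} this reduces to the algebraic condition that for any commuting pair $u_1\in A_{\hat s_n}$, $u_n\in A_{\hat s_1}$ there exists $g\in A_{\hat s_1}\cap A_{\hat s_n}$ and $s\in\Lambda'$ with $gu_jg^{-1}\in A_{\hat s}$; equivalently, via Corollary~\ref{cor:algebraic}, one needs (a) the intersection of any two parabolic subgroups of $A_\Lambda$ is parabolic, and (b) the controlled-conjugation statement for commuting parabolic Garside elements $c_{\mathsf P_1}, c_{\mathsf P_n}$ that Proposition~\ref{prop:key} establishes for spherical groups. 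The bulk of the work therefore lies in lifting both (a) and (b) from the spherical setting to arbitrary tree Dynkin diagrams.

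The hard part will be (b); a natural strategy is to exploit the fact that the parabolic closures $\mathsf P_1$ and $\mathsf P_n$ are themselves spherical parabolic subgroups of $A_\Lambda$. One would apply Proposition~\ref{prop:key} inside the smallest standard parabolic $A_T$ containing both $\mathsf P_1$ and $\mathsf P_n$, then propagate the resulting elementary-conjugator decomposition out into $A_\Lambda$ using Lemma~\ref{lem:irreducible segment} and the alternative decomposition in Corollary~\ref{cor:decomposition}, while tracking supports by Lemma~\ref{lem:left irreducible}, Lemma~\ref{lem:support contain}, and Lemma~\ref{lem:triming}. The delicate point is that the conjugator so obtained must lie in $A_{\hat s_1}\cap A_{\hat s_n}$ rather than just in $A_\Lambda$; one would exploit the tree structure of $\Lambda$ to slice off elementary segments whose supports avoid $s_1$ or $s_n$, in the spirit of the proof of Proposition~\ref{prop:key}. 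Condition (a) is a well-known open conjecture currently available only for spherical types (\cite{cumplido2019parabolic}) and FC types (\cite{blufstein2023parabolic}); establishing it for all tree Dynkin diagrams is itself a major step, and any approach to the conjecture must either prove it or bypass it.

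As a fallback, one could pursue an alternative route modeled on \cite{haettel2021lattices,haettel2022link}: equip $\Delta_{\Lambda,\Lambda'}$ with a piecewise $\CAT(0)$ or injective metric adapted to the actual edge labels of $\Lambda'$, and deduce hypothesis (2) from a link-by-link non-positive curvature argument, iterating the gluing lemmas \ref{lem:enlarge1}, \ref{lem:enlarge2}, \ref{lem:enlarge3}. The obstacle here is that these gluing lemmas insist on cutting along edges of label $\ge 6$; for edges of label $3,4,5$ one would have to replace the flat-triangle shape by hyperbolic or injective models of the correct Coxeter type and re-verify the link $\CAT(1)$ conditions, which themselves recursively reduce to relative Artin complexes over smaller trees. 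In either approach the technical heart is the same: taming a pair of commuting elements that live in complementary maximal standard parabolic subgroups, and I expect that step to remain the decisive difficulty.
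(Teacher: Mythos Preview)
The statement you are attempting to prove is explicitly recorded in the paper as a \emph{conjecture}, not a theorem; the paper provides no proof of it. There is therefore nothing to compare your argument against: the author leaves the labeled 4-wheel condition for arbitrary tree Dynkin diagrams as an open problem, after establishing it only in the spherical (Corollary~\ref{cor:wheel}), locally reducible (Corollary~\ref{cor:locally reducible}), and $\widetilde C_n$ (Proposition~\ref{prop:widetilde C}) cases.

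Your proposal is not a proof but a research outline, and you are candid about this. You correctly identify the two genuine obstacles: (a) that the intersection of two parabolic subgroups of $A_\Lambda$ is parabolic, and (b) the controlled-conjugator analogue of Proposition~\ref{prop:key} outside the spherical setting. You also correctly note that (a) is itself a well-known open problem for general Artin groups. Your fallback geometric approach runs into the same wall the paper does: Lemmas~\ref{lem:enlarge1}--\ref{lem:enlarge3} require an edge of label $\ge 6$ to cut along, and no substitute is known for labels $3,4,5$. In short, your diagnosis of where the difficulty lies matches the paper's implicit one, but neither you nor the paper resolves it; what you have written is a plausible plan of attack on an open conjecture, not a proof.
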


\section{Folded Artin complexes and Artin groups of generalized cyclic type}
\label{sec:cycle}

In this section we discuss classes of Artin groups which are not exactly of type $\widetilde A_n$, but whose geometry are related to the geometry of type $\widetilde A_n$ in an appropriate sense. 
We provide two criterions, namely Proposition~\ref{prop:single cycle} and Theorem~\ref{thm:folded} for proving new classes of Artin groups satisfying $K(\pi,1)$-conjecture. These two criterions are motivated from the geometry of Artin groups associated with quasi-Lannner diagrams, where the $K(\pi,1)$-conjecture is widely open for them. More precisely, Proposition~\ref{prop:single cycle} treats and generalizes quasi-Lann\'er diagrams of ``lollipop'' shape, and Theorem~\ref{thm:folded} treats and generalizes the more challenging example of quasi-Lann\'er diagram made of two 4-gons glued along two consecutive edges. 

In Section~\ref{subsec:single cycle} we prove Proposition~\ref{prop:single cycle}. In Section~\ref{subsec:folded} we introduce a new tool, called the folded Artin complex, and use it to prove Theorem~\ref{thm:folded}. In Section~\ref{subsec:some concrete classes}, we discuss applications of Proposition~\ref{prop:single cycle} and Theorem~\ref{thm:folded}.

\subsection{Artin groups with cycles in Dynkin diagrams}
\label{subsec:single cycle}

\begin{lem}
	\label{lem:cyclic contractible}
	Suppose $\Lambda$ is a Dynkin diagram with an induced cyclic subgraph $C$. For a node $s\in C$, let $C_s$ be the  component of  $C\setminus\{s\}$, and let $\Lambda_s$ be  the component of $\Lambda\setminus\{s\}$ that contains $C_s$.
	Suppose  for each node $s\in \Lambda$, $C_s$ is admissible in $\Lambda_s$ and the $(\Lambda_s,C_s)$-relative Artin complex is bowtie free. Then $\Delta_{\Lambda,C}$ is contractible.
\end{lem}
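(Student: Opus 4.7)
The plan is to deduce contractibility of $\Delta_{\Lambda,C}$ directly from Theorem~\ref{thm:contractible}, by viewing $\Delta_{\Lambda,C}$ as a simplicial complex of type $C$ with $C$ cyclically ordered as a cycle, and checking that the three hypotheses of that theorem hold. First I would record that $\Delta_{\Lambda,C}$ is indeed of type $C$ in the required sense: every element $g\in A_\Lambda$ lies in each coset $gA_{\hat s}$ for $s\in C$, so the collection $\{gA_{\hat s}\}_{s\in C}$ spans a top-dimensional simplex, and conversely every top-dimensional simplex has exactly one vertex of each type $\hat s$, $s\in C$. The natural cyclic order on the vertices of $C$ supplies the cyclic order on the label set required by Theorem~\ref{thm:contractible}.

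Next I would verify the three hypotheses of Theorem~\ref{thm:contractible} in turn. For simple-connectivity, since $|C|\ge 3$ (as $C$ is a cycle), Lemma~\ref{lem:relative sc} gives $\pi_1(\Delta_{\Lambda,C})=1$. For the partial order on links, fix a vertex $v\in\Delta_{\Lambda,C}$ of type $\hat s$ with $s\in C$. By Lemma~\ref{lem:link}(2), $\lk(v,\Delta_{\Lambda,C})$ is type-preservingly isomorphic to the $(\Lambda_s,C_s)$-relative Artin complex, where $C_s = C\setminus\{s\}$ is a linear graph (obtained by cutting a cycle at one vertex) and is admissible in $\Lambda_s$ by assumption. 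Removing $s$ from the cyclic order on $C$ induces a linear order on $C_s$, and Lemma~\ref{lem:poset}(2) applied to the pair $(\Lambda_s,C_s)$ provides a partial order on the vertex set of $\lk(v,\Delta_{\Lambda,C})$ which matches $<_v$ coming from the cyclic order on $C$. The bowtie free condition on $\lk(v,\Delta_{\Lambda,C})$ is exactly the hypothesis of the lemma.

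Once these three items are in place, Theorem~\ref{thm:contractible} yields that $\Delta_{\Lambda,C}$ is contractible, which is the desired conclusion. Since the proof is essentially an application of the machinery already assembled in the paper, there is no real main obstacle; the only point requiring care is reconciling the cyclic order on $C$ used in Theorem~\ref{thm:contractible} with the linear order on $C_s$ used in Definition~\ref{def:bowtie free}, but this is an immediate matter of unpacking definitions.
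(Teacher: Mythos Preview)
Your proposal is correct and follows essentially the same approach as the paper: the paper's proof is the one-line ``This follows from Theorem~\ref{thm:contractible}, Lemma~\ref{lem:relative sc}, and Lemma~\ref{lem:poset},'' and you have simply unpacked how those three ingredients feed into the hypotheses of Theorem~\ref{thm:contractible}.
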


\begin{proof}
We will verify that $\Delta_{\Lambda,C}$ satisfies the three requirements of Theorem~\ref{thm:contractible}. The first requirement follows from Lemma~\ref{lem:relative sc}. The second requirement follows Lemma~\ref{lem:poset} and the assumption that $C_s$ is admissible in $\Lambda_s$. The third requirement follows from Lemma~\ref{lem:link} and the assumption that $\Delta_{\Lambda_s,C_s}$ being bowtie free.
\end{proof}

\begin{prop}
	\label{prop:single cycle}
Suppose $\Lambda$ is a connected Dynkin diagram with an induced cyclic subgraph $C$. For a node $s\in C$, let $C_s$ be the  component of  $C\setminus\{s\}$, and let $\Lambda_s$ be  the component of $\Lambda\setminus\{s\}$ that contains $C_s$.
Suppose  for each node $s\in \Lambda$, $C_s$ is admissible in $\Lambda_s$ and the $(\Lambda_s,C_s)$-relative Artin complex is bowtie free. Then the following assertions hold.
\begin{enumerate}
	\item $\Delta_\Lambda$ is contractible. 
	\item If, in addition, $A_{\Lambda_s}$ satisfies the $K(\pi,1)$-conjecture for each node $s\in C$, then so does $A_\Lambda$.
\end{enumerate}
\end{prop}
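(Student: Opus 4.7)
The strategy is to feed both statements into the induction scheme of Section~\ref{subsec:homotopy}. Set $\mathcal C_1 = \{C\}$ and let $\mathcal C_2$ be the collection of connected induced subgraphs $\Lambda''$ of $\Lambda$ that contain $C$. For any such $\Lambda''$ and any $s\in C$, write $(\Lambda'')_s$ for the component of $\Lambda''\setminus\{s\}$ containing $C_s$; then $(\Lambda'')_s$ is an induced subgraph of $\Lambda_s$ containing $C_s$, admissibility of $C_s$ in $(\Lambda'')_s$ is immediate from its admissibility in $\Lambda_s$ (subcomponents of $(\Lambda'')_s\setminus\{x\}$ are contained in components of $\Lambda_s\setminus\{x\}$), and Proposition~\ref{prop:inherit} propagates the bowtie-free property from $\Delta_{\Lambda_s,C_s}$ to $\Delta_{(\Lambda'')_s,C_s}$. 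Thus the hypotheses of Lemma~\ref{lem:cyclic contractible} hold for every $\Lambda''\in\mathcal C_2$, giving contractibility of $\Delta_{\Lambda'',C}$.

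For Part (1), I would apply Proposition~\ref{prop:contractible} to the pair $(\mathcal C_1,\mathcal C_2)$. Assumption~(1) is precisely the statement from the previous paragraph; Assumption~(2) says any connected induced subgraph $\Lambda'''$ of some $\Lambda''\in \mathcal C_2$ with $C\subset \Lambda'''$ is itself in $\mathcal C_2$, which is tautological from the definition of $\mathcal C_2$. Consequently, $\Delta_{\Lambda''}$ is contractible for each $\Lambda''\in \mathcal C_2$, and in particular for $\Lambda''=\Lambda$.

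For Part (2), I would upgrade to Corollary~\ref{cor:contractible}, whose Assumptions~(1)--(3) are already verified above (Assumption~(3) is tautological since every $\Lambda''\in \mathcal C_2$ contains $C\in\mathcal C_1$). The only remaining item is Assumption~(4): for each $\Lambda''\in \mathcal C_2$ and each $s\in C$, every component of $\Lambda''\setminus\{s\}$ must satisfy the $K(\pi,1)$-conjecture. The distinguished component $(\Lambda'')_s$ is a standard parabolic subdiagram of $\Lambda_s$, and since $A_{\Lambda_s}$ satisfies $K(\pi,1)$ by hypothesis, so does $A_{(\Lambda'')_s}$ by \cite[Corollary~2.4]{godelle2012k}. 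Any other component $\Theta$ of $\Lambda''\setminus\{s\}$ is attached in $\Lambda''$ to $s$ only; picking any $s'\in C\setminus\{s\}$, the vertex $s$ lies in $C_{s'}\subset \Lambda_{s'}$ and $\Theta\cup\{s\}$ remains connected in $\Lambda''\setminus\{s'\}$, so $\Theta\subset\Lambda_{s'}$, and again $A_\Theta$ satisfies $K(\pi,1)$ by the same inheritance principle. Corollary~\ref{cor:contractible} then yields $K(\pi,1)$ for $A_\Lambda$.

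The main obstacle is conceptual rather than computational: everything reduces to Lemma~\ref{lem:cyclic contractible} together with the abstract induction of Section~\ref{subsec:homotopy}, and the only place where the cyclic structure of $C$ enters essentially is in absorbing each ``non-distinguished'' component of $\Lambda''\setminus\{s\}$ into some $\Lambda_{s'}$ via an alternative vertex $s'\in C\setminus\{s\}$, so that the parabolic inheritance principle for $K(\pi,1)$ becomes applicable.
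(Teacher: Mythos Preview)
Your proposal is correct and follows essentially the same route as the paper: both feed the pair $(\mathcal C_1,\mathcal C_2)$ into Corollary~\ref{cor:contractible}, invoking Lemma~\ref{lem:cyclic contractible} for Assumption~(1) and Proposition~\ref{prop:inherit} for Assumption~(2). The only cosmetic differences are that the paper takes $\mathcal C_1$ to be the class of all cycles satisfying the hypotheses (rather than the singleton $\{C\}$) and declares Assumptions~(3)--(4) ``clear,'' whereas you spell out the absorption of non-distinguished components into some $\Lambda_{s'}$ explicitly---which is exactly the content the paper is suppressing.
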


\begin{proof}
Let $\mathcal C_1=\{C\}$, and let $\mathcal C_2$ be the class of Dynkin diagrams which are connected induced subgraphs of $\Lambda$ that contain $C$. It suffices to verify the Assumptions of Corollary~\ref{cor:contractible} hold true for such choice of $\mathcal C_1$ and $\mathcal C_2$. Note that for any $\Theta\in \mathcal C_2$, it follows from Proposition~\ref{prop:inherit} that the pair $C\subset \Theta$ still satisfies the assumptions of Lemma~\ref{lem:cyclic contractible}. Thus $\Delta_{\Lambda',C}$ is contractible, and Assumption 1 of Corollary~\ref{cor:contractible}. Assumptions 2 and 3 are clear. For Assumption 4 of Corollary~\ref{cor:contractible}, take $s\in C$, and $\Theta\in \mathcal C_2$. 
Let $\Phi$ be a component of $\Theta\setminus\{s\}$. If $\Phi$ contains $C_s$, then  $\Phi\subset \Lambda_s$. If $\Phi$ does not contain $C_s$, then $\Phi\cap C_2=\emptyset$. As $\Lambda$ is connected, we know $\Phi\subset \Lambda_{s'}$ for $s'\in C$ with $s'\neq s$. By Assumption 2 of Proposition~\ref{prop:single cycle} and \cite[Corollary 2.4]{godelle2012k}, each component of $\Theta\setminus\{s\}$ corresponds to an Artin group which satisfies the $K(\pi,1)$-conjecture.
\end{proof}

%Assumption 1 follows from Lemma~\ref{lem:cyclic contractible}. Assumption 2 follows from Proposition~\ref{prop:inherit}. Assumptions 3 and 4 are clear.
\subsection{Folded Artin complexes and folded cycles in Dynkin diagrams}
\label{subsec:folded}
Let $\Lambda,\Lambda'$ be a pair of Dynkin diagrams. A \emph{special folding} is a surjective graph morphism $f:\Lambda\to \Lambda'$ (i.e. it map nodes to nodes and edges to edges) such that if nodes $x$ and $y$ are adjacent in $\Lambda'$, then each node in $f^{-1}(x)$ is adjacent in $\Lambda$ to every node in $f^{-1}(y)$. Note that a special folding in our sense is different from another well-known notion of folding defined by Crisp in \cite[Section 4]{crisp2011injective}.
An induced subgraph $\Lambda_1$ of $\Lambda$ is $f$-folded if the restriction of $f$ to $\Lambda_1$ is not injective. Now we define a ``folded'' version of (relative) Artin complexes.

\begin{definition}
Given a special folding $f:\Lambda\to \Lambda'$, we also use $f:S\to S'$ to denote the induced map on the sets of nodes.
We define the \emph{folded Artin complex} $\Delta_{\Lambda,f}$ as follows. Vertices of $\Delta_{\Lambda,f}$ are in 1-1 correspondence with left cosets of $A_{S\setminus \{f^{-1}(s')\}}$ in $A_S$. A collection of vertices span a simplex if the intersection of the associated collection of left cosets is non-empty. By Lemma~\ref{lem:flag}, $\Delta_{\Lambda,f}$ is a flag complex.
For any induced subgraph $\Lambda''\subset \Lambda'$, the associated \emph{relative folded Artin complex} $\Delta_{\Lambda,f,\Lambda''}$ is defined to be a simplicial complex whose vertices are in 1-1 correspondence with left cosets of $A_{S\setminus \{f^{-1}(s'')\}}$ in $A_S$ with $s''\in \Lambda''$, and simplices in $\Delta_{\Lambda,f,\Lambda''}$ corresponds to non-empty intersection of associated left cosets. 
If a vertex of $\Delta_{\Lambda,f,\Lambda''}$ corresponds to  $gA_{S\setminus \{f^{-1}(s'')\}}$ with node $s''\in \Lambda''$, then this vertex is defined to be of \emph{type $\hat s''$}.
\end{definition}

Let $S''=f^{-1}(S')$. Then there is a natural piecewise linear embedding $$i:\Delta_{\Lambda,f,\Lambda''}\to \Delta_{S,S''}$$ as follows. Given a vertex $x\in \Delta_{\Lambda,f,\Lambda''}$ corresponds to a coset of form $gA_{S\setminus \{f^{-1}(s'')\}}$ with $s''\in \Lambda''$, $i$ sends $x$ to the barycenter of the simplex of $\Delta_{S,S''}$ spanned by vertices of form $gA_{S\setminus \{s\}}$ with $s\in f^{-1}(s'')$. Note that $i$ sends vertices inside a simplex to points inside a simplex. Thus we can extend $i$ linearly.

\begin{lem}
	\label{lem:order fold}
Let $f:\Lambda\to\Lambda'$ be a special folding with $\Lambda'$ connected. Suppose $\Lambda''\subset \Lambda'$ is an induced linear subgraph with consecutive nodes $\{s'_i\}_{i=1}^n$. For simplicity we will say a vertex of $Y=\Delta_{\Lambda,f,\Lambda''}$ has type $i$ if it has type $\hat s'_i$. We define a relation in the vertex set $V$ of $Y$ as follows. For two vertices $x,y\in Y$, we put $x<y$ if they are adjacent in $Y$ and $\type(x)<\type(y)$. 

Suppose $\Lambda''$ is an admissible subgraph of $\Lambda'$. Then $(V,\le)$ is a partially ordered set.
\end{lem}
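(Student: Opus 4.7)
Since the relation $<$ is defined so that $x<y$ forces $\type(x)<\type(y)$ in the usual order $\{1,\ldots,n\}$, irreflexivity and antisymmetry are automatic. The content of the lemma is therefore transitivity: given $x,y,z\in V$ with $x<y<z$, I must show $x$ and $z$ are adjacent in $Y$, so that $x<z$. For the remainder of this sketch set $T_i=f^{-1}(s'_i)$, $T_j=f^{-1}(s'_j)$, $T_k=f^{-1}(s'_k)$, and $\Omega_r=S\setminus T_r$. After translating by a group element we may assume $x,y,z$ correspond to cosets $aA_{\Omega_i},aA_{\Omega_j},acA_{\Omega_k}$ respectively, where $a\in A_S$ and $c\in A_{\Omega_j}$. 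The adjacency of $x$ and $z$ in $Y$ is equivalent to $aA_{\Omega_i}\cap acA_{\Omega_k}\neq\emptyset$, i.e. to the assertion $c\in A_{\Omega_i}\cdot A_{\Omega_k}$.

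The heart of the argument is a geometric/structural fact about the fibres of the special folding $f$ that lets me factor $c$ appropriately. Since $\Lambda''$ is admissible in $\Lambda'$ and $i<j<k$, the vertices $s'_i$ and $s'_k$ lie in different components of $\Lambda'\setminus\{s'_j\}$. I claim that this then forces $T_i$ and $T_k$ to lie in different components of $\Lambda\setminus T_j$: if a path in $\Lambda$ from some $t_i\in T_i$ to some $t_k\in T_k$ avoided $T_j$, then its image under $f$ would be a walk in $\Lambda'$ from $s'_i$ to $s'_k$ avoiding $s'_j$, a contradiction. Consequently, writing $\Lambda_1$ for the union of components of $\Lambda\setminus T_j$ meeting $T_i$, $\Lambda_2$ for those meeting $T_k$, and $\Lambda_3$ for the remainder, I get a disjoint decomposition $S\setminus T_j=\Lambda_1\sqcup\Lambda_2\sqcup\Lambda_3$ in which no vertex of $\Lambda_1$ is adjacent in $\Lambda$ to any vertex of $\Lambda_2$. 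Under the standing assumption that $\Lambda$ has no $\infty$-labeled edges, non-adjacency in $\Lambda$ means commutation, so generators in $\Lambda_1$ commute with those in $\Lambda_2$, yielding a direct product decomposition $A_{\Omega_j}=A_{\Lambda_1}\times A_{\Lambda_2}\times A_{\Lambda_3}$.

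Given this, the desired factorisation is immediate: write $c=c_1c_2c_3$ with $c_r\in A_{\Lambda_r}$, all commuting pairwise. Because $\Lambda_1\cap T_k=\emptyset$ we have $c_1\in A_{\Omega_k}$; because $\Lambda_2\cap T_i=\emptyset$ we have $c_2\in A_{\Omega_i}$; and $\Lambda_3\subset \Omega_i\cap\Omega_k$ gives $c_3\in A_{\Omega_i}\cap A_{\Omega_k}$. Reordering, $c=(c_2c_3)(c_1)\in A_{\Omega_i}\cdot A_{\Omega_k}$, which is precisely what was needed. Thus $aA_{\Omega_i}\cap acA_{\Omega_k}\neq\emptyset$, $x$ and $z$ are adjacent in $Y$, and $x<z$, establishing transitivity.

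The main obstacle is conceptual rather than computational: one must realise that the admissibility hypothesis on $\Lambda''$ in $\Lambda'$ lifts through the special folding to a separation statement about the fibres in $\Lambda$. Once this lifting is in place, the product decomposition of $A_{\Omega_j}$ together with the standard fact $A_{S_1}\cap A_{S_2}=A_{S_1\cap S_2}$ from \cite{lek} does all the work. No further combinatorics of folded Artin complexes is required for this lemma beyond the definitions.
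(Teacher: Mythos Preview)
Your proof is correct and follows essentially the same approach as the paper: both reduce to transitivity, use admissibility of $\Lambda''$ in $\Lambda'$ to show that the fibres $f^{-1}(s'_i)$ and $f^{-1}(s'_k)$ lie in different components of $\Lambda\setminus f^{-1}(s'_j)$, and then exploit the resulting direct product decomposition of $A_{S\setminus f^{-1}(s'_j)}$ to conclude that the cosets for $x$ and $z$ intersect. Your version is in fact slightly more careful than the paper's, since you explicitly account for a third factor $\Lambda_3$ (components of $\Lambda\setminus T_j$ meeting neither $T_i$ nor $T_k$), whereas the paper writes $\Theta_y=\Theta_{y,1}\sqcup\Theta_{y,3}$ as if this factor were absent; also, your caveat about $\infty$-labeled edges is unnecessary, since vertices in different components of $\Lambda\setminus T_j$ are non-adjacent in the Dynkin diagram by definition, hence commute regardless.
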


\begin{proof}
It remains to check the transitivity of the relation $<$. Take $x,y,z\in Y$ such that $x,z\in \lk(y,Y)$ and $\type(x)<\type(y)<\type(z)$. Suppose $x,y,z$ have type $i_1,i_2$ and $i_3$ respectively. Let $\Lambda'_y$ be the union of components of $\Lambda\setminus \{s'_{i_2}\}$. Similarly we define $\Lambda'_x$ and $\Lambda'_z$. Suppose $\Lambda'_y=\Lambda'_{y,1}\sqcup\Lambda'_{y,2}\sqcup\Lambda'_{y,3}$, where $\Lambda'_{y,k}$ is the component of $\Lambda\setminus\{s'_{i_2}\}$ containing $s'_{i_k}$ for $k=1,3$, and $\Lambda'_{y,2}$ is the union of the rest components of $\Lambda\setminus\{s'_{i_2}\}$. Let $\Lambda_y=f^{-1}(\Lambda'_y)$. Similarly we define $\Lambda_{y,i}$, $\Lambda_x$ and $\Lambda_z$. Note that $(\Lambda'_{y,3}\cup\Lambda'_{y,2})\subset \Lambda'_x\cap\Lambda'_y$ and $(\Lambda'_{y,1}\cup\Lambda'_{y,2})\subset \Lambda'_z\cap\Lambda'_y$.

Up to a left translation, we assume $y$ corresponds to the identity coset $A_{\Lambda_y}$ in $A_{\Lambda}$. Suppose $x$ (resp. $z$) corresponds to a coset of form $g_x A_{\Lambda_x}$ (resp. $g_zA_{\Lambda_y}$). Note that $A_{\Lambda_y}=A_{\Lambda_{y,1}}\oplus A_{\Lambda_{y,2}}\oplus A_{\Lambda_{y,3}}$. As $A_{\Lambda_y}\cap g_x A_{\Lambda_x}$ contains a left $A_{\Lambda_{y,3}}\oplus A_{\Lambda_{y,2}}$ coset, and $A_{\Lambda_y}\cap g_z A_{\Lambda_z}$ contains a left $A_{\Lambda_{y,1}}\oplus A_{\Lambda_{y,2}}$ coset; we know $g_x A_{\Lambda_x}\cap g_z A_{\Lambda_z}\neq\emptyset$. Thus $x<z$ and the transitivity is justified.
\end{proof}

%Let $\Theta_y$ be the union of components of $\Lambda\setminus \{f^{-1}(\type(y))\}$, i.e. $\Lambda\setminus \{f^{-1}(s'_{i_2})\}$. Similarly we define $\Theta_x$ and $\Theta_z$.
%Then for any pair of nodes $t_1\in f^{-1}(s'_{i_1})$ and $t_3\in f^{-1}(s'_{i_3})$, $t_1$ and $t_3$ are in different components of $\Theta_y$, as $f(t_1)$ and $f(t_3)$ are in different components of $f(\Lambda)\setminus\{s'_{i'_2}\}$ by the assumption that $\Lambda''$ is an admissible subgraph of $\Lambda'$. 
%Let $\Theta_{y,1}$ (resp. $\Theta_{y,3}$) be the union of components of $\Theta_y$ that contains at least one node in $f^{-1}(s'_{i_1})$ (resp. $f^{-1}(s'_{i_3})$). Then the previous discussion implies that $\Theta_{y,1}\cap \Theta_{y,3}=\emptyset$. We assume $\Theta_y=\Theta_{y,1}\sqcup\Theta_{y,3}\sqcup\Theta'_y$. 

The proof of the following is identical to that of Lemma~\ref{lem:relative sc}, Lemma~\ref{lem:link}, Lemma~\ref{lem:bowtie free criterion}, Lemma~\ref{lem:connect} and Proposition~\ref{prop:4-cycle}.
\begin{lem}
	\label{lem:folded}
Let $f:\Lambda\to\Lambda'$ be a special folding, and let $\Lambda''\subset\Lambda'$ be an induced subgraph.
Then the following are true.
\begin{enumerate}
	     \item If $\Lambda''$ has $\ge 3$ nodes, then $\Delta_{\Lambda,f,\Lambda''}$ is simply-connected.
		\item Let $v\in \Delta_{\Lambda,f,\Lambda''}$ be a vertex of type $\hat s'$ with $s'\in \Lambda''$. Let $\Lambda_{s'}$ (resp. $\Lambda''_{s'}$) be the induced subgraph of $\Lambda$ (resp. $\Lambda''$) spanned by nodes in $S\setminus\{f^{-1}(s')\}$ (resp. $\Lambda''\setminus\{s'\}$). Then 
		there is a type-preserving isomorphism between $\lk(v,\Delta_{\Lambda,f,\Lambda''})$ and $\Delta_{\Lambda_{s'},f,\Lambda''_{s'}}$. Moreover, if $I_{s'}$ is the union of components of $\Lambda_{s'}$ that contain at least one component of $f^{-1}(\Lambda''_{s'})$, then $f^{-1}(\Lambda''_{s'})\subset I_{s'}$ and there is a type-preserving isomorphic between $\lk(v,\Delta_{\Lambda,f,\Lambda''})$ and $\Delta_{I_{s'},f,\Lambda''_{s'}}$.
		\item If $\Lambda''$ is admissible and linear, then  $\Delta_{\Lambda,f,\Lambda''}$ is bowtie free if the analogues two assumptions of Lemma~\ref{lem:bowtie free criterion} hold true for $\Delta_{\Lambda,f,\Lambda''}$.
		\item If $\Lambda''$ is admissible and linear, then $\Delta_{\Lambda,f,\Lambda''}$ is bowtie free if and only if it satisfies the labeled 4-cycle condition.
		\item If $\Lambda''$ is an admissible tree subgraph of $\Lambda'$, then $\Delta_{\Lambda,f,\Lambda''}$ satisfies the bowtie free condition if and only if $\Delta_{\Lambda,f,\Lambda'''}$ for any maximal linear subgraph $\Lambda'''$ of $\Lambda''$.
\end{enumerate}
\end{lem}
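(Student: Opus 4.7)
The plan is to verify each of the five assertions by transporting the corresponding proof from the relative Artin complex setting, performing the global substitution of $A_{\hat s}=A_{S\setminus\{s\}}$ by $A_{S\setminus f^{-1}(s')}$ for $s'\in\Lambda'$. The essential algebraic input — van der Lek's identity $A_T\cap A_{T'}=A_{T\cap T'}$ from \cite{lek} — continues to apply because
\[
A_{S\setminus f^{-1}(s')}\cap A_{S\setminus f^{-1}(t')}=A_{S\setminus(f^{-1}(s')\cup f^{-1}(t'))},
\]
and this is the only place where the distinction between a single generator $s$ and a fiber $f^{-1}(s')$ enters the combinatorial analysis of intersections of cosets.

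For assertion (1), I would run the van Kampen argument of \cite[Lemma 4]{cumplido2020parabolic} verbatim, which only requires that triple intersections of the standard cosets indexed by three vertices of $\Lambda''$ are nonempty; this again follows from the displayed identity. For assertion (2), identifying vertices of $\lk(v,\Delta_{\Lambda,f,\Lambda''})$ with cosets of $A_{S\setminus(f^{-1}(s')\cup f^{-1}(t'))}$ inside the $A_{S\setminus f^{-1}(s')}$-coset of $v$ gives the required type-preserving isomorphism with $\Delta_{\Lambda_{s'},f,\Lambda''_{s'}}$. The moreover statement uses that the components of $\Lambda_{s'}$ not contained in $I_{s'}$ contain no vertex of any $f^{-1}(t')$ with $t'\in\Lambda''_{s'}$; hence the product decomposition $A_{\Lambda_{s'}}=A_{I_{s'}}\times A_{\Lambda_{s'}\setminus I_{s'}}$ forces the relevant cosets to see only the $A_{I_{s'}}$-factor, exactly as in Lemma~\ref{lem:link}(2).

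For assertion (3), Lemma~\ref{lem:order fold} already provides the poset structure $(V,\le)$, and part (2) above supplies the join decomposition of the link (via the analogue of Lemma~\ref{lem:link}(3) over the components of $I_{s'}$). With these two ingredients, the proof of Lemma~\ref{lem:bowtie free criterion} transfers line by line, since that proof was purely a manipulation of links and the order. Assertion (4) is then formal: the argument of Lemma~\ref{lem:connect} only uses that edges respect the order and that induced $4$-cycles give bowtie configurations, both of which remain valid. Finally, assertion (5) is proved by adapting Proposition~\ref{prop:4-cycle}; its combinatorial induction on the tripod structure of $\Lambda''_{a_1,\ldots,a_k}$ uses only the link-join decomposition, hence extends.

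The main obstacle is the geometric portion of the argument for (5): the proof of Proposition~\ref{prop:4-cycle} projects edge paths in the oriented Davis complex $\widehat\Sigma_\Lambda$ onto maximal standard subcomplexes of type $S\setminus\{a_i\}$. In the folded setting one must work instead with the standard subcomplexes of $\Sigma_\Lambda$ of type $S\setminus f^{-1}(s'_i)$, which are no longer \emph{maximal} proper standard subcomplexes. Nearest-point projections to such subcomplexes are still well defined by Lemma~\ref{lem:gate}, and the product decomposition $E=U_1\times U_2$ produced by Lemma~\ref{lem:two cells} still separates off the components of $\supp(E)$ contained in $S\setminus f^{-1}(s'_i)$ from those that are not; this is precisely what is needed in the claim that converts a 4-cycle $x_1y_1x_2y_2$ into a vertex of the intermediate type. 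Once this product decomposition is verified in the folded setting, the trimming of the boundary word $w_1w_2w_3w_4$ proceeds exactly as in Proposition~\ref{prop:4-cycle}.
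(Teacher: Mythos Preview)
Your proposal is correct and takes essentially the same approach as the paper: the paper's proof states simply that the argument is identical to that of Lemma~\ref{lem:relative sc}, Lemma~\ref{lem:link}, Lemma~\ref{lem:bowtie free criterion}, Lemma~\ref{lem:connect}, and Proposition~\ref{prop:4-cycle}, which is precisely the transport you describe. Your additional care in (5), observing that the standard subcomplexes $S\setminus f^{-1}(s'_i)$ need not be maximal and verifying that Lemmas~\ref{lem:gate} and~\ref{lem:two cells} still apply, is well-placed but does not constitute a different route.
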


%We define a relation in the vertex set $V$ of $Y$ as follows. For two vertices $x,y\in Y$, we put $x<y$ if they are adjacent in $Y$ and $\type(x)<\type(y)$. We now check $(V,\le)$ is a partially ordered set. 

\begin{lem}
	\label{lem:tree fold}
Suppose $f:\Lambda\to \Lambda'$ be a special folding between two trees. If $\Delta_{\Lambda}$ satisfies the labeled 4-cycle condition, then $\Delta_{\Lambda,f}$ satisfies the labeled 4-cycle condition.
\end{lem}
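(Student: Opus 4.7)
The plan is to reduce the labeled $4$-wheel condition on $\Delta_{\Lambda,f}$ to the hypothesis on $\Delta_\Lambda$ via the embedding $i:\Delta_{\Lambda,f,\Lambda''}\hookrightarrow \Delta_{\Lambda,f^{-1}(\Lambda'')}$ together with a careful lifting of $4$-cycles. First, by Lemma~\ref{lem:folded}\,(4) and~(5) it suffices to verify the labeled $4$-wheel condition on $\Delta_{\Lambda,f,\Lambda''}$ for each maximal linear subgraph $\Lambda''$ of $\Lambda'$ (which is an admissible linear subgraph, since $\Lambda'$ is a tree); simultaneously, by Corollary~\ref{cor:inherit}, the auxiliary complex $\Delta_{\Lambda,f^{-1}(\Lambda'')}$ inherits the labeled $4$-wheel condition from $\Delta_\Lambda$.

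Next I would lift $4$-cycles. Given an induced $4$-cycle $x_1x_2x_3x_4$ in $\Delta_{\Lambda,f,\Lambda''}$ with consecutive labels $\hat s'_i$, each $x_i = g_i A_{S\setminus f^{-1}(s'_i)}$ corresponds, via the identity $A_{S\setminus f^{-1}(s')}=\bigcap_{t\in f^{-1}(s')} A_{\hat t}$ of \cite{lek}, to a simplex $\sigma_{x_i}$ in $\Delta_{\Lambda,f^{-1}(\Lambda'')}$ spanned by $\{g_i A_{\hat t}:t\in f^{-1}(s'_i)\}$. By the flag property of $\Delta_{\Lambda,f^{-1}(\Lambda'')}$, the adjacency $x_i\sim x_{i+1}$ in the folded complex is equivalent to $\sigma_{x_i}\cup \sigma_{x_{i+1}}$ spanning a simplex. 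Choosing intermediate representatives $g_{(i-1)i}$ in the pairwise intersections and then picking $t_i\in f^{-1}(s'_i)$ yields a $4$-loop $v_1v_2v_3v_4$ in $\Delta_{\Lambda,f^{-1}(\Lambda'')}$ with $v_i=g_{(i-1)i}A_{\hat t_i}$ of type $\hat t_i$.

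I would then apply the labeled $4$-wheel condition to $v_1v_2v_3v_4$: when this loop is induced, it produces directly a vertex $v=hA_{\hat t}$ adjacent to each $v_i$ with $t$ in the smallest subtree of $\Lambda$ containing $\{t_i\}$; when the loop has a chord, a triangle argument inside the flag complex (selecting a suitable vertex from a chamber containing the triangle) produces a vertex with the same property. Since $f$ is a graph morphism between trees, $f(t)$ lies in the smallest subtree of $\Lambda'$ containing $\{s'_i\}$, so the candidate vertex $z=hA_{S\setminus f^{-1}(f(t))}$ of $\Delta_{\Lambda,f,\Lambda''}$ has the required label.

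The main obstacle is the last step: verifying that $z$ is actually adjacent to each $x_i$ in the folded complex, i.e.\ that $hA_{S\setminus f^{-1}(f(t))}\cap g_i A_{S\setminus f^{-1}(s'_i)}\neq \emptyset$, as the labeled $4$-wheel condition above only produces intersections at the level of the smaller parabolic cosets $A_{\hat t},A_{\hat t_i}$. To bridge this gap I would exploit the very restricted structure of a special folding between trees: a short $K_{2,2}$ argument (forbidden inside the tree $\Lambda$) shows that whenever $|f^{-1}(s')|\ge 2$ the vertex $s'$ must be a leaf of $\Lambda'$ and $f^{-1}(s')$ consists of mutually commuting leaves of $\Lambda$ sharing a common neighbor. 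Factoring $f$ as a composition of elementary foldings, each identifying two such leaves, and inducting on the number of identifications reduces the problem to the elementary case. In the elementary case I would run the lifted $4$-wheel argument above simultaneously for every choice of $t_i\in f^{-1}(s'_i)$, and then merge the data using the commutativity of the two identified leaves (which controls $A_{S\setminus f^{-1}(s')}$ as the intersection $A_{\hat t}\cap A_{\hat t'}$ inside the abelian subgroup they generate) together with the flag property of $\Delta_{\Lambda,f}$ to upgrade the adjacency from the small cosets to the required larger cosets $A_{S\setminus f^{-1}(s')}$.
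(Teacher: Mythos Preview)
Your setup matches the paper's: reduce to a maximal linear $\Lambda''\subset\Lambda'$, observe (via the $K_{2,2}$ obstruction in the tree $\Lambda$) that $f^{-1}(s'_i)$ is a single vertex for every interior $s'_i$, and lift the $4$-cycle to simplices $X_1,X_2,Y_1,Y_2$ in $\Delta_{\Lambda,f^{-1}(\Lambda'')}$. The paper also uses induction on $|\Lambda|$ to handle Assumption~1 of Lemma~\ref{lem:bowtie free criterion} (the link condition), which you do not mention but which is routine.

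The genuine gap is exactly where you flag it, and your proposed fix does not close it. Two problems. First, the factoring into elementary foldings does not compose: if $f=f_2\circ f_1$, knowing that $\Delta_{\Lambda,f_1}$ has the labeled $4$-wheel property does not let you apply the lemma to $f_2$, because the hypothesis of the lemma concerns $\Delta_{\Lambda_m}$ (the Artin complex of the intermediate group $A_{\Lambda_m}$), not the folded complex $\Delta_{\Lambda,f_1}$. Second, even in the elementary case, running the $4$-wheel condition for each choice of $t_i\in f^{-1}(s'_i)$ produces possibly \emph{different} centers $z$, and even if they coincided, adjacency of $z=hA_{\hat s}$ to both $g_iA_{\hat t}$ and $g_iA_{\hat{t'}}$ does \emph{not} imply $hA_{\hat s}\cap g_iA_{S\setminus\{t,t'\}}\neq\emptyset$; commutativity of $t,t'$ alone does not give this.

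What the paper actually does for this step is a poset-growing argument (its Claim~2). Given a center $w$ of type $\hat s_i$ ($1<i<n$) adjacent to all of $Y_1\cup Y_2$, one uses bowtie-freeness of the various linear relative complexes $\Delta_{\Lambda,\Lambda_{\theta_1,\eta_j}}$ (where $\Lambda_{\theta_1,\eta_j}$ is the segment in $\Lambda$ from a fixed $\theta_1\in f^{-1}(s'_1)$ to each $\eta_j\in f^{-1}(s'_n)$) to form the \emph{join} $w''$ of $w$ and $x_{1,\theta_1}$ in the poset $\Delta_{\Lambda,\Lambda_{\theta_1}}$; the join is then shown to remain $\le y_{\ell,\eta_j}$ for every $j$, so $w''$ is still adjacent to all of $Y_1\cup Y_2$ while now also adjacent to $x_{1,\theta_1}$. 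Iterating over all preimage vertices (induction on $k_1+k_n$) produces a single $z$ adjacent to every vertex of $X_1\cup X_2\cup Y_1\cup Y_2$, which is exactly the folded adjacency you need. The lattice-theoretic join (available by Lemma~\ref{lem:posets} once bowtie-freeness is in hand) is the missing mechanism in your ``merge'' step.
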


\begin{proof}
We induct on the number of nodes of $\Lambda$. Take a maximal linear subgraph $\Theta'\subset \Lambda'$ with consecutive nodes $\{s'_i\}_{i=1}^n$. By Lemma~\ref{lem:folded} (4) and (5), it suffices to show $X=\Delta_{\Lambda,f,\Theta'}$ is bowtie free.
Now we show the assumptions of Lemma~\ref{lem:folded} (3) (see also Lemma~\ref{lem:bowtie free criterion}) are met for $X$. 
For Assumption 1 of Lemma~\ref{lem:bowtie free criterion}, take a vertex of $X$ of type $\hat s'_1$. Let $\Lambda'_1$ be the component of $\Lambda'\setminus\{s'_1\}$ (note that $s'_1$ is a leaf node of $\Lambda$) and $\Lambda_1=f^{-1}(\Lambda'_1)$. By induction, $\Delta_{\Lambda_1,f}$ satisfies the labeled 4-cycle condition.  By Lemma~\ref{lem:folded} (4) and (5), $\Delta_{\Lambda_1,f,\Lambda'_1\cap \Theta'}$ is bowtie free. Thus  Assumption 1 of Lemma~\ref{lem:bowtie free criterion} follows.

Now we verify Assumption 2 of Lemma~\ref{lem:bowtie free criterion}. Take an embedded 4-cycle $x_1y_1x_2y_2$ in $X$ such that $x_1,x_2$ have type $\hat s'_1$ and $y_1,y_2$ have type $\hat s'_n$. Let $$f^{-1}(s_1)=\{\theta_i\}_{i=1}^{k_1}\ \textrm{and}\ f^{-1}(s_n)=\{\eta_i\}_{i=1}^{k_n}.$$ By the previous discussion, we view $x_1$ as the barycenter of a simplex $X_1$ in $\Delta_{\Lambda,f^{-1}(\Theta')}$ whose vertices are of type $\{\hat \theta_i\}_{i=1}^{k_1}$. Similarly, we define simplices $X_2,Y_1,Y_2$ in $\Delta_{\Lambda,f^{-1}(\Theta')}$. Note that $X_i$ and $Y_j$ span a simplex in $\Delta_\Lambda$ for $1\le i,j\le 2$. As $\Lambda$ is a tree, by the definition of special folding, $f^{-1}(s'_i)$ is a single point for $1<i<n$. Let $s_i=f^{-1}(s'_i)$ for $1<i<n$. Then Assumption 2 of Lemma~\ref{lem:bowtie free criterion} for $X$ follows from the following.
\begin{claim}
	\label{claim1}
There exist vertex $z\in \Delta_{\Lambda,f^{-1}(\Theta')}$ of type $\hat s_i$ for some $i$ satisfying $1<i<n$ such that $z$ is adjacent to each vertex in $X_1\cup X_2\cup Y_1\cup Y_2$.
\end{claim}
Before we prove Claim~\ref{claim1}, we need another preparatory result. Let $x_{1,\theta_1}$ and $x_{2,\theta_1}$ be the vertex of type $\hat \theta_1$ in $X_1$ respectively $X_2$. Similarly, we define $y_{1,\eta_1}\in Y_1$ and $y_{2,\eta_1}\in Y_2$.

\begin{claim}
The following are true:
\begin{enumerate}
	\item If there is a vertex $w\in \Delta_{\Lambda,f^{-1}(\Theta')}$ of type $\hat s_i$ for some $i$ satisfying $1<i<n$ such that $w$ is adjacent to each vertex in $Y_1\cup Y_2$, then there is a vertex $w'\in \Delta_{\Lambda,f^{-1}(\Theta')}$ of type $\hat s_{i'}$ for some $i'$ with $1<i\le i'<n$ such that $w'$ is adjacent to each vertex of $Y_1\cup Y_2\cup\{x_{1,\theta_1},x_{2,\theta_1}\}$ and $w'$ is either adjacent or equal to $w$. 
	\item If there is a vertex $w\in \Delta_{\Lambda,f^{-1}(\Theta')}$ of type $\hat s_i$ for some $i$ satisfying $1<i<n$ such that $w$ is adjacent to each vertex in $X_1\cup X_2$, then there is a vertex $w'\in \Delta_{\Lambda,f^{-1}(\Theta')}$ of type $\hat s_{i'}$ for some $i'$ with $1<i'\le i<n$ such that $w'$ is adjacent to each vertex of $X_1\cup X_2\cup\{y_{1,\eta_1},x_{2,\eta_1}\}$ and $w'$ is either adjacent or equal to $w$. 
\end{enumerate}
\end{claim}

Now we prove Claim 2. We only prove the first part, as the second part is similar. Let $\Lambda_{\theta_1,\eta_i}$ denotes the linear subgraph of $\Lambda$ from $\theta_1$ to $\eta_i$. Let $\Lambda_{\theta_1}$ be the component of $\Lambda_{\theta_1,\eta_i}\setminus\{\eta_i\}$ (this definition does not depend on $i$).
 If $w$ is not adjacent to $x_{1,\theta_1}$, then the bowtie free condition on $\Delta_{\Lambda,\Lambda_{\theta_1,\eta_1}}$ implies that there is a vertex $w'\in \Delta_{\Lambda,\Lambda_{\theta_1,\eta_1}}$ such that $$\{x_{1,\theta_1},w\}\le w'\le \{y_{1,\eta_1},y_{2,\eta_1}\}.$$ Then $w'$ is adjacent to each of $\{x_{1,\theta_1},w\}$ and it is of type $\hat s_{i'}$ with $1<i\le i'<n$. In particular, $w'\in \Delta_{\Lambda,\Lambda_{\theta_1}}$, hence $x_{1,\theta_1}$ and $w$ has an upper bound in $\Delta_{\Lambda,\Lambda_{\theta_1}}$. As $\Delta_{\Lambda,\Lambda_{\theta_1}}$ is also bowtie free, by Lemma~\ref{lem:posets}, $x_{1,\theta_1}$ and $w$ admit a join, denoted by $w''$, in $\Delta_{\Lambda,\Lambda_{\theta_1}}$. The bowtie free property for each $\Delta_{\Lambda,\Lambda_{\theta_1,\eta_i}}$ implies that 
 $$w''\le y_{1,\eta_i}\ \textrm{and}\ w''\le y_{2,\eta_i}\  \mathrm{in}\ \Delta_{\Lambda,\Lambda_{\theta_1,\eta_i}}.$$ Thus $w''$ is adjacent to each vertex in $Y_1\cup Y_2$. Note that by construction $w''$ is adjacent to $x_{1,\theta_1}$. If $w''$ is adjacent to $x_{2,\theta_1}$ as well, then we are done, otherwise we can repeat this argument and consider the join $w'''$ of $w''$ and $x_{2,\theta_1}$ in $\Delta_{\Lambda,\Lambda_{\theta_1}}$. In particular $w\le w'''$ and $w'''$ satisfies all the requirements of Claim 2.

Finally we prove Claim 1. First we prove Claim 1 under the additional assumption that there is vertex $z$ of type $\hat s_i$ with $1<i<n$ such that either $z$ is adjacent to each vertex in $X_1\cup X_2\cup Y_1\cup Y_2$ except $x_{1,\theta_1}$ and $x_{2,\theta_1}$, or $z$ is adjacent to each vertex in $X_1\cup X_2\cup Y_1\cup Y_2$ except $y_{1,\eta_1}$ and $y_{2,\eta_1}$. We only treat the former case, as the latter is similar.
Part 1 of Claim 2 implies that there is a vertex $w'$ adjacent to each vertex of  $Y_1\cup Y_2\cup\{x_{1,\theta_1},x_{2,\theta_1}\}$ such that $w'\ge z$ in $\Delta_{\Lambda,\Lambda_{\theta_1}}$. Thus $w'\ge z$ in $\Delta_{\Lambda,\Lambda_{\theta_i}}$ for each $i$. As $z\ge x_{1,\theta_i}$ and $z\ge x_{2,\theta_i}$ in $\Delta_{\Lambda,\Lambda_{\theta_i}}$ for each $i\ge 2$, we know $$w'\ge x_{1,\theta_i}\ \textrm{and}\ w'\ge x_{2,\theta_i}$$ for each $i\ge 2$. Then $w'$ is adjacent to each vertex of $Y_1\cup Y_2\cup X_1\cup X_2$, as desired. This proves Claim 1 under our additional assumption. However, repeating this argument finitely many times proves Claim 1 in full generality.
\end{proof}

We say a Dynkin diagram $\Lambda$ is \emph{$\widetilde A_n$-like}, if it is a cycle, and for each node $s\in \Lambda$, $\Delta_{\Lambda_s}$ is bowtie free, where $\Lambda_s$ denotes the unique component of $\Lambda\setminus \{s\}$. In particular, if $\Lambda$ is the Dynkin diagram of a Coxeter group (or Artin group) of type $\widetilde A_n$, then $\Lambda$ is $\widetilde A_n$-like.

\begin{lem}
	\label{lem:cycle fold}
	Suppose $f:\Lambda\to \Lambda'$ be a special folding from a $\widetilde A_n$-like cycle to a linear graph. 
	Then $\Delta_{\Lambda'}$ is bowtie free.
\end{lem}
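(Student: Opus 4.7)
The strategy is to verify the folded analogue of Lemma~\ref{lem:bowtie free criterion}, namely Lemma~\ref{lem:folded}(3), for $X=\Delta_{\Lambda,f,\Lambda'}$. Let $s'_1,\dots,s'_n$ denote the consecutive vertices of the path $\Lambda'$ and endow the vertex set of $X$ with the order of Lemma~\ref{lem:order fold}. The two conditions to check are the bowtie-freeness of the links at vertices of the extreme types $\hat s'_1$ and $\hat s'_n$, and the existence of a common $z$ with $x_i\le z\le y_j$ for every embedded $4$-cycle $x_1y_1x_2y_2$ with $x_i$ of type $\hat s'_1$ and $y_j$ of type $\hat s'_n$.

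\textbf{Step 1: link condition at the endpoint types.} Fix a vertex $v$ of type $\hat s'_1$. By Lemma~\ref{lem:folded}(2), $\lk(v,X)$ is type-isomorphic to $\Delta_{I_{s'_1},f,\Lambda'\setminus\{s'_1\}}$ where $I_{s'_1}$ is a disjoint union of path components obtained by deleting the fibre $f^{-1}(s'_1)$ from the cycle $\Lambda$. For any fixed $s\in f^{-1}(s'_1)$, $I_{s'_1}$ is an induced subgraph of the path $\Lambda_s=\Lambda\setminus\{s\}$. The $\widetilde A_n$-like hypothesis says that $\Delta_{\Lambda_s}$ is bowtie free, hence satisfies the labeled 4-wheel condition by Lemma~\ref{lem:connect} and Proposition~\ref{prop:4-cycle}; Corollary~\ref{cor:inherit} transfers this to $\Delta_{I_{s'_1}}$. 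Applying Lemma~\ref{lem:tree fold} to the restricted folding $f|_{I_{s'_1}}$ gives the labeled 4-wheel condition on $\Delta_{I_{s'_1},f}$, which by Lemma~\ref{lem:folded}(4)--(5) specializes to bowtie-freeness of the relative folded complex over $\Lambda'\setminus\{s'_1\}$. A symmetric argument handles vertices of type $\hat s'_n$.

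\textbf{Step 2: trimming $4$-cycles.} Take an embedded $4$-cycle $x_1y_1x_2y_2$ with $x_i$ of type $\hat s'_1$ and $y_j$ of type $\hat s'_n$. Via the piecewise linear embedding $\Delta_{\Lambda,f}\hookrightarrow\Delta_\Lambda$, each $x_i$ (resp.\ $y_j$) corresponds to a simplex $X_i$ (resp.\ $Y_j$) in $\Delta_\Lambda$ of type $f^{-1}(s'_1)$ (resp.\ $f^{-1}(s'_n)$), and adjacency in $\Delta_{\Lambda,f}$ translates to complete bipartite adjacency of the corresponding simplices in $\Delta_\Lambda$. Enumerate $f^{-1}(s'_1)=\{\theta_1,\dots,\theta_{k_1}\}$ and $f^{-1}(s'_n)=\{\eta_1,\dots,\eta_{k_n}\}$, and write $x_{i,\theta_a}$ (resp.\ $y_{j,\eta_b}$) for the vertex of $X_i$ (resp.\ $Y_j$) of the indicated type. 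The plan is to mimic the inductive enlargement in the proof of Lemma~\ref{lem:tree fold}: start from the base case $k_1=k_n=1$, which is a bona fide labeled 4-wheel in $\Delta_{\Lambda_s}$ for any $s\notin f^{-1}(s'_1)\cup f^{-1}(s'_n)$, and then add the vertices of the fibres one at a time, using at each step the bowtie-freeness of $\Delta_{\Lambda_{\theta_a}}$ (respectively $\Delta_{\Lambda_{\eta_b}}$) restricted to an appropriate linear subgraph $\Lambda_{\theta_a,\eta_b}$ to produce a new common upper bound dominating the previous one. The final common upper bound, together with the special folding condition, assembles into the desired simplex $Z$ whose barycenter is the vertex $z\in X$ we seek.

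\textbf{Main obstacle.} The principal difficulty lies in Step 2. In the tree case handled by Lemma~\ref{lem:tree fold}, bowtie-freeness of the ambient Artin complex $\Delta_\Lambda$ itself drove the inductive enlargement; here $\Delta_\Lambda$ is not bowtie free because $\Lambda$ contains a cycle, and we only have bowtie-freeness of the path Artin complexes $\Delta_{\Lambda_s}$ for each individual $s\in\Lambda$. Consequently, each enlargement step must be performed inside a carefully chosen sub-complex $\Delta_{\Lambda_s}$, and one must ensure that the resulting upper bound is still meaningful after projecting back. The key bookkeeping is to verify that the common upper bound produced using one choice of base vertex $s_0\in f^{-1}(s'_1)$ remains compatible when we later need to incorporate other vertices of $f^{-1}(s'_1)\setminus\{s_0\}$; here the symmetry built into the special folding hypothesis, combined with the monotone nature of the order-theoretic join operation inside each $\Delta_{\Lambda_s}$, provides the needed compatibility.
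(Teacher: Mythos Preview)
Your plan misses the decisive structural observation that opens the paper's proof: a special folding from a cycle $\Lambda$ onto a linear graph forces $\Lambda$ to be a $4$-cycle and $\Lambda'$ to be a path on three vertices $s'_1,s'_2,s'_3$, with $|f^{-1}(s'_1)|=|f^{-1}(s'_3)|=1$ and $|f^{-1}(s'_2)|=2$. (Reason: for an interior vertex $s'_i$ of $\Lambda'$, every vertex of $f^{-1}(s'_i)$ must be adjacent to all of $f^{-1}(s'_{i-1})\cup f^{-1}(s'_{i+1})$; since vertices of a cycle have degree $2$, this pins down all fibre sizes.) In particular your parameters satisfy $k_1=k_n=1$ automatically, so the ``inductive enlargement'' of Step~2 and the compatibility worries in your Main Obstacle paragraph never arise.

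More seriously, Step~2 has a genuine gap even in this concrete $4$-cycle situation. You assert that the base case ``is a bona fide labeled $4$-wheel in $\Delta_{\Lambda_s}$ for any $s\notin f^{-1}(s'_1)\cup f^{-1}(s'_n)$'', but the $4$-cycle $x_1y_1x_2y_2$ lives in $\Delta_\Lambda$, not a priori in any $\Delta_{\Lambda_s}$; placing it inside $\lk(z_1,\Delta_\Lambda)\cong\Delta_{\Lambda_{s}}$ requires first producing a vertex $z_1$ of type $\hat s_2$ or $\hat s_4$ adjacent to all four. The hypotheses give bowtie-freeness only for the path complexes $\Delta_{\Lambda_s}$, not for $\Delta_\Lambda$ or for $\Delta_{\Lambda,\Lambda''}$ with $\Lambda''$ a proper arc, so you cannot simply invoke a labeled $4$-wheel condition on the ambient complex. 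The paper closes this gap by applying Lemma~\ref{lem:4-cycle} to $\Delta_\Lambda$: the $\widetilde A_n$-like hypothesis is exactly what is needed for $\Delta_\Lambda$ to satisfy the assumptions of Theorem~\ref{thm:contractible}, so Lemma~\ref{lem:4-cycle} yields a vertex $z_1$ adjacent to all of $x_1,x_2,y_1,y_2$; say $z_1$ has type $\hat s_2$. Then the $4$-cycle sits in $\lk(z_1,\Delta_\Lambda)\cong\Delta_{\Lambda_{s_2}}$, whose bowtie-freeness supplies $z_2$ of type $\hat s_4$ adjacent to all four, and the edge $\overline{z_1z_2}$ corresponds to the required vertex of type $\hat s'_2$ in $\Delta_{\Lambda,f}$. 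Your Step~1 is fine and matches the paper.
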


\begin{proof}
	By the definition of special folding, $\Lambda$ must be a 4-cycle and $\Lambda'$ is a linear graph with three nodes. Let consecutive nodes of $\Lambda'$ be $\{s'_i\}_{i=1}^3$. Let consecutive nodes of $\Lambda$ be $\{s_i\}_{i=1}^4$. We assume without loss of generality that $f(s_i)=s'_i$ for $1\le i\le 3$ and $f(s_4)=s'_2$. Now we verify that the assumptions of Lemma~\ref{lem:bowtie free criterion} hold for $\Delta_{\Lambda,f}$. 
	
	If we take a vertex $x$ of type $\hat s'_1$ in $X=\Delta_{\Lambda,f}$, then $\lk(x,X)$ admits a type-preserving isomorphism to $\Delta_{\Lambda_{s_1},f}$, which is bowtie free by Lemma~\ref{lem:tree fold}. Thus Assumption 1 of Lemma~\ref{lem:bowtie free criterion} follows. To verify Assumption 2, let $x_1y_1x_2y_2$ be a 4-cycle in $X$ with $x_1,x_2$ of type $\hat s'_1$ and $y_1,y_2$ of type $\hat s'_3$. We can also view these vertices as in $\Delta_\Lambda$, with $x_1,x_2$ of type $\hat s_1$ and $y_1,y_2$ of type $\hat s_3$. It suffices to find an edge $\overline{z_1z_2}\subset X$ connecting a vertex of type $\hat s_2$ and a vertex of type $\hat s_4$ such that each of $\{z_1,z_2\}$ is adjacent to each of $\{x_1,x_2,y_1,y_2\}$. By Lemma~\ref{lem:4-cycle}, there is a vertex $z_1$ adjacent to each of $\{x_1,x_2,y_1,y_2\}$. If $z_1$ is of type $\hat s_2$, then this 4-cycle is contained in $\lk(z_1,X)$, which admits a type-preserving isomorphism to $\Delta_{\Lambda_{s_2}}$. As $\Delta_{\Lambda_{s_2}}$ is bowtie free, there is a vertex $z_2\in \lk(z_1,X)$ of type $\hat s_4$ such that $z_2$ is adjacent to each of $\{x_1,x_2,y_1,y_2\}$. This finishes the proof.
\end{proof}

\begin{thm}
	\label{thm:folded}
Let $\Lambda,\Lambda'$ be connected Dynkin diagrams with a special folding $f:\Lambda\to \Lambda'$.
Suppose $\Lambda'$ is a Dynkin diagram with an induced cyclic subgraph $C'$. 
For a node $s'\in C'$, let $\Lambda_{s'}$ be the component of $\Lambda\setminus\{f^{-1}(s')\}$ that contains all the other nodes of $f^{-1}(C')$ (the definition of special folding ensures that all nodes of $f^{-1}(C'\setminus\{s'\})$ is in the same component of $\Lambda\setminus\{f^{-1}(s')\}$). Suppose for each node $s'\in C'$, $\Lambda_{s'}$ is one of the following:
\begin{enumerate}
	\item a tree whose Artin complex satisfies the labeled 4-cycle condition;
	\item an $f$-folded cycle which is $\widetilde A_n$-like.
\end{enumerate}
Then the following holds.
	\begin{enumerate}
		\item $\Delta_{\Lambda,f,C'}$ is contractile.
		\item $\Delta_\Lambda$ is contractible. 
		\item If, in addition, $A_{\Lambda_{s'}}$ satisfies the $K(\pi,1)$-conjecture for each node $s'\in C'$, then so is $A_\Lambda$.
	\end{enumerate}
\end{thm}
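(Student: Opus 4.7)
The plan is to follow the template of the proof of Proposition~\ref{prop:single cycle}, substituting the folded Artin complex $\Delta_{\Lambda,f,C'}$ for the relative Artin complex $\Delta_{\Lambda,C}$ used there.

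For Assertion (1), I will apply the contractibility criterion of Theorem~\ref{thm:contractible} to $X:=\Delta_{\Lambda,f,C'}$. Simply connectedness is immediate from Lemma~\ref{lem:folded}(1), since $C'$ is a cycle (so $|C'|\ge 3$). Fixing a cyclic orientation on $C'$, Lemma~\ref{lem:order fold} supplies the required partial order on each vertex link (admissibility of $C'$ in $\Lambda'$ is automatic, as $C'\setminus\{s'\}$ is connected for every $s'\in C'$). The core step is the bowtie-free condition on the link of each vertex $v$ of type $\hat s'$: by Lemma~\ref{lem:folded}(2), $\lk(v,X)\cong \Delta_{\Lambda_{s'},f,C'_{s'}}$, where $C'_{s'}$ is the linear graph $C'\setminus\{s'\}$. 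In the first case of the hypothesis, Lemma~\ref{lem:tree fold} yields the labeled 4-wheel condition on $\Delta_{\Lambda_{s'},f}$; I will show (by a folded analog of Corollary~\ref{cor:inherit}) that this passes to the induced subcomplex $\Delta_{\Lambda_{s'},f,C'_{s'}}$, after which Lemma~\ref{lem:folded}(4) converts labeled 4-wheel into bowtie-freeness. In the second case, Lemma~\ref{lem:cycle fold} directly gives bowtie-freeness of $\Delta_{\Lambda_{s'},f}$; since the folding $f$ maps $\Lambda_{s'}$ onto its linear image $C'_{s'}$, this complex coincides with $\Delta_{\Lambda_{s'},f,C'_{s'}}$.

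For Assertions (2) and (3), I will invoke Corollary~\ref{cor:contractible} with $\mathcal{C}_2$ the class of Dynkin diagrams satisfying the hypotheses of the theorem (for some folding $f$ and cycle $C'$) and $\mathcal{C}_1\subset\mathcal{C}_2$ the subclass of ``thick cycle'' diagrams with $\Lambda=f^{-1}(C')$. Assumption (1) of Corollary~\ref{cor:contractible} requires that $\Delta_{\Lambda,\Lambda'}$ be contractible whenever $\Lambda'=f^{-1}(C')\in\mathcal{C}_1$ is an induced subdiagram of $\Lambda\in\mathcal{C}_2$; I will establish this by a parallel application of Theorem~\ref{thm:contractible}, now to the relative Artin complex $\Delta_{\Lambda,f^{-1}(C')}$, transporting the cyclic order on $C'$ to a partial order on vertex types via $f$ and rerunning the same link analysis as in Assertion (1). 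Assumption (2) of Corollary~\ref{cor:contractible} is the inheritance statement analogous to Proposition~\ref{prop:inherit}; Assumption (3) is clear from the definitions of $\mathcal{C}_1$ and $\mathcal{C}_2$; Assumption (4) follows from the hypothesis that each $A_{\Lambda_{s'}}$ satisfies $K(\pi,1)$, combined with \cite[Corollary 2.4]{godelle2012k} to handle the components of $\Lambda\setminus\{s\}$ for $s$ a vertex of the thick cycle.

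The principal obstacle is establishing the folded analog of Corollary~\ref{cor:inherit} used in Assertion (1): that the labeled 4-wheel condition on the full folded complex $\Delta_{\Lambda_{s'},f}$ descends to the subcomplex $\Delta_{\Lambda_{s'},f,C'_{s'}}$ spanned by the non-maximal linear subgraph $C'_{s'}\subset f(\Lambda_{s'})$. A secondary technical point is that Lemma~\ref{lem:tree fold} requires the image $f(\Lambda_{s'})$ to be a tree; this will need to be checked in each invocation of the first case (it holds whenever the folding is injective on $\Lambda_{s'}$, which is the typical situation). Finally, carefully setting up the cyclic partial order on $\Delta_{\Lambda,f^{-1}(C')}$ that parallels the one on the folded complex, and verifying its transitivity, is the last technical piece needed for Assertion (2).
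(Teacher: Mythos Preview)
Your approach to Assertion~(1) is essentially the paper's, and your ``principal obstacle'' is not actually an obstacle: once $\Delta_{\Lambda_{s'},f}$ satisfies the labeled 4-wheel condition, its restriction to any subtree $C'_{s'}\subset f(\Lambda_{s'})$ inherits the condition directly from the definition (a 4-cycle with all four types in $C'_{s'}$ produces a center whose type lies in the smallest subtree spanned by those types, hence in $C'_{s'}$). The paper dispatches this in one line via Lemma~\ref{lem:folded}(4)(5). Your secondary worry is also unfounded: the special-folding condition forces any cycle in the image to lift to a cycle in the domain, so a tree $\Lambda_{s'}$ automatically has tree image.

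Your approach to Assertions~(2) and~(3) via Corollary~\ref{cor:contractible}, however, has a genuine gap. Assumption~(1) of that corollary asks for contractibility of the \emph{unfolded} relative complex $\Delta_{\Lambda,f^{-1}(C')}$, and you propose to get this from Theorem~\ref{thm:contractible} by pulling back the cyclic order on $C'$ along $f$. But Theorem~\ref{thm:contractible} requires the type set $S$ to carry a genuine cyclic order; when $|f^{-1}(s')|>1$ for some $s'$ (precisely the case in which the folded theorem says more than Proposition~\ref{prop:single cycle}), the pullback is only a cyclic \emph{preorder} on $f^{-1}(C')$, and the weak-order construction $P=X^{(0)}\times\mathbb Z$ underlying Theorem~\ref{thm:contractible} no longer makes sense. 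Moreover, the link of a vertex of type $\hat s$ in $\Delta_{\Lambda,f^{-1}(C')}$ is controlled by $\Lambda\setminus\{s\}$, not by $\Lambda\setminus f^{-1}(f(s))$, so it does not match the folded link analysis you carried out for Assertion~(1).

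The paper's route for Assertion~(2) is different. After showing $\Delta_{\Lambda,f}$ is contractible (by the Proposition~\ref{prop:contractible}-style retraction from $\Delta_{\Lambda,f,\Lambda'}$ down to $\Delta_{\Lambda,f,C'}$), it factors $f$ as a chain of elementary foldings $g_i=f_i\circ\cdots\circ f_1$, each identifying a single pair of vertices, and proves by induction on $|\Lambda|-|\Lambda'|$ that $\Delta_{\Lambda,g_i}$ deformation retracts onto $\Delta_{\Lambda,g_{i+1}}$. The retraction is obtained by a subdivision trick: slice each maximal simplex of $\Delta_{\Lambda,g_i}$ along the codimension-one face through the midpoint of the edge joining the two vertex types being identified, then collapse the open stars of those two types, whose links are contractible by the induction hypothesis applied to the strictly smaller diagram. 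Taking $g_0=\mathrm{id}$ gives contractibility of $\Delta_\Lambda$. Assertion~(3) is then handled by a second induction on $|\Lambda|-|\Lambda'|$ together with Theorem~\ref{thm:combine}, rather than via Corollary~\ref{cor:contractible}.
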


\begin{proof}
Let $X=\Delta_{\Lambda,f,C'}$. Take a vertex $x_0\in X$ of type $\hat s'$ with $s'\in C'$. Let $\Lambda'_{s'}$ be the component of $\Lambda'\setminus\{s'\}$ that contains other nodes of $C'$. By the definition of special folding, $\Lambda_{s'}=f^{-1}(\Lambda'_{s'})$.
Then $\lk(x_0,X)$ admits a type-preserving isomorphism to $Y=\Delta_{\Lambda_{s'},f,C'\setminus\{s'\}}$ by Lemma~\ref{lem:folded} (2). As $\Lambda_{s'}$ is either a tree or a folded cycle, we know $f(\Lambda_{s'})$ is either a tree or a line segment. Then Lemma~\ref{lem:order fold}, Lemma~\ref{lem:tree fold} and Lemma~\ref{lem:cycle fold} imply that $\Delta_{\Lambda_{s'},f,C'\setminus\{s'\}}$ satisfies the labeled 4-cycle condition. By Lemma~\ref{lem:folded} (4) and (5), $Y$ is bowtie free. By Lemma~\ref{lem:folded} (1) and Theorem~\ref{thm:contractible}, $X$ is contractible, thus Assertion 1 follows.

It is also true that $\Delta_{\Lambda_0,f,C'}$ is contractible for any $\Lambda_0=f^{-1}(\Lambda'_0)$ with $\Lambda'_0$ being a connected induced subgraph of $\Lambda'$ containing $C'$. The reason is that $f:\Lambda_0\to \Lambda'_0$ still satisfies the two assumptions of the theorem by Corollary~\ref{cor:inherit}. Hence we can repeat the proof of Assertion 1 to deduce that $\Delta_{\Lambda_0,f,C'}$ is contractible.

%For Assertion 2, note that the pair $(\Lambda',C')$ satisfies the assumption of Proposition~\ref{prop:single cycle}. By the same argument as in the proof of Proposition~\ref{prop:contractible} (1) (using Lemma~\ref{lem:folded}), we know that for any connected induced subgraph $\Lambda''$ of $\Lambda'$ satisfying $C'\subset \Lambda''$, $\Delta_{\Lambda,f,\Lambda''}$ is contractible. In particular, $\Delta_{\Lambda,f}$ is contractible. 

For Assertion 2, we first claim that for any connected induced subgraphs $\Phi'$ and $\Psi'$ of $\Lambda'$ satisfying $C'\subset \Psi'\subset \Phi'$, $\Delta_{f^{-1}(\Phi'),f,\Psi'}$ is contractible. We prove the claim by induction on number of vertices in $\Phi'\setminus C'$. Let $\Phi=f^{-1}(\Phi')$ and $\Psi=f^{-1}(\Psi')$.
The base case when $\Phi'=C'$ follows from the previous paragraph. Now we assume the claim is true when $\Phi'\setminus C'$ has $\le n-1$ vertices. Now take $\Phi'$ such that $\Phi'\setminus C'$ has $n$ vertices. 
We apply Lemma~\ref{lem:subgraph} to the pair $C'\subset \Psi'$ to obtain $\{\Lambda'_i\}_{i=1}^{k}$ and $\{s'_i\}_{i=1}^{k-1}$ as in Lemma~\ref{lem:subgraph}, such that $\Lambda'_1=\Psi'$ and $\Lambda'_{k}=C'$. Let $\Lambda_i=f^{-1}(\Lambda'_i)$.
As $\Delta_{\Phi,f,C'}$ is contractible by the previous paragraph, it suffices to show that for each $1\le i\le k$, $\Delta_{\Phi,f,\Lambda'_i}$ deformation retracts onto $\Delta_{\Phi,f,\Lambda'_{i+1}}$. By the analogue of Lemma~\ref{lem:dr} for folded relative Artin complexes, we need to show $\lk(v,\Delta_{\Phi,f,\Lambda'_i})$ is contractible for any vertex  $v\in \Delta_{\Lambda,f,\Lambda'_i}$ of type $\hat s'_i$. Let $\Theta'_i$ be the component of $\Phi'\setminus \{s'_i\}$ that contains $\Lambda'_{i+1}$. Then $\Theta_i=f^{-1}(\Theta'_i)$ is connected by the definition of special folding.
By Lemma~\ref{lem:folded} (2), $\lk(v,\Delta_{\Phi,f,\Lambda'_i})$ admits a type-preserving isomorphism to $\Delta_{\Theta_i,f,\Lambda'_{i+1}}$.
Note that $C'\subset \Theta'_i$, and $\Theta'_i$ has fewer vertices compared to $\Phi'$. Thus by induction $\Delta_{\Theta_i,f,\Lambda'_{i+1}}$ is contractible, as desired. This finishes the proof of the claim. In particular, it follows from the claim that $\Delta_{\Lambda,f}$ is contractible.

 %As $C'\subset \Lambda_{s_i}\subset\Lambda$, we know $\Lambda_{s_i}\in \mathcal C_2$ by Assumption 2. Note that $\Lambda'\subset \Lambda_{i+1}\subset\Lambda_{s_i}$. As $\Lambda_{i+1}\subsetneq \Lambda''$, $\Lambda_{i+1}\setminus \Lambda'$ has less nodes compared to $\Lambda''\setminus \Lambda'$. By our induction assumption, $\Delta_{\Lambda_{s_i},\Lambda_{i+1}}$ is contractible. The claim follows.

We claim whenever $f$ is a composition of two special folding $f':\Lambda\to \Lambda_m$ and $f'':\Lambda_m\to \Lambda'$, then $\Delta_{\Lambda,f'}$ is contractible. Assertion 2 follows immediately from this claim (by taking $f'$ to be the identity map). We now the prove the claim by induction on   $|\Lambda|-|\Lambda'|$, where $|\Lambda|$ is the number of nodes in $\Lambda$. For the base case when this quantity is $0$, we know $f'$, $f''$ and $f$ are identity maps. Then $\Delta_{\Lambda,f'}=\Delta_{\Lambda,f}$ is contractible by the previous paragraph.

For the induction step, note that there is a finite sequence of special foldings 
$$\{f_i:\Lambda_i\to \Lambda_{i+1}\}_{i=1}^k$$ 
such that
\begin{itemize}
	\item $\Lambda_1=\Lambda$, $\Lambda_{k+1}=\Lambda'$ and $\Lambda_m$ is one of $\{\Lambda_i\}_{i=1}^{k+1}$;
	\item  $f=f_k\circ\cdots\circ f_1$ and $|\Lambda_{i+1}|=|\Lambda_i|-1$ for $1\le i\le k$. 
\end{itemize}
Let $g_i=f_i\circ\cdots\circ f_1$, and let $g_0$ be the identity map from $\Lambda$ to itself. As we already know $\Delta_{\Lambda,f}$ is contractible, it suffices to show $\Delta_{\Lambda,g_i}$ deformation retracts onto $\Delta_{\Lambda,g_{i+1}}$ for $0\le i\le k$.

Let $\{s_{i1},s_{i2}\}$ be the unique pair of distinct nodes in $\Lambda_i$ with the same $f_i$-image.  Now let $\Delta'_{\Lambda,g_i}$ be a subdivision of $\Delta_{\Lambda,g_i}$ by cutting each maximal simplex in $\Delta_{\Lambda,g_i}$ along a codimension 1 sub-simplex spanned by the midpoint of the edge connecting the vertex of type $\hat s_{i1}$ and the vertex of type $\hat s_{i2}$, as well as all the other vertices of the maximal simplex. Then $\Delta_{\Lambda,g_{i+1}}$ is obtained from $\Delta'_{\Lambda,g_i}$ by removing all the open starts (in $\Delta'_{\Lambda,g_i}$) of vertices of type $\hat s_{i1}$ or type $\hat s_{i2}$. To show $\Delta_{\Lambda,g_i}$ deformation retracts onto $\Delta_{\Lambda,g_{i+1}}$, it suffices to prove $\lk(x,\Delta_{\Lambda,g_i})$ is contractible whenever $x$ is of type $\hat s_{i1}$ or $\hat s_{i2}$. We only treat the case when $x$ is of type $\hat s_{i1}$ as the other case is similar. 
Then Lemma~\ref{lem:folded} (2) implies that $\lk(x,\Delta_{\Lambda,g_i})$ admits a type-preserving isomorphism to $\Delta_{\Lambda_{i1},g_i}$ where $\Lambda_{i1}$ is the only component in $\Lambda\setminus\{g^{-1}_i(s_{i1})\}$. As $s_{i1}\neq s_{i2}$, the map $f:\Lambda_{i1}\to \Lambda'$ is a special folding. Note that $|\Lambda_{i1}|-|\Lambda'|<|\Lambda|-|\Lambda'|$, and by Proposition~\ref{prop:inherit},  the special folding $f:\Lambda_{i1}\to \Lambda'$ still satisfies the assumptions of Theorem~\ref{thm:folded}. Now the induction assumption implies that $\lk(x,\Delta_{\Lambda,g_i})$ is contractible, as desired.

For Assertion 3, we induct on $|\Lambda|-|C'|$. The base case where this quantity is $0$ follows from Proposition~\ref{prop:single cycle}. By Assertion 2, it suffices to show the claim that for any node $s\in \Lambda$, each component of $\Lambda\setminus\{s\}$ satisfies the $K(\pi,1)$-conjecture. If $s$ is not the only node in $f^{-1}(f(s))$, then $f$ induces a special folding from the component in $\Lambda\setminus\{s\}$ to $\Lambda'$ which still satisfies the assumptions of Theorem~\ref{thm:folded} (by Proposition~\ref{prop:inherit} and \cite[Corollary 2.4]{godelle2012k}), and the claim follows from the induction assumption. Now suppose $s$ is the only node in $f^{-1}(f(s))$. 

First we consider the case $f(s)\in C'$. Let $\Phi$ be a component of $\Lambda\setminus \{s\}$. If $f^{-1}(C'\setminus\{f(s)\})$ is contained in $\Phi$, then $\Phi=\Lambda_{f(s)}$, hence $A_{\Phi}$ satisfies the $K(\pi,1)$-conjecture by the assumption in Assertion 3. If $\Phi$ does not contain all the nodes of $f^{-1}(C'\setminus\{f(s)\})$, then $\Phi\cap\Lambda_{f(s)}=\emptyset$. As $\Lambda$ is connected, we know $\Phi\subset \Lambda_{s'}$ for $s'\in C'$ and $s'\neq f(s)$. Hence $A_{\Phi}$ satisfies the $K(\pi,1)$-conjecture by assumption in Assertion 3 and \cite[Corollary 2.4]{godelle2012k}. Thus $\Lambda\setminus \{s\}$ corresponds to an Artin group satisfying the $K(\pi,1)$-conjecture, as each of its components does so.

Second we consider the case $f(s)\notin C'$. Let $\Phi'$ be a component of $C'\setminus\{f(s)\}$. Let $\Phi=f^{-1}(\Phi')$. If $C'\subset \Phi'$, then by Proposition~\ref{prop:inherit} and \cite[Corollary 2.4]{godelle2012k}, $f:\Phi\to \Phi'$ is a special folding satisfying the assumptions of Theorem~\ref{thm:folded}. As $\Phi$ has fewer vertices than $\Lambda$, by induction, $A_{\Phi}$ satisfies the $K(\pi,1)$-conjecture. If $C'$ is not contained in $\Phi'$, then $\Phi'\cap C'=\emptyset$. As $\Lambda$ is connected, we know $\Phi\subset \Lambda_{s'}$ for some $s'\in C'$. By the assumption in Assertion 3 and \cite[Corollary 2.4]{godelle2012k}, $A_{\Phi}$ satisfies the $K(\pi,1)$-conjecture. This finishes the proof.
\end{proof}

%is similar to the proof of Corollary~\ref{cor:contractible} (i.e. by an extra layer of induction on the number of nodes in $\Lambda'\setminus C'$). Now we are done by Theorem~\ref{thm:combine}.

\subsection{Some concrete classes}
\label{subsec:some concrete classes}
Here we discuss several concrete new classes of Artin groups satisfying $K(\pi,1)$, as a consequence of Proposition~\ref{prop:single cycle} and Theorem~\ref{thm:folded}.

The following is a consequence of  Theorem~\ref{thm:folded}, Corollary~\ref{cor:wheel}, Corollary~\ref{cor:algebraic}, Corollary~\ref{cor:locally reducible}, Proposition~\ref{prop:widetilde C} and the main theorem of \cite{paolini2021proof}.
\begin{thm}
	\label{thm:folded1}
	Let $f:\Lambda\to \Lambda'$  be a special folding between connected Dynkin diagrams. Assume $\Lambda'$ contains an induced cyclic subgraph $C'$.
	For a node $s'\in C'$, let $\Lambda_{s'}$ be the component of $\Lambda\setminus\{f^{-1}(s')\}$ that contains all the other nodes of $f^{-1}(C')$. Suppose 
	\begin{enumerate}
		\item the two assumptions of Corollary~\ref{cor:algebraic} holds for affine Artin groups of type $\widetilde B$, $\widetilde D$, $\widetilde E$ and $\widetilde F$;
		\item for each node $s'\in C'$, $\Lambda_{s'}$ is either of a tree of spherical type or affine type or locally reducible type, or an $f$-folded $\widetilde A_n$-type cycle.
	\end{enumerate}
	Then the $K(\pi,1)$-conjecture holds for $A_\Lambda$.
\end{thm}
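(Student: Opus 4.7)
The plan is to verify the hypotheses of Theorem~\ref{thm:folded} for the special folding $f:\Lambda\to\Lambda'$ and the cyclic subgraph $C'\subset\Lambda'$, and then invoke part (3) of that theorem. For each $s'\in C'$ I would check case by case that $\Lambda_{s'}$ fits one of the two alternatives of Theorem~\ref{thm:folded} and that the Artin group $A_{\Lambda_{s'}}$ satisfies the $K(\pi,1)$-conjecture. The case in which $\Lambda_{s'}$ is an $f$-folded $\widetilde A_n$-like cycle is directly alternative (2) of Theorem~\ref{thm:folded}, so no further work beyond the $K(\pi,1)$-check is needed there.

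For the tree cases, I need the Artin complex $\Delta_{\Lambda_{s'}}$ to satisfy the labeled 4-wheel condition. By Proposition~\ref{prop:4-cycle} together with Lemma~\ref{lem:connect}, this reduces to bowtie-freeness of $\Delta_{\Lambda_{s'},\Lambda''}$ for every maximal linear subgraph $\Lambda''\subset\Lambda_{s'}$. Spherical trees are handled by Corollary~\ref{cor:wheel}, and locally reducible trees by Corollary~\ref{cor:locally reducible}. For affine trees, the possible types are $\widetilde B$, $\widetilde C$, $\widetilde D$, $\widetilde E$, $\widetilde F$, $\widetilde G$ (since $\widetilde A$ is cyclic, not a tree). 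Each of these is connected and almost spherical, so I would apply Corollary~\ref{cor:algebraic}; hypothesis (1) of the theorem supplies its two group-theoretic inputs for $\widetilde B$, $\widetilde D$, $\widetilde E$, $\widetilde F$. The type $\widetilde C_n$ is covered separately by Proposition~\ref{prop:widetilde C}, and $\widetilde G_2$ is itself locally reducible (its only rank-$3$ parabolic subgroup is the full affine one, and all its irreducible spherical parabolic subgroups have rank $\le 2$), so it falls under Corollary~\ref{cor:locally reducible}.

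For the $K(\pi,1)$-conjecture on each $A_{\Lambda_{s'}}$: spherical types are Deligne \cite{deligne}, affine types (including the $\widetilde A_n$-cycle case) are Paolini--Salvetti \cite{paolini2021proof}, and locally reducible types are covered by Charney--Davis \cite{CharneyDavis} or equivalently by Theorem~\ref{thm:combine1} applied with empty edge set $E$. With all hypotheses of Theorem~\ref{thm:folded} verified, part (3) of that theorem yields the $K(\pi,1)$-conjecture for $A_\Lambda$. The only genuinely non-routine step is the invocation of Corollary~\ref{cor:algebraic} for the affine types $\widetilde B$, $\widetilde D$, $\widetilde E$, $\widetilde F$, where the required parabolic-intersection and Garside-commutation properties are precisely the external hypothesis (1); modulo that, the argument is a systematic assembly of ingredients already established earlier in the paper.
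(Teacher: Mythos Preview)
Your proof is correct and follows the same approach as the paper, which simply states the result as a consequence of Theorem~\ref{thm:folded}, Corollary~\ref{cor:wheel}, Corollary~\ref{cor:locally reducible}, Proposition~\ref{prop:widetilde C}, and \cite{paolini2021proof}; you have spelled out the case analysis (including the treatment of $\widetilde G_2$ as locally reducible and the use of Corollary~\ref{cor:algebraic} under hypothesis~(1) for the remaining affine tree types) that the paper leaves implicit. One minor citation correction: the $K(\pi,1)$-conjecture for locally reducible Artin groups is \cite{charney2000tits}, not \cite{CharneyDavis}.
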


Now the following is immediate.
\begin{thm}
	\label{thm:folded2}
	Let $f:\Lambda\to \Lambda'$  be a special folding between connected Dynkin diagrams. Assume $\Lambda'$ contains an induced cyclic subgraph $C'$.
	For a node $s'\in C'$, let $\Lambda_{s'}$ be the component of $\Lambda\setminus\{f^{-1}(s')\}$ that contains all the other nodes of $f^{-1}(C')$. Suppose for each node $s'\in C'$, $\Lambda_{s'}$ is either of spherical type, or type $\widetilde A_n$ which is $f$-folded, or type $\widetilde C_n$, or locally reducible tree.
	Then the $K(\pi,1)$-conjecture holds for $A_\Lambda$.
\end{thm}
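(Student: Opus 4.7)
The plan is to deduce Theorem~\ref{thm:folded2} as a direct specialization of Theorem~\ref{thm:folded}; essentially all the substantive work has already been carried out, and only a packaging step remains. Concretely, I would set the data for Theorem~\ref{thm:folded} to be the given $f:\Lambda\to\Lambda'$ and the cycle $C'\subset\Lambda'$, and then check that each component $\Lambda_{s'}$ (for $s'\in C'$) falls into one of the two admissible classes in the hypothesis of Theorem~\ref{thm:folded}.

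For each of the four permitted types of $\Lambda_{s'}$ listed in Theorem~\ref{thm:folded2}, I would argue as follows. If $\Lambda_{s'}$ is an $f$-folded $\widetilde A_n$-cycle, this matches hypothesis (2) of Theorem~\ref{thm:folded} verbatim, using that a genuine $\widetilde A_n$-cycle is $\widetilde A_n$-like: removing any vertex from $\widetilde A_n$ yields a spherical $A_n$-diagram, whose Artin complex is bowtie free by Theorem~\ref{thm:bowtie free}. In the remaining three cases $\Lambda_{s'}$ is a tree, and I would supply the labeled 4-wheel condition from prior results: Corollary~\ref{cor:wheel} when $\Lambda_{s'}$ is spherical, Corollary~\ref{cor:locally reducible} when $\Lambda_{s'}$ is a locally reducible tree, and for $\Lambda_{s'}$ of type $\widetilde C_n$ (which is a linear tree, hence an admissible linear subgraph of itself) a combination of Proposition~\ref{prop:widetilde C}, which gives the bowtie free condition on $\Delta_{\Lambda_{s'}}$, and Lemma~\ref{lem:connect}, which promotes bowtie free to the labeled 4-wheel condition for linear diagrams.

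Next I would verify the auxiliary hypothesis of Theorem~\ref{thm:folded} that each $A_{\Lambda_{s'}}$ satisfies the $K(\pi,1)$-conjecture. This is known in all four cases: Deligne~\cite{deligne} covers the spherical case, Okonek~\cite{okonek1979k} (or alternatively Paolini--Salvetti~\cite{paolini2021proof}) covers types $\widetilde A_n$ and $\widetilde C_n$, and Charney--Davis~\cite{CharneyDavis} covers locally reducible diagrams. With both hypotheses now in hand, Theorem~\ref{thm:folded} immediately yields the $K(\pi,1)$-conjecture for $A_\Lambda$.

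The point of isolating Theorem~\ref{thm:folded2} from its more general relative Theorem~\ref{thm:folded1} is that restricting the affine components to type $\widetilde C_n$ makes the conclusion \emph{unconditional}: Proposition~\ref{prop:widetilde C} handles $\widetilde C_n$ without invoking the parabolic-closure and commutation assumptions of Corollary~\ref{cor:algebraic} that Theorem~\ref{thm:folded1} requires for the other affine types. Consequently there is no genuine obstacle in this proof; the real content lives in Theorem~\ref{thm:folded}, Proposition~\ref{prop:widetilde C}, and the labeled 4-wheel results of earlier sections, and writing the argument out is a brief bookkeeping exercise.
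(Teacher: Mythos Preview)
Your proposal is correct and matches the paper's approach: the paper treats Theorem~\ref{thm:folded2} as immediate from Theorem~\ref{thm:folded1} (itself a consequence of Theorem~\ref{thm:folded} together with Corollary~\ref{cor:wheel}, Corollary~\ref{cor:locally reducible}, Proposition~\ref{prop:widetilde C}, and \cite{paolini2021proof}), and you have spelled out exactly this reduction. One small bibliographic quibble: for the $K(\pi,1)$-conjecture in the locally reducible case the paper cites \cite{charney2000tits} rather than \cite{CharneyDavis}.
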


\begin{cor}
	\label{cor:extended lanner}
All Artin groups whose Dynkin diagrams belongs to Figure~\ref{fig:9} satisfy the $K(\pi,1)$-conjecture.
\end{cor}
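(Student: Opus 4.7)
The plan is to reduce Corollary~\ref{cor:extended lanner} to a verification task, applying Theorem~\ref{thm:folded2} to each family and each exceptional example in Figure~\ref{fig:9}. For each Dynkin diagram $\Lambda$ in the figure, I first locate (or construct) the natural cycle structure: either $\Lambda$ itself contains an induced cycle $C$ that will play the role of $C'$ under the identity folding $f=\mathrm{id}$, or $\Lambda$ carries a graph automorphism (typically a reflection symmetry visible from the ``horseshoe crab'' pictures) whose quotient is a smaller diagram $\Lambda'$ with induced cycle $C'$, giving the required special folding $f\colon\Lambda\to\Lambda'$. The fact that the 11 infinite families were advertised as modifications of quasi-Lannér diagrams by attaching spherical/locally reducible trees (or by extending an affine tail) ensures such a cycle $C'$ is always visible.

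Next, for each such pair $(\Lambda,C')$ I will inspect, vertex by vertex of $C'$, the component $\Lambda_{s'}$ of $\Lambda\setminus f^{-1}(s')$ containing the other preimages of $C'$. The task is then to confirm that each $\Lambda_{s'}$ belongs to the allowed list in Theorem~\ref{thm:folded2}, namely spherical, locally reducible tree, type $\widetilde C_n$, or an $f$-folded $\widetilde A_n$-cycle. This is where the geometric features of the listed families become decisive: the ``horseshoe crab'' diagrams are designed so that each such $\Lambda_{s'}$ breaks off as a union of short $A_m$, $B_m$, or $I_2(m)$ type pieces, the cyclic quasi-Lannér examples yield either folded $\widetilde A_n$ or spherical $\Lambda_{s'}$, and the diagrams built around an affine tail yield $\widetilde C_n$ components. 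For the 20 exceptional diagrams containing $F_4, H_3, H_4, E_6, E_7, E_8$ as spherical subdiagrams, one checks case-by-case that removing the chosen cycle vertex indeed decouples the exceptional spherical part from the rest.

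For each infinite family I will present the verification parametrically: fixing the position of the vertex $s'$ as it ranges over the (at most four) combinatorial positions on the cycle $C'$, and reading off $\Lambda_{s'}$ as a function of the free parameter $n$. Once these finitely many combinatorial patterns are confirmed to land in the allowed list (using the classifications of spherical Dynkin diagrams in Figure~\ref{fig:sa} and the list of locally reducible trees), Theorem~\ref{thm:folded2} applies uniformly to the whole family. The 20 exceptional examples are handled by direct inspection under the same scheme.

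The main obstacle is purely bookkeeping: the sheer number of cases (eleven families, each giving up to four vertex-types on $C'$, plus twenty exceptional diagrams) means the argument is long, and one must be careful that the special folding $f$ is chosen compatibly for every vertex of $C'$ so that $f^{-1}(s')$ really disconnects the diagram the way Theorem~\ref{thm:folded2} requires. A subtle point is the $\widetilde A_n$-like assumption on folded cycles: when a component $\Lambda_{s'}$ is itself an $f$-folded cycle, I must verify that for each vertex $t$ of this cycle, $\Delta_{\Lambda_{s'}\setminus\{t\}}$ is bowtie free, which reduces via Corollary~\ref{cor:wheel}, Corollary~\ref{cor:locally reducible}, and Lemma~\ref{lem:connect} to the cases already covered; but this reduction has to be carried out and recorded for each family. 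Once all these verifications are assembled, Theorem~\ref{thm:folded2} yields the $K(\pi,1)$-conjecture simultaneously for every $\Lambda$ in Figure~\ref{fig:9}.
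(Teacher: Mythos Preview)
Your plan has a genuine gap. You assert that Theorem~\ref{thm:folded2} covers every family in Figure~\ref{fig:9} once the right cycle and folding are chosen, and that the remaining work is ``purely bookkeeping''. This fails for at least three families, and the paper handles them by a separate argument.

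Consider one of the horseshoe crab families: an $n$-cycle $\Lambda_1$ and a $3$-cycle $\Lambda_2$ sharing an edge $\overline{s_1s_2}$, with third triangle vertex $s_3$. If you take $C'=\Lambda_2$ under the identity folding, then $\Lambda_{s_3}$ is the $n$-cycle $\Lambda_1$, which is not a tree and is not $f$-folded (the identity is injective), so it is not on the allowed list. If instead you take $C'=\Lambda_1$, then for any vertex $s'\in\Lambda_1\setminus\{s_1,s_2\}$ the component $\Lambda_{s'}$ still contains the triangle $\{s_1,s_2,s_3\}$, again not a tree and not $f$-folded. No reflection symmetry of $\Lambda$ produces a special folding that simultaneously turns every $\Lambda_{s'}$ into something on the list of Theorem~\ref{thm:folded2}. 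A similar obstruction arises for the top-right-corner family when the edge label $n\ge 6$: the component $\Lambda_{s_3}$ is a linear graph on four vertices containing an $A_3$ (or $B_3$, $H_3$) subdiagram, hence is neither spherical nor locally reducible nor of type $\widetilde C_n$.

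The paper does not route these cases through Theorem~\ref{thm:folded2} at all. Instead it takes the triangle $\Lambda_2$ as core, metrizes $\Delta_{\Lambda,\Lambda_2}$ with flat equilateral triangles, and verifies the CAT(0) link condition directly: the link at a type $\hat s_3$ vertex is $\Delta_{\Lambda_1,\overline{s_1s_2}}$, whose girth $\ge 6$ comes from Lemma~\ref{lem:4-cycle} (for the $n$-cycle) or from Theorem~\ref{thm:bowtie free} and Lemma~\ref{lem:enlarge1} (for the top-right family); the links at $\hat s_1,\hat s_2$ are handled by Theorem~\ref{thm:bowtie free} since $\Lambda\setminus\{s_i\}$ is spherical of type $A$, $B$, or $D$. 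Contractibility of $\Delta_{\Lambda,\Lambda_2}$ then yields the $K(\pi,1)$-conjecture via the same mechanism as Proposition~\ref{prop:single cycle}. Your proposal needs to incorporate this direct CAT(0) step for these exceptional families.
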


\begin{proof}
Note that Theorem~\ref{thm:folded2} applies directly to all the classes in Figure~\ref{fig:9} (the family with two 4-gons inside requires a special folding), except the two horseshoe crab shaped families and the family on the top right corner. 

Let $\Lambda$ be a diagram belong to one of the horseshoe crab families. Let $\Lambda_1,\Lambda_2$ be the $n$-cycle and  3-cycle inside $\Lambda$. It suffices to show $\Delta_{\Lambda,\Lambda_2}$ is contractible, then an argument similar to Proposition~\ref{prop:single cycle} will imply the $K(\pi,1)$-conjecture for this family. Let the nodes of $\Lambda'$ be $\{s_i\}_{i=1}^3$ with $s_1,s_2\in \Gamma_1\cap \Gamma_2$. We metrize each triangle in $X=\Delta_{\Lambda,\Lambda_2}$ as a flat equilateral triangle. By Lemma~\ref{lem:4-cycle}, the edge $\overline{s_1s_2}$ is 3-solid in $\Lambda_2$. For any vertex $x\in X$ of type $\hat s_3$, $\lk(x,X)$ admits a type-preserving isomorphism to $\Delta_{\Lambda_1,\overline{s_1s_2}}$ by Lemma~\ref{lem:link}. Thus $X$ is locally $\CAT(0)$ around vertices of type $\hat s_3$. Note that $\Lambda\setminus\{s_i\}$ is spherical for $i=1,2$ (either type $A_n$ or $B_n$ or $D_n$). Then Theorem~\ref{thm:bowtie free} and Lemma~\ref{lem:link} imply that $\lk(x,X)$ has girth $\ge 6$ when $x$ is of type $\hat s_1$ or $\hat s_2$. As $X$ is simply-connected (Lemma~\ref{lem:relative sc}), we know $X$ is $\CAT(0)$, hence it is contractible.

Now let $\Lambda$ be a diagram belonging to the family of the top right corner. Let $\Lambda'$ be the triangle subgraph of $\Lambda$ with its nodes $\{s_1,s_2,s_3\}$. Suppose $s_3$ is the node not in the edge labeled by $n$. Again it suffices to show $\Delta_{\Lambda,\Lambda'}$ is contractible. We metrize triangles in $X=\Delta_{\Lambda,\Lambda'}$ as flat equilateral triangles, and it suffices to check the girth of link at each vertex. When $x$ is a vertex of type $\hat s_3$, then $\Lambda_1$ is of type $A_4$ or $B_4$ or $H_4$ where $\Lambda_1$ is the component of $\Lambda\setminus\{s_3\}$ containing $\overline{s_1s_2}$. Note that $\overline{s_1s_2}$ is $3$-solid in $\Lambda_1$ -- indeed, this follows from Theorem~\ref{thm:bowtie free} if $n\le 5$ and Lemma~\ref{lem:enlarge1} when $n\ge 6$. As $\lk(x,X)$ admits a type-preserving isomorphism to $\Delta_{\Lambda_1,\overline{s_1s_2}}$, we know $\lk(x,X)$ has girth $\ge 6$. The other cases are similar and simpler.
\end{proof}

\begin{cor}
	\label{cor:lanner}
Suppose $\Lambda$ is either a Lann\'er diagram or a quasi-Lann\'er diagram such that $\Lambda$ contains a cycle. If the Artin complex of Artin group of type $\widetilde B_4$, $\widetilde F_4$ and $\widetilde E_7$ satisfies the labeled 4-cycle condition (or equivalently the condition in Corollary~\ref{cor:wheel}), then $A_\Lambda$ satisfies the $K(\pi,1)$-conjecture.
\end{cor}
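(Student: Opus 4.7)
The plan is to reduce Corollary~\ref{cor:lanner} to Theorem~\ref{thm:folded} by a finite case check over the classifications of Lann\'er diagrams in \cite{lanner1950complexes} and of quasi-Lann\'er diagrams in \cite{chein1969recherche}. Since Lann\'er and quasi-Lann\'er diagrams only exist up to a bounded dimension, the collection of such $\Lambda$ containing a cycle is finite. For each such $\Lambda$, I would exhibit a special folding $f\colon\Lambda\to\Lambda'$ (often the identity, but sometimes folding symmetric branches that emanate from the cycle) together with an induced cycle $C'\subset\Lambda'$, so that for every $s'\in C'$ the component $\Lambda_{s'}$ of $\Lambda\setminus\{f^{-1}(s')\}$ containing the remaining vertices of $f^{-1}(C')$ falls in one of the five allowed types listed below.

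Concretely, for each choice of cycle $C$ in each (quasi-)Lann\'er diagram, I would verify that every $\Lambda_{s'}$ is: (a) spherical; (b) a locally reducible tree; (c) an $f$-folded cycle which is $\widetilde A_n$-like; (d) of type $\widetilde C_n$; or (e) of type $\widetilde B_4$, $\widetilde F_4$, or $\widetilde E_7$. The key enabling feature is that in any quasi-Lann\'er diagram, the cycle serves essentially as the core: for any vertex $s'$ on the core cycle, each component of $\Lambda\setminus\{s'\}$ is spherical or affine by the very definition of quasi-Lann\'er. Hence $\Lambda_{s'}$ is forced to be spherical or of one of the affine types $\widetilde A_n,\widetilde B_n,\widetilde C_n,\widetilde D_n,\widetilde E_{6,7,8},\widetilde F_4,\widetilde G_2$. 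Inspection of the finite list shows that the only affine types that actually occur as $\Lambda_{s'}$ in this setting are those in (c)--(e), with the $\widetilde B$, $\widetilde F$, $\widetilde E$ instances restricted to $\widetilde B_4$, $\widetilde F_4$, and $\widetilde E_7$.

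Types (a)--(d) are covered directly by Theorem~\ref{thm:folded2}. For type (e), the hypothesis of Corollary~\ref{cor:lanner} says exactly that the Artin complexes of $A_{\widetilde B_4}$, $A_{\widetilde F_4}$, and $A_{\widetilde E_7}$ satisfy the labeled 4-wheel condition, so each such $\Lambda_{s'}$ is a tree whose Artin complex satisfies the labeled 4-wheel condition; that is, it meets assumption (1) of Theorem~\ref{thm:folded}. Combining the two kinds of components, the full hypothesis of Theorem~\ref{thm:folded} holds for $(\Lambda,f,C')$, so $A_\Lambda$ satisfies the $K(\pi,1)$-conjecture (using additionally Corollary~\ref{cor:wheel}, Corollary~\ref{cor:locally reducible}, Proposition~\ref{prop:widetilde C}, and \cite{paolini2021proof} to supply $K(\pi,1)$ for the proper standard parabolics of types (a)--(e), as in the proof of Theorem~\ref{thm:folded1}).

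The main obstacle is the enumeration itself: one must go through the tables of Lann\'er and quasi-Lann\'er diagrams with a cycle and, for each, exhibit a choice of $(f,C')$ whose $\Lambda_{s'}$'s lie in the list (a)--(e). The delicate entries are the higher-dimensional quasi-Lann\'er diagrams whose cycle is a triangle with an attached affine branch, since one must verify that this branch is of type $\widetilde A$, $\widetilde C$, $\widetilde B_4$, $\widetilde F_4$, or $\widetilde E_7$ and not, for instance, $\widetilde D_n$ or $\widetilde E_8$; an examination of Chein's list shows this is indeed the case. Once this dictionary is written down, the corollary follows by invoking Theorem~\ref{thm:folded} on each diagram in turn.
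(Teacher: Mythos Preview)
Your proposal is correct and matches the paper's approach: the paper's justification is precisely a finite case check over the Lann\'er and quasi-Lann\'er tables, invoking Theorem~\ref{thm:folded} together with Corollary~\ref{cor:wheel} and Corollary~\ref{cor:locally reducible}, and observing that exactly four diagrams remain (those containing a $\widetilde B_4$, $\widetilde F_4$, or $\widetilde E_7$ sub-diagram), which are then handled by the additional labeled 4-wheel hypothesis. Your write-up is in fact more detailed than the paper's, which simply asserts the outcome of the enumeration.
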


\begin{proof}
See \cite{chein1969recherche} for the list of Lann\'er and quasi-Lann\'er diagrams. Theorem~\ref{thm:folded}, Corollary~\ref{cor:wheel} and Corollary~\ref{cor:locally reducible} already imply the $K(\pi,1)$-conjecture holds true for all the diagrams satisfying the assumption of Corollary~\ref{cor:lanner} except the four diagrams in Figure~\ref{f:lanner}, which contain sub-diagrams of type $\widetilde B_4$, $\widetilde F_4$ or $\widetilde E_7$. Thus Theorem~\ref{thm:folded} reduces the $K(\pi,1)$-conjecture for these four diagrams to the labeled 4-cycle condition of affine Artin complexes of type $\widetilde B_4$, $\widetilde F_4$ and $\widetilde E_7$.
\end{proof}

\begin{figure}[h!]
	\centering
	\includegraphics[scale=1]{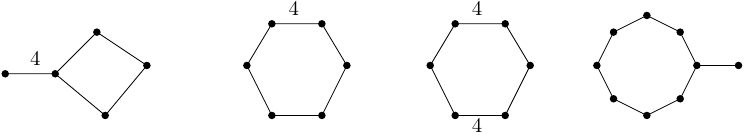}
	\caption{Remaining cases.}
	\label{f:lanner}
\end{figure}

An Artin group is of \emph{hyperbolic cyclic} type if its Dynkin diagram is a circle, and the associated reflection group acts \emph{cocompactly} on $\mathbb H^n$. 

\begin{cor}
	\label{cor:group1}
	Suppose $A_S$ is an Artin group of hyperbolic cyclic type. Then
	\begin{enumerate}
		\item the intersection of any parabolic subgroups of $A$ is a parabolic subgroup;
		\item $A_S$ satisfies Properties $(\ast),(\ast\ast)$ and $(\ast\ast\ast)$ from \cite{godelle2007artin}, in particular, for any $X\subset S$, the commensurator of $A_X$ in $A_S$ coincides with the normalizer of $A_X$ in $A_S$, which is equal to $A_X$ times its quasi-center $QZ_{A_S}(X)$ (recall that $QZ_{A_S}(X)=\{g\in A_S\mid g\cdot X=X\}$);
		\item for any group $G$ of symmetries of the Artin systems $A_S$, the fix point subgroup $A^G_S$ is isomorphic to an Artin group.
	\end{enumerate}
\end{cor}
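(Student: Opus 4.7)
The proof plan relies on the almost-spherical structure of hyperbolic cyclic type Artin groups: every proper standard parabolic subgroup $A_X$ with $X\subsetneq S$ has spherical type, since any proper induced subgraph of a cyclic Lann\'er diagram is linear. Combined with the contractibility of $\Delta_\Lambda$ from Corollary~\ref{cor:extended lanner} and the bowtie-free property from Theorem~\ref{thm:bowtie free} applied to proper parabolics, this will allow an inductive argument to propagate group-theoretic properties from the spherical case to $A_S$ itself.

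For Assertion 1, the only parabolic subgroup of full type $S$ is $A_S$ itself, so it suffices to consider intersections of proper type parabolics, which are spherical. My plan is to adapt the argument of \cite{cumplido2020parabolic}: given spherical parabolics $\mathsf P_1 = g_1 A_{X_1} g_1^{-1}$ and $\mathsf P_2 = g_2 A_{X_2} g_2^{-1}$, consider the simplices $\sigma_1,\sigma_2$ in $\Delta_\Lambda$ whose pointwise stabilizers are $\mathsf P_1,\mathsf P_2$. The stabilizer of the join $\sigma_1\vee\sigma_2$, whose existence follows from the bowtie-free property on proper parabolics combined with the contractibility of $\Delta_\Lambda$, gives a parabolic subgroup containing $\mathsf P_1\cap \mathsf P_2$; one then applies Theorem~\ref{thm:parabolic} inside this parabolic to conclude. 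Extension to arbitrary intersections is automatic since standard parabolic subgroups have bounded rank.

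For Assertion 2, the plan is to verify Properties $(\ast)$, $(\ast\ast)$, $(\ast\ast\ast)$ of \cite{godelle2007artin} by induction on parabolic rank. The base case for proper parabolic subgroups is handled by \cite{paris1997parabolic} and \cite{godelle2003normalisateur} since these are spherical. For the inductive step concerning $A_S$ itself, the essential input is that any $g\in A_S$ with $gA_Xg^{-1}\subset A_Y$ for $X,Y\subsetneq S$ admits a decomposition as a product of elementary conjugators (ribbons), which is precisely the content of Corollary~\ref{cor:decomposition} lifted from the spherical parabolics via the elementary $B$-segment machinery of Section~\ref{subsec:face}. The explicit descriptions of commensurators and normalizers then follow formally from $(\ast)$ and $(\ast\ast)$ together with the identification of the quasi-center performed in \cite{godelle2003normalisateur}.

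For Assertion 3, I would apply the formal framework for symmetries of Artin systems in the style of \cite{crisp2011injective}: under the properties established in Assertion 2 together with the contractibility of $\Delta_\Lambda$, the fixed subgroup $A_S^G$ is identified with the Artin group on the quotient Dynkin diagram $\Lambda/G$, with relations encoded by the $G$-orbits in $S$. The main obstacle throughout will be the inductive step in Assertion 2, where one must carefully track how the ribbon-style decomposition of conjugators between spherical parabolics interacts with the global structure of $A_S$ through $\Delta_\Lambda$; the elementary $B$-segment analysis of Section~\ref{subsec:face} together with the controlled escaping-geodesic techniques of Section~\ref{subsec:escaping} is what makes the lift go through.
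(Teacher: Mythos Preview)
Your approach diverges from the paper's and has gaps. The paper's proof is essentially a one-liner: Theorem~\ref{thm:bowtie free} shows that for hyperbolic cyclic $A_S$ the link of every vertex in $\Delta_S$ is bowtie free, so the assumptions of Theorem~\ref{thm:contractible} hold; the \emph{moreover} part of that theorem then equips $\Delta_S$ with an $A_S$-invariant \emph{consistent convex geodesic bicombing} in which every simplex is convex, and since vertex stabilizers are spherical parabolics the three assertions follow by rerunning the argument in \cite[Section~8]{haettel2021lattices} with input from \cite{morris2021parabolic,godelle2007artin,crisp2000symmetrical}. The geometric bicombing is the crucial ingredient you omit entirely.

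Concretely, your Assertion~1 argument assumes a join $\sigma_1\vee\sigma_2$ of two simplices in $\Delta_\Lambda$ exists, justified by ``bowtie-free property on proper parabolics combined with contractibility''. The bowtie-free property in this paper concerns the vertex poset of a \emph{linear} relative Artin complex (Definition~\ref{def:bowtie free}); it does not directly give a lattice structure on simplices of the full $\Delta_\Lambda$, and contractibility alone is far too weak. What actually produces the needed fixed-point/projection behaviour (in \cite{morris2021parabolic} and in Haettel's Section~8) is the convexity coming from the bicombing. For Assertion~2, you invoke Corollary~\ref{cor:decomposition} and the $B$-segment machinery of Section~\ref{subsec:face} to ``lift'' ribbon decompositions, but Corollary~\ref{cor:decomposition} is stated only for spherical $A_\Gamma$; there is no mechanism in your sketch for extending it to elements of the full non-spherical $A_S$, and the ``escaping geodesic'' techniques of Section~\ref{subsec:escaping} are aimed at an entirely different goal (Lemma~\ref{lem:triming}). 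The inductive step you describe is precisely the hard part, and the paper bypasses it by appealing to the bicombed geometry rather than algebra.
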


\begin{proof}
If $A_S$ is a hyperbolic cyclic type Artin group, then by Theorem~\ref{thm:bowtie free}, $\Delta_S$ satisfies the assumption of Theorem~\ref{thm:contractible}. In particular, $\Delta_S$ can be equipped with an $A_S$-invariant metric with consistent convex geodesic bicombing $\sigma$ such that each simplex is $\sigma$-convex. Also note that the stabilizer of each vertex in $\Delta_S$ is a spherical parabolic subgroup of $A_S$. Now the rest follows by repeating the discussion in \cite[Section 8]{haettel2021lattices}, using works \cite{morris2021parabolic,godelle2007artin,crisp2000symmetrical}.
\end{proof}
%
%\section{Tree of triangles}
%
%\begin{lem}
%Let $\Lambda$ be a tree of triangles with a triangle $\Lambda_1$ and connector $\Lambda_2$ inside such that $\Lambda_1\cap \Lambda_2=\{s_1\}$. Suppose the vertices of $\Lambda_1$ are $\{r_1,s_1,t_1\}$ and the consecutive vertices of $\Lambda_2$ are 
%
%\end{lem}
\bibliographystyle{alpha}
\bibliography{mybib}
\end{document}